\newtheorem{thm}{Theorem}[]
\newtheorem{lem}[thm]{Lemma}
\newtheorem{defi}[thm]{Definition}
\newtheorem{prop}[thm]{Proposition}
\newtheorem{exmp}[thm]{Example}
\declaretheoremstyle[headfont=\normalfont]{normalhead}
\newcommand{\rd}{\mathrm{d}}
\newcommand{\N}{\mathbb{N}}
\newcommand{\Z}{\mathbb{Z}}
\newcommand{\R}{\mathbb{R}}
\newcommand{\defeq}{\vcentcolon=}
\newcommand{\eqdef}{=\vcentcolon}
\DeclareMathOperator{\dive}{div}
\DeclareMathOperator{\supp}{supp}
\DeclareMathOperator{\sgn}{sgn}
\DeclareMathOperator{\Range}{Range}
\DeclareMathOperator{\spn}{span}
\DeclareMathOperator*{\argmin}{arg\,min}
\DeclareMathOperator*{\diam}{diam}
\DeclareMathOperator*{\dist}{dist}
\def\Xint#1{\mathchoice
{\XXint\displaystyle\textstyle{#1}}%
{\XXint\textstyle\scriptstyle{#1}}%
{\XXint\scriptstyle\scriptscriptstyle{#1}}%
{\XXint\scriptscriptstyle\scriptscriptstyle{#1}}%
\!\int}
\def\XXint#1#2#3{{\setbox0=\hbox{$#1{#2#3}{\int}$ }
\vcenter{\hbox{$#2#3$ }}\kern-.6\wd0}}
\def\dashint{\Xint-}
\numberwithin{equation}{section}
\begin{document}

\title[Advection on non-cartesian grids]{Discretizing advection equations with rough velocity fields on non-cartesian grids}
\author[P.--E.~Jabin]{Pierre-Emmanuel Jabin}
\address{\sc P.--E. Jabin.  Department of Mathematics and Huck Institutes, Pennsylvania State University, State College, PA 16801, USA}
\email{pejabin@psu.edu}

\author[D. Zhou]{Datong Zhou}
\address{\sc D. Zhou.  Department of Mathematics, Pennsylvania State University, State College, PA 16801, USA}
\email{dbz5086@psu.edu}

\date{}

\maketitle

\begin{abstract}
We investigate the properties of discretizations of advection equations on non-cartesian grids and graphs in general. Advection equations discretized on non-cartesian grids have remained a long-standing challenge as the structure of the grid can lead to strong oscillations in the solution, even for otherwise constant velocity fields. We introduce a new method to track oscillations of the solution for rough velocity fields on any graph. The method in particular highlights some inherent structural conditions on the mesh for propagating regularity on solutions.
\end{abstract}

\tableofcontents

\section{Introduction} \label{sec:introduction}

\subsection{Discretized advection equations}

We introduce a new framework to study the regularity of discretized advection equations. Our method is able to provide quantitative regularity estimates by extending the kernel based approach initially introduced at the continuum level in \cite{BeJa:13,BeJa:19} and further studied in \cite{Le:18,HaSeSmSt:18}. This is particularly helpful when investigating the convergence of numerical schemes for coupled non-linear systems. 

To be more specific, we study discretizations of the classical linear continuity equation,
\begin{equation}\label{eqn:linear_continuity_equation}
\begin{aligned}
\partial_t u(t,x) + \dive_x \big(b(t,x) u(t,x)\big) = 0, \quad t \in \R_+, x\in \R^d,
\end{aligned}
\end{equation}
Those discretized equations usually involve calculating the dynamics of a discrete density $u_i$ that is defined on each cell of a grid or mesh. We specifically focus on upwind schemes that read 
\begin{equation} \label{eqn:continuity_equation_scheme_0}
\begin{aligned}
\frac{\rd u_{i}}{\rd t} = \frac{1}{\pi_i} \sum_{i'} \big( a_{i,i'} \, u_{i'} - a_{i',i} \, u_i \big).
\end{aligned}
\end{equation}
The factor $\pi_i$ represents some notion of volume of the cell $i$ on the mesh and the coefficients $a_{i,i'}, a_{i',i}$ are related to the flux between two cells $i$ and $i'$. We refer to Section \ref{subsec:set_up} for the precise formulas for the schemes that we consider.

The continuity equation constitutes a key relation in a large variety of models, in which the velocity field $b(t,x)$ is typically related to the density $u(t,x)$ in different ways.
As mentioned above, it is this coupling between $b$ and $u$ that makes strong compactness of the density, instead of weak convergence, an essential ingredient and one of the common difficulties when trying to prove convergence for the whole system, whether from numerical approximations or some other approximate system.
We present a few typical examples below that motivate our investigations and have natural applications in biology and fluid mechanics.

The velocity field $b$ in \eqref{eqn:linear_continuity_equation} can first be related to the density $u$ by some convolution $b = K \star g(u)$ for some non-linear function $g$ or by the Poisson equation
\begin{equation} \label{eqn:poissoncoupling}
\begin{aligned}
b(t,x) = \nabla_x \phi(t,x), \quad - \Delta_x \phi(t,x) = g(u(t,x)),
\end{aligned}
\end{equation}
which corresponds to choosing the fundamental solution of the Laplacian as the kernel $K$.
There exist already many examples of such systems in applications: We briefly mention \cite{ToBe:04} for swarming or \cite{BuDoSc:06,PeDa:09} for models of chemotaxis.
The function $g$ represents a non-linear dependence on the density $u$ in the equation for $b$, which can capture more complex phenomena in the model such as logistic effects. 

In a somewhat similar spirit, {\em non-linear} continuity equations may be considered such as
\begin{equation} \label{eqn:nonlinear_continuity_equation}
\begin{aligned}
\partial_t u(t,x) + \dive_x \big(b(t,x) f(u(t,x))\big) = 0, \quad t \in \R_+, x\in \R^d.
\end{aligned}
\end{equation}
This type of non-linear flux combines  non-linear scalar conservation laws with linear advection. Models such as~\eqref{eqn:nonlinear_continuity_equation} are found for example in some biological settings, where the speed of micro-organisms is impacted by their local density; see, for instance, the discussion of Keller-Segel model in \cite{PeDa:09}.
We expect that the results in this paper can be extended to such non-linear models, but for the sake of simplicity in this article, we consider only the linear continuity equation and exclude such nonlinearity in the flux from the rest of our discussion.

The continuity equation \eqref{eqn:linear_continuity_equation} is also naturally a critical component of compressible fluid dynamics such as the barotropic
compressible Navier-Stokes system 
\begin{equation*}
\left\{
\begin{aligned}
&\frac{\partial u}{\partial t} + \dive (b u) = 0
\\
&\frac{\partial}{\partial t} (bu) + \dive \big( (b \otimes b) u \big) + \nabla p - \dive (\tau(b)) = f
\\
&p = P(u),
\end{aligned} \right.
\end{equation*}
with appropriate boundary conditions if considered in a bounded domain. In this system, the velocity field $b$ is coupled with the density $u$ by another evolution PDE, leading to an even more complex non-linearity than in the previous examples.

A related model is the Stokes system,
\begin{equation*}
\left\{
\begin{aligned}
&\frac{\partial u}{\partial t} + \dive (b u) = 0
\\
& -\Delta b + \nabla p = f 
\\
&p = P(u),
\end{aligned} \right.
\end{equation*}
which considerably simplifies the momentum equation and the relation between $b$ and $u$.

There exists a large literature on the numerical analysis of compressible Navier-Stokes system to which we cannot do justice in a few sentences. We only briefly mention \cite{GaGeHeLa:08,GaHeLa:10} for the compressible Navier-Stokes and \cite{EyGaHeLa:10b,EyGaHeLa:10a,GaHeLa:09} for the Stokes system.
To the best of our knowledge however, the numerical analysis of these systems is only well-understood on Cartesian meshes or staggered grids, for example in \cite{GoLlMi:20}, that still rely on Cartesian mesh for the density.
Generally speaking, the regularity of discretized continuity equations such as \eqref{eqn:continuity_equation_scheme_0} remains poorly understood on non-cartesian meshes, which leads  to the main motivation and focus in the present work.

\subsection{Renormalized solutions}
Even without discretization, the well-posedness for advection equations such as \eqref{eqn:linear_continuity_equation} is in itself a delicate question when the velocity field $b$ is not Lipschitz.
By introducing the concept of renormalized solution,  the uniqueness and compactness of \eqref{eqn:linear_continuity_equation} was first obtained in \cite{DiLi:89} for velocity fields $b \in W^{1,p}$.
This was later improved in \cite{Am:04,Bo:01} to only $b \in BV$ with $\dive b\in L^1$.

Renormalized solutions are based on a simple but essential observation: Assume that $b$ and $u$ are smooth and satisfy the continuity equation 
\eqref{eqn:linear_continuity_equation}. Then for all $\beta \in C^1(\R)$, $\beta(u)$ is a solution of
\begin{equation} \label{eqn:renormalized_solution}
\begin{aligned}
\partial_t \beta(u) + \dive (b \, \beta(u)) + \dive b \, (\beta'(u) u - \beta(u)) = 0.
\end{aligned}
\end{equation}
A weak solution of \eqref{eqn:linear_continuity_equation} with a non-smooth field $b$ is said to be renormalized iff \eqref{eqn:renormalized_solution} holds in distributional sense for all $\beta \in C^1(\R)$ with $|\beta (\xi)|\leq C|\xi|$.
Moreover, equation \eqref{eqn:linear_continuity_equation} with a fixed field $b$ is said to be renormalized iff all its weak solutions are renormalized.

Basically, the renormalization property consists in stating that if $u$ is a weak solution then non-linear functions of $u$ are also solutions, with appropriate corrections if $\dive b\neq 0$. This directly implies the uniqueness of a weak solution $u$: Consider two weak solutions $u, v$ with $u(0,x)=v(0,x)$, if $u - v$ is a renormalized solution, then $|u - v|$ is also a weak solution. Hence $\|u(t,.) - v(t,.)\|_{L^1(\R^d)} \leq \|u(0,.) - v(0,.)\|_{L^1(\R^d)}=0$ and $u=v$.
Combining the uniqueness and the renormalization property directly provides compactness in the appropriate $L^p_{loc}$ sense as one can prove that
\begin{equation*}
\begin{aligned}
\textrm{weak-*}\, \lim \beta(u_n) = \beta( \textrm{weak-*}\, \lim u_n).
\end{aligned}
\end{equation*}

A common critical part in the proof of the renormalization property is a so-called commutator estimate.
Consider a classical convolution kernel $K_\epsilon$ together with $K_\epsilon\star_x u$ where $u$ solves~\eqref{eqn:linear_continuity_equation}. Commutator estimates arise when trying to write a similar equation on~$K_\epsilon\star_x u$: One then has that
\begin{equation*}
\begin{aligned}
\partial_t (K_\epsilon \star_x u) + \dive \big(b \; K_\epsilon \star_x u\big) = R_\epsilon,
\end{aligned}
\end{equation*}
where the remainder term $R_\epsilon$ can be written as a commutator
\begin{equation*}
\begin{aligned}
R_\epsilon(x) = \int \big(b(x) - b(y)\big) \nabla K_\epsilon (x-y) u(y) \;\rd y + \int \dive_x b(x) K_\epsilon(x - y) u(y) \;\rd y.
\end{aligned}
\end{equation*}
Commutator estimates then consists in proving that $R_\epsilon\to 0$. If it is possible to prove this, then it is straightforward to deduce the renormalization property \eqref{eqn:renormalized_solution}  by writing an equation on $\beta(K_\epsilon \star_x u)$ and passing to the limit~$\epsilon\to 0$.

Renormalized solutions are also connected to some form of propagation of quantitative regularity. 
It had already been noticed in \cite{AmLeMa:05} that renormalized solutions lead to some approximate differentiability on the solution.
But the first explicit propagation of regularity was obtained in \cite{CrLe:08} at the level of the characteristics flow.
The characteristics method used in \cite{CrLe:08} proved very fruitful with many later extensions. One can mention the study of SDEs in \cite{ChJa:18,FaLuTh:10,Zh:10}, the question of mixing under incompressible flows in \cite{BrOtSe:11,IyKiAlXu:14,Se:13}, well-posedness for velocity fields with less than $1$ derivative  in \cite{ChJa:10,JaMa:15},  and velocity fields obtained through a singular integral  in \cite{BoBoCr:16,BoCr:13,CoCrSp:15}.

The corresponding regularity at the PDE level can be derived by directly quantifying oscillations on the solution.
A first method to do so was introduced  in \cite{BeJa:13,BeJa:19} for non-linear continuity equations of the form \eqref{eqn:nonlinear_continuity_equation}.
In the linear case, sharper estimates were obtained in \cite{Le:18} and \cite{BrNg:18} through a somewhat similar approach.
We also mention \cite{BrJa:17,BrJa:18} which combines those methods with a new notion of weights; this was applied to the compressible Navier-Stokes equation with a large variety of laws of state and stress tensors.
A very different quantitative approach at the PDE level was studied in \cite{Se:17,Se:18}, using certain optimal transport distances.
All those results only propagate a weaker notion of regularity, weaker than full differentiability, usually some sort of $\log$ of derivative. It is indeed  not possible in general to bound any kind of Sobolev regularity on density when the velocity field is merely Sobolev; see some counterexamples for example in~\cite{AlCrMa:14,Ja:16}.

The approach that we follow in the present paper is inspired by the quantitative semi-norms introduced in~\cite{BeJa:13,BeJa:19}, which we briefly describe for this reason. 
The local compactness of a sequence of bounded functions $u_k \in L^p(\R^d)$ with $1 \leq p < \infty$ follows from the following  property:
\begin{equation} \label{eqn:compactness_criteria}
\begin{aligned}
\limsup_{n} \int_{\R^{2d}} \widetilde{K}_h(x-y) |u_n(x) - u_n(y)|^p \;\rd x \rd y \to 0 \quad \text{ as } h \to 0,
\end{aligned}
\end{equation}
where $\{\widetilde{K}_h\}_{h > 0}$ is any family of classical convolution kernels. Scaling \eqref{eqn:compactness_criteria} with a given rate of convergence in $h$ leads to various notions of semi-norms that measure intermediate regularity between $L^p$ and $W^{s,p}$ for any $s > 0$, and all of such regularities are strong enough to imply local compactness in $L^p$.

The particular family of kernels $\{\widetilde{K}_h\}$ proposed in \cite{BeJa:13,BeJa:19} results in semi-norms corresponding to a sort of log-scale derivatives that we denote here by $W_{\log, \theta}^p$.
The $W_{\log, \theta}^p$-regularity defined by kernels $\{\widetilde{K}_h\}$ was then proved to be propagated by \eqref{eqn:linear_continuity_equation} when the velocity field $b \in W^{1,p}$, $\dive b$ is bounded and $\dive b$ is compact or enjoys some similar $W_{\log,\theta}^p$-regularity.

Hence with such assumptions on $b$, the solutions of \eqref{eqn:linear_continuity_equation} are compact if the initial data are $W_{\log, \theta}^p$-regular. The bounds in \cite{Le:18} and \cite{BrNg:18} yield some more precise log-scale derivatives based on somewhat similar semi-norms.
The corresponding spaces have also received increasing attention  in other settings, see for instance~\cite{BoBrMi:00}.

When trying to extend the idea of quantifying oscillations in \cite{BeJa:13,BeJa:19} to our discrete setting, it appears natural to introduce an approximation of the continuous kernel $K_h$ on the mesh. In other words, we would like to estimate the regularity of the discrete density by something like
\begin{equation} \label{eqn:compactness_criteria_discrete}
\begin{aligned}
\limsup_{n} \sum_{i,j} \widetilde{K}_{i,j;(n)}^h |u_{i;(n)} - u_{j;(n)}|^p \pi_{i;(n)}\pi_{j;(n)} \to 0, \quad \text{ as } h \to 0,
\end{aligned}
\end{equation}
where $(\widetilde{K}_{i,j}^h)_{i,j}$ is an approximation of kernel $K_h$ and the double integral is replaced by a double summation over the mesh.

The main issue however is to identify the right family of kernels $(\widetilde{K}_{i,j}^h)_{i,j}$ so that the corresponding semi-norms are propagated by the discrete advection equation. This turns out to be extremely challenging on non-cartesian grids as a straightforward discretization of the kernels $\{\widetilde{K}_h\}$ used for the continuous equation does not appear to work.
The main technical contribution of this paper is a general method to find admissible approximation $(\widetilde{K}_{i,j}^h)_{i,j}$, extending the results in \cite{BeJa:13,BeJa:19} to  upwind schemes. This leads to the study of a non-symmetric diffusion equation on the mesh which we can solve and bound when the mesh show periodic patterns (the exact definition is given in Section \ref{subsec:set_up}).
\subsection{Some of the issues with non-cartesian grids}
At first glance, it may not be apparent why non-cartesian meshes lead to such additional difficulty. Eq.~\eqref{eqn:continuity_equation_scheme_0} may in fact be seen as an advection equation on a graph where the actual velocity field is correspond to some projection of the original velocity field that incorporates the structure of the graph. This means that the graph's topology can lead to additional oscillations in the solution in itself. This is made apparent in the  following straightforward example, that we are grateful to T. Gallou\"et and R. Herbin for pointing out. This shows that  even for very smooth or actually constant velocity fields at the continuous level, one may have strong oscillations in the solution at the discrete level.
\begin{exmp} \label{exmp:mouse_mesh}
Consider the constant velocity field $b(x) \equiv (1,0)$ in dimension~$2$ and the following non-cartesian discretization: Let $h_0$ be the discretization parameter, and use $\Z^2$ to index the cells. The cell indexed by $(i,j) \in \Z^2$ is given by
\begin{equation} \notag
C_{(j,k)} = \left\{
\begin{aligned}
[j h, (j+1) h) \times [k h, (k+1) h) \quad &\text{ if } k \text{ is even},
\\
[j h/2, (j+1) h/2) \times [k h, (k+1) h) \quad &\text{ if } k \text{ is odd}.
\end{aligned} \right.
\end{equation}
That simply means that we keep the vertical discretization $h$, but alternate a row with horizontal discretization $h$, with another row with discretization $h/2$.

Consider a discrete density $(u_{j,k})_{j,k}$ solving the upwind scheme \eqref{eqn:continuity_equation_scheme_0} over such a mesh for a discretization of the constant velocity field $b= (1,0)$.
Assume that the initial data $(u_{j,k}(0))_{j,k}$ is bounded in discrete $W^{1,1}$-norm, uniformly in $h_0$.
Then for any $t>0$, $(u_{j,k}(t))_{j,k}$ is bounded in the discrete $W^{s,1}$-norm, uniformly in $h_0$, if and only if $s < 1/2$.
\end{exmp}
This type of spurious oscillations created by the mesh itself are one of the reasons why the aforementioned quantitative methods (either in ODE or PDE level) have not been extended to non-Cartesian meshes. In fact, there exist only very few qualitative results of strong convergence for non Lipschitz velocity fields and non-Cartesian meshes. One can nevertheless mention \cite{Bo:12} which relies on the renormalization property at the limit. However because this kind of approach is not quantitative, it requires some a priori knowledge of the compactness of the divergence of the velocity field. This appears to make handling coupled non-linear models such as \eqref{eqn:linear_continuity_equation}-\eqref{eqn:poissoncoupling} out of reach.

When one is not trying to handle at the same time non-Cartesian meshes and non-Lipschitz coefficients, stronger results can be obtained.
On non-Cartesian meshes, 
we refer for instance to \cite{Me:08,MeVo:07} for divergence-free velocity field that are Lipschitz in both space and time, and to \cite{DeLa:11} for autonomous (time-independent) Lipschitz velocity fields with non-zero divergence.
For non-Lipschitz velocity fields on Cartesian meshes, one can obtain quantitative convergence results in some suitable weak distances.
When the velocity field is in the appropriate Sobolev space with one-sided bounded divergence, the upwind scheme was proved to converge at rate of $1/2$ in \cite{ScSe:17} in some weak topology.
When the velocity field is one-sided Lipschitz continuous, the convergence with rate $1/2$ of the upwind scheme in Wasserstein distance was proved in \cite{DeLaVa:17}.

To the best of our knowledge however, this article is the first to provide a general approach to the compactness of solutions to discrete advection equations with non-Lipschitz coefficients and non-Cartesian meshes, even if we still require some restrictions on the mesh such as periodic patterns.

Furthermore, the compactness result in this paper is directly applicable to some of the coupled systems discussed at the beginning of this introduction. We are in particular able to derive the compactness of discretizations of the non-linear coupled system~\eqref{eqn:linear_continuity_equation}-\eqref{eqn:poissoncoupling}. The exact result is stated later in this first section. We remark here that the velocity field $b$ obtained from~$u$ through~\eqref{eqn:poissoncoupling} is naturally bounded in  $W^{1,p}$ for  all $1 < p \leq \infty$, if we assume $g(u) \in L^1 \cap L^\infty$. Of course we cannot know a priori the compactness of $\dive b$ but we have the simple relation $\dive b = g(u)$. This is where quantitative, explicit estimates prove critical as we are able to conclude through some sort of Gronwall argument.

However more complex coupled systems would present unique challenges for our approach: This is notably the case of compressible fluid dynamics. Energy estimates would provide Sobolev, $H^1_x$ bounds on the velocity. However the divergence of velocity  is generically unbounded, which would prevent us from applying our method in any straightforward manner. Instead this would likely require the introduction of weights such as was done in~\cite{BrJa:17,BrJa:18} at the continuous level.

\subsection{A basic example of setting for the linear continuity equation} \label{subsec:set_up}

Considering the linear continuity equation \eqref{eqn:linear_continuity_equation},
we introduce here  its basic discretization on a polygon mesh $(\mathcal{C}, \mathcal{F}) = \big( \{V_i\}_{i \in \mathcal{V}}, \{S_{i,j}\}_{(i,j) \in \mathcal{E}} \big)$ over a bounded domain $\Omega \subset \R^d$, which we defined as the following:
\begin{itemize}
\item The pair of indices $(\mathcal{V}, \mathcal{E})$ form a finite graph.
\item
Each cell $V_i$ is a $d$-dim polygon in $\R^d$. The intersection of two cells $V_i$ and $V_j$ is nonempty if and only if $(i,j) \in \mathcal{E}$. In that case $S_{i,j} \defeq V_i \cap V_j$ is a $(d-1)$-dim polygon in $\R^d$.
\item The domain is covered by the mesh: $\Omega \subset \bigcup_{i \in \mathcal{V}} V_i$.

\end{itemize}
We define the discretization size of the mesh as $\delta x \defeq \sup_{i \in \mathcal{V}} \mbox{diam} (V_i)$, where $\mbox{diam} ()$ represents the diameter.

As a first example, we consider the following semi-discrete upwind scheme 
\begin{equation}\label{eqn:continuity_equation_scheme_polygon}
\left\{
\begin{aligned}
&\frac{\rd}{\rd t} u_{i}(t) = \frac{1}{|V_i|} \sum_{i' :\, (i,i') \in \mathcal{E}} \Big( a_{i,i'}(t) \, u_{i'}(t) - a_{i',i}(t) \, u_i(t) \Big), & \text{if } i \in \mathcal{V} \text{ and } V_i \!+\! B_{\delta x} \subset \Omega,
\\
&u_i(t) \equiv 0, & \text{if } i \in \mathcal{V} \text{ but } V_i \!+\! B_{\delta x} \nsubseteq \Omega,
\\
&a_{i,j}(t) = \left( \frac{1}{|B_{\delta x}|} \int_{B_{\delta x}} \int_{S_{i,j}} b(x+y,t) \cdot \bm{N}_{i,j} \;\rd y \rd x \right)^+, & \text{if } (i,j) \in \mathcal{E} \text{ and } S_{i,j} \!+\! B_{\delta x} \subset \Omega,
\\
&u_i(0) = \frac{1}{|V_i|} \int_{V_i} u_0(x) \;\rd x, & \text{if } i \in \mathcal{V} \text{ and } V_i \!+\! B_{\delta x} \subset \Omega,
\end{aligned} \right.
\end{equation}
where $(u_i(t))_{i \in \mathcal{V}}$ are the discrete density on the mesh,
$\bm{N}_{i,j}$ is the unit normal vector on $S_{i,j}$, satisfying $\bm{N}_{i,j} = - \bm{N}_{j,i}$. The functions $b(t,x)$ and $u_0(x)$ are respectively the velocity field and initial condition 
in the linear continuity equation \eqref{eqn:linear_continuity_equation}.
Also, throughout the paper, for $s \in \R$ we use the notation $s^+ = s \vee 0 = \max\{s,0\}$ and $s^- = -(s \wedge 0) = -\min\{s,0\}$.

The total mass on the mesh is given by $\sum_{i \in \mathcal{V}} u_i |V_i|$. It is easy to verify that
the scheme  conserves mass
except near the boundary of $\Omega$, where some leaking may occur.
Such leaking effect can be controlled by no flux (no outward flux) condition of the velocity field or by a priori estimating the distribution of density.

Before we can rigorously state any compactness result, we still need to clarify our assumptions on the mesh.
Throughout the paper, for $A, B \subset \R^d$, we use the notation
\begin{equation*}
\begin{aligned}
A + B \defeq \{x \in \R^d, x = a + b, a \in A, b \in B\}.
\end{aligned}
\end{equation*}
Also, for $x \in \R^d$ we denote $A + x \defeq A + \{x\}$, which is a translation of set $A$ on $\R^d$.

We say that a mesh has a periodic pattern if the following holds:
\begin{defi}\label{defi:periodic_mesh_polygon}
Let  $(\mathcal{C}, \mathcal{F}) = \big( \{V_i\}_{i \in \mathcal{V}}, \{S_{i,j}\}_{(i,j) \in \mathcal{E}} \big)$ be a polygon mesh over $\Omega \subset \R^d$ and let $\mathcal{V}_0 \subset \mathcal{V}$.
The mesh is periodic with pattern $\mathcal{V}_0$ if it satisfies the following properties: The set $\bigcup_{i \in \mathcal{V}_0} V_i$ is connected and one has $\big( \bigcup_{i \in \mathcal{V}_0} V_i \big) + B_{\delta x}\subset \Omega$.
There exists a translation group action
\begin{equation*}
\begin{aligned}
[m] (V_i) \defeq V_i + \sum_{k=1}^d m_{k} L_k,
\quad
\forall m \in \Z^d, i \in \mathcal{V},
\end{aligned}
\end{equation*}
where $L_1, \dots, L_d \in \R^d$ are linearly independent vectors, such that
\begin{equation*}
\begin{aligned}
\sum_{m \in \Z^d} \sum_{i \in \mathcal{V}_0} \mathbbm{1}_{[m]V_i} (x) = 1 \quad \text{for a.e. } x \in \R^d.
\end{aligned}
\end{equation*}
Moreover, there exists an injective map
\begin{equation*}
\begin{aligned}
\sigma : \mathcal{V} &\mapsto \Z^d \times \mathcal{V}_0 ,
\\
i &\mapsto ([n],i_0).
\end{aligned}
\end{equation*}
If $\sigma(i) = ([n],i_0)$ and $([m]+[n],i_0) \in \sigma(\mathcal{V})$, define $[m](i) \defeq \sigma^{-1} \big( ([m]+[n],i_0) \big) \in \mathcal{V}$.
Then one has
\begin{equation*}
\begin{aligned}
[m](V_i) = V_{[m](i)}, \quad \text{if } i \in \mathcal{V} \text{ and } [m](i) \in \mathcal{V}.
\end{aligned}
\end{equation*}

If the mesh is periodic with pattern $\mathcal{V}_0$ we call $|\mathcal{V}_0|$ the pattern size; of course for a given mesh, the choice of $\mathcal{V}_0$ and $|\mathcal{V}_0|$ may not be unique. If one can choose $\mathcal{V}' \subseteq \mathcal{V}, \mathcal{E}' \subseteq \mathcal{E}$ such that $(\mathcal{C}', \mathcal{F}') = \big( \{V_i\}_{i \in \mathcal{V}'}, \{S_{i,j}\}_{(i,j) \in \mathcal{E}'} \big)$ forms a mesh over $\Omega' \subseteq \Omega$, and is a periodic mesh by the definition above, then we say that $(\mathcal{C}, \mathcal{F})$ is periodic over $\Omega'$.

\end{defi}

We also require some additional assumptions on the meshes, though those are rather standard. 
Throughout the discussion, any mesh $(\mathcal{C}, \mathcal{F})$ of our interest should satisfy that for all $i \in \mathcal{V}$ and $x \in \R^d$:
\begin{equation} \label{eqn:mesh_comparable_polygon}
\begin{aligned}
C^{-1} \delta x \leq &\diam(V_{i}) \leq C\delta x, \quad C^{-1} (\delta x)^d \leq |V_{i}| \leq C(\delta x)^d,
\\
&\big| \{k \in \mathcal{V} : B(x;\delta x) \cap V_{k} \neq \varnothing\} \big| \leq C,
\end{aligned}
\end{equation}
for some uniform constant $C$. These conditions exclude some pathological situations where some parts of the mesh would be too singular  in some regard.

Finally, we observe that since we are considering the limit to the continuous equation, then we naturally expect the discretization size to vanish. Namely, let 
\begin{equation*}
\begin{aligned}
\big(\mathcal{C}^{(n)}, \mathcal{F}^{(n)}\big) = \big( \{V_{i;(n)}\}_{i \in \mathcal{V}^{(n)}}, \{S_{i,j;(n)}\}_{(i,j) \in \mathcal{E}^{(n)}} \big), \quad n \in \N_+
\end{aligned}
\end{equation*}
be a family of meshes and let $\delta x_{(n)}$ , $n \in \N_+$ denote the discretization sizes, then we ask that
\begin{equation} \label{eqn:mesh_convergence}
\begin{aligned}
\delta x_{(n)} \to 0 \text{ as } n \to \infty. 
\end{aligned}
\end{equation}

We are now ready to state a first example of our compactness result:
\begin{thm} \label{thm:periodic_compactness_result}
Consider $T > 0$ and a bounded domain $\Omega \subset \R^d$ with piecewise smooth boundary.
Let $b(t,x)$ be a velocity field with $b \in L^{\infty}_tL^{1}_x \cap L^{q}_t(W^{1,q}_x) \cap L^{1}_x(W^{s,1}_t) ([0,T] \times \Omega)$ and divergence $\dive_x b \in L^{\infty}_t(L^{\infty}_x) \cap L^{1}_t(W^{s,1}_x)([0,T] \times \Omega)$,  for some $1 < q \leq \infty$, $0 < s \leq 1$.
Let $u_0\in L^\infty \cap W^{s,1}(\Omega)$ be the initial data.

Consider
a sequence of polygonal meshes $\{(\mathcal{C}^{(n)}, \mathcal{F}^{(n)})\}_{n=1}^\infty$ over $\Omega$, having discretization size $\delta x_{(n)} \to 0$, satisfying the structural assumptions \eqref{eqn:mesh_comparable_polygon} with some uniform constant, and being periodic on $\Omega$ with their pattern size also uniformly bounded.
Let $(u_{i;(n)}(t))_{i \in \mathcal{V}^{(n)}}$ be  solutions to the semi-discrete scheme \eqref{eqn:continuity_equation_scheme_polygon} and denote by $u_{(n)}(t,x)$ the piecewise constant function extending  $(u_{i;(n)}(t))_{i \in \mathcal{V}^{(n)}}$. 
Assume finally that the total mass $\sum_{i \in \mathcal{V}^{(n)}} u_{i;(n)}(T) |V_{i;(n)}| \to \|u_0\|_{L^1}$ as $n \to \infty$.
Then
\begin{equation*}
\begin{aligned}
u_{(n)}(t,x) \text{ is compact in } L^1([0,T] \times \Omega).
\end{aligned}
\end{equation*}

\end{thm}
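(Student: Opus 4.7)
The plan is to verify the discrete compactness criterion \eqref{eqn:compactness_criteria_discrete} for $u_{(n)}$, complement it with a time-equicontinuity bound, and extract a strong $L^1$ limit. Concretely, the theorem reduces to exhibiting a family of discrete kernels $\widetilde K^h_{i,j;(n)}$ such that the semi-norm
\[
N^h_{(n)}(t) \defeq \sum_{i,j \in \mathcal V^{(n)}} \widetilde K^h_{i,j;(n)} \, \bigl|u_{i;(n)}(t) - u_{j;(n)}(t)\bigr| \, |V_{i;(n)}|\,|V_{j;(n)}|
\]
is uniformly small in $n$ after time integration and vanishes as $h\to 0$, together with a Kruzhkov-type modulus of continuity in $t$ coming directly from \eqref{eqn:continuity_equation_scheme_polygon}.

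The crux is the construction of the $\widetilde K^h_{i,j;(n)}$. As Example~\ref{exmp:mouse_mesh} warns, a straight restriction of a log-scale mollifier $K_h$ from \cite{BeJa:13,BeJa:19} to the mesh cannot be propagated by the upwind scheme. Instead I would define $\widetilde K^h_{i,j;(n)}$ implicitly as the solution of a non-symmetric discrete stationary diffusion problem on $\mathcal V^{(n)}\times \mathcal V^{(n)}$, whose drift coefficients are built from the upwind weights $a_{i,i'}$ and whose source is a cell-average of $K_h$. The two points to establish here are: (a) the discrete problem is solvable, and (b) its solution satisfies a two-sided pointwise comparison $\widetilde K^h_{i,j;(n)} \sim K_h(x_i - x_j)$ uniformly in $n$ and $h$. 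The periodic pattern hypothesis is what makes (a) and (b) tractable: at fixed $j$, the operator acting in $i$ is periodic modulo the slow variation of $b$, so Floquet-type decomposition on the finite pattern $\mathcal V_0$ delivers local solutions that can be glued via a perturbation argument using $b\in W^{1,q}_x$.

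Given the kernel, the propagation estimate comes from differentiating $N^h_{(n)}(t)$. Using the upwind sign structure and the defining equation of $\widetilde K^h$, the leading-order transport contribution cancels by design, leaving a discrete commutator of the schematic form
\[
\sum_{i,j,i'} \widetilde K^h_{i,j;(n)} \bigl(a_{i,i'}(t) - a_{j,j'}(t)\bigr) \bigl(u_{i'}(t) - u_{j'}(t)\bigr) |V_{j;(n)}|,
\]
where $j'$ denotes the periodic counterpart of $i'$ seen from $j$. Combining the two-sided comparison on $\widetilde K^h$ with the $W^{1,q}_x$ regularity of $b$ and the $L^\infty$ bound on $\dive_x b$, this commutator is dominated by $C\,\|b(t,\cdot)\|_{W^{1,q}}\bigl(N^h_{(n)}(t) + \varepsilon(n,h)\bigr)$. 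Gronwall then yields $N^h_{(n)}(t) \lesssim N^h_{(n)}(0) + \varepsilon(n,h)$ on $[0,T]$, and $N^h_{(n)}(0)$ is controlled by the log-scale counterpart of $\|u_0\|_{W^{s,1}}$. The $W^{s,1}_t$ regularity of $b$ is used to locally freeze $b$ in time when applying the kernel construction, while the assumed mass convergence $\sum u_{i;(n)}(T)|V_{i;(n)}|\to \|u_0\|_{L^1}$ rules out mass escape through $\partial\Omega$.

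The main obstacle, by a wide margin, is the construction and two-sided estimation of $\widetilde K^h_{i,j;(n)}$: the relevant discrete operator is genuinely non-self-adjoint, so energy methods do not apply directly, and without the periodic pattern assumption one cannot even formulate a sharp comparison with $K_h$. Once this is achieved, the propagation of $N^h_{(n)}$, the Gronwall argument, and the Riesz--Kolmogorov extraction of an $L^1([0,T]\times\Omega)$ limit are of standard flavor. Example~\ref{exmp:mouse_mesh} also cautions that the regularity one propagates is, in general, only of sub-Sobolev log-scale type, which is precisely what the framework of \cite{BeJa:13,BeJa:19} and its discrete extension is designed to deliver.
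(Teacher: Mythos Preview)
Your overall strategy---propagate a log-scale semi-norm built from a mesh-adapted kernel $\widetilde K^h_{i,j;(n)}$, then combine with time equicontinuity and Riesz--Kolmogorov---matches the paper's architecture. The difference lies in how the kernel is constructed, and here the paper takes a substantially simpler route than you propose.

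You suggest defining $\widetilde K^h_{i,j;(n)}$ as the solution of a non-symmetric diffusion problem on the product space $\mathcal V^{(n)}\times\mathcal V^{(n)}$, then proving a two-sided comparison with $K_h(x_i-x_j)$ via Floquet theory. The paper instead keeps the kernel as a \emph{pointwise evaluation} of the continuous $K^h$, but at perturbed ``virtual coordinates'': $\widetilde K^h_{i,j}=K^h(\widetilde x_i-\widetilde x_j)$. The unknowns are the $\widetilde x_i\in\R^d$ (one per cell, not per pair), and they are chosen so that $\sum_{i'}(\widetilde x_{i'}-\widetilde x_i)a_{i',i}\approx b_i\pi_i$. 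For a constant field on a periodic mesh this reduces to a finite $|\mathcal V_0|\times|\mathcal V_0|$ linear system on the pattern (the paper's Theorem~\ref{thm:auxiliary_function_existence}), with solutions satisfying $|\widetilde x_i-x_i|\le C(|\mathcal V_0|)\,\delta x$. The two-sided kernel comparison you identify as the main obstacle then follows \emph{for free} from the Lipschitz-type bound in Lemma~\ref{lem:kernel_equiv_Euclidean}/Proposition~\ref{prop:kernel_equiv}: if virtual and barycentric coordinates are $O(\delta x)$-close, the induced semi-norms are equivalent up to a factor $1+C\delta x/h$. For non-constant $b$, the paper freezes $b$ piecewise in time and locally in space (using the $W^{s,1}_t$ and $W^{1,q}_x$ regularity) and controls the resulting residue; this is where the careful partitioning in Theorem~\ref{thm:residue_term_estimate} enters.

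Your commutator sketch with the ``periodic counterpart $j'$ of $i'$ seen from $j$'' is also where I would flag a risk: that pairing is well-defined only for constant $b$ on a periodic mesh, and for varying $b$ there is no canonical $j'$. The paper avoids this entirely: after a first-order Taylor expansion $\widetilde K^h_{i',j}-\widetilde K^h_{i,j}\approx\nabla K^h(\widetilde x_i-\widetilde x_j)\cdot(\widetilde x_{i'}-\widetilde x_i)$ and the virtual-coordinate equation, the commutator collapses to $\sum_{i,j}\nabla K^h(\widetilde x_i-\widetilde x_j)\cdot(\widetilde b_i-\widetilde b_j)|u_i-u_j|\pi_i\pi_j$, which is then bounded by the continuous commutator estimate from \cite{BeJa:19}. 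No pair-matching is needed.

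In short: your plan is not wrong in spirit, but solving a diffusion equation for the kernel on $\mathcal V\times\mathcal V$ and extracting two-sided bounds from it is a much heavier program than the paper's, where the non-self-adjoint problem lives on the single pattern $\mathcal V_0$, the unknown is a coordinate perturbation rather than a kernel, and the kernel comparison is an immediate corollary.
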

The proof of the theorem is postponed to Section~\ref{sec:quantitative_regularity}, where it follows from the propagation of some discrete regularity of the form~\eqref{eqn:compactness_criteria_discrete}.
%
\subsection{The more complete setting} \label{subsec:set_up_partition_of_unity}

We demonstrate the potential of our method by also deriving the compactness for a simple non-linear coupled system, namely
\begin{equation} \label{eqn:Poisson_coupling}
\left\{
\begin{aligned}
&\partial_t u(t,x) + \dive_x \big(b(t,x) u(t,x)\big) = 0
\\
&b(t,x) = \nabla_x \phi(t,x), \quad - \Delta_x \phi(t,x) = g(u(t,x)).
\end{aligned} \right.
\end{equation}
However, the setting of polygon meshes described above may no longer be the most appropriate.
The difficulty comes from the coupling of numerical schemes between the elliptic Poisson equation, for which one may want to use finite elements for example, and the hyperbolic advection equation for which we use upwind schemes. This is one of the motivations for our more general formulation.

We define cell functions, face functions and meshes that replace the polygonal cells. We discuss later in subsection~\ref{subsec:connection_two_settings} how the previous polygon meshes can be related to this formulation.
\begin{defi} \label{defi:partition_of_unity_cell}
Consider a piecewise differentiable function $\chi$ with value $0 \leq \chi \leq 1$ and vector values functions $\{\bm{n}_j\}_{j = 1}^m$ on $\R^d$.
Then $\chi$ is said to be a cell function with $\{\bm{n}_j\}_{j = 1}^m$ as its face functions if
\begin{equation*}
\begin{aligned}
\textstyle \sum_{j=1}^m \bm{n}_j(x) = - \nabla \chi(x) \quad \text{for a.e. } x \in \R^d  \quad \text{ and } \quad \supp \bm{n}_j \subseteq \supp \chi, \quad \forall j \in \mathcal{V}.
\end{aligned}
\end{equation*}
\end{defi}
With the cell functions and face functions defined, we give the following definition of meshes.
\begin{defi} \label{defi:partition_of_unity_mesh}
We define as a generalized mesh over $\Omega \subseteq \R^d$ a pair $(\mathcal{C}, \mathcal{F}) = \big( \{\chi_i\}_{i \in \mathcal{V}}, \{\bm{n}_{i,j}\}_{(i,j) \in \mathcal{E}} \big)$ satisfying the following conditions:
\begin{itemize}
\item The pair of indices $(\mathcal{V}, \mathcal{E})$ forms a finite graph.
\item If $i \in \mathcal{V}$ and $\supp \chi_i \subset \Omega$, then $\chi_i$ must be a cell function with $\{\bm{n}_{j,i}\}_{(j,i) \in \mathcal{E}}$ as its face functions.
  \item Finally,
\begin{equation*}
\begin{aligned}
\textstyle \sum_{i \in \mathcal{V}} \chi_i(x) = 1,\quad \forall x \in \Omega \quad \text{ and } \quad \bm{n}_{i,j} = - \bm{n}_{j,i}, \quad
\forall i,j \in \mathcal{V}.
\end{aligned}
\end{equation*}
\end{itemize}
We also extend $\{\bm{n}_{i,j}\}_{(i,j) \in \mathcal{E}}$ to $\{\bm{n}_{i,j}\}_{i,j \in \mathcal{V}}$ by defining $\bm{n}_{i,j} = \bm{n}_{j,i} = 0$ for $(i,j) \notin \mathcal{E}$.

The discretization size of a mesh $(\mathcal{C}, \mathcal{F})$ is defined as $\delta x \defeq \max_{i \in \mathcal{V}} \diam ( \supp \chi_i )$. The volume of cell $i \in \mathcal{V}$ is defined as $\pi_i \defeq \|\chi_i\|_{L^1}$.
\end{defi}

The semi-discrete scheme we consider in this paper is of form 
\begin{equation}\label{eqn:continuity_equation_scheme}
\left\{
\begin{aligned}
&\frac{\rd}{\rd t} u_{i}(t) = \frac{1}{\pi_i} \sum_{j: (i,j)\in \mathcal{E}} \Big( a_{i,j}(t)  \, u_{j}(t) - a_{j,i}(t) \, u_i(t) \Big), \quad && \text{if } i \in \mathcal{V} \text{ and } \supp \chi_i \subset \Omega,
\\
&u_i(t) \equiv 0 , \quad && \text{if } i \in \mathcal{V} \text{ but } \supp \chi_i \nsubseteq \Omega.
\end{aligned} \right.
\end{equation}
Given $b(t,x)$ and $u_0(x)$ as the field and initial condition 
in linear continuity equation \eqref{eqn:linear_continuity_equation} respectively, we choose
the coefficients and initial data in the scheme as
\begin{equation}\label{eqn:continuity_equation_scheme_coef}
\left\{
\begin{aligned}
&u_i(0) = \frac{1}{\pi_i} \int_{\R^d} \chi_i(x) \, u_0(x) \;\rd x, \quad && \text{if } i \in \mathcal{V} \text{ and } \supp \chi_i \subset \Omega,
\\
&a_{i,j}(t) = \int_{\R^d} \Big(b(t,x) \cdot \bm{n}_{i,j}(x) \Big)^+ \;\rd x, \quad && \text{if } (i,j) \in \mathcal{E} \text{ and } \supp \bm{n}_{i,j} \subset \Omega.
\end{aligned} \right.
\end{equation}
Notice that if $\supp \chi_i \subset \Omega$, then for all $j \in \mathcal{V}$, either one has $(i,j) \in \mathcal{E}$, $\supp \bm{n}_{i,j} \subset \chi_i \subset \Omega$ or one has $(i,j) \notin \mathcal{E}$, $\bm{n}_{i,j} = 0$ and $\supp \bm{n}_{i,j} \subset \Omega$ trivially holds. 
Hence $a_{i,j}$ and $a_{i,j}$ in \eqref{eqn:continuity_equation_scheme} are always well-defined.
In addition, we let 
\begin{equation*}
\begin{aligned}
a_{i,j}(t) \equiv 0 \quad \text{if } (i,j) \notin \mathcal{E} \text{ or } \supp \bm{n}_{i,j} \nsubseteq \Omega.
\end{aligned}
\end{equation*}
Then the summation in \eqref{eqn:continuity_equation_scheme} can be taken over all $j \in \mathcal{V}$, instead of only $j$ such that $(i,j)\in \mathcal{E}$, and the scheme is essentially unchanged. In some of the later calculations, this adaption can be convenient.

The structural assumptions to meshes we have made should also be adapted.
In particular, the new definition of being periodic is the following:
\begin{defi}\label{defi:periodic_mesh}
Let  $(\mathcal{C}, \mathcal{F}) = \big( \{\chi_i\}_{i \in \mathcal{V}}, \{\bm{n}_{i,j}\}_{(i,j) \in \mathcal{E}} \big)$ be a mesh over $\Omega \subset \R^d$ and let $\mathcal{V}_0 \subset \mathcal{V}$. We say that $(\mathcal{C}, \mathcal{F})$ is a periodic mesh with pattern $\mathcal{V}_0$ if it satisfies the following properties: 
\begin{itemize}
  \item The set  $\bigcup_{i \in \mathcal{V}_0} \supp \chi_i$ is connected and one has $\bigcup_{i \in \mathcal{V}_0} \supp \chi_i \subset \Omega$.
\item There exists a translation group action
\begin{equation*}
\left\{
\begin{aligned}
([m] \chi_i) \big(x \big) &\defeq \textstyle \chi_i \left(x - \sum_{k=1}^d m_{k} L_k \right),
\\
([m] \bm{n}_{i,j}) \big(x \big) &\defeq \textstyle \bm{n}_{i,j} \left(x - \sum_{k=1}^d m_{k} L_k \right),
\end{aligned} \right.
\quad
\forall [m] \in \Z^d, i,j \in \mathcal{V},
\end{equation*}
where $L_1, \dots, L_n \in \R^d$ are linearly independent vectors, such that
\begin{equation*}
\begin{aligned}
\sum_{[m] \in \Z^d} \sum_{i \in \mathcal{V}_0} [m] \chi_i (x) = 1 \quad \forall x \in \R^d.
\end{aligned}
\end{equation*}
\item Moreover, there exists an injective map
\begin{equation*}
\begin{aligned}
\sigma : \mathcal{V} &\mapsto \Z^d \times \mathcal{V}_0 ,
\\
i &\mapsto ([n],i_0).
\end{aligned}
\end{equation*}
\end{itemize}
If $\sigma(i) = ([n],i_0)$ and $([m]+[n],i_0) \in \sigma(\mathcal{V})$, define $[m](i) \defeq \sigma^{-1} \big( ([m]+[n],i_0) \big) \in \mathcal{V}$.
Then one has
\begin{equation*}
\begin{aligned}
[m] \chi_i = \chi_{[m](i)}, \quad [m] \bm{n}_{i,j} = \bm{n}_{[m](i),[m](j)}, \quad \text{if } i, [m](i) \in \mathcal{V}.
\end{aligned}
\end{equation*}

If the mesh is periodic with pattern $\mathcal{V}_0$ we call $|\mathcal{V}_0|$ the pattern size. As before, for a given mesh, the choice of $\mathcal{V}_0$ and $|\mathcal{V}_0|$ may not be unique.
If one can choose $\mathcal{V}' \subseteq \mathcal{V}, \mathcal{E}' \subseteq \mathcal{E}$ such that $(\mathcal{C}', \mathcal{F}') = \big( \{\chi_i\}_{i \in \mathcal{V}'}, \{\bm{n}_{i,j}\}_{(i,j) \in \mathcal{E}'} \big)$ forms a mesh over $\Omega' \subseteq \Omega$, and is a periodic mesh by the definition above, then we say that $(\mathcal{C}, \mathcal{F})$ is periodic over $\Omega'$.

\end{defi}

The other structural assumptions on the mesh can be adapted in a straightforward manner.
We limit our discussion to meshes $(\mathcal{C}, \mathcal{F})$ that satisfy that for all $i \in \mathcal{V}$, $(j,j') \in \mathcal{E}$ and $x \in \R^d$:
\begin{equation} \label{eqn:mesh_comparable_1}
\begin{aligned}
&C^{-1} \delta x \leq \diam ( \supp \chi_{i}) \leq C\delta x, \quad C^{-1} (\delta x)^d \leq \|\chi_{i}\|_{L^1} \leq C(\delta x)^d,
\\
&\delta x \, \|\bm{n}_{j,j'}\|_{L^\infty} \leq C
, \quad \big| \{k \in \mathcal{V} : (\supp \chi_{k}) \cap B(x;\delta x) \neq \varnothing\} \big| \leq C,
\end{aligned}
\end{equation}
for some uniform constant $C$.
Also, we assume that the discretization size vanishes when considering a family of meshes,
\begin{equation*}
\begin{aligned}
\big(\mathcal{C}^{(n)}, \mathcal{F}^{(n)}\big) = \big( \{\chi_{i;(n)}\}_{i \in \mathcal{V}^{(n)}}, \{\bm{n}_{i,j;(n)}\}_{(i,j) \in \mathcal{E}^{(n)}} \big), \quad n \in \N_+
\end{aligned}
\end{equation*}
that \eqref{eqn:mesh_convergence} holds where  $\delta x_{(n)}$ , $n \in \N_+$ denote the discretization sizes as given in Definition~\ref{defi:partition_of_unity_mesh}. 

With this more general formulation, one can couple the upwind scheme for advection and the finite elements for Poisson equation in the following way:
Consider convex bounded domains with piecewise smooth boundary $\Omega_{v} \subset \Omega_{e} \subset \R^d$.
Let $(\mathcal{P}, \mathcal{N})$ be a finite element discretization of $\Omega_{e}$, where $\mathcal{P}$ is the set of shape functions and $\mathcal{N}$ is the set of nodal variables.
Choose the mesh $(\mathcal{C}, \mathcal{F}) =  \big( \{\chi_i\}_{i \in \mathcal{V}}, \{\bm{n}_{i,j}\}_{(i,j) \in \mathcal{E}} \big)$ over $\Omega_{v}$ as in Definition~\ref{defi:partition_of_unity_mesh} such that $\mathcal{C} \subset \mathcal{P}$.

The coupled system \eqref{eqn:Poisson_coupling} is numerically discretized through  by \eqref{eqn:continuity_equation_scheme}. The coefficients $(a_{i,j})_{i,j \in \mathcal{V}}$ derive \eqref{eqn:continuity_equation_scheme_coef} where the field $b(t,x)$ is now a solution of the variational problem
\begin{equation} \label{eqn:Poisson_equation_scheme}
\left\{
\begin{aligned}
&\phi(t,\cdot) \in \mathcal{P}, \quad
\int_{\R^d} \nabla v(x) \cdot \nabla \phi(t,x) \;\rd x = \int_{\R^d} v(x) g^\chi(t,x) \;\rd x, \quad \forall v \in \mathcal{P},
\\
& g^\chi(t,x) = \sum_{i \in \mathcal{V}} g\big(u_{i}(t)\big) \chi_{i}(x),
\\
&b(t,x) = \nabla \phi(t,x).
\end{aligned} \right.
\end{equation}
We only consider here Dirichlet boundary conditions for \eqref{eqn:Poisson_equation_scheme}, as Neumann boundary conditions would require an extra condition
$\int_{\R^d} g^\chi(t,x) \;\rd x = 0$ for all $t$, which does not naturally hold when $g$ contains some nonlinear function of the density $u$.

When investigating this more complex coupling, we require further structural assumptions on the pair of finite element $(\mathcal{P}, \mathcal{N})$ and the mesh $(\mathcal{C}, \mathcal{F})$ of our interest.
Namely, the exact solution of $- \Delta \widetilde{\phi} = g^\chi$ and its approximated solution $\phi$ of the finite element variational method
\begin{equation*}
\begin{aligned}
\phi \in \mathcal{P}, \quad
\int_{\R^d} \nabla v(x) \cdot \nabla \phi(x) \;\rd x = \int_{\R^d} v(x) u(x) \;\rd x, \quad \forall v \in \mathcal{P},
\end{aligned}
\end{equation*}
are assumed to satisfy the priori estimates
\begin{equation} \label{eqn:coupling_finite_element_bound}
\begin{aligned}
\|\widetilde{\phi} - \phi\|_{H^1(\Omega_{e})} &\leq C \delta x \|\widetilde{\phi}\|_{H^2(\Omega_{e})},
\\
\|\phi\|_{W^{1,\infty}(\Omega_{e})} &\leq C \|\widetilde{\phi}\|_{W^{1,\infty}(\Omega_{e})}.
\end{aligned}
\end{equation}
Such a priori estimates can be proved under rather mild conditions on the finite element discretization; we refer to Section 5.4 and 8.1 of \cite{BrSc:08}.

We are now ready to state our main theorem on this coupled system, whose proof is again postponed to Section \ref{sec:quantitative_regularity}.
\begin{thm} \label{thm:Poisson_propagating}
Consider bounded domains $\Omega_{v} \subset \Omega_{e} \subset \R^d$ with piecewise smooth boundary, a sequence of finite element discretizations $\{(\mathcal{P}^{(n)}, \mathcal{N}^{(n)})\}_{n=1}^{\infty}$ on $\Omega_{e}$ and a sequence of meshes 
\begin{equation*}
\begin{aligned}
\{(\mathcal{C}^{(n)}, \mathcal{F}^{(n)})\}_{n=1}^{\infty} = \{ \big( \{\chi_{i;(n)}\}_{i \in \mathcal{V}^{(n)}}, \{\bm{n}_{i,j;(n)}\}_{i,j \in \mathcal{V}^{(n)}} \big)\}_{n=1}^{\infty}
\end{aligned}
\end{equation*}
over $\Omega_{v}$ as in Definition~\ref{defi:partition_of_unity_mesh}, satisfying $\mathcal{C}^{(n)} \subset \mathcal{P}^{(n)}$.
Assume that the
discretization size $\delta x_{(n)} \to 0$, the meshes $\{(\mathcal{C}^{(n)}, \mathcal{F}^{(n)})\}_{n=1}^{\infty}$ satisfy the structural assumptions \eqref{eqn:mesh_comparable_1} by some uniform constant, and each mesh $(\mathcal{C}^{(n)}, \mathcal{F}^{(n)})$ is periodic on $\Omega_{v}$ with pattern size uniformly bounded.
Moreover, assume that the finite element discretization $(\mathcal{P}^{(n)}, \mathcal{N}^{(n)})$ satisfy the
a priori estimates
\eqref{eqn:coupling_finite_element_bound} with some uniform constants.

Consider bounded, Lipschitz and concave nonlinearity $g : [0,+\infty) \to \R$  with $g(0)=0$.
Assume that the
initial data $u_0$ satisfies $u_0 \in L^\infty \cap W^{s,1}(\Omega_{v})$ for some $s > 1$, and $\dist(\supp u_0, \partial \Omega_{v}) > 0$.
For all $n \in \N_+$, let $(u_{i;(n)}(t))_{i \in \mathcal{V}^{(n)}}$ and $(a_{i,j;(n)})_{i,j \in \mathcal{V}^{(n)}}$ be the solution of the coupled scheme \eqref{eqn:continuity_equation_scheme}, \eqref{eqn:continuity_equation_scheme_coef} and \eqref{eqn:Poisson_equation_scheme} solved on 
$(\mathcal{C}^{(n)}, \mathcal{F}^{(n)})$ and $(\mathcal{P}^{(n)}, \mathcal{N}^{(n)})$.
Define
\begin{equation*}
\begin{aligned}
u_{(n)}^\chi(t,x) \defeq \sum_{i \in \mathcal{V}^{(n)}} \chi_{i;(n)}(x) u_{i;(n)}(t).
\end{aligned}
\end{equation*}
Then there exists $T > 0$ such that
\begin{equation*}
\begin{aligned}
u_{(n)}^\chi \text{ is compact in } L^1([0,T] \times \R^d).
\end{aligned}
\end{equation*}
Moreover, $T$ could be arbitrarily large by choosing large $\Omega_{v}$ such that $\dist(\supp u_0, \partial \Omega_{v}) \to \infty$.

\end{thm}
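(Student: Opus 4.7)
The plan is to propagate a discrete log-scale regularity semi-norm of the form \eqref{eqn:compactness_criteria_discrete} for the sequence $u_{(n)}$, then invoke the corresponding compactness criterion to conclude. The main challenge is the coupling: $\dive b_{(n)}$ is not a priori compact but is related to $u_{(n)}$ itself through the nonlinearity $g$, so any regularity bound on $u_{(n)}$ must be closed by a Gronwall-type argument rather than taken as input.

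First I would establish uniform a priori bounds on the discrete density. The upwind structure of \eqref{eqn:continuity_equation_scheme} together with the nonnegativity of $u_0$ and the uniform $L^\infty$-bound on $b_{(n)}$ derived below yields uniform $L^1 \cap L^\infty$ bounds on $(u_{i;(n)})$ via a standard discrete maximum principle; hence $g(u_{(n)}^\chi)\in L^\infty$ uniformly. Standard elliptic regularity on the bounded domain $\Omega_e$ places the exact solution $\widetilde{\phi}_{(n)}$ of $-\Delta \widetilde{\phi}=g^\chi_{(n)}$ uniformly in $W^{2,p}(\Omega_e)$ for every finite $p$, and the finite-element estimate \eqref{eqn:coupling_finite_element_bound} yields a uniform $W^{1,\infty}$-bound on $\phi_{(n)}$. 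Thus $b_{(n)}\in L^\infty$ uniformly, and a finite-speed-of-propagation argument shows that the support of $u_{(n)}^\chi(t,\cdot)$ remains inside $\Omega_v$ as long as $T\,\|b_{(n)}\|_{L^\infty}<\dist(\supp u_0,\partial \Omega_v)$, which can be made arbitrarily large by enlarging $\Omega_v$.

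Next I would invoke the main technical result of the paper, namely the propagation of the discrete log-scale semi-norm along the upwind scheme \eqref{eqn:continuity_equation_scheme} established in Section~\ref{sec:quantitative_regularity}. That result yields a bound on the discrete semi-norm at time $t$ in terms of the initial semi-norm and a commutator contribution involving $b_{(n)}$ and $\dive b_{(n)}$. In the present setting $b_{(n)}=\nabla \phi_{(n)}$ sits uniformly in $W^{1,p}$ for every $p<\infty$ by elliptic regularity together with \eqref{eqn:coupling_finite_element_bound}, so the $b$-dependent piece is controlled. For the divergence one has $\dive b_{(n)}=g(u_{(n)}^\chi)$ up to a finite-element projection error controlled again by \eqref{eqn:coupling_finite_element_bound}, and since $g$ is Lipschitz the log-scale semi-norm of $g(u_{(n)}^\chi)$ is dominated by that of $u_{(n)}^\chi$ itself. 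Schematically one obtains
\begin{equation*}
\mathcal{I}_h\bigl[u_{(n)}(t)\bigr]\;\leq\;\mathcal{I}_h\bigl[u_{(n)}(0)\bigr] + C\int_0^t \mathcal{I}_h\bigl[u_{(n)}(\tau)\bigr]\,\rd \tau + o_h(1),
\end{equation*}
where $\mathcal{I}_h$ denotes the discrete semi-norm in \eqref{eqn:compactness_criteria_discrete} with the kernels built in Section~\ref{sec:quantitative_regularity}. Gronwall then closes the estimate on $[0,T]$, and the hypothesis $u_0\in W^{s,1}$ with $s>1$ guarantees that $\mathcal{I}_h[u_{(n)}(0)]\to 0$ as $h\to 0$ uniformly in $n$.

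The final step is to convert this uniform regularity bound into compactness of $u_{(n)}^\chi$ in $L^1([0,T]\times\R^d)$. Spatial equicontinuity for each fixed $t$ follows from the discrete compactness criterion in \eqref{eqn:compactness_criteria_discrete}, and the uniform $L^\infty$-bound promotes it to $L^1$ compactness in $x$; time equicontinuity follows directly from \eqref{eqn:continuity_equation_scheme} and the uniform bound on $b_{(n)}$, which yields equicontinuity of $t\mapsto u_{(n)}^\chi(t,\cdot)$ in, say, $W^{-1,1}$, so that an Aubin-Lions-type argument delivers the claimed compactness. The hard part will be the Gronwall closure itself: the propagation estimate from Section~\ref{sec:quantitative_regularity} must be robust enough to admit a divergence whose regularity is only of the same log-scale type as $u$, rather than the $W^{s,1}$-regularity assumed in Theorem~\ref{thm:periodic_compactness_result}. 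Checking that the kernels constructed in Section~\ref{sec:quantitative_regularity} accommodate this self-consistent weak regularity of $\dive b$ is precisely what enables the bootstrap, and is the essential new difficulty of the coupled case.
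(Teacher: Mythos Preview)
Your outline captures the overall strategy correctly --- a priori $L^\infty$ bounds, propagation of the discrete log-scale semi-norm via Theorem~\ref{thm:periodic_regularity_result}, Gronwall closure using that the divergence semi-norm is dominated by the semi-norm of $u$ itself (since $g$ is Lipschitz), and Aubin--Lions for time compactness. But there is a genuine gap.

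Theorem~\ref{thm:periodic_regularity_result} requires not only spatial Sobolev regularity of the approximating field $\widetilde{b}_{(n)}$ but also \emph{time} regularity: $\widetilde{b}_{(n)}\in L^p_x(W^{s,p}_t)$ for some $s>0$. You never address this, and it is the most involved step of the argument. The paper takes $\widetilde{b}_{(n)}=\nabla\widetilde{\phi}_{(n)}$ with $-\Delta\widetilde{\phi}_{(n)}=g^\chi_{(n)}$ (the \emph{exact} elliptic solution, not the finite-element one $\phi_{(n)}$; the FE estimate~\eqref{eqn:coupling_finite_element_bound} is then used to show $\|P_{\mathcal{F}}b_{(n)}-P_{\mathcal{F}}\widetilde{b}_{(n)}\|\to 0$). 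To get $\widetilde{b}_{(n)}\in W^{s,1}_t L^1_x$ one must show $\partial_t g^\chi_{(n)}\in L^1_t W^{-1,1}_x$, which in turn requires decomposing $\tfrac{\rd}{\rd t}g(u_i)$ via the upwind scheme into an advective piece, a divergence piece, and a ``renormalization defect'' $G^M_i=\tfrac{1}{\pi_i}\sum_j a_{i,j}\bigl(g'(u_i)(u_j-u_i)-[g(u_j)-g(u_i)]\bigr)$. This last term is not controllable by Lipschitz continuity alone; its sign (hence its $L^1$ bound via a telescoping argument) comes precisely from the \emph{concavity} of $g$. Your proposal never invokes concavity, and without it the required time regularity of $\widetilde{b}_{(n)}$ fails.

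A smaller issue: the ``finite-speed-of-propagation'' claim is only heuristic for the discrete scheme, since mass can in principle jump arbitrarily far in one step. The paper makes this rigorous by interpreting the scheme as a Poisson jump process and bounding the probability of reaching $\partial\Omega_v$ in time $T$ via a large-deviation estimate for the number of jumps; this is what gives the exponentially small mass-leaking bound needed both for Theorem~\ref{thm:periodic_regularity_result} and for the boundary term $G^{\textrm{bd}}$ in the time-regularity computation above.
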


\subsection{Connection between the two settings} \label{subsec:connection_two_settings}

We now discuss why the polygon meshes in Section~\ref{subsec:set_up} can be understood as a special case of the more general setting in Section~\ref{subsec:set_up_partition_of_unity}.

Starting with any polygon mesh $(\mathcal{C}, \mathcal{F}) = \big( \{V_i\}_{i \in \mathcal{V}}, \{S_{i,j}\}_{(i,j) \in \mathcal{E}} \big)$ over $\Omega \subset \R^d$ with discretization size $\delta x$, one can construct a mesh as in Definition~\ref{defi:partition_of_unity_mesh} through the following process. First, add more cells to $(\mathcal{C}, \mathcal{F})$ if necessary, to ensure $\Omega + B_{\delta x} \subset \bigcup_{i \in \mathcal{V}} V_i$.
Second, construct the extended mesh $\big( \{\chi_i\}_{i \in \mathcal{V}}, \{\bm{n}_{i,j}\}_{(i,j) \in \mathcal{E}} \big)$ with the cell and face functions
\begin{equation} \label{eqn:partition_of_unity_polygon}
\begin{aligned}
\chi_i(x) &= \frac{1}{|B_r(0)|} \int_{B_r(0)} \mathbbm{1}_{V_i} (x - y) \;\rd y, \quad \forall i \in \mathcal{V},
\\
\bm{n}_{i,j}(x) &= \int_{S_{i,j}} \frac{1}{|B_r(0)|} \mathbbm{1}_{B_r(0)}(x-y) \bm{N}_{i,j} \;\rd y, \quad \forall (i,j) \in \mathcal{E}.
\end{aligned}
\end{equation}
where $\bm{N}_{i,j}$ is the unit normal vector of $S_{i,j}$.
It is then straightforward to check that if $i \in \mathcal{V}$ and $\supp \chi_i \subset \Omega$, then $\chi_i$ is indeed a cell function with $\{\bm{n}_{j,i}\}_{(j,i) \in \mathcal{E}}$ as its face functions.
Also, one has
$\sum_{i \in \mathcal{V}} \chi_i(x) = 1$, $\forall x \in \Omega$ and $\bm{n}_{i,j} = - \bm{n}_{j,i}$, $\forall i,j \in \mathcal{V}$. Therefore, this construction does yield a mesh over $\Omega$ as in Definition~\ref{defi:partition_of_unity_mesh}.

With this construction, the upwind scheme \eqref{eqn:continuity_equation_scheme} for $(\{\chi_i\}, \{\bm{n}_{i,j}\})$ with coefficients \eqref{eqn:continuity_equation_scheme_coef} and the upwind scheme \eqref{eqn:continuity_equation_scheme_polygon} for $(\{V_i\}, \{S_{i,j}\})$
are very similar.
The conditions $\supp \chi_i \subset \Omega$ and $\supp \bm{n}_{i,j} \subset \Omega$ are now nothing but $V_i + B_{\delta x} \subset \Omega$ and $S_{i,j} + B_{\delta x} \subset \Omega$.
It is also immediate to see that 
\begin{equation*}
\begin{aligned}
\pi_i = \int \chi_i(x) \;\rd x = |V_i|.
\end{aligned}
\end{equation*}
Finally, the coefficients $a_{i,j}$ in \eqref{eqn:continuity_equation_scheme_coef} (when $\supp \bm{n}_{i,j} \subset \Omega$) now read
\begin{equation*} 
\begin{aligned}
a_{i,j} &\defeq \frac{1}{|B_{\delta x}(0)|} \int_{B_{\delta x}} \int_{S_{i,j}} \Big( b(x+y) \cdot \bm{N}_{i,j} \Big)^+ \;\rd y \rd x, 
\end{aligned}
\end{equation*}
which is only slightly different from the coefficients $a_{i,j}$ in \eqref{eqn:continuity_equation_scheme_polygon}, though we do emphasize the order of $(\cdot)^+$ and integration in this formula. Notice that if $b(x)$ is constant, then the integrand $b \cdot N_{i,j}$ is also constant, hence $a_{i,j}$ given by \eqref{eqn:continuity_equation_scheme_coef} and \eqref{eqn:continuity_equation_scheme_polygon} coincide.
So, when $b(x)$ has $W^{1,p}$ regularity, we can naturally expect the two ways of determining  $a_{i,j}$ to differ only by a term that is vanishing in $L^p$ as discretization size goes to zero.

As mentioned earlier, both compactness results in Theorem~\ref{thm:periodic_compactness_result} and Theorem~\ref{thm:Poisson_propagating} are derived by propagating some discrete regularity like \eqref{eqn:compactness_criteria_discrete}, where
the discrete density $(u_{i;(n)}(t))_{i \in \mathcal{V}^{(n)}}$ are both governed by the upwind scheme, with the coefficients originally defined in different ways but now formulated all in the setting of Section~\ref{subsec:set_up_partition_of_unity}.
In Section~\ref{sec:main_and_outline}, we give the precise definition of such regularity as Definition~\ref{defi:semi-norm_discrete} and state the propagation of such regularity by the upwind scheme as Theorem~\ref{thm:periodic_regularity_result}. Theorem~\ref{thm:periodic_regularity_result} can then be applied to prove both Theorem~\ref{thm:periodic_compactness_result} and Theorem~\ref{thm:Poisson_propagating}.

While the main elements of the proofs rely on the same result, namely Theorem~\ref{thm:periodic_regularity_result}, we do need to mention that some settings in Theorem~\ref{thm:periodic_compactness_result} and Theorem~\ref{thm:Poisson_propagating} are not identical.
Apart from the aforementioned choice of $a_{i,j}$, the way we extend discrete density to continuous functions are also slightly different: In Theorem~\ref{thm:periodic_compactness_result}, $u_n$ is defined as piecewise constant on each cell, while in Theorem~\ref{thm:Poisson_propagating}, $u_n^\chi$ is reconstructed from cell functions and is thus not piecewise constant.
Nevertheless, these differences are only minor issues once all necessary definitions and notations are properly introduced, which we do in Section~\ref{sec:main_and_outline}.

Let us also remark that $u_n^\chi$ and $u_n$ could be made identical by formally choosing $\chi_i=\mathbb{I}_{V_i}$.
However such choice would come with some additional issues.
The indicator functions are not cell functions according to Definition \ref{defi:partition_of_unity_cell} because they are not even continuous. 
One may still try to understand the gradients $\nabla \chi_i$ and $\bm{n}_{i,j}$ in distributional sense to have that $\nabla \chi_i = \sum_{j} \bm{n}_{i,j}$ and
\begin{equation*}
\begin{aligned}
\int_{\R^d} f(x) \cdot \bm{n}_{i,j}(x) \;\rd x = \int_{S_{i,j}} f(x) \cdot \bm{N}_{i,j} \;\rd x \quad \forall (i,j) \in \mathcal{E}, f \in C^\infty_c(\R^d, \R^d).
\end{aligned}
\end{equation*}
In such cases we formally have
\begin{equation*}
\begin{aligned}
a_{i,j} &\defeq \int_{S_{i,j}} \Big( b(x) \cdot \bm{N}_{i,j} \Big)^+ \;\rd x.
\end{aligned}
\end{equation*}
where the extra mollification in the current choice is removed.
But this extra mollification appears to be necessary for our formulation. 
For example integrating on $S_{i,j}$ without any mollification would require trace embedding and in turn more stringent conditions on the mesh, which we try to avoid.


\section{Main technical results of the paper} \label{sec:main_and_outline}

The goal of this section is to introduce the technical setting that we need for our approach and to state the main precise, quantitative results that underlies our compactness results.
First, we introduce some necessary notations. Then in subsection \ref{subsec:compactness_and_propagation}, we introduce the discrete kernel and semi-norm we use to prove compactness, which is modified from the continuous kernel and semi-norm introduced in \cite{BeJa:13, BeJa:19}.
We next state Theorem~\ref{thm:periodic_regularity_result} about the the propagation of regularity on periodic meshes.
This is the main quantitative result in the paper and, in particular, Theorem~\ref{thm:periodic_compactness_result} and Theorem~\ref{thm:Poisson_propagating} are deduced from it.
The proof of Theorem~\ref{thm:periodic_regularity_result} depends on multiple lemmas and theorems, which we state in subsections \ref{subsec:step_1}, \ref{subsec:step_2} and \ref{subsec:step_3}. But the actual proofs of these lemmas and theorems are postponed to later sections.

\subsection{Definitions and notations} \label{subsec:definitions_and_notations}

Consider a mesh $(\mathcal{C}, \mathcal{F})$ over $\Omega \subset \R^d$ as in Definition~\ref{defi:partition_of_unity_mesh}.
We introduce the following notations:
\begin{equation*}
\begin{aligned}
&\mathcal{V}_\Omega \defeq \{ i \in \mathcal{V}: \supp \chi_{i} \cap \Omega \neq \varnothing \},
\quad \mathcal{V}_\Omega^\circ \defeq \{ i \in \mathcal{V}: \supp \chi_{i} \subseteq \Omega \},
\\
&\overline{\mathcal{V}}_\Omega \defeq \left\{i \in \mathcal{V} : \exists j , \, j = i \text{ or } (i,j) \in \mathcal{E}, \text{ s.t. } \supp \chi_j \cap \Omega \neq \varnothing \right\},
\\
&\mathcal{E}_\Omega \defeq \{(i,j) \in \mathcal{E}:\supp \chi_{k} \cap \Omega \neq \varnothing \text{ for } k = i,j \}, \quad \mathcal{E}_\Omega^\circ \defeq \{(i,j) \in \mathcal{E}:\supp \bm{n}_{i,j} \subset \Omega\}.
\end{aligned}
\end{equation*}
For a function $f \in L^1_\text{loc}(\R^d)$ (or $L^1_\text{loc}(\R^d; \R^d)$), define the ``projection-to-cell operator'' $P_{\mathcal{C}}f$ as
\begin{equation} \label{eqn:proj_to_cell}
(P_{\mathcal{C}}f)_{i} \defeq \left\{
\begin{aligned}
&\frac{1}{\pi_i} \int_{\R^d} f(x) \chi_i(x) \;\rd x, \quad &&\forall i \in \mathcal{V}_{\Omega}^\circ,
\\
&0, \quad &&\forall i \in (\mathcal{V} \setminus \mathcal{V}_{\Omega}^\circ).
\end{aligned} \right.
\end{equation}
Moreover, for $f \in L^1_\text{loc}(\R^d; \R^d)$, define the ``projection-to-face operator'' $P_{\mathcal{F}}f$ as
\begin{equation} \label{eqn:proj_to_edge}
(P_{\mathcal{F}}f)_{i,j} \defeq \left\{
\begin{aligned}
&\int_{\R^d} \Big( f(x) \cdot \bm{n}_{i,j}(x) \Big)^+ \;\rd x, \quad && \forall (i,j) \in \mathcal{E}_\Omega^\circ,
\\
&0, \quad && \forall (i,j) \in (\mathcal{V}^2 \setminus \mathcal{E}_\Omega^\circ).
\end{aligned} \right.
\end{equation}
With these notations, the coefficients and initial data in \eqref{eqn:continuity_equation_scheme_coef} can be rewritten as $(a_{i,j})_{i,j \in \mathcal{V}} = P_{\mathcal{F}} b$ and $(u_i(0))_{i \in \mathcal{V}} = P_{\mathcal{C}} u_0$.
Next, we define the discrete divergence of $(a_{i,j})_{i,j \in \mathcal{V}}$ as
\begin{equation} \label{eqn:divergence_discrete}
D_k = D\big((a_{i,j})_{i,j \in \mathcal{V}}\big)_k \defeq
\left\{
\begin{aligned}
 &\frac{1}{\pi_k} \sum_{i \in \mathcal{V}} \big( a_{i,k} - a_{k,i} \big), \quad && \forall k \in \mathcal{V}_{\Omega}^\circ
 \\
 &0, \quad && \forall k \in  (\mathcal{V} \setminus \mathcal{V}_{\Omega}^\circ).
\end{aligned} \right.
\end{equation}
The definition of discrete divergence is justified by the following observation:
When choosing $(a_{i,j})_{i,j \in \mathcal{V}} = P_{\mathcal{F}} b$, one has
\begin{equation*}
\begin{aligned}
\forall k \in \mathcal{V}_{\Omega}^\circ, \quad D_k &= \frac{1}{\pi_k} \sum_{i \in \mathcal{V}} \left( \int_{\R^d} \Big( b(x) \cdot \bm{n}_{i,k}(x) \Big)^+ \;\rd x - \int_{\R^d} \Big( b(x) \cdot \bm{n}_{k,i}(x) \Big)^+ \;\rd x \right)
\\
&= \frac{1}{\pi_k} \sum_{i \in \mathcal{V}} \left( \int_{\R^d} \Big( b(x) \cdot \bm{n}_{i,k}(x) \Big)^+ \;\rd x - \int_{\R^d} \Big( b(x) \cdot \bm{n}_{i,k}(x) \Big)^- \;\rd x \right)
\\
&= \frac{1}{\pi_k} \sum_{i \in \mathcal{V}} \int_{\R^d} b(x) \cdot \bm{n}_{i,k}(x) \;\rd x \;\rd x
\\
&= - \frac{1}{\pi_k} \int_{\R^d} b(x) \nabla \chi_k(x) \;\rd x = \frac{1}{\pi_k} \int_{\R^d} \dive_x b(x) \chi_k(x) \;\rd x.
\end{aligned}
\end{equation*}
Hence, at least on $\mathcal{V}_{\Omega}^\circ$,
\begin{equation} \label{eqn:divergence_commutative}
\begin{aligned}
D(P_{\mathcal{F}} b) = P_{\mathcal{C}} \big( \dive_x b \big).
\end{aligned}
\end{equation}
For any $(v_i)_{i \in \mathcal{V}}$, we define its discrete $L^p$ norm by
\begin{equation*}
\begin{aligned}
\|(v_i)_{i \in \mathcal{V}}\|_{L^p(\mathcal{C})} &\defeq \left( \sum_{i \in \mathcal{V}} \big| v_{i} \big|^p \pi_i \right)^{1/p}.
\end{aligned}
\end{equation*}
In addition, we define the $L^p$ norm for the discretized velocity field $(a_{i,j})_{i,j \in \mathcal{V}}$ by
\begin{equation*}
\begin{aligned}
\|(a_{i,j})_{i,j \in \mathcal{V}}\|_{L^p(\mathcal{F})} = \left( \sum_{i,j \in \mathcal{V}} \big| a_{i,j} \big|^p  \right)^{1/p} (\delta x)^{d/p - (d-1)},
\end{aligned}
\end{equation*}
where the factor~$(\delta x)^{d/p - (d-1)}$ attempts to account for the expected size of the faces.

One motivation to define the discrete norms as above, and especially the scaling factor in $L^p(\mathcal{F})$, 
is that we can easily bound them by their continuous counterparts.
It is easy to verify the following proposition when $p=1$ and $p=\infty$, and the general case follows by an interpolation.
\begin{prop} \label{prop:norm_interpolate_discrete_density}
Let $(\mathcal{C},\mathcal{F})$ be a mesh satisfying \eqref{eqn:mesh_comparable_1}, then one has the following inequalities:
\begin{equation*}
\begin{aligned}
\|P_{\mathcal{F}} b\|_{L^p(\mathcal{F})} \leq C \|b\|_{L^p(\Omega)},
\quad
\|P_{\mathcal{C}} u\|_{L^{p}(\mathcal{C})} \leq C \|u\|_{L^{p}(\Omega)},
\end{aligned}
\end{equation*}
where the constant $C$ in the above inequalities only depends on the constants in the structural assumptions \eqref{eqn:mesh_comparable_1}.
\end{prop}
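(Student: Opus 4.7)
The plan is to verify the two endpoint inequalities $p=1$ and $p=\infty$ directly from the definitions, and then obtain the full range $1\leq p\leq\infty$ by a Riesz--Thorin style interpolation. Since both $P_\mathcal{C}$ and $P_\mathcal{F}$ are linear (or in the case of $P_\mathcal{F}$, sublinear), this reduction is standard, so the only genuine work is at the endpoints. The key ingredients are the partition-of-unity identity $\sum_{i\in\mathcal{V}}\chi_i=1$ on $\Omega$, the fact that $\pi_i=\|\chi_i\|_{L^1}$, and the structural assumptions \eqref{eqn:mesh_comparable_1}, namely $\|\bm{n}_{j,j'}\|_{L^\infty}\leq C/\delta x$, the volume bound $\pi_i\leq C(\delta x)^d$, and the finite-overlap condition on the supports.

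For the cell projection, the $p=1$ bound follows from Fubini and the partition of unity:
\begin{equation*}
\|P_\mathcal{C}u\|_{L^1(\mathcal{C})}=\sum_{i}\bigl|(P_\mathcal{C}u)_i\bigr|\pi_i\leq\sum_{i}\int|u(x)|\chi_i(x)\,\rd x\leq\int_{\Omega}|u(x)|\,\rd x,
\end{equation*}
while the $p=\infty$ bound is immediate because $(P_\mathcal{C}u)_i$ is a weighted average of $u$ against the probability weight $\chi_i/\pi_i$.

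For the face projection, the work is only slightly more delicate. For $p=1$, I would estimate
\begin{equation*}
\sum_{i,j}(P_\mathcal{F}b)_{i,j}\leq\int|b(x)|\sum_{i,j}|\bm{n}_{i,j}(x)|\,\rd x,
\end{equation*}
and use that, at a fixed $x$, only $O(1)$ faces have $x$ in their support (by the finite-overlap assumption together with $\supp\bm{n}_{i,j}\subseteq\supp\chi_i$), each contributing at most $C/\delta x$ in $L^\infty$. This yields a bound of order $\|b\|_{L^1}/\delta x$, which is precisely absorbed by the factor $(\delta x)^{d/p-(d-1)}=\delta x$ appearing in $\|\cdot\|_{L^1(\mathcal{F})}$. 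For $p=\infty$, I would bound a single face by
\begin{equation*}
(P_\mathcal{F}b)_{i,j}\leq\|b\|_{L^\infty}\|\bm{n}_{i,j}\|_{L^1}\leq\|b\|_{L^\infty}\,\|\bm{n}_{i,j}\|_{L^\infty}|\supp\bm{n}_{i,j}|\leq C(\delta x)^{d-1}\|b\|_{L^\infty},
\end{equation*}
and again the scaling factor $(\delta x)^{-(d-1)}$ in $\|\cdot\|_{L^\infty(\mathcal{F})}$ exactly cancels the $(\delta x)^{d-1}$. This confirms that the power of $\delta x$ in the definition of $\|\cdot\|_{L^p(\mathcal{F})}$ is the right one.

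Having the two endpoints with uniform constants depending only on the structural assumptions \eqref{eqn:mesh_comparable_1}, I would conclude by interpolation. Concretely, since both projection operators can be viewed as bounded linear maps between the continuous $L^p(\Omega)$ spaces and the discrete spaces $L^p(\mathcal{C})$ or $L^p(\mathcal{F})$ (where the discrete spaces are weighted $\ell^p$ spaces over $\mathcal{V}$ or $\mathcal{V}^2$ with the weights $\pi_i$ or $(\delta x)^{d-(d-1)p}$ respectively), Riesz--Thorin applies directly and yields the intermediate bounds with constant $C^{1/p}\cdot C^{1-1/p}\leq C$. For the face projection, although $b\mapsto(b\cdot\bm{n}_{i,j})^+$ is only sublinear, one may either apply Marcinkiewicz interpolation or simply dominate $(b\cdot\bm{n}_{i,j})^+\leq|b||\bm{n}_{i,j}|$ throughout and interpolate the resulting linear operator. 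The only step that requires a moment of care is checking that the scaling factor $(\delta x)^{d/p-(d-1)}$ is consistent with interpolation between the two endpoint weights $\delta x$ and $(\delta x)^{-(d-1)}$, but this is an arithmetic verification; the anticipated ``main obstacle'' is really just tracking these scaling exponents, and there is no genuine analytic difficulty.
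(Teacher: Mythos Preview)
Your proposal is correct and matches the paper's approach exactly: the paper does not write out a proof but states just before the proposition that ``It is easy to verify the following proposition when $p=1$ and $p=\infty$, and the general case follows by an interpolation,'' which is precisely what you carry out. Your endpoint computations and your handling of the sublinearity of $P_{\mathcal{F}}$ via domination by the linear operator $b\mapsto\int|b|\,|\bm{n}_{i,j}|$ are all sound.
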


In this paper, we consider a sequence of discrete densities $u_{(n)} = (u_{i;(n)}(t))_{i \in \mathcal{V}^{(n)}}$ defined on a sequence of meshes $(\mathcal{C}^{(n)}, \mathcal{F}^{(n)})$. Since the $n$-th density is always defined on the $n$-th mesh, as an abuse of notation, we write $\|u_{(n)}\|_{L^p(\mathcal{C}^{(n)})}$ as $\|u_{(n)}\|_{L^p(\mathcal{C})}$ for simplicity. 
Similarly, for the discrete coefficients $a_{(n)} = (a_{i,j;(n)})_{i,j \in \mathcal{V}^{(n)}}$ on the meshes we write $\|a_{(n)}\|_{L^p(\mathcal{F}^{(n)})}$ as $\|a_{(n)}\|_{L^p(\mathcal{F})}$.

The following notations are also useful in later discussions: 
For each $i \in \mathcal{V}$, define the ``barycenter'' of cell function $\chi_i$ by 
\begin{equation*}
\begin{aligned}
x_i \defeq \frac{1}{\pi_i} \int x\, \chi_i(x) \;\rd x.
\end{aligned}
\end{equation*}
For any $(v_i)_{i \in \mathcal{V}}$, define its extension to $\R^d$ by
\begin{equation*}
\begin{aligned}
v^\chi \defeq \sum_{i \in \mathcal{V}} v_i\, \chi_{i}.
\end{aligned}
\end{equation*}

\subsection{Compactness via quantitative regularity estimates} \label{subsec:compactness_and_propagation}
In this subsection we introduce the explicit semi-norms that we are going to use in the paper, together with lemmas and propositions about some basic properties of those objects.
The proof of all lemmas and propositions are postponed to Section~\ref{sec:kernel_equiv}.

The following continuous kernels and semi-norms are introduced in \cite{BeJa:13, BeJa:19} to prove the compactness of density:
\begin{defi} \label{defi:semi-norm_continuous}
Define the kernel $K^h$ for all $h > 0$ by
\begin{equation*}
\begin{aligned}
K^h(x) \defeq \frac{\phi(x)}{(|x| + h)^d}, \quad \forall x \in \R^d,
\end{aligned}
\end{equation*}
where $\phi$ is some smooth function with compact support in $B(0,2)$ and s.t. $\phi = 1$ inside $B(0,1)$. 
Then for $1 \leq p < \infty$, $0 < \theta < 1$, the semi-norm $\|\cdot\|_{p,\theta}$ for density $u \in L^p(\R)$ is defined as
\begin{equation} \label{eqn:log_Sobolev_continuous}
\begin{aligned}
\|u\|_{p,\theta}^p \defeq \sup_{h \leq 1/2} |\log h|^{-\theta} \int_{\R^{2d}} K^h(x-y) |u(x) - u(y)|^p \;\rd x \rd y.
\end{aligned}
\end{equation}
\end{defi}
We define the corresponding discretization of such kernels and semi-norms.
\begin{defi} \label{defi:semi-norm_discrete}
Consider a mesh $(\mathcal{C}, \mathcal{F})$ over $\Omega \subset \R^d$, on which there exists a discrete density $(u_i)_{i \in \mathcal{V}}$ such that $\mbox{supp}\,u_i \subset \mathcal{V}_{\Omega}^\circ$. Assume that
\begin{equation} \label{eqn:mesh_comparable_2}
\begin{aligned}
\sum_{i \in \mathcal{V}} \chi_i(x) = 1,\quad \forall x \in \Omega + B(0,4).
\end{aligned}
\end{equation}
Given any so-called {\em virtual coordinates} $\widetilde{x} = (\widetilde{x}_i)_{i \in \mathcal{V}} \in (\R^d)^{\mathcal{V}}$, we define an approximate kernel $\widetilde{K}_{i,j}^h$ on the mesh by
\begin{equation*} 
\begin{aligned}
\widetilde{K}_{i,j}^h = K^h(\widetilde{x}_i - \widetilde{x}_j), \quad \forall i,j \in \mathcal{V}.
\end{aligned}
\end{equation*}
Then for $1 \leq p < \infty$, $0 < \theta < 1$ and $0 < h_0 < 1/2$,
the discrete semi-norm $\|\cdot\|_{h_0,p,\theta;\widetilde{x}}$ on the mesh is defined as
\begin{equation} \label{eqn:log_Sobolev_discrete}
\begin{aligned}
\|u\|_{h_0,p,\theta;\widetilde{x}}^p \defeq \sup_{h_0 \leq h \leq 1/2} |\log h|^{-\theta} \sum_{i,j \in \mathcal{V}} \widetilde{K}_{i,j}^h |u_i - u_j|^p \pi_i \pi_j.
\end{aligned}
\end{equation}
\end{defi}

The following lemma is the cornerstone of deriving compactness from the discrete regularity in Definition \ref{defi:semi-norm_discrete}, in the particular case where $\widetilde{x} \in (\R^d)^{\mathcal{V}}$ in Definition \ref{defi:semi-norm_discrete} is simply chosen by the barycenters, i.e. $\widetilde{x}_i = x_i$.
In that case, we use the specific notation
\begin{equation*}
\begin{aligned}
K_{i,j}^h = K^h(x_i - x_j) \quad \text{ and } \quad \|\cdot\|_{h_0,p,\theta} = \|\cdot\|_{h_0,p,\theta;(x_i)_{i \in \mathcal{V}}}
\end{aligned}
\end{equation*}
to specify the discrete semi-norm derived from the barycenters $(x_i)_{i \in \mathcal{V}}$.
\begin{lem} \label{lem:compactness_regularity}
Let $(\mathcal{C}, \mathcal{F})$
be a mesh as in Definition~\ref{defi:partition_of_unity_mesh} over $\Omega \subset \R^d$ such that \eqref{eqn:mesh_comparable_1} and \eqref{eqn:mesh_comparable_2} hold. Consider a discrete function $(u_{i})_{i \in \mathcal{V}}$ on the mesh, satisfying the bound
\begin{equation*}
\begin{aligned}
\|(u_{i})_{i \in \mathcal{V}}\|_{h_0,p,\theta} \leq L
\end{aligned}
\end{equation*}
for some $0 < h_0 < 1/2$.
Define the renormalized kernel $\bar K^h(x) \defeq K^h(x) / \|K^h\|_{L^1}$ and let $u^\chi$ be the extension of $(u_{i}(t))_{i \in \mathcal{V}}$ to $\R^d$. Then
\begin{equation*}
\begin{aligned}
\forall h > h_0, \quad \|u^\chi - \bar K^h \star u^\chi\|_{L^p}^p \leq C |\log h|^{\theta - 1}
\end{aligned}
\end{equation*}
where the constant $C$ only depends on $L$ and the constant in structural assumptions \eqref{eqn:mesh_comparable_1}.
If the mesh is given by a polygon mesh $(\{V_i\}, \{S_{i,j}\})$ via
\eqref{eqn:partition_of_unity_polygon}, then the above inequality also holds when $u^\chi$ is replaced by the piecewise constant extension.

\end{lem}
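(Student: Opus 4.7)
The plan is to reduce the continuous $L^p$ norm to a discrete double sum via two applications of Jensen's inequality, and then to compare the continuous and discrete kernels pointwise. First, since $\bar K^h$ has integral $1$, I would write
\begin{equation*}
u^\chi(x) - \bar K^h \star u^\chi(x) = \int \bar K^h(x-y)\bigl(u^\chi(x)-u^\chi(y)\bigr)\,\rd y
\end{equation*}
and apply Jensen's inequality (convexity of $|\cdot|^p$) to obtain
\begin{equation*}
\|u^\chi - \bar K^h \star u^\chi\|_{L^p}^p \leq \int\!\!\int \bar K^h(x-y)|u^\chi(x)-u^\chi(y)|^p\,\rd x\,\rd y.
\end{equation*}
Next, using the partition of unity \eqref{eqn:mesh_comparable_2}, I would expand
\begin{equation*}
u^\chi(x)-u^\chi(y)=\sum_{i,j\in\mathcal{V}}(u_i-u_j)\chi_i(x)\chi_j(y)
\end{equation*}
and apply Jensen a second time with the probability weights $\chi_i(x)\chi_j(y)$ (which sum to $1$) to bound $|u^\chi(x)-u^\chi(y)|^p\leq \sum_{i,j}\chi_i(x)\chi_j(y)|u_i-u_j|^p$. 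The enlargement $\Omega+B(0,4)$ in \eqref{eqn:mesh_comparable_2} combined with $\supp K^h\subset B(0,2)$ ensures that the pair $(x,y)$ where the integrand is nonzero lies in a region where the partition of unity applies.

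After interchanging sum and integral, the main technical step is to compare
\begin{equation*}
I_{i,j}\defeq \int\!\!\int \bar K^h(x-y)\chi_i(x)\chi_j(y)\,\rd x\,\rd y \quad\text{with}\quad \bar K_{i,j}^h\,\pi_i\pi_j.
\end{equation*}
Since $x\in\supp\chi_i$ and $y\in\supp\chi_j$ force $|x-x_i|\leq\diam(\supp\chi_i)\leq C\delta x$ (and similarly for $y$) by \eqref{eqn:mesh_comparable_1}, one has $\bigl||x-y|-|x_i-x_j|\bigr|\leq 2C\delta x$. In the asymptotic regime $\delta x\ll h_0$, this yields the pointwise comparison $K^h(x-y)\leq C_1 K^h(x_i-x_j)$ for $h\geq h_0$, by elementary estimation of $(|z|+h)^{-d}$; the only subtlety is when $|x_i-x_j|$ lies near the boundary of $\supp\phi$, which can be handled either by enlarging the cutoff slightly on the discrete side (and observing that the enlarged kernel is bounded by a constant multiple of $K^h$), or by absorbing those non-singular contributions into the constant. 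This yields $I_{i,j}\leq C_1\bar K_{i,j}^h\pi_i\pi_j$.

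Finally, combining everything and using the classical estimate $\|K^h\|_{L^1}\geq c|\log h|$ for $h\leq 1/2$ (which follows by direct integration in polar coordinates, $\int_{B(0,1)}(|x|+h)^{-d}\rd x \sim |\log h|$), together with the assumed seminorm bound, gives
\begin{equation*}
\|u^\chi-\bar K^h\star u^\chi\|_{L^p}^p \leq \frac{C_1}{\|K^h\|_{L^1}}\sum_{i,j}K^h(x_i-x_j)|u_i-u_j|^p\pi_i\pi_j \leq \frac{C_1|\log h|^\theta L^p}{c|\log h|} \leq C|\log h|^{\theta-1},
\end{equation*}
as claimed. For the polygon case, replacing $\chi_i$ by $\mathbf{1}_{V_i}$ throughout yields the exact identity $u^V(x)-u^V(y)=\sum_{i,j}(u_i-u_j)\mathbf{1}_{V_i}(x)\mathbf{1}_{V_j}(y)$ since $\{\mathbf{1}_{V_i}\}_{i\in\mathcal{V}}$ is a partition of $\Omega$ a.e., and the remainder of the argument is identical, with constants absorbing the $O(\delta x)$ discrepancy between the barycenters of $V_i$ and of the mollified $\chi_i$ in \eqref{eqn:partition_of_unity_polygon}. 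The main obstacle is the kernel comparison step, specifically disentangling the singular behavior of $K^h$ near the origin when $h$ is only moderately larger than $\delta x$ from the mild boundary effect of the cutoff $\phi$ near $|x_i-x_j|\approx 2$.
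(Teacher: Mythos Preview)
Your proposal is correct and follows essentially the same route as the paper. The paper packages your two main steps into previously stated results: the Jensen-plus-partition-of-unity reduction to a discrete double sum is exactly Proposition~\ref{prop:discrete_regularity_extended}, and the pointwise kernel comparison $K^h(x-y)\leq (1+C\delta x/h)K^h(x_i-x_j)$ is precisely Lemma~\ref{lem:kernel_equiv_Euclidean} applied with perturbations $f_k=(F\circ J)(\cdot,\omega_k)$; the cutoff issue near $|x_i-x_j|\approx 2$ that you flag is handled there by a five-fold telescoping on the annulus $1/2<|x-y|\leq 5/2$, which is one concrete implementation of the ``absorb the non-singular contributions'' idea you sketch.
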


Consider a sequence of meshes $(\mathcal{C}^{(n)}, \mathcal{F}^{(n)})$ and a sequence of discrete density $u_{(n)} = (u_{i;(n)})_{i \in \mathcal{V}^{(n)}}$ defined on them. It is just natural to study the compactness of such discrete densities on different meshes by some sort of extension on $\R^d$.

To see how Lemma~\ref{lem:compactness_regularity} helps to derive compactness of such sequence, assume that one has uniform boundedness
\begin{equation*}
\begin{aligned}
\sup_{0 < h < 1/2} \limsup_{n \to \infty}
 \|u_{(n)}\|_{h_0,1,\theta} < \infty,
\end{aligned}
\end{equation*}
and uniform boundedness of discrete $L^p(\mathcal{C})$ norm.
Then for any fixed $h > 0$, the difference between extended functions $u_{(n)}^\chi$ and their mollifications are uniformly bounded by $C |\log h|^{\theta - 1}$ up to discarding finitely many terms of the sequence.
On the other hand for any fixed $h$ greater than $0$, the sequence of mollified functions is locally compact.
Therefore, the sequence of extended functions is also locally compact.

However there are several big issues that one should be aware of when moving from the continuous to the discrete setting: 
\begin{itemize}
\item
The kernel parameter $h$ has to be bounded from below in Definition \ref{defi:semi-norm_discrete}, because a kernel too sharp is not suitable for a coarser grid.
Generically $h_0$ should be chosen much greater than the discretization size $\delta x$.
But for a sequence of meshes with $\delta x$ converging to zero, $h_0$ could be chosen converging to zero as well (with a possibly much slower speed).
As we just discussed below Lemma~\ref{lem:compactness_regularity},
this sort of regularity in asymptotic sense would be sufficient to continue our discussion of compactness.

\item
Moreover, in Definition \ref{defi:semi-norm_continuous} the integral is taking on $\R^{2d}$, while in Definition \ref{defi:semi-norm_discrete} we are restricted to a finite double summation.
Nevertheless, any kernel $K^h$ has bounded support in ball $B(0;2)$, hence for any density $u$ with bounded support, the double integral in \eqref{eqn:log_Sobolev_continuous} can be taken on $\big(\supp u + B(0;2)\big)^2$ instead of $\R^{2d}$.
Therefore, to reasonably approximate the integral in \eqref{eqn:log_Sobolev_continuous}, it is natural to made the additional assumption \eqref{eqn:mesh_comparable_2} for the summation in \eqref{eqn:log_Sobolev_discrete}. The larger ball $B(0;4)$ is used for the convenience of later analysis.
Starting from a mesh $(\mathcal{C}, \mathcal{F})$ over $\Omega \subset \R^d$ on which the upwind scheme \eqref{eqn:continuity_equation_scheme} is defined, one can always put additional cell functions to make \eqref{eqn:mesh_comparable_2} hold.
The scheme is not really affected as the density is set as zero at any $i \notin \mathcal{V}_{\Omega}^\circ$.
For this reason, when discussing quantitative regularity, we always add \eqref{eqn:mesh_comparable_2} as part of our assumption to meshes.

\item
The more delicate issue and the one that leads to most technical difficulties in this paper is how to choose the virtual coordinates $(\widetilde{x}_i)_{i \in \mathcal{V}}$. 
While it would seem natural to take $(\widetilde{x}_i)_{i \in \mathcal{V}} = (x_i)_{i \in \mathcal{V}}$, the corresponding semi-norm does not seem to be propagated well on the scheme \eqref{eqn:continuity_equation_scheme}. This will force the use of $(\widetilde{x}_i)_{i \in \mathcal{V}} \neq (x_i)_{i \in \mathcal{V}}$ to obtain semi-norms that we can propagate well.
On the other hand, by Lemma~\ref{lem:compactness_regularity} we can clearly see compactness from the semi-norms induced by $(x_i)_{i \in \mathcal{V}}$, but not from semi-norms induced by arbitrary $(\widetilde{x}_i)_{i \in \mathcal{V}}$.
Therefore, we will also have to show that the approximate kernels $K^h(\widetilde{x}_i - \widetilde{x}_j)$ are equivalent to $K^h(x_i - x_j)$, for a choice of virtual coordinates $(\widetilde{x}_i)_{i \in \mathcal{V}}$ that are only slightly different from the barycenters $(x_i)_{i \in \mathcal{V}}$.

\end{itemize}

We can make the last issue somewhat more precise by a more general estimate that consider the discrete kernels as some sort of perturbation of the continuous kernels.
\begin{lem} \label{lem:kernel_equiv_Euclidean}
Consider measurable functions $f_i,g_i : \R^d \to \R^d$, $i = 1,2$ and $0 < h_1 < 1/4$, such that $|x - f_i(x)| \leq h_1$, $|x - g_i(x)| \leq h_1$, $\forall x \in \R^d, i = 1,2$. Consider the kernels
\begin{equation*}
\begin{aligned}
K_f^h(x,y) = K^h(f_1(x),f_2(y)) = \frac{\phi(|f_1(x)- f_2(y)|)}{(|f_1(x)- f_2(y)| + h)^d}, 
\\
K_g^h(x,y) = K^h(g_1(x),g_2(y)) = \frac{\phi(|g_1(x)- g_2(y)|)}{(|g_1(x)- g_2(y)| + h)^d}.
\end{aligned}
\end{equation*}
Then for $1 \leq p < \infty$ and $0 < h < 1/2$,
\begin{equation} \label{eqn:key_estimate_perturb}
\begin{aligned}
\int_{\R^{2d}} K_g^h(x,y) |u(x) - v(y)|^p \;\rd x\rd y \leq \left(1 +  Ch_1/h \right) \int_{\R^{2d}} K_f^h(x,y) |u(x) - v(y)|^p \;\rd x \rd y,
\end{aligned}
\end{equation}
where the constant $C$ only depends on the fixed choice of $\phi$ in the definition of kernels $K^h$.
\end{lem}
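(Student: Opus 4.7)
The plan is to establish a pointwise kernel comparison
\begin{equation*}
K^h(g_1(x)-g_2(y))\leq (1+Ch_1/h)\,K^h(f_1(x)-f_2(y)),
\end{equation*}
after which multiplying by the non-negative weight $|u(x)-v(y)|^p$ and integrating over $\R^{2d}$ immediately gives \eqref{eqn:key_estimate_perturb}. Abbreviating $z_f \defeq f_1(x)-f_2(y)$ and $z_g \defeq g_1(x)-g_2(y)$, the hypotheses $|x-f_i(x)|,|x-g_i(x)|\leq h_1$ yield $|z_f-z_g|\leq 4h_1$ and in particular $\bigl||z_f|-|z_g|\bigr|\leq 4h_1$.

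Without loss of generality take $\phi$ radial, $\phi(z)=\psi(|z|)$ with $\psi$ smooth, non-increasing, $\psi\equiv 1$ on $[0,1]$ and $\psi\equiv 0$ on $[2,\infty)$, so that $K^h(z)=\psi(|z|)/(|z|+h)^d$. The core computation is the derivative bound
\begin{equation*}
\left|\tfrac{d}{dr}\!\left[\tfrac{\psi(r)}{(r+h)^d}\right]\right| \leq \tfrac{|\psi'(r)|}{(r+h)^d} + \tfrac{d}{h}\cdot\tfrac{\psi(r)}{(r+h)^d}.
\end{equation*}
Integrating this over an interval of length $\leq 4h_1$ between $|z_f|$ and $|z_g|$, and combining with the direct denominator estimate $(|z_f|+h)^d/(|z_g|+h)^d \leq (1+4h_1/h)^d \leq 1+Ch_1/h$, gives the desired multiplicative bound in the ``bulk'' regions $|z|\leq 1-4h_1$ and $|z|\geq 2+4h_1$, where $\psi'\equiv 0$ on the entire integration interval and the $d/h$ term in the derivative is exactly the source of the claimed $Ch_1/h$ factor.

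The main obstacle is the transition annulus $r\in[1-4h_1,\,2+4h_1]$, and specifically the situation where the support boundary $|z|=2$ lies strictly between $|z_f|$ and $|z_g|$: there $K^h(z_f)$ may vanish while $K^h(z_g)$ does not, so the naive pointwise bound fails. I would bridge this by introducing the radially enlarged auxiliary kernel
\begin{equation*}
\widetilde K^h(z)\defeq \psi\bigl((|z|-4h_1)_+\bigr)\,\big/\,\bigl((|z|-4h_1)_+ +h\bigr)^d,
\end{equation*}
for which $K^h(z_g)\leq \widetilde K^h(z_f)$ holds trivially pointwise by monotonicity of $\psi$ and of $r\mapsto (r+h)^{-d}$ together with $|z_g|\geq|z_f|-4h_1$. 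It then remains to compare $\widetilde K^h(z_f)$ with $K^h(z_f)$ at the same point: outside the transition annulus the two agree, while inside the annulus $(|z_f|+h)$ and $((|z_f|-4h_1)_+ +h)$ are both of order one, so the additive discrepancy is at most $Ch_1$ by Lipschitz continuity of $\phi$ and of the denominator map, and this can be reabsorbed into $(Ch_1/h)K^h(z_f)$ through a case analysis balancing the two contributions. The constant $C$ throughout depends only on $d$ and the fixed profile $\phi$, and this last reabsorption in the annulus is the step where the argument requires the most care.
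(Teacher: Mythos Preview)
Your approach has a genuine gap at the support boundary of $\phi$. The pointwise inequality $K^h(z_g)\le (1+Ch_1/h)\,K^h(z_f)$ is simply false: take $|z_f|=2+\epsilon$ with $0<\epsilon<4h_1$, so that $K^h(z_f)=0$, while $|z_g|$ can be as small as $|z_f|-4h_1<2$, giving $K^h(z_g)>0$. Your auxiliary kernel $\widetilde K^h$ does not fix this: you correctly note that $K^h(z_g)\le \widetilde K^h(z_f)$ and that $|\widetilde K^h(z_f)-K^h(z_f)|\le Ch_1$ in the annulus, but the ``reabsorption into $(Ch_1/h)K^h(z_f)$'' is impossible when $K^h(z_f)=0$. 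There is no case analysis that rescues this, because the additive error $Ch_1$ is strictly positive while the target is identically zero on an open set of positive measure in $(x,y)$. Any purely pointwise comparison must fail here; one needs a genuinely integral argument that moves mass to a region where $K_f^h$ is bounded away from zero.

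The paper handles this by abandoning the pointwise bound in the outer region $1/2<|x-y|\le 5/2$. Instead it telescopes $|u(x)-v(y)|^p$ through four intermediate points $\tfrac{k}{5}x+\tfrac{5-k}{5}y$, so each resulting factor involves arguments at distance $|x-y|/5\le 1/2$. After a linear change of variables each piece becomes an integral of the form $\int K_f^h(w,w+z)\,|u(w)-v(w+z)|^p$ with $|z|\le 1/2$; on that set $|f_1(w)-f_2(w+z)|\le 1$, hence $\phi\equiv 1$ and $K_f^h(w,w+z)\ge c>0$ uniformly. The difference $K_g^h-K_f^h$ evaluated at the original (long) scale is $O(h_1)$ in this annulus, so the ratio to the short-scale $K_f^h$ is $O(h_1)\le O(h_1/h)$. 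In short, the paper does not compare the two kernels at the same scale near the cutoff; it compares the long-scale difference against a short-scale kernel that is guaranteed to be nondegenerate.
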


Notice that the double summation in \eqref{eqn:log_Sobolev_discrete} can be rewritten as a double integral form by carefully choosing some function $f=f_1 = f_2$ and a piecewise constant, which we state as the next lemma:
\begin{lem} \label{lem:kernel_equiv_extension_trick}
Consider a mesh $(\mathcal{C}, \mathcal{F})$ as in Definition~\ref{defi:partition_of_unity_mesh} over $\Omega \subset \R^d$ with discretization size $\delta x < 1/16$, such that \eqref{eqn:mesh_comparable_2} hold.
Introduce some measurable sets $(V_i)_{i \in \mathcal{V}} \subset \R^d$ such that
\begin{equation*}
\begin{aligned}
|V_i| = \pi_i = \int_{\R^d} \chi_i, \quad \sup_{x \in V_i} |x - x_i| < 2\delta x, \quad \forall i \in \mathcal{V}, \quad \text{ and } \quad V_{i} \cap V_{j} = \varnothing, \quad \forall i,j \in \mathcal{V}.
\end{aligned}
\end{equation*}
Define the  piecewise constant extension $u^V \defeq \sum_{i \in \mathcal{V}} u_i \mathbbm{1}_{V_i}$ for the discrete density function. Then
\begin{equation*}
\begin{aligned}
\supp u^V \subset \Big( \Omega + B(0,1) \Big) \subset \Big( \Omega + B(0,3) \Big) \subset \Big( \bigcup_{i \in \mathcal{V}} V_i \Big).
\end{aligned}
\end{equation*}
Assume that the virtual coordinates $(\widetilde{x}_i)_{i \in \mathcal{V}}$ are such that $\left|\widetilde{x}_i - x_i\right| < h_2$, $\forall i \in \mathcal{V}$ and for some $0< h_2 < 1/16$.
Define $f: \R^d \to \R^d$ as
\begin{equation*}
f(x) = \left\{
\begin{aligned}
&\widetilde{x}_i, \quad &&\text{ for } x \in V_i,\; i \in \mathcal{V},
\\
&x, \quad &&\text{ for } x \notin \textstyle \bigcup_{i \in \mathcal{V}} V_i.
\end{aligned} \right.
\end{equation*}
Then $|x - f(x)| \leq  2\delta x + h_2 < 1/4$, $\forall x \in \R^d$.
Moreover the double summation in \eqref{eqn:log_Sobolev_discrete} can be rewritten as a double integral:
\begin{equation*}
\begin{aligned}
\sum_{i,j \in \mathcal{V}} \widetilde{K}_{i,j}^h |u_i - u_j|^p \pi_i \pi_j
=
\int_{\R^{2d}} K^h\big(f(x) - f(y)\big) |u^V(x) - u^V(y)|^p \;\rd x\rd y,
\end{aligned}
\end{equation*}
for all $1 \leq p < \infty$ and $0 < h < 1/2$.

\end{lem}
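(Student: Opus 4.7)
The plan is to check the three conclusions of the lemma in sequence: the pointwise bound on $|x - f(x)|$, the chain of support inclusions for $u^V$, and finally the rewriting of the discrete double sum as a double integral. The first is immediate from the triangle inequality, the second is straightforward except for one genuinely geometric inclusion, and the third reduces, once the support work is done, to a partition of $\R^{2d}$ into product cells $V_i \times V_j$ on which every relevant function is constant.

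First I would handle the bound on $f$: if $x \notin \bigcup_i V_i$ then $f(x) = x$ by definition, while if $x \in V_i$ the triangle inequality gives $|x - f(x)| = |x - \widetilde{x}_i| \leq |x - x_i| + |x_i - \widetilde{x}_i| < 2\delta x + h_2 < 1/4$, using $\delta x < 1/16$ and $h_2 < 1/16$. For the support of $u^V$, only cells $V_i$ with $i \in \mathcal{V}_\Omega^\circ$ contribute; for such $i$, $\supp \chi_i \subset \Omega$ forces $x_i \in \overline{\Omega}$, so $V_i \subset B(x_i, 2\delta x) \subset \Omega + B(0,1)$. The third inclusion $\Omega + B(0,3) \subset \bigcup_i V_i$ exploits the disjointness of the $V_i$ together with the measure identity $\sum_i |V_i| = \sum_i \pi_i = \int \sum_i \chi_i$: by \eqref{eqn:mesh_comparable_2} this latter quantity is at least $|\Omega + B(0,4)|$, and combined with the constraint $V_i \subset B(x_i, 2\delta x)$ and the bounded-overlap clause of \eqref{eqn:mesh_comparable_1} a volume-filling argument shows that the $V_i$'s must exhaust $\Omega + B(0,3)$.

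With these inclusions secured, the rewriting of the sum becomes a direct computation. I would decompose $\R^{2d}$ (up to null sets) as $\bigsqcup_{i,j} V_i \times V_j$ together with a remainder $R$ in which at least one coordinate lies outside $\bigcup V_k$. On each product $V_i \times V_j$ both $u^V$ and $f$ are constant, so the integrand is exactly $K^h(\widetilde{x}_i - \widetilde{x}_j)|u_i - u_j|^p$, and integrating over $V_i \times V_j$ produces $\widetilde{K}_{i,j}^h |u_i - u_j|^p \pi_i \pi_j$; summing over $(i,j)$ reproduces the left-hand side of the claimed identity. On $R$, one of $u^V(x)$, $u^V(y)$ vanishes while $f$ acts as the identity off $\bigcup V_k$; a nonzero contribution would require, say, $x \in V_i$ with $u_i \neq 0$ and $|f(x) - y| < 2$ (since $\supp K^h \subset B(0,2)$), and then triangle inequalities combining this with $|f(x) - x| < 1/4$ and $x \in \Omega + B(0,1)$ place $y$ inside $\Omega + B(0,3) \subset \bigcup_k V_k$, contradicting $y \notin \bigcup_k V_k$. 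Hence $R$ contributes zero and the identity is proved.

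The main obstacle is the covering statement $\Omega + B(0,3) \subset \bigcup V_i$: the hypotheses only pin down the measures, centers, and diameters of the $V_i$ but leave their shapes essentially free, so the argument must lean on the generous buffer in \eqref{eqn:mesh_comparable_2} (the condition $\sum \chi_i = 1$ on the larger set $\Omega + B(0,4)$ rather than merely on $\Omega + B(0,3)$) together with the bounded overlap of \eqref{eqn:mesh_comparable_1} to absorb the $O(\delta x)$ wobble coming from $V_i \subset B(x_i, 2\delta x)$. Everything else is routine once this geometric ingredient, and the resulting vanishing of the $R$-integral, is in hand.
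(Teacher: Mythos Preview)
Your reading of the lemma is that the sets $V_i$ are \emph{given} (any disjoint family with $|V_i|=\pi_i$ and $V_i\subset B(x_i,2\delta x)$) and that the three conclusions must then be deduced. Under that reading the covering claim $\Omega+B(0,3)\subset\bigcup_iV_i$ is simply false, and your ``volume-filling argument'' cannot be made to work. A one-dimensional counterexample: take a uniform mesh with $\pi_i=\delta x$, $x_i=(i+\tfrac12)\delta x$, and choose each $V_i$ to be an interval of length $\delta x$ shifted by $s_i\delta x$ inside $B(x_i,2\delta x)$, where the shifts $s_i$ increase monotonically from $-1$ to $+1$ as $i$ crosses the mesh. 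All three hypotheses are met, yet $\bigcup_iV_i$ has a gap of total length $2\delta x$ located wherever the $s_i$ jump---and that gap can sit squarely inside $\Omega$. The buffer between $B(0,3)$ and $B(0,4)$ is irrelevant here, since the hole is created in the interior, not at the edge of the mesh. So the inclusion you flag as ``the main obstacle'' is not merely hard; it is not a consequence of the listed constraints on $V_i$ at all.

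The paper reads the word ``Introduce'' as an assertion of \emph{existence}: the lemma constructs specific $V_i$ enjoying all stated properties simultaneously, and the covering is a feature of that construction rather than a deduction from the three bullet conditions. Concretely, the paper partitions $\R^d$ into hypercubes $Q_{[m]}$ of diameter $\delta x$ and, inside each $Q_{[m]}$, carves out disjoint pieces $V_{i,[m]}\subset Q_{[m]}$ with $|V_{i,[m]}|=\int_{Q_{[m]}}\chi_i$; then $V_i=\bigcup_{[m]}V_{i,[m]}$. Because $\sum_i\chi_i=1$ on $\Omega+B(0,4)$, every cube $Q_{[m]}$ contained in $\Omega+B(0,4)$ is exactly filled by the $V_{i,[m]}$, which is what yields $\Omega+B(0,3)\subset\bigcup_iV_i$ (up to a null set). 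Your treatment of the $|x-f(x)|$ bound, of $\supp u^V\subset\Omega+B(0,1)$, and of the integral identity on the complement $R$ are all fine once the covering is in hand---but to get the covering you must build the $V_i$, not assume them.
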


From these two lemmas, one may deduce the following proposition supporting our use of $(\widetilde{x}_i)_{i \in \mathcal{V}} \neq (x_i)_{i \in \mathcal{V}}$.
\begin{prop} \label{prop:kernel_equiv}
Consider a mesh $(\mathcal{C}, \mathcal{F})$ with discretization size $\delta x < h_2 < 1/16$, such that \eqref{eqn:mesh_comparable_1} and \eqref{eqn:mesh_comparable_2} hold.
Let $(\widetilde{x}_i^{(1)})_{i \in \mathcal{V}}$, $(\widetilde{x}_i^{(2)})_{i \in \mathcal{V}} \in (\R^d)^{\mathcal{V}}$ be two sets of virtual coordinates on the mesh such that
\begin{equation*}
\begin{aligned}
\forall i \in \mathcal{V}, \; k = 1,2, \quad\quad \left|\widetilde{x}_i^{(k)} - x_i\right| < h_2.
\end{aligned}
\end{equation*}
Then for $1 \leq p < \infty$, $0 < \theta < 1$ and $0 < h_0 < 1/2$, the two resulting semi-norms
and equivalent and satisfy
\begin{equation*}
\begin{aligned}
\|u\|_{h_0,p,\theta;\widetilde{x}^{(2)}} \leq \left(1 + Ch_2/h_0 \right)\|u\|_{h_0,p,\theta;\widetilde{x}^{(1)}},
\end{aligned}
\end{equation*}
where the constant $C$ is fixed.
\end{prop}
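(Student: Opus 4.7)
The plan is to reduce the comparison of the two discrete semi-norms to the pointwise kernel perturbation bound in Lemma~\ref{lem:kernel_equiv_Euclidean}, using the double-integral representation provided by Lemma~\ref{lem:kernel_equiv_extension_trick} applied to each choice of virtual coordinates on a common underlying family of sets $(V_i)$.

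First I would fix a collection of disjoint measurable sets $(V_i)_{i \in \mathcal{V}}$ with $|V_i| = \pi_i$ and $V_i \subset B(x_i, 2\delta x)$. Such sets exist because $\pi_i \leq C(\delta x)^d$ and the structural assumption~\eqref{eqn:mesh_comparable_1} caps the number of cells meeting any ball of radius $\delta x$; one may enumerate the cells and greedily carve $V_i$ out of an arbitrarily fine cartesian subgrid while respecting the previously chosen $V_j$. This is a routine measure-theoretic construction I would not belabor. With these sets in hand, define $f^{(k)}(x) \defeq \widetilde{x}_i^{(k)}$ for $x \in V_i$ and $f^{(k)}(x) \defeq x$ otherwise, and let $u^V \defeq \sum_i u_i \mathbbm{1}_{V_i}$. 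For $x \in V_i$ the triangle inequality gives
\begin{equation*}
|x - f^{(k)}(x)| \leq |x - x_i| + |x_i - \widetilde{x}_i^{(k)}| < 2\delta x + h_2 < 3h_2 < 1/4,
\end{equation*}
while $|x - f^{(k)}(x)| = 0$ trivially off $\bigcup_i V_i$. Lemma~\ref{lem:kernel_equiv_extension_trick} then rewrites the discrete double sum appearing in~\eqref{eqn:log_Sobolev_discrete} as
\begin{equation*}
\sum_{i,j \in \mathcal{V}} K^h(\widetilde{x}_i^{(k)} - \widetilde{x}_j^{(k)}) |u_i - u_j|^p \pi_i \pi_j = \int_{\R^{2d}} K^h\bigl(f^{(k)}(x) - f^{(k)}(y)\bigr) |u^V(x) - u^V(y)|^p \;\rd x \rd y,
\end{equation*}
for $k=1,2$ and each $h_0 \leq h \leq 1/2$.

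Next, I would apply Lemma~\ref{lem:kernel_equiv_Euclidean} with $f_1 = f_2 = f^{(1)}$, $g_1 = g_2 = f^{(2)}$, $u = v = u^V$ and $h_1 = 3h_2$, which yields
\begin{equation*}
\int_{\R^{2d}} K^h\bigl(f^{(2)}(x) - f^{(2)}(y)\bigr) |u^V(x) - u^V(y)|^p \;\rd x\rd y \leq \bigl(1 + C h_2/h\bigr) \int_{\R^{2d}} K^h\bigl(f^{(1)}(x) - f^{(1)}(y)\bigr) |u^V(x) - u^V(y)|^p \;\rd x\rd y.
\end{equation*}
For $h \geq h_0$ the prefactor is bounded by $1 + C h_2/h_0$. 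Multiplying by $|\log h|^{-\theta}$, taking the supremum over $h \in [h_0, 1/2]$, and then extracting a $p$-th root (using $(1+t)^{1/p} \leq 1 + t$ for $t \geq 0$ and $p \geq 1$ to absorb the power into the same form of constant) delivers the stated inequality.

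All the analytical content is prepackaged in the two preceding lemmas, so the only obstacle of any substance is the measure-theoretic construction of the sets $(V_i)$ meeting the volume, diameter and disjointness constraints simultaneously. Given the structural assumptions~\eqref{eqn:mesh_comparable_1} this is standard, and once these sets are in place the proposition follows essentially by bookkeeping.
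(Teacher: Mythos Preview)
Your proposal is correct and follows essentially the same approach as the paper: invoke Lemma~\ref{lem:kernel_equiv_extension_trick} to rewrite both discrete semi-norms as double integrals over a common piecewise-constant extension, then apply Lemma~\ref{lem:kernel_equiv_Euclidean} with $h_1 = 3h_2$ and take the $p$-th root. The only cosmetic difference is that the paper delegates the construction of the sets $(V_i)$ to the proof of Lemma~\ref{lem:kernel_equiv_extension_trick} itself rather than sketching it separately.
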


The proposition implies that the equivalence of semi-norms can be derived from the closeness of virtual coordinates.
Therefore, a large part of our technical analysis is actually devoted to finding appropriate $(\widetilde{x}_i)_{i \in \mathcal{V}}$ ensuring the propagation of regularity while remaining reasonably close to barycenters $(x_i)_{i \in \mathcal{V}}$.

The next proposition is also a consequence of Lemma~\ref{lem:kernel_equiv_Euclidean}:

\begin{prop} \label{prop:discrete_regularity_extended}
Consider a mesh $(\mathcal{C}, \mathcal{F})$ such that \eqref{eqn:mesh_comparable_1} and \eqref{eqn:mesh_comparable_2} hold. Then for $1 \leq p < \infty$,
\begin{equation*}
\begin{aligned}
\int_{\R^{2d}} K^h(x,y) |u^\chi(x) - u^\chi(y)|^p \;\rd x\rd y 
\leq \left(1 + C\delta x/h \right) \sum_{i,j \in \mathcal{V}} K^h(x_i - x_j) |u_{i} - u_{j}|^p \;\pi_{i} \pi_{j},
\end{aligned}
\end{equation*}
and
\begin{equation*}
\begin{aligned}
\sum_{i,j \in \mathcal{V}} K_{i,j}^h \Big| (P_{\mathcal{C}}u)_{i} - (P_{\mathcal{C}}u)_{j} \Big|^p \;\pi_i \pi_j &=  \sum_{i,j \in \mathcal{V}} K_{i,j}^h \left|\frac{1}{\pi_{i}} \int_{\R^d} u(x) \chi_{i}(x) \;\rd x - \frac{1}{\pi_{j}} \int_{\R^d} u(y) \chi_{j}(y) \;\rd y\right|^p \;\pi_{i} \pi_{j}
\\
&\leq \left(1 + C\delta x/h \right) \int_{\R^{2d}} K^h(x,y) |u(x) - u(y)|^p \;\rd x\rd y.
\end{aligned}
\end{equation*}
for some constant $C$ depending only on $p$ and the constants in the structural assumptions in~\eqref{eqn:mesh_comparable_1}.
\end{prop}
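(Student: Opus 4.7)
My plan is to combine Jensen's inequality with the integrated kernel comparison of Lemma~\ref{lem:kernel_equiv_Euclidean}, using the piecewise-constant extension device of Lemma~\ref{lem:kernel_equiv_extension_trick} as a bridge to reinterpret discrete sums as integrals against kernels of the form $K^h(f(x) - f(y))$ with $|f(x) - x| \leq 2\delta x$. Both estimates will follow the same template: apply Jensen in the cell-function weights $\chi_i(x)\chi_j(y)$ to pass from differences at the discrete level to differences at the continuous level (or vice versa), and then invoke Lemma~\ref{lem:kernel_equiv_Euclidean} to close the comparison between the two resulting kernels.

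For the second inequality I would first apply Jensen to the identity $(P_{\mathcal{C}}u)_i - (P_{\mathcal{C}}u)_j = (\pi_i\pi_j)^{-1}\int\!\!\int(u(x) - u(y))\chi_i(x)\chi_j(y)\,dx\,dy$, obtaining $|(P_{\mathcal{C}}u)_i - (P_{\mathcal{C}}u)_j|^p \pi_i \pi_j \leq \int\!\!\int |u(x) - u(y)|^p \chi_i(x)\chi_j(y) \, dx \, dy$. Summing against $K^h_{i,j} = K^h(x_i - x_j)$ reduces the target to
\begin{equation*}
\int\!\!\int |u(x) - u(y)|^p \Psi(x,y) \, dx \, dy \leq (1 + C\delta x/h)\int\!\!\int K^h(x-y)|u(x) - u(y)|^p \, dx\, dy,
\end{equation*}
where $\Psi(x,y) := \sum_{i,j} K^h(x_i - x_j)\chi_i(x)\chi_j(y)$. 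The key observation I would exploit is that $\Psi(x,y) = \mathbb{E}[K^h(T_\omega(x) - T'_{\omega'}(y))]$ for a pair of independent random measurable maps $T_\omega, T'_{\omega'}:\R^d \to \R^d$ sending $x$ to $x_i$ with probability $\chi_i(x)$ (constructed explicitly from the cumulative distribution of $(\chi_i(x))_{i}$ against a uniform variable $\omega \in [0,1]$). For every realization $|T_\omega(x) - x| \leq C\delta x$ almost everywhere since $\chi_i(x) > 0$ forces $x \in \supp\chi_i \subset B(x_i, C\delta x)$; applying Lemma~\ref{lem:kernel_equiv_Euclidean} with $f_1 = T_\omega$, $f_2 = T'_{\omega'}$, $g_i = \mathrm{Id}$, $h_1 = C\delta x$ uniformly in $(\omega,\omega')$ and then taking expectations via Fubini delivers the required bound.

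For the first inequality the same template applies: writing $u^\chi(x) - u^\chi(y) = \sum_{i,j}(u_i - u_j)\chi_i(x)\chi_j(y)$ through the partition of unity~\eqref{eqn:mesh_comparable_2} and using Jensen would bound $|u^\chi(x) - u^\chi(y)|^p$ by $\sum_{i,j}\chi_i(x)\chi_j(y)|u_i - u_j|^p$. Integrating against $K^h(x-y)$ and exchanging sum and integral then reduces matters to the pairwise inequality $\int\!\!\int K^h(x-y)\chi_i(x)\chi_j(y)\,dx\,dy \leq (1 + C\delta x/h) K^h(x_i - x_j) \pi_i \pi_j$, which follows from the same random-projection device by applying Lemma~\ref{lem:kernel_equiv_Euclidean} in the opposite direction (valid since $f$ and $g$ enter its hypotheses symmetrically). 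The hard part will be that the tempting pointwise comparison $\Psi(x,y) \leq (1+C\delta x/h)K^h(x-y)$ genuinely fails near the cutoff $\phi$ in $K^h$: if $|x-y|$ lies just above $2$ then $K^h(x-y) = 0$ while nearby $K^h(x_i - x_j)$ can still be positive. This forces the whole argument to remain at the integrated level, and it is precisely the measurable-selection/expectation form of Lemma~\ref{lem:kernel_equiv_Euclidean} above that circumvents the cutoff pathology.
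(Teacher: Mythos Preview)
Your approach is essentially identical to the paper's: your random map $T_\omega$ (sending $x$ to $x_i$ with probability $\chi_i(x)$, built from the cumulative distribution against a uniform $\omega$) is exactly the paper's composition $F\circ J$, and both proofs are Jensen in the weights $\chi_i(x)\chi_j(y)$ followed by Lemma~\ref{lem:kernel_equiv_Euclidean} applied uniformly in $(\omega_1,\omega_2)$ and then averaged. One small wording caveat: the ``pairwise inequality'' $\int\!\!\int K^h(x-y)\chi_i(x)\chi_j(y)\,dx\,dy \leq (1+C\delta x/h)K^h(x_i-x_j)\pi_i\pi_j$ that you write for fixed $(i,j)$ is false for exactly the cutoff reason you yourself flag for $\Psi$---but your stated method (Lemma~\ref{lem:kernel_equiv_Euclidean} at the integrated level, then summed/averaged) proves the correct summed inequality directly and never needs the pairwise form, which is also how the paper proceeds.
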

This proposition ensures that the regularity of the extended function is comparable to the regularity of the discrete density and vice versa, which is needed in our proof of Lemma~\ref{lem:compactness_regularity} and Theorem~\ref{thm:Poisson_propagating}.

\subsection{Our main quantitative regularity result} \label{sec:quantitative_regularity}

We are now ready to state our main quantitative theorem about the propagation of regularity on periodic mesh.
\begin{thm} \label{thm:periodic_regularity_result}
Consider $T > 0$, and a bounded domain $\Omega \subset \R^d$ with piecewise smooth boundary.
Let $\{(\mathcal{C}^{(n)}, \mathcal{F}^{(n)})\}_{n=1}^\infty$ be a sequence of meshes over $\Omega \subset \R^d$ as in Definition~\ref{defi:partition_of_unity_mesh},
having discretization size $\delta x_{(n)} \to 0$, satisfying the structural assumptions \eqref{eqn:mesh_comparable_1} and \eqref{eqn:mesh_comparable_2} by some uniform constant, and being periodic on $\Omega$ with pattern size uniformly bounded.

For all $n \geq \N_+$, $t \in [0,T]$, let $(a_{i,j;(n)}(t))_{i,j \in \mathcal{V}^{(n)}}$ be the coefficients of the upwind scheme \eqref{eqn:continuity_equation_scheme} on $(\mathcal{C}^{(n)}, \mathcal{F}^{(n)})$ and let
$D_{(n)}(t) = (D_{i;(n)}(t))_{i \in \mathcal{V}^{(n)}}$ be the discrete divergence defined as in \eqref{eqn:divergence_discrete}.
Let $u_{(n)} = (u_{i;(n)}(t))_{i \in \mathcal{V}^{(n)}}$ be a sequence of discrete density solved by the upwind scheme.
With some $1 \leq p < q \leq \infty$ and $0 < s \leq 1$, assume that there exists a sequence of velocity field $\widetilde{b}_{(n)}(t,x)$, bounded uniformly in
$L^{q}_t(W^{1,q}_x) \cap L^{p}_x(W^{s,p}_t)([0,T] \times \Omega)$, and approximating the coefficients $(a_{i,j;(n)}(t))_{i,j \in \mathcal{V}^{(n)}}$ with vanishing error
\begin{equation*}
\begin{aligned}
&\big\| (a_{i,j;(n)})_{i,j \in \mathcal{V}^{(n)}} - P_{\mathcal{F}^{(n)}} \widetilde{b}_{(n)}\, \big\|_{L^{p}([0,T] \times \mathcal{F}^{(n)})} \to 0 \quad \text{as } n \to \infty.
\end{aligned}
\end{equation*}
Assume moreover that the solutions have uniformly bounded norms $\sup_n \|u_{(n)}\|_{L^\infty_t L^{p^*}_x([0,T] \times \mathcal{C}^{(n)})}<\infty$, and that  mass leaking vanishes
\begin{equation*}
\begin{aligned}
\|u_{(n)}(0)\|_{L^1(\mathcal{C})} - \|u_{(n)}(T)\|_{L^1(\mathcal{C})} \to 0 \quad \text{as } n \to \infty.
\end{aligned}
\end{equation*}

Then for all $\theta \geq \max\{1 - 1/q, 1/2\}$, $0 < h_0 < 1/2$, there exists sufficiently large $N \in \N^+$ such that for all $n \geq N$,
\begin{equation} \label{eqn:main_regularity_result}
\begin{aligned}
&\quad 
 \|u_{(n)}(t)\|_{h_0,1,\theta} \leq {\alpha}^* \bigg[ \|u_{(n)}(0)\|_{h_0,1,\theta}
 \\
 &\quad \quad \quad \quad + C\int_0^t 
 \bigg( \|\dive \widetilde{b}_{(n)}(s)\|_{L^\infty(\mathcal{C})} \|u_{(n)}(s)\|_{h_0,1,\theta} + \|\widetilde{b}_{(n)}(s)\|_{W^{1,q}} \|u_{(n)}(s)\|_{L^{p^*}(\mathcal{C})}
\\ 
&\quad \quad \quad \quad + \|D_{(n)}(s)\|_{L^\infty(\mathcal{C})} \|u_{(n)}(s)\|_{h_0,1,\theta} + \|u_{(n)}(s)\|_{L^{p^*}(\mathcal{C})} \|D_{(n)}(s)\|_{h_0,p,p(\theta - 1/p^*)} \bigg) \;\rd s
\\
&\quad \quad \quad \quad + L_2^* + L_3^*
\bigg],
\end{aligned}
\end{equation}
with additional terms due to discretization
\begin{equation} \label{eqn:main_regularity_appendix}
\begin{aligned}
&L_2^* = C \big( |\log h_0|^{-\theta} / h_0^2 \big) (\delta x_{(n)}) \int_0^t \Big( \|a_{(n)}(s)\|_{L^q(\mathcal{F})} + \|\widetilde{b}_{(n)}(s)\|_{W^{1,q}} \Big) \|u_{(n)}(s)\|_{L^{q^*}(\mathcal{C})} \;\rd s
\\
& \quad\quad+ C ( |\log h_0|^{1-\theta} ) \big( \|u_{(n)}(0)\|_{L^1(\mathcal{C})} - \|u_{(n)}(t)\|_{L^1(\mathcal{C})} \big),
\\
&L_3^* = C 
(|\log h_0|^{-\theta}/h_0)
\Bigg[
\big\| a_{(n)} - P_{\mathcal{F}^{(n)}} \widetilde{b}_{(n)}\, \big\|_{L^{p}([0,T] \times \mathcal{F})}
+(\delta x_{(n)})^{s/(1+s)}  \|\widetilde{b}_{(n)}\|_{L^{p}_x(W^{s,p}_t)}
\\
& \quad\quad\quad\quad
+ (\delta x_{(n)})^{\frac{1/p-1/q}{1+(1/p-1/q)}} \bigg( \|a_{(n)}\|_{L^{q}([0,T] \times \mathcal{F})}
+ \|\widetilde{b}_{(n)}\|_{L^{q}_t(W^{1,q}_x)}
\bigg)
\Bigg] \|u_{(n)}\|_{L^\infty_t L^{p^*}_x([0,t] \times \mathcal{C})}
,
\\
&{\alpha}^* = \exp\big( C(1/h_0) (\delta x_{(n)})^{s/(1+s)} \big).
\end{aligned}
\end{equation}
The constant $C$ in \eqref{eqn:main_regularity_result} only depends on $\Omega$, the exponents $p,q$ and the constant in the structural assumption \eqref{eqn:mesh_comparable_1}, while the constant $C$ in \eqref{eqn:main_regularity_appendix} also depends on $T$, the exponent $s$ and the constant bounding pattern size. Nevertheless, none of the constants depends on $h_0$ or $\delta x_{(n)}$.
The index $N \in \N^+$ is chosen to make $\delta x_{(n)}$ sufficiently small, which only depends on $h_0$ and the constant bounding pattern size.

In particular, for any fixed $h_0 > 0$, the additional terms $L_2^*, L_3^*$ converge to zero and ${\alpha}^*$ converges to one as $n \to \infty$.
\end{thm}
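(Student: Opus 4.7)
The plan is to differentiate the approximate semi-norm
\begin{equation*}
E_h(t) \defeq \sum_{i,j \in \mathcal{V}^{(n)}} \widetilde{K}_{i,j}^h\, |u_{i;(n)}(t) - u_{j;(n)}(t)|\, \pi_{i;(n)}\pi_{j;(n)}
\end{equation*}
in time for each $h \in [h_0,1/2]$ and for a carefully chosen set of virtual coordinates $(\widetilde{x}_i)_{i \in \mathcal{V}^{(n)}}$, then to take the weighted supremum over $h$ to recover $\|u_{(n)}(t)\|_{h_0,1,\theta}$. Substituting the upwind scheme~\eqref{eqn:continuity_equation_scheme} (after the standard regularization $|\cdot|\mapsto\sqrt{\cdot+\varepsilon}$) and symmetrizing in $(i,j)$, one decomposes $\rd E_h/\rd t$ into three pieces: a transport-type contribution given by finite differences of $\widetilde{K}^h$ along the discretized velocity, a divergence contribution involving $D_{(n)}$, and a genuinely new numerical dissipation piece intrinsic to upwinding.

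For the transport piece, one first swaps $a_{i,j;(n)}$ for $P_{\mathcal{F}^{(n)}}\widetilde{b}_{(n)}$, paying the approximation error that contributes to $L_3^*$; Proposition~\ref{prop:discrete_regularity_extended} then turns the resulting discrete commutator into a continuous one on the extended function $u^\chi_{(n)}$, which is controlled by the maximal-function commutator estimate of \cite{BeJa:13,BeJa:19} applied to $\widetilde{b}_{(n)} \in W^{1,q}_x$. This produces the $\|\widetilde{b}_{(n)}\|_{W^{1,q}}\|u_{(n)}\|_{L^{p^*}}$ term and fixes the threshold $\theta \geq 1-1/q$. The divergence piece is handled via the commuting identity $D(P_{\mathcal{F}^{(n)}}\widetilde{b}_{(n)}) = P_{\mathcal{C}^{(n)}}(\dive \widetilde{b}_{(n)})$ from~\eqref{eqn:divergence_commutative}, producing the $\|\dive \widetilde{b}_{(n)}\|_{L^\infty}\|u\|_{h_0,1,\theta}$ term directly and, through a Bresch--Jabin weighted splitting, the term $\|u\|_{L^{p^*}}\|D_{(n)}\|_{h_0,p,p(\theta - 1/p^*)}$.

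The main obstacle, and the whole technical novelty, is the dissipation piece, which has the structure
\begin{equation*}
\sum_{i,j,k} a_{i,k;(n)}\,\bigl(\widetilde{K}^{h}_{k,j} - \widetilde{K}^{h}_{i,j}\bigr)\,|u_i - u_j|\,\pi_j,
\end{equation*}
non-symmetric in $(i,k)$ and responsible for the grid-induced oscillations in Example~\ref{exmp:mouse_mesh}. The plan is to absorb it by selecting $\widetilde{x}_i = x_i + \xi_i$ so that, for each fixed $j$, the map $i\mapsto \widetilde{K}^{h}_{i,j}$ solves, up to a controllable remainder, a cell problem for the non-symmetric elliptic operator $v\mapsto \sum_k a_{i,k;(n)}(v_k - v_i)$. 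Periodicity of the mesh is precisely what makes this tractable: the cell problem reduces to a finite linear system on the pattern $\mathcal{V}_0$ and the solution extends translation-covariantly to the whole mesh. The construction must deliver $|\xi_i| \ll h_0$ uniformly in $n$, so that Proposition~\ref{prop:kernel_equiv} can be used to swap $\|\cdot\|_{h_0,1,\theta;\widetilde{x}}$ for $\|\cdot\|_{h_0,1,\theta}$ at the price of a factor $1 + C\delta x_{(n)}/h_0$, which after Gronwall becomes the multiplicative constant $\alpha^*$ and forces the choice of $N$ large.

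Once the dissipation term is absorbed, integrating in time and collecting all errors produces the claimed Gronwall inequality: the $W^{s,p}_t$ time regularity of $\widetilde{b}_{(n)}$ controls the time shifts introduced by the semi-discrete integration, Proposition~\ref{prop:norm_interpolate_discrete_density} allows one to pass between continuous and discrete norms, and the elementary bound $E_h \leq C|\log h|\,\|u\|_{L^1(\mathcal{C})}$ at the cutoff scale $h_0$ accounts for the mass-leaking contribution in $L_2^*$. Taking $\sup_{h_0\leq h\leq 1/2}|\log h|^{-\theta}(\cdot)$ and applying the usual Gronwall argument yields~\eqref{eqn:main_regularity_result}. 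Every step apart from the cell-problem construction is either a careful discretization of the existing Bresch--Jabin argument or a quantitative bookkeeping of the discretization errors; the hard part, by a wide margin, is engineering $(\widetilde{x}_i)_{i\in\mathcal{V}^{(n)}}$ with uniform quantitative estimates over the periodic family of meshes and over $h\in[h_0,1/2]$.
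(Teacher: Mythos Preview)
Your overall outline has the right shape, but there is a genuine structural gap and a conceptual confusion that would prevent the argument from closing.

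First, your ``transport piece'' and ``dissipation piece'' are the same object: after the Kruzkov doubling (Proposition~\ref{prop:scheme_Kruzkov}) the only term involving differences of the kernel is precisely $A_K = 2\sum_{i,j,i'}(\widetilde{K}^h_{i',j} - \widetilde{K}^h_{i,j}) a_{i',i}|u_i - u_j|\pi_j$. There is no separate dissipation term to absorb; the entire difficulty is in bounding $A_K$. Relatedly, the cell problem is \emph{not} for the map $i \mapsto \widetilde{K}^h_{i,j}$ (which would depend on $j$ and on $h$) but for the virtual coordinates themselves: one Taylor-expands $\widetilde{K}^h_{i',j} - \widetilde{K}^h_{i,j} \approx \nabla K^h(\widetilde{x}_i - \widetilde{x}_j)\cdot(\widetilde{x}_{i'} - \widetilde{x}_i)$ and then requires that $\sum_{i'}(\widetilde{x}_{i'} - \widetilde{x}_i)a_{i',i} = \widetilde{b}_i\pi_i + r_i\pi_i$ with small residue $r_i$ (equation~\eqref{eqn:linear_system_time_dependent}). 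It is this substitution, not a direct swap $a\to P_{\mathcal{F}}\widetilde{b}$, that reduces the discrete commutator to the continuous one from~\cite{BeJa:19}.

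Second, and more seriously: the residue equation can only be solved with vanishing residue for a \emph{constant} field $b_c$ (Theorem~\ref{thm:auxiliary_function_existence}), because the cell problem on the periodic pattern depends on the direction $b_c/|b_c|$. For the actual $\widetilde{b}_{(n)}(t,x)$ you must first partition $[0,T]$ into $m$ intervals of length $\tau$ and $\Omega$ into blocks of size $\eta$, replace $\widetilde{b}_{(n)}$ by its piecewise-constant average $\bar b$ on each box, and solve the cell problem blockwise (Theorem~\ref{thm:residue_term_estimate}). The $W^{s,p}_t$ regularity enters \emph{here}, to control $\|\widetilde{b} - \bar b\|_{L^p} \lesssim \tau^s\|\widetilde{b}\|_{L^p_xW^{s,p}_t} + \eta\|\widetilde{b}\|_{L^q_tW^{1,q}_x}$; it has nothing to do with ``time shifts introduced by semi-discrete integration''. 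The factor $\alpha^*$ then arises because one must switch virtual coordinates at each of the $m \sim T/\tau$ partition points, each switch costing $1 + CM_\beta/h_0$ by Proposition~\ref{prop:kernel_equiv}; optimizing $\tau \sim (\delta x)^{1/(1+s)}$ and $\eta \sim (\delta x)^{(1/p-1/q)/(1+1/p-1/q)}$ balances these against the residue and produces both the exponents in $L_3^*$ and the form of $\alpha^*$. Your proposal omits this entire mechanism, and without it there is no way to obtain the stated powers of $\delta x_{(n)}$ or to make $L_3^*$ vanish.
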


Proving Theorem~\ref{thm:periodic_regularity_result} is the main technical challenge of the paper.
We split our proof into three theorems, namely Theorem~\ref{thm:propagation_regularity_discrete},~\ref{thm:residue_term_estimate}~and~\ref{thm:auxiliary_function_existence}. These three theorems are stated in subsection \ref{subsec:step_1}, \ref{subsec:step_2} and \ref{subsec:step_3} and we conclude Section~\ref{sec:outline_of_proof} by how they are used to prove Theorem~\ref{thm:periodic_regularity_result}.
Each of the three theorems requires its own proof on which we spend an entire section after Section~\ref{sec:outline_of_proof}.


Before we move to the proof, 
we conclude this section by showing how to deduce Theorem~\ref{thm:periodic_compactness_result} and Theorem~\ref{thm:Poisson_propagating} from Theorem~\ref{thm:periodic_regularity_result}.

\begin{proof}[Proof of Theorem~\ref{thm:periodic_compactness_result}]

Let us begin with the discussion of mesh properties as
Theorem~\ref{thm:periodic_compactness_result} is stated in the setting of polygon meshes. We first recall the construction \eqref{eqn:partition_of_unity_polygon} in Section~\ref{subsec:connection_two_settings}, restated here
\begin{equation*} 
\begin{aligned}
\chi_i(x) &= \frac{1}{|B_r(0)|} \int_{B_r(0)} \mathbbm{1}_{V_i} (x - y) \;\rd y, \quad \forall i \in \mathcal{V},
\\
\bm{n}_{i,j}(x) &= \int_{S_{i,j}} \frac{1}{|B_r(0)|} \mathbbm{1}_{B_r(0)}(x-y) \bm{N}_{i,j} \;\rd y, \quad \forall (i,j) \in \mathcal{E}.
\end{aligned}
\end{equation*}
for the entire sequence $\{(\mathcal{C}^{(n)}, \mathcal{F}^{(n)})\}_{n=1}^\infty$. As an abuse of notation, we still use $\{(\mathcal{C}^{(n)}, \mathcal{F}^{(n)})\}_{n=1}^\infty$ to denote the generated sequence $\{\big( \{\chi_{i;(n)}\}_{i \in \mathcal{V}^{(n)}}, \{\bm{n}_{i,j;(n)}\}_{(i,j) \in \mathcal{E}^{(n)}} \big)\}_{n=1}^\infty$.
It is easy to verify that if the polygon meshes satisfy the structural assumptions \eqref{eqn:mesh_comparable_polygon}, then the constructed new meshes as in Definition~\ref{defi:partition_of_unity_mesh} satisfy the structural assumptions \eqref{eqn:mesh_comparable_1}, with a possibly larger constant.
Moreover, as explained in Section~\ref{subsec:compactness_and_propagation}, one can always put additional cell functions to make \eqref{eqn:mesh_comparable_2} hold. This yields a sequence of meshes that fulfills the requirements of Theorem~\ref{thm:periodic_regularity_result}.

Now we define a linear operator $P_{\mathcal{F}}'$ as an alternate of the ``projection-to-face operator'' $P_{\mathcal{F}}$, such that in Theorem~\ref{thm:periodic_compactness_result}, the coefficients of upwind scheme on each $(\mathcal{C}^{(n)}, \mathcal{F}^{(n)})$ is chosen exactly by $(a_{i,j;(n)})_{i,j \in \mathcal{V}^{(n)}} = P_{\mathcal{F}^{(n)}}' b$.
Such operator $P_{\mathcal{F}}'$ is given by
\begin{equation*}
(P_{\mathcal{F}}' f)_{i,j} \defeq \left\{
\begin{aligned}
&\left(\int_{\R^d} f(x) \cdot \bm{n}_{i,j}(x) \;\rd x\right)^+, \quad && \forall (i,j) \in \mathcal{E}_\Omega^\circ,
\\
&0, \quad && \forall (i,j) \in (\mathcal{V}^2 \setminus \mathcal{E}_\Omega^\circ).
\end{aligned} \right.
\end{equation*}
It is easy to verify the divergence identity $D(P_{\mathcal{F}}' b) = P_{\mathcal{C}} \big( \dive_x b \big)$ by the same approach with which we obtain~\eqref{eqn:divergence_commutative}. Also, it is straightforward that $\|P_{\mathcal{F}}' b\|_{L^p(\mathcal{F})} \leq \|P_{\mathcal{F}} b\|_{L^p(\mathcal{F})} \leq C \|b\|_{L^p(\Omega)}$.

We can then do some a priori estimates for the norms required by Theorem~\ref{thm:periodic_regularity_result}. To avoid writing too many index $(n)$ in the calculation,
let $(\mathcal{C}, \mathcal{F}) = \big( \{\chi_{i}\}_{i \in \mathcal{V}}, \{\bm{n}_{i,j}\}_{(i,j) \in \mathcal{E}} \big)$ be any mesh in the sequence of meshes we consider.
Firstly, notice that for $i \in \mathcal{V}_{\Omega}^\circ$, one has
\begin{equation} \label{eqn:density_sup_norm_1}
\begin{aligned}
\frac{\rd u_{i}}{\rd t} &= \frac{1}{\pi_i} \sum_{j \in \mathcal{V}} \big( a_{i,j} \, u_{j} - a_{j,i} \, u_i \big) \leq \frac{1}{\pi_i} \left( \sup_{k \in \mathcal{V}} u_{k} \sum_{j \in \mathcal{V}}  a_{i,j} - u_i \sum_{j \in \mathcal{V}} a_{j,i} \right)
\\
&= -D_i \, \sup_{k \in \mathcal{V}} u_{k} + \frac{1}{\pi_i}\sum_{j \in \mathcal{V}} a_{j,i} \Big(\sup_{k \in \mathcal{V}} u_k - u_i \Big),
\end{aligned}
\end{equation}
and for all $i \in  \mathcal{V}_{\Omega} \setminus \mathcal{V}_{\Omega}^\circ$, one has $u_i \equiv 0$.
By the assumption $\dive_x b \in L^{\infty}_t L^{\infty}_x$, one can conclude $\|D(t)\|_{L^\infty(\mathcal{C})} \leq C \|\dive_x b\|_{L^{\infty}}$ and
\begin{equation} \label{eqn:density_sup_norm_2}
\begin{aligned}
\frac{\rd}{\rd t} \sup_{k \in \mathcal{V}} u_k &\leq \sup_{i \in \mathcal{V}} (-D_j) \sup_{k \in \mathcal{V}} u_{k} \leq C\|\dive_x b\|_{L^{\infty}} \sup_{k \in \mathcal{V}} u_k.
\end{aligned}
\end{equation}
The constants $C$ just above do not depend on $(n)$, so that $(u_{i;(n)})_{i \in \mathcal{V}^{(n)}}$ and  $(D_{i;(n)})_{i \in \mathcal{V}^{(n)}}$ have uniform a priori bound in $L^\infty_t L^{\infty}_x([0,T] \times \mathcal{C}^{(n)})$.
By H\"older estimate one can obtain uniform bound in any $L^\infty_t L^{p}_x([0,T] \times \mathcal{C}^{(n)})$ where $1 \leq p \leq \infty$.

Secondly, for any $s > 0$ and $\theta \geq 1 - 1/p$, the semi-norm of the divergence is bounded by
\begin{equation*}
\begin{aligned}
\|(D_{i}(t))_{i \in \mathcal{V}}\|_{h_0,p,p(\theta - 1/p^*)}
\leq
C \| \dive_x b(t) \|_{p,p(\theta - 1/p^*)}
\leq C \| \dive_x b(t) \|_{W^{s,p}}.
\end{aligned}
\end{equation*}
The first inequality is an application of the divergence identity $D(P_{\mathcal{F}}' b) = P_{\mathcal{C}} \big( \dive_x b \big)$ and Proposition~\ref{prop:kernel_equiv}, while the second inequality is due to Sobolev estimates. Similarly,
\begin{equation*}
\begin{aligned}
\|(u_{i}(0))_{i \in \mathcal{V}}\|_{h_0,1,\theta}
\leq C \| u_0 \|_{W^{s,1}}.
\end{aligned}
\end{equation*}
The constants $C$ again do not depend on $(n)$. This gives uniform bounds to the semi-norm of $(D_{i;(n)}(t))_{i \in \mathcal{V}^{(n)}}$ and $(u_{i;(n)}(0))_{i \in \mathcal{V}^{(n)}}$.

We want to apply Theorem~\ref{thm:periodic_regularity_result} with $\widetilde{b}_{(n)}(t,x) = b(t,x)$, $p = 1$ and $q = q$ (recall that $b \in L^{q}_t(W^{1,q}_x) \cap L^{1}_x(W^{s,1}_t)$).
But the issue remains is that our newly defined $P_{\mathcal{F}}'$ is not identical to $P_{\mathcal{F}}$. Hence, the $L^1$ difference
\begin{equation*}
\begin{aligned}
\big\| (a_{i,j;(n)})_{i,j \in \mathcal{V}^{(n)}} - P_{\mathcal{F}^{(n)}} \widetilde{b}_{(n)} \big\|_{L^1([0,T] \times \mathcal{F}^{(n)})} = \big\| P_{\mathcal{F}^{(n)}}' b - P_{\mathcal{F}^{(n)}} b \big\|_{L^1([0,T] \times \mathcal{F}^{(n)})}
\end{aligned}
\end{equation*}
is not zero, and one has to argue it converges to zero as $n \to \infty$  and as $\delta x_{(n)} \to 0$.
This is guaranteed by the following proposition whose proof is postponed to Section \ref{sec:kernel_equiv}.
\begin{prop} \label{prop:two_ways_projection}
Let $(\mathcal{C}, \mathcal{F})$
be a mesh as in Definition~\ref{defi:partition_of_unity_mesh} over $\Omega \subset \R^d$ such that \eqref{eqn:mesh_comparable_1} hold.
Assume that each face function $\bm{n}_{i,j} \in \mathcal{F}$ is of form $\bm{n}_{i,j}(x) = \bm{N}_{i,j} w_{i,j}(x), \forall x \in \R^d$, where $\bm{N}_{i,j}\in \mathcal{S}^{d-1}$ is a unit vector and $w_{i,j}$ is a scalar function.
Then for $1 \leq p \leq \infty$,
\begin{equation*}
\begin{aligned}
\big\| P_{\mathcal{F}}' b - P_{\mathcal{F}} b \big\|_{L^{p}([0,T] \times \mathcal{F})} \leq C \delta x \|b\|_{L^{p}_t(W^{1,p}_x)},
\end{aligned}
\end{equation*}
where the constant $C$ only depends on $p$ and the constant in the structural assumption~\eqref{eqn:mesh_comparable_1}.

\end{prop}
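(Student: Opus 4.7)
The plan is to reduce the comparison to a local Poincar\'e-type inequality by exploiting the single-direction structure $\bm{n}_{i,j} = \bm{N}_{i,j} w_{i,j}$. The key observation is that $P_{\mathcal{F}}$ and $P_{\mathcal{F}}'$ coincide on any constant velocity field, so only the oscillation of $b$ across each face will contribute to the difference.

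First I would note that one may take $w_{i,j}(x)\geq 0$ without loss of generality: since $\bm{N}_{i,j}$ is a single unit vector independent of $x$ on each face, any sign change of $w_{i,j}$ could not be absorbed into $\bm{N}_{i,j}$, so $w_{i,j}$ must have constant sign (up to the harmless relabeling $\bm{N}_{i,j}\mapsto -\bm{N}_{i,j}$). Under this normalization,
\begin{equation*}
(P_{\mathcal{F}}b)_{i,j} = \int_{\R^d} (b\cdot \bm{N}_{i,j})^+ w_{i,j}\,\rd x, \qquad (P_{\mathcal{F}}'b)_{i,j} = \Bigl( \int_{\R^d} (b\cdot \bm{N}_{i,j})\, w_{i,j}\,\rd x \Bigr)^+\!,
\end{equation*}
and both evaluate to $(c\cdot \bm{N}_{i,j})^+\,\|w_{i,j}\|_{L^1}$ when $b\equiv c$. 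For each face I would then pick a ball $B_{i,j}\subset \R^d$ of radius $\sim\delta x$ containing $\supp w_{i,j}$ (which exists by \eqref{eqn:mesh_comparable_1}) and set $c_{i,j}(t)\defeq |B_{i,j}|^{-1}\int_{B_{i,j}} b(t,x)\,\rd x$. Subtracting $c_{i,j}$ inside both projections and using the contraction $|a^+ - b^+|\leq |a-b|$, I would arrive at
\begin{equation*}
\bigl|(P_{\mathcal{F}}'b - P_{\mathcal{F}}b)_{i,j}(t)\bigr| \;\leq\; 2\int_{B_{i,j}} |b(t,x) - c_{i,j}(t)|\, w_{i,j}(x)\,\rd x.
\end{equation*}

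Next I would apply H\"older with exponents $p$ and $p^*$, using $\|w_{i,j}\|_{L^\infty}\leq C/\delta x$ and $|B_{i,j}|\leq C(\delta x)^d$ from \eqref{eqn:mesh_comparable_1}, together with the standard Poincar\'e inequality $\|b - c_{i,j}\|_{L^p(B_{i,j})}\leq C\delta x\, \|\nabla b\|_{L^p(B_{i,j})}$, to obtain
\begin{equation*}
\bigl|(P_{\mathcal{F}}'b - P_{\mathcal{F}}b)_{i,j}(t)\bigr|^p \;\leq\; C(\delta x)^{pd - d}\, \|\nabla b(t,\cdot)\|_{L^p(B_{i,j})}^p.
\end{equation*}
The finite overlap of the balls $\{B_{i,j}\}$, which follows from the last clause of \eqref{eqn:mesh_comparable_1}, would then give $\sum_{i,j}\|\nabla b(t,\cdot)\|_{L^p(B_{i,j})}^p\leq C\|\nabla b(t,\cdot)\|_{L^p(\Omega)}^p$. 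Combining this with the weight $(\delta x)^{d - (d-1)p}$ built into $\|\cdot\|_{L^p(\mathcal{F})}^p$, and using the identity $(\delta x)^{pd - d}\cdot (\delta x)^{d - (d-1)p} = (\delta x)^p$, I would conclude $\|P_{\mathcal{F}}'b(t) - P_{\mathcal{F}}b(t)\|_{L^p(\mathcal{F})}\leq C\delta x\,\|\nabla b(t,\cdot)\|_{L^p}$; a time integration in $t\in[0,T]$ then produces the stated bound. The limit case $p=\infty$ follows either by the same argument (with an $L^\infty$ Poincar\'e estimate) or by interpolation from $p<\infty$ against a crude $L^\infty$ bound on both projections.

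The main obstacle is essentially bookkeeping rather than conceptual: three independent powers of $\delta x$ appear along the way (from Poincar\'e, from the $L^\infty$ bound on $w_{i,j}$, and from the volume of $B_{i,j}$), and they must combine correctly with the intrinsic $(\delta x)^{d/p-(d-1)}$ scaling of the discrete face norm to leave exactly the power $\delta x^1$ advertised in the statement. The only genuinely delicate point is justifying the ``$w_{i,j}\geq 0$'' reduction, which uses in an essential way that $\bm{N}_{i,j}$ is an $x$-independent unit vector; this is precisely the hypothesis of the proposition.
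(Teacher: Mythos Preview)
Your argument is correct and arrives at the claimed bound. The route, however, is genuinely different from the paper's. The paper introduces the functional
\[
I(b,\bm{N},w)=\int (b\cdot\bm{N})^+w-\Bigl(\int b\cdot\bm{N}\,w\Bigr)^+,
\]
proves a monotonicity $0\le w\le u\Rightarrow I(b,\bm{N},w)\le I(b,\bm{N},u)$, and then bounds $I$ by optimizing a shift parameter $\lambda$, which leads to the double-average oscillation $\|w\|_{L^1}^{-1}\iint|b(x)-b(y)|\,w(x)w(y)\,\rd x\rd y$. It then treats the endpoints $p=1$ and $p=\infty$ separately and interpolates. Your approach is more direct: you subtract a fixed constant $c_{i,j}$, invoke the $1$-Lipschitz property of $(\cdot)^+$, and feed the result straight into a Poincar\'e inequality valid for all $1\le p<\infty$. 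This avoids both the variational trick and the endpoint interpolation, at the cost of quoting Poincar\'e on balls; the paper's monotonicity step, on the other hand, cleanly isolates the dependence on $w$ before any $p$ enters.

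One small point: your justification that $w_{i,j}\ge 0$ is not quite right. The decomposition $\bm{n}_{i,j}=\bm{N}_{i,j}w_{i,j}$ with $\bm{N}_{i,j}$ a constant unit vector does not by itself force $w_{i,j}$ to have constant sign; a scalar function may certainly change sign. Nonnegativity of $w_{i,j}$ is really an implicit hypothesis here (it holds automatically in the polygon-mesh construction \eqref{eqn:partition_of_unity_polygon} that motivates the proposition, and the paper's own proof uses it without comment). Once $w_{i,j}\ge 0$ is granted, your bookkeeping of the $\delta x$ powers and the finite-overlap summation are accurate.
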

We only have to observe that the construction \eqref{eqn:partition_of_unity_polygon} indeed ensures that any $\bm{n}_{i,j} \in \mathcal{F}$ is of form $\bm{n}_{i,j}(x) = \bm{N}_{i,j} w_{i,j}(x)$. Hence this proposition applies to the setting of Theorem~\ref{thm:periodic_compactness_result}. And we can finally apply Theorem~\ref{thm:periodic_regularity_result} to obtain \eqref{eqn:main_regularity_result} with $\widetilde{b}_{(n)}(t,x) = b(t,x)$, $p = 1$ and $q = q$.

By Gronwall estimate one can conclude
\begin{equation*}
\begin{aligned}
C_{\log, \theta}^1 \defeq \sup_{0 < h_0 < 1/2} \limsup_{n \to \infty} \|u_{(n)}(t)\|_{h_0,1,\theta}  < \infty
\end{aligned}
\end{equation*}
for some $0 < \theta <1$. This directly implies compactness in space of the density $u_{(n)}$. Compactness in time now follows by reproducing the Aubin-Lions argument in the semi-discrete setting.

For any $0 < h < 1/2$, let $h_0 = h$. By the previous estimates, one can choose $N(h) \in \N_+$ such that
\begin{equation*}
\begin{aligned}
\sup_{n \geq N(h)} \|u_{(n)}(t)\|_{h,1,\theta}  < 2 C_{\log, \theta}^1.
\end{aligned}
\end{equation*}
By Lemma~\ref{lem:compactness_regularity}, one has
\begin{equation} \label{Khun}
\begin{aligned}
\|\bar K^h \star_x u_{(n)}(t) - u_{(n)}(t)\|_{L^1} \leq C |\log h|^{\theta - 1}  \quad \text{ for } n \geq N(h), t \in [0,T],
\end{aligned}
\end{equation}
where $C$ depends on $C_{\log, \theta}^1$ and the total mass of $u_{(n)}$.

On the other hand for any fixed $h > 0$, $U_{h,C,\Omega} \defeq \{\bar K^h \star u : \|u\|_{L^\infty} < C, \supp u \subset \Omega \}$ is a compact set by Arzel\`a–Ascoli theorem, on which we consider $t \mapsto \bar K_h \star_x u_{(n)}(t)$, the trajectory of  mollified density. Notice that
\begin{equation} \notag
\begin{aligned}
V_h(t) &\defeq \int_{\R^d}\left |\frac{\rd}{\rd t} \Big( \bar K^h \star_x u_{(n)}(t,x)\Big) \right| \;\rd x
= \int_{\R^d}\left | \sum_{i \in \mathcal{V}^{(n)}} \left(\int_{C_{i;(n)}} \bar K^h(y-x) \rd y\right) \frac{\rd}{\rd t} u_{i;(n)}(t) \right| \;\rd x.
\end{aligned}
\end{equation}
By \eqref{eqn:continuity_equation_scheme_0} we can bound $V_h(t)$ by
\begin{equation} \notag
\begin{aligned}
V_h(t) 
&= \int_{\R^d}\left |\sum_{i \in \mathcal{V}^{(n)}} \left(\int_{C_{i;(n)}} \!\!\!\!\!\!\!\! K^h(y-x) \rd y\right)
\frac{1}{\pi_i} \sum_{i' :\, (i,i') \in \mathcal{E}^{(n)}} \!\!\!\!\!\!\!\!\big( a_{i,i';(n)} \, u_{i';(n)} - a_{i',i;(n)} \, u_{i;(n)} \big)
\right| \;\rd x
\\
&= \int_{\R^d}\Bigg |\sum_{(i,i') \in \mathcal{E}^{(n)}} 
\left(\frac{1}{\pi_i} \int_{C_{i;(n)}} \!\!\!\!\!\!\!\! K^h(y-x) \rd y - \frac{1}{\pi_{i'}} \int_{C_{i';(n)}} \!\!\!\!\!\!\!\! K^h(z-x) \rd z\right)
\big( a_{i,i';(n)} \, u_{i';(n)} \big)
\Bigg| \;\rd x
\\
&\leq \sum_{(i,i') \in \mathcal{E}^{(n)}} 
\left( \int_{\R^d} \left|\frac{1}{\pi_i} \int_{C_{i;(n)}} \!\!\!\!\!\!\!\! K^h(y-x) \rd y - \frac{1}{\pi_{i'}} \int_{C_{i';(n)}} \!\!\!\!\!\!\!\! K^h(z-x) \rd z\right| \rd x \right)
\big( a_{i,i';(n)} \, u_{i';(n)} \big)
\\
&\leq \sum_{(i,i') \in \mathcal{E}^{(n)}} 
L_h \delta x_{(n)}
\big( a_{i,i';(n)} \, u_{i';(n)} \big)
\\
&\leq C L_h \|b(t)\|_{L^1} \|u_{(n)}(t)\|_{L^{\infty}(\mathcal{C}^{(n)})},
\end{aligned}
\end{equation}
where $L_h$ denotes the Lipschitz constant of $\bar K^h$ and $C$ depends on the constant in Proposition~\ref{prop:norm_interpolate_discrete_density}.
Since we assume $b \in L^\infty_t L^1_x$ and have obtained uniform a priori bound of $(u_{i;(n)})_{i \in \mathcal{V}^{(n)}}$ in $L^\infty_t L^{\infty}_x([0,T] \times \mathcal{C}^{(n)})$, we have $V_h(t)$ uniformly bounded in $t$.

Therefore, for any fixed $h \geq 0$, $K^h \star u_{(n)}$ is equicontinuous as a trajectory in $U_{h,C,\Omega} \subset L^1(\R^d)$.
By Arzel\`a–Ascoli theorem, $\{\bar K^h \star u_{(n)} \}$ is compact for all $h > 0$, which implies $\{u_{(n)}\}$ is also compact thanks to~\eqref{Khun}.

\end{proof}

\bigskip

We now turn to the proof of Theorem~\ref{thm:Poisson_propagating} which requires more work but follows somewhat similar steps.

\begin{proof}[Proof of Theorem~\ref{thm:Poisson_propagating}]
As in the proof of Theorem~\ref{thm:periodic_compactness_result}, our goal is to apply Theorem~\ref{thm:periodic_regularity_result}.
As a comparison, this time we begin with a sequence of meshes as in Definition~\ref{defi:partition_of_unity_mesh} so the mesh properties are obvious, but since we are discussing a coupled system, we also need to actually derive regularity estimates on the velocity field.

As before, when there is no ambiguity, we omit the index $(n)$ by letting $(\mathcal{C}, \mathcal{F}), (\mathcal{P}, \mathcal{N})$ be any pair of mesh and finite element in the sequence, and let $(u_i)_{i \in \mathcal{V}}$, $(a_{i,j})_{i,j \in \mathcal{V}}$ be the discrete solution.

\emph{Step 1: Discrete a priori bounds.} The discrete divergence at $i \in \mathcal{V}_{\Omega}^\circ$ is given by
\begin{equation*}
\begin{aligned}
D_{i} &= \frac{1}{\pi_{i}} \sum_{i' \in \mathcal{V}} \big( a_{i',i} - a_{i,i'} \big)
= \frac{1}{\pi_{i}} \sum_{i' \in \mathcal{V}} \int_{\R^d} \big( b(y) \cdot \bm{n}_{i',i}(y) \big)^+ - \big( b(y) \cdot \bm{n}_{i',i}(y) \big)^- \;\rd y
\\
&= \frac{1}{\pi_{i}} \sum_{i' \in \mathcal{V}} \int_{\R^d} \nabla \phi(y) \cdot \bm{n}_{i',i}(y) \;\rd y
= \frac{1}{\pi_{i}} \int_{\R^d} \nabla \phi(y) \cdot (- \nabla \chi_{i}(y)) \;\rd y
\\
&= \frac{1}{\pi_{i}} \int_{\R^d} \big(-g^\chi(y)\big) \chi_{i}(y) \;\rd y.
\end{aligned}
\end{equation*}
The last identity is due to the assumption that $\chi_i \in \mathcal{C} \subset \mathcal{P}$ and \eqref{eqn:Poisson_equation_scheme}.

By Proposition~\ref{prop:discrete_regularity_extended}, for all $h \geq \delta x$, one has
\begin{equation*}
\begin{aligned}
\sum_{i,j \in \mathcal{V}} K_{i,j}^h |D_{i} - D_{j}| \;\pi_{i} \pi_{j}
&= \sum_{i,j \in \mathcal{V}} K_{i,j}^h \left|\frac{1}{\pi_{i}} \int_{\R^d} g^\chi(x) \chi_{i}(x) \;\rd x - \frac{1}{\pi_{j}} \int_{\R^d} g^\chi(y) \chi_{j}(y) \;\rd y\right| \;\pi_{i} \pi_{j}
\\
&\leq C \int_{\R^{2d}} K^h(x,y) |g^\chi(x) - g^\chi(y)| \;\rd x\rd y
\\
&\leq C \sum_{i,j \in \mathcal{V}} K_{i,j}^h |g(u_{i}) - g(u_{j})| \;\pi_{i} \pi_{j}
\\
&\leq CL_g \sum_{i,j \in \mathcal{V}} K_{i,j}^h |u_{i} - u_{j}| \;\pi_{i} \pi_{j}.
\end{aligned}
\end{equation*}
where $L_g$ be the Lipschitz constant of $g$.
Hence,
\begin{equation}
\begin{aligned}
\big\| \big(D_{i}(t)\big)_{i \in \mathcal{V}} \big\|_{h_0,1,\theta} \leq C L_g \big\| \big(u_{i}(t)\big)_{i \in \mathcal{V}} \big\|_{h_0,1,\theta}
\end{aligned}\label{divfromu}
\end{equation}
Also, from the above discussion it is straightforward to see that
\begin{equation*}
\begin{aligned}
D_{i}(t) = \frac{1}{\pi_{i}} \int_{\R^d} \big(-g^\chi(y)\big) \chi_{i}(y) \;\rd y &= \frac{1}{\pi_{i}} \int_{\R^d} \bigg(-\sum_{j \in \mathcal{V}} g\big(u_{j}(t)\big) \chi_{j}(y)\bigg) \chi_{i}(y) \;\rd y
\\
&= - \sum_{j \in \mathcal{V}} A_{i,j} \, g\big( u_{j}(t) \big),
\end{aligned}
\end{equation*}
where the coefficients satisfies
\begin{equation*}
\begin{aligned}
A_{i,j} \geq 0, 
\quad 
\sum_{j \in \mathcal{V}} A_{i,j}  = 1.
\end{aligned}
\end{equation*}
Thus
\begin{equation*}
\begin{aligned}
\sup_{i \in \mathcal{V}}\big(D_{i}(t)\big)_{i \in \mathcal{V}} &\leq \sup \Big\{-g(a)\,|\; a\in [\inf_{i \in \mathcal{V}} \big(u_{i}(t)\big)_{i \in \mathcal{V}},\ \sup_{i \in \mathcal{V}} \big(u_{i}(t)\big)_{i \in \mathcal{V}}] \Big\},
\\
\inf_{i \in \mathcal{V}}\big(D_{i}(t)\big)_{i \in \mathcal{V}} &\geq \inf \Big\{-g(a)\,| \;a\in [\inf_{i \in \mathcal{V}} \big(u_{i}(t)\big)_{i \in \mathcal{V}},\ \sup_{i \in \mathcal{V}} \big(u_{i}(t)\big)_{i \in \mathcal{V}}] \Big\}.
\end{aligned}
\end{equation*}
Since we assumed that $g\in L^\infty(\R)$, this means that the divergence is bounded uniformly in $n$.
By \eqref{eqn:density_sup_norm_1} and \eqref{eqn:density_sup_norm_2}, we also obtain a uniform in $n$ bound of $(u_i)_{i \in \mathcal{V}}$ in $L^\infty([0,T] \times \mathcal{C})$ for any $T > 0$.

Recalling moreover that $\widetilde{\phi}$ is the solution of $- \Delta \widetilde{\phi} = g^\chi$ (with Dirichlet BC on $\Omega_{e}$), one also has a uniform bound on $\nabla \widetilde{\phi}$ in $L^\infty_t L^\infty_x \cap L^\infty_t H^1_x$.

\medskip

\emph{Step 2: Control of mass leaking by Markovian interpretation.} 
The discrete scheme can also be represented by a Poisson random process model. 
Without loss of generality, we may assume that the total mass $\sum_{i \in \mathcal{V}} u_{i}(0) \pi_{i} = 1$
and define the initial condition of the random process $X(t)$ by $\mathbb{P}\{\bm{X}(0) = i\} = u_{i}(0) \pi_{i}$. We choose the rate of the Poisson process as
\begin{equation*}
\begin{aligned}
\lambda_{i',i}(t) = \frac{a_{i',i}^+(t)}{\pi_{i}}.
\end{aligned}
\end{equation*}
Define now the stopping time and number of jumps through
\begin{equation*}
\begin{aligned}
\bm{\tau} &= \inf \{t : \bm{X}(t) \notin \mathcal{V}_{\Omega_{v}}^\circ \},
\\
\bm{N}(t) &= \sup \{N: 0 = s_0 < s_1 < \dots < s_N \leq t, \; \forall 1\leq i \leq N, \bm{X}(s_i) \neq \bm{X}(s_{i-1})\}.
\end{aligned}
\end{equation*}
Then we have a straightforward identity of the density for $t > 0$  by
\begin{equation*}
\begin{aligned}
u_{i}(t) = \frac{1}{\pi_{i}} \mathbb{P}\{\bm{\tau} > t, \; \bm{X}(t) = i\}.
\end{aligned}
\end{equation*}
By the fact that $\nabla \widetilde{\phi}$ is bounded in $L^\infty$ and \eqref{eqn:coupling_finite_element_bound}, one can conclude that $b = \nabla \phi$ is also uniformly bounded in $L^\infty$. Denote $M_b$ the a priori bound of $\|b\|_{L^\infty}$, then
\begin{equation*}
\begin{aligned}
\lambda_{i',i}(t) = \frac{a_{i',i}^+(t)}{\pi_{i}} \leq \frac{M_\lambda}{\delta x},
\end{aligned}
\end{equation*}
where $M_\lambda = C M_b$.

Define $L_{\partial\Omega_v}=\mbox{dist}\,(\mbox{supp}\,u_0,\,\partial\Omega_v)$. By its definition, $\mbox{dist}\,(\bm{X}_{0},\,\partial\Omega_v)\geq L_{\partial\Omega_v}$, so that it requires at least $L_{\partial\Omega_v}/ \delta x$ jumps to reach the boundary, i.e.
\begin{equation*}
\begin{aligned}
\bm{\tau} \leq t \quad \text{ only if } \quad \bm{N}(t) \geq \left\lfloor \frac{L_{\partial}}{\delta x} \right\rfloor.
\end{aligned}
\end{equation*}
In particular, one can bound the probability of $\bm{T} \leq t$ by a homogeneous Poisson process, i.e.
\begin{equation*}
\begin{aligned}
\mathbb{P}\{\bm{\tau} \leq t\} \leq \bar P\left(\frac{M_\lambda}{\delta x}, \left\lfloor \frac{L_{\partial\Omega_v}}{\delta x} \right\rfloor - 1\right),
\end{aligned}
\end{equation*}
where $\bar P(\lambda,.)$ denotes the probability distribution of a homogeneous Poisson process with rate $\lambda$ and starting from $0$.

Consider $T > 0$ such that $L_{\partial\Omega_v} - M_\lambda T > 0$, and let $\Lambda_{T} = (L_{\partial\Omega_v} - M_\lambda T)/2$.
Then  one can deduce
\begin{equation*}
\begin{aligned}
\mathbb{P}\left\{\bm{\tau} \leq T \right\} \leq C \exp(- \Lambda_{T} /\delta x),
\end{aligned}
\end{equation*}
by standard estimates for homogeneous Poisson processes.

Hence choosing $T$ s.t. $\Lambda_T>0$, we obtain the mass leaking estimate, for any $t\leq T$
\[
0\leq 1-\sum_{i\in\mathcal{V}}  \pi_i\,u_i(t)\leq C \exp(- \Lambda_{T} /\delta x)\longrightarrow 0,\quad \mbox{as}\ n\to \infty.
\]

\medskip

\emph{Step 3: Regularity of the continuous velocity field.}
We choose the continuous velocity field in Theorem~\ref{thm:periodic_regularity_result} as  $\widetilde{b} = \nabla \widetilde{\phi}$, $- \Delta \widetilde{\phi} = g^\chi$ (with Dirichlet BC on $\Omega_{e}$).
By our discussion in Step 1, 
we have uniform in $n$ bounds on $\widetilde{b} = \nabla \widetilde{\phi}$ in $L^\infty_t L^\infty_x \cap L^\infty_t H^1_x$.

Moreover, by our assumption \eqref{eqn:coupling_finite_element_bound} on the finite elements, one can bound the $L^2$ difference between $(a_{i,j})_{i,j \in \mathcal{V}} = P_{\mathcal{F}}b$ and $P_{\mathcal{F}}\widetilde{b}$ by
\begin{equation*}
\begin{aligned}
\big\| (a_{i,j})_{i,j \in \mathcal{V}} - P_{\mathcal{F}} \widetilde{b} \, \big\|_{L^\infty_t L^{2}_x([0,T] \times \mathcal{F})} &= \big\| P_{\mathcal{F}} b - P_{\mathcal{F}} \widetilde{b} \big\|_{L^\infty_t L^{2}_x([0,T] \times \mathcal{F})}
\\
&\leq C \|\nabla \phi - \nabla \widetilde{\phi}\|_{L^\infty_t L^2_x} \leq C\delta x \|\widetilde{\phi}\|_{L^\infty_t H^2_x},
\end{aligned}
\end{equation*}
which converges to zero as $n\to\infty$ and $\delta x \to 0$.

We can also show that our choice of $\widetilde{b}$ has Sobolev regularity in time, namely
$\widetilde{b} = \nabla \widetilde{\phi} \in W^{s,1}_t L^{1}_x([0,T] \times \Omega_{e})$ for any $s<1$. Notice that
\begin{equation*}
\begin{aligned}
- \Delta \widetilde{\phi} = g^\chi = \sum_{i \in \mathcal{V}} \chi_{i} g(u_{i}),
\end{aligned}
\end{equation*}
so that the issue is to show that $\sum_{i \in \mathcal{V}} \chi_{i} g(u_{i}) \in W^{1,1}_t W^{-1,1}_x(\Omega_{e})$.

Any solution $u$ of the first-order scheme \eqref{eqn:continuity_equation_scheme} satisfies the following identity for all $i$,
\begin{equation*}
\begin{aligned}
\frac{\rd}{\rd t} u_i = 
\frac{1}{\pi_i} \sum_{j \in \mathcal{V}} \Big( a_{i,j} u_{j} - a_{j,i} u_{i} \Big)
- \mathbbm{1}_{\mathcal{V}\setminus \mathcal{V}_{\Omega}^\circ}(i) \frac{1}{\pi_i} \sum_{j \in \mathcal{V}} a_{i,j} u_j.
\end{aligned}
\end{equation*}
When $i \in \mathcal{V}_{\Omega}^\circ$, the above equality is exactly the upwind scheme. When  $i \in (\mathcal{V}\setminus \mathcal{V}_{\Omega}^\circ)$, one has $u_i \equiv 0$ and the above equality reduce to $0 = 0$.
Therefore,
\begin{equation*}
\begin{aligned}
\frac{\rd}{\rd t}g(u_i) = g'(u_i) \frac{\rd u_i}{\rd t}
&= \left( g'(u_i) \frac{1}{\pi_i} \sum_{j \in \mathcal{V}} \big(a_{i,j} u_j - a_{j,i} u_i \big) \right)
- \left( \mathbbm{1}_{\mathcal{V}\setminus \mathcal{V}_{\Omega_{v}}^\circ}(i) g'(u_i) \frac{1}{\pi_i} \sum_{j \in \mathcal{V}} a_{i,j} u_j \right)
\\
&\eqdef G^{\textrm{int}}_i - G^{\textrm{bd}}_i.
\end{aligned}
\end{equation*}
The term $G^{\textrm{bd}}_i$ measures the possible leaking at boundary. It is non-negative and from the previous step, it satisfies
\begin{equation*}
\begin{aligned}
\int_{\R^d} \sum_{i} G^{\textrm{bd}}_i(t) \, \chi_i(x) \;\rd x\rd t
=
\sum_{i \in \mathcal{V}} G^{\textrm{bd}}_i(t) \pi_i \;\rd t \leq C L_g \exp(- \Lambda_T /\delta x),
\end{aligned}
\end{equation*}
which implies that $\sum_{i \in \mathcal{V}} G^{\textrm{bd}}_i \chi_{i} \in L^\infty_t L^1_x$.

In addition, the term $G^{\textrm{int}}_i$ can be reformulated as
\begin{equation*}
\begin{aligned}
G^{\textrm{int}}_i &= g'(u_i) \frac{1}{\pi_i} \sum_{j \in \mathcal{V}} \big(a_{i,j} u_j - a_{j,i} u_i \big)
\\
&= \bigg( \frac{1}{\pi_i} \sum_{j \in \mathcal{V}} \big(a_{i,j} g(u_j) - a_{j,i} g(u_i) \big) \bigg) + \bigg( \frac{1}{\pi_i} \sum_{j \in \mathcal{V}} \big( a_{i,j} - a_{j,i} \big) \big( g'(u_i) u_i - g(u_i) \big) \bigg)
\\
&\quad + \bigg( \frac{1}{\pi_i} \sum_{j \in \mathcal{V}} a_{i,j} \big( g'(u_i)(u_j - u_i) - [g(u_j) - g(u_i)] \big) \bigg)
\\
&\eqdef G^A_i + G^B_i + G^M_i.
\end{aligned}
\end{equation*}
Since $|G^B_i| \leq 2 \sup_i |D_i| L_g M \leq 2(L_gM)^2$, it is straightforward that $\sum_{i \in \mathcal{V}} G^B_i \chi_{i} \in L^\infty_t L^\infty_x$.
Also, by the concavity of nonlinearity $g$, one has $G^M_i \geq 0$.
Furthermore,
\begin{equation*}
\begin{aligned}
&\quad \int_0^T \int_{\Omega_e} \sum_{i} G^M_i(t) \, \chi_i(x) \;\rd x\rd t
\\
&=
\int_0^T \sum_{i \in \mathcal{V}} G^M_i(t) \pi_i \;\rd t
\\
&= \int_0^T \sum_{i \in \mathcal{V}} \Big( \frac{\rd}{\rd t}g(u_i(t)) - G^A_i(t) - G^B_i(t) + G^{\textrm{bd}}_i(t) \Big) \pi_i \;\rd t
\\
&\leq \left| \sum_{i \in \mathcal{V}} \Big( g(u_i(T)) - g(u_i(0)) \Big) \right| + 0 + \left|\int_0^T \sum_{i \in \mathcal{V}} G^B_i(t) \pi_i \;\rd t \right| + \left| \int_0^T \sum_{i \in \mathcal{V}} G^{\textrm{bd}}_i(t) \pi_i \;\rd t \right|
\\
&\leq 2L_gM + 2T(L_gM)^2 + C T L_g \exp(- \Lambda_T /\delta x),
\end{aligned}
\end{equation*}
where in the first inequality we use the observation that $\sum_{i \in \mathcal{V}} G^A_i(t) \pi_i \equiv 0$.
As a consequence, one has $\sum_{i \in \mathcal{V}} G^M_i \chi_{i} \in L^1_t L^1_x$.

Finally, for any test function $\varphi \in W^{1,\infty}$,
\begin{equation*}
\begin{aligned}
&\quad \int_{\Omega_e} \varphi(x) \sum_{i \in \mathcal{V}} G^A_i(t) \chi_i(x) \;\rd x
\\
&= \int_{\Omega_e} \varphi(x) \sum_{i,j \in \mathcal{V}} \frac{1}{\pi_i} \big(a_{i,j} g(u_j(t)) - a_{j,i} g(u_i(t)) \big) \chi_i(x) \;\rd x
\\
&= \sum_{i,j \in \mathcal{V}} \big(a_{i,j} g(u_j(t)) - a_{j,i} g(u_i(t)) \big) \frac{1}{\pi_i} \int_{\Omega_e} \varphi(x) \chi_i(x) \;\rd x
\\
&= \sum_{i,j \in \mathcal{V}} \big(a_{i,j} g(u_j(t)) \big) \left( \frac{1}{\pi_i} \int_{\Omega_e} \varphi(x) \chi_i(x) \;\rd x - \frac{1}{\pi_j} \int_{\Omega_e} \varphi(y) \chi_j(y) \;\rd y \right)
\\
&= \sum_{(i,j) \in \mathcal{E}} \big(a_{i,j} g(u_j(t)) \big) \left( \frac{1}{\pi_i}\frac{1}{\pi_j} \int_{\Omega_e}\int_{\Omega_e} \big[\varphi(x) - \varphi(y)\big] \chi_i(x) \chi_j(y) \;\rd x \rd y \right).
\end{aligned}
\end{equation*}
By our structural assumptions, the number of terms in the last sum is at most $C (\delta x)^{-d}$ and each term is bounded by $C (\delta x)^d L_g M^2 \|\varphi\|_{W^{1,\infty}}$.
Hence
\begin{equation*}
\begin{aligned}
\int_{\Omega_e} \varphi(x) \sum_{i \in \mathcal{V}} G^A_i(t) \chi_i(x) \;\rd x
\leq
C L_g M^2 \|\varphi\|_{W^{1,\infty}},
\end{aligned}
\end{equation*}
which means $\sum_{i \in \mathcal{V}} G^A_i \chi_{i} \in L^\infty_t W^{-1,1}_x$.

By combining all estimates above, we conclude that
\begin{equation*}
\begin{aligned}
\frac{\rd}{\rd t} g^\chi = \frac{\rd}{\rd t} \sum_{i \in \mathcal{V}} \chi_{i} g(u_{i}) \in L_t^1 W^{-1,1}_x.
\end{aligned}
\end{equation*}
It is also straightforward that $g^\chi \in L^\infty_t L^\infty_x$. Thus one indeed has that $g^\chi \in W^{1,1}_t W^{-1,1}_x$, which implies that $\widetilde{b} = \nabla \widetilde{\phi} \in W^{s,1}_t L^{1}_x$ for any $s<1$.

\emph{Step 4.} (Compactness)
Combine the previous results and apply them to $(\mathcal{C}^{(n)}, \mathcal{F}^{(n)})$ and $(\mathcal{P}^{(n)}, \mathcal{N}^{(n)})$ for all $n \in \N_+$. Since all functions are defined on a bounded domain, Sobolev embeddings also directly apply. 
We may then use Theorem~\ref{thm:periodic_regularity_result} with $p = 1$, $q = 2$ and and $s< 1$, yielding the following asymptotic estimate for $\theta > 1/2$,
\begin{equation*}
\begin{aligned}
&\limsup_{n \to \infty}
 \|u_{(n)}(t)\|_{h_0,1,\theta}
\leq \limsup_{n \to \infty}
 \bigg[ \|u_{(n)}(0)\|_{h_0,1,\theta}
 \\
 &\quad\quad\quad\quad\quad\quad\quad\quad + C\int_0^t 
 \bigg( \|u_{(n)}(s)\|_{L^\infty(\mathcal{C})} \|u_{(n)}(s)\|_{h_0,1,\theta} + \|\widetilde{b}_{(n)}(s)\|_{W^{1,1}} \|u_{(n)}(s)\|_{L^\infty(\mathcal{C})} \bigg) \;\rd s \bigg],
\end{aligned}
\end{equation*}
where we used step 1 to bound the discrete divergence terms in Theorem~\ref{thm:periodic_regularity_result} by the corresponding bound on $u_{(n)}$.

By Gronwall estimate we conclude that
\begin{equation*}
\begin{aligned}
C_{\log, \theta}^1 \defeq \sup_{0 < h_0 < 1/2} \limsup_{n \to \infty} \|u_{(n)}(t)\|_{h_0,1,\theta}  < \infty.
\end{aligned}
\end{equation*}
The last part is to argue that $t \mapsto \bar K_h \star_x u_{(n)}^\chi(t)$, the trajectory of  mollified extended density, is equicontinuous on a compact subset of space $L^1(\R^d)$, which is performed in the same manner as in the proof of Theorem~\ref{thm:periodic_compactness_result}.

\end{proof}

\section{Proving Theorem~\ref{thm:periodic_regularity_result}}
\label{sec:outline_of_proof}

In this section we start the proof of Theorem~\ref{thm:periodic_regularity_result}, which is spread into Section~\ref{sec:outline_of_proof},~\ref{sec:propagation_regularity_discrete},~\ref{sec:residue_term_estimate}~and~\ref{sec:residue_term_estimate}. We introduce in this section three theorems that each corresponds to a specific step and  explain why they together prove Theorem~\ref{thm:periodic_regularity_result}.

The first step in the proof is naturally an estimate of the time evolution of $\|u(t)\|_{h_0,1,\theta}$, which we state as Theorem~\ref{thm:propagation_regularity_discrete}.
Most terms in the estimate behave as one can expect from the continuous model.
However, there is one additional term involving what we call a residue $r_i(t)$ which given by linear equation \eqref{eqn:linear_system_time_dependent} and restated here,
\begin{equation*}
\begin{aligned}
 \sum_{i' \in \partial \{i\}} (\widetilde{x}_{i'}^{(k)} - \widetilde{x}_i^{(k)}) a_{i',i}^+(t) = \widetilde{b}_i(t) \pi_i + r_i(t) \pi_i, \quad \forall i \in \mathcal{V}, \; t\in [t_{k-1}, t_{k}], \; 1\leq k \leq m.
\end{aligned}
\end{equation*}
We need to control this residue to conclude the bound on $\|u(t)\|_{h_0,1,\theta}$ through  Gronwall lemma. The study of the residue is where our proof fully deviates from the continuous setting.

In essence the size of the residue follows from the choice of virtual coordinates $\tilde x_i$. We correspondingly introduce two theorems: The first one  identifies some good assumptions for the virtual coordinates to make the residue small, which we state as Theorem~\ref{thm:residue_term_estimate}.
The second theorem~\ref{thm:auxiliary_function_existence} shows that virtual coordinates satisfying such assumptions actually exist, at least where the mesh has periodic patterns.
%
%
\subsection{Step 1: Propagation of regularity in the discrete setting} \label{subsec:step_1}

Our first result reproduces the propagation of regularity in \cite{BeJa:19}
for scheme \eqref{eqn:continuity_equation_scheme} but with additional terms caused by the discretization. The proof of the theorem is postponed to Section~\ref{sec:propagation_regularity_discrete}.

\begin{thm} \label{thm:propagation_regularity_discrete}

Consider the semi-discrete scheme \eqref{eqn:continuity_equation_scheme}
on a mesh $(\mathcal{C}, \mathcal{F})$ over a bounded domain $\Omega \subset \R^d$ with piecewise smooth boundary as in Definition~\ref{defi:partition_of_unity_mesh}, having discretization size $\delta x$ and satisfying the structural assumptions \eqref{eqn:mesh_comparable_1} and \eqref{eqn:mesh_comparable_2}. 
Let $(a_{i,j}(t))_{i,j \in \mathcal{V}}$
be the coefficients of scheme \eqref{eqn:continuity_equation_scheme}
and $D(t) = (D_i(t))_{i \in \mathcal{V}}$ be the discrete divergence given by \eqref{eqn:divergence_discrete}.
Let $\widetilde{b}(t,x)$ be a continuous velocity field on $\R^d$ and denote its discretization by $(\widetilde{b}_i(t))_{i \in \mathcal{V}} = P_{\mathcal{F}}\widetilde{b}(t,\cdot)$.

Choose $M_\beta, M_\gamma > 0$, such that $\delta x \leq M_\gamma \leq M_\beta < 1/32$.
Divide the time interval $[0,T]$ as $0 = t_0 < t_1 < \dots < t_m = T$. 
For each interval $[t_{k-1},t_{k}]$, let $(\widetilde{x}_i^{(k)})_{i \in \mathcal{V}}$ be virtual coordinates on the mesh satisfying
\begin{subequations} \label{eqn:drift}
\begin{align} \label{eqn:relative_drift}
|\widetilde{x}_i^{(k)} - \widetilde{x}_{i'}^{(k)}| &\leq 2 M_\gamma, \quad \forall (i,i') \in \mathcal{E},
\\ \label{eqn:absolute_drift}
|\widetilde{x}_i^{(k)} - x_i| &\leq 2 M_\beta, \quad \forall i \in \mathcal{V}.
\end{align}
\end{subequations}
Let $(r_i(t))_{i \in \mathcal{V}}, t \in [0,T]$ be the residue function given by
\begin{equation} \label{eqn:linear_system_time_dependent}
\begin{aligned}
 \sum_{i' \in \partial \{i\}} (\widetilde{x}_{i'}^{(k)} - \widetilde{x}_i^{(k)}) a_{i',i}(t) = \widetilde{b}_i(t) \pi_i + r_i(t) \pi_i, \quad \forall i \in \mathcal{V}, \; t\in [t_{k-1}, t_{k}], \; 1\leq k \leq m.
\end{aligned}
\end{equation}
Let $k(t) = \min \{k: t < t_k\}, \forall t \in [0,T]$.
Then any solution $u(t) = (u_i(t))_{i \in \mathcal{V}}$, $t \in [0,T]$ of the semi-discrete scheme \eqref{eqn:continuity_equation_scheme},
satisfies for  $0<h_0<1/2$
\begin{equation} \label{eqn:propagation_Gronwall}
\begin{aligned}
  &\|u(t)\|_{h_0,1,\theta} \leq {\alpha} (L_0 + L_1 + L_2 + L_3),
\end{aligned}
\end{equation}
where
\begin{equation} \label{eqn:propagation_Gronwall2}
\begin{aligned}
&L_0 = \|u(0)\|_{h_0,1,\theta}, \quad {\alpha} = \big(1 + C (M_\beta / h_0)\big)^{k(t)+1}, 
\\
&L_1 = C\int_0^t 
 \bigg( \|\dive \widetilde{b}(s)\|_{L^\infty(\mathcal{C})} \|u(s)\|_{h_0,1,\theta}  + \|\widetilde{b}(s)\|_{W^{1,q}} \|u(s)\|_{L^{p^*}(\mathcal{C})}
\\ 
&\quad \quad \quad \quad + \|D(s)\|_{L^\infty(\mathcal{C})} \|u(s)\|_{h_0,1,\theta} + \|u(s)\|_{L^{p^*}(\mathcal{C})} \|D(s)\|_{h_0,p,p(\theta - 1/p^*)} \bigg) \;\rd s
\\
&L_2 =  C \int_0^t \bigg(\; \big( |\log h_0|^{-\theta} M_\gamma^2 / h_0^2 \delta x \big) \|(a_{i,j}(s))_{i,j \in \mathcal{V}}\|_{L^q(\mathcal{F})} \|u(s)\|_{L^{q^*}(\mathcal{C})}
\\ 
&\quad\quad\quad\quad\quad + \big(|\log h_0|^{-\theta} M_\beta / h_0^2 \big) \|\widetilde{b}(s)\|_{L^q} \|u(s)\|_{L^{q^*}(\mathcal{C})}
\\
&\quad\quad\quad\quad\quad + (|\log h_0|^{-\theta} \delta x / h_0^2) \|\widetilde{b}(s)\|_{W^{1,q}} \|u(s)\|_{L^{q^*}(\mathcal{C})} \bigg) \;\rd s
 \\
&\quad\quad +C ( |\log h_0|^{1-\theta} ) \Big( \|u(0)\|_{L^1(\mathcal{C})} - \|u(t)\|_{L^1(\mathcal{C})} \Big),
\\ 
&L_3 = C \int_0^t  (|\log h_0|^{-\theta}/h_0) \|(r_i(s))_{i \in \mathcal{V}}\|_{L^p(\mathcal{C})} \|u(s)\|_{L^{p^*}(\mathcal{C})} \;\rd s,
\end{aligned}
\end{equation}
provided that $1 \leq p < q \leq \infty$, $\theta \geq \max\{1 - 1/q, 1/2\}$, $M_\beta \leq h_0 < 1/2$.
The constant $C$ depends on $\Omega$, the exponents $p,q$ and the constant in structural assumptions $\eqref{eqn:mesh_comparable_1}$.

\end{thm}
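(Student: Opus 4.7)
The plan is to mimic, at the discrete level, the commutator-based argument from \cite{BeJa:19}. On each sub-interval $[t_{k-1},t_k]$ I fix the virtual coordinates $(\widetilde{x}_i^{(k)})_{i\in \mathcal{V}}$ and study, for each fixed $h \in [h_0,1/2]$, the time derivative of
\[
Q^h(t) \defeq \sum_{i,j \in \mathcal{V}} \widetilde{K}_{i,j}^h\,|u_i(t) - u_j(t)|\, \pi_i\, \pi_j.
\]
Using the symmetry $\widetilde{K}_{i,j}^h = \widetilde{K}_{j,i}^h$ and the scheme \eqref{eqn:continuity_equation_scheme}, $\tfrac{d}{dt}Q^h = 2\sum_{i,j,i'} \widetilde{K}_{i,j}^h\,\sigma_{i,j}\,\pi_j\,(a_{i,i'}u_{i'} - a_{i',i}u_i)$ with $\sigma_{i,j} \defeq \sgn(u_i-u_j)$. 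The convexity inequality $\sigma_{i,j}(u_{i'}-u_i) \leq |u_{i'}-u_j| - |u_i-u_j|$, together with the decomposition $a_{i,i'}u_{i'} - a_{i',i}u_i = a_{i,i'}(u_{i'}-u_i) + (a_{i,i'}-a_{i',i})u_i$ and a swap of the dummy indices $i \leftrightarrow i'$ in the term carrying $|u_{i'}-u_j|$, reorganises the right-hand side into a discrete commutator
\[
\tfrac{d}{dt}Q^h \leq 2\sum_{i,j,i'} \pi_j\, a_{i',i}\,(\widetilde{K}_{i',j}^h - \widetilde{K}_{i,j}^h)\,|u_i-u_j| + (\text{discrete divergence contributions}).
\]
Using $\sum_{i'}(a_{i,i'}-a_{i',i}) = -D_i\pi_i$ and a further symmetrisation in $i\leftrightarrow j$, the divergence contributions collapse, after a H\"older inequality with conjugate exponents $(p,p^*)$, into terms bounded by $\|D\|_{L^\infty(\mathcal{C})}\|u\|_{h_0,1,\theta}$ and $\|u\|_{L^{p^*}(\mathcal{C})}\|D\|_{h_0,p,p(\theta-1/p^*)}$, exactly as in the continuum.

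Next I would insert the virtual coordinates into the kernel differences via a first-order Taylor expansion,
\[
\widetilde{K}_{i',j}^h - \widetilde{K}_{i,j}^h = (\widetilde{x}_{i'}-\widetilde{x}_i)\cdot \nabla K^h(\widetilde{x}_i-\widetilde{x}_j) + O\!\left(\frac{M_\gamma^2}{(|\widetilde{x}_i-\widetilde{x}_j|+h)^{d+2}}\right),
\]
the remainder being controlled via \eqref{eqn:relative_drift}. Summing against $a_{i',i}$ and applying the residue identity \eqref{eqn:linear_system_time_dependent}, the leading term becomes, after a symmetrisation $i\leftrightarrow j$ using $\nabla K^h(z) = -\nabla K^h(-z)$,
\[
\sum_{i,j}\nabla K^h(\widetilde{x}_i-\widetilde{x}_j)\cdot\bigl((\widetilde{b}_i+r_i) - (\widetilde{b}_j+r_j)\bigr)\,|u_i-u_j|\,\pi_i\pi_j.
\]
The piece carrying $\widetilde{b}_i - \widetilde{b}_j$ is estimated as in \cite{BeJa:19}: the log-scale identity $|z|\,|\nabla K^h(z)| \leq C K^h(z)$ combined with a H\"older inequality against $\|\widetilde{b}\|_{W^{1,q}_x}\|u\|_{L^{p^*}(\mathcal{C})}$ reproduces the corresponding contribution to $L_1$. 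The piece carrying $r_i-r_j$ is absorbed into $L_3$ through the elementary bound $\sum_{i,j}\widetilde{K}_{i,j}^h\,|r_i|\,|u_i-u_j|\pi_i\pi_j \leq C h_0^{-1}\|r\|_{L^p(\mathcal{C})}\|u\|_{L^{p^*}(\mathcal{C})}$.

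The remaining pieces are discretisation errors. The Taylor remainder produces the $|\log h_0|^{-\theta} M_\gamma^2/(h_0^2\,\delta x)$ contribution to $L_2$ once paired with $\|a\|_{L^q(\mathcal{F})}\|u\|_{L^{q^*}(\mathcal{C})}$; replacing $(a_{i,j})$ by $P_{\mathcal{F}}\widetilde{b}$ at the level of the residue identity, modulo the assumed vanishing defect, and accounting for the spatial mollification implicit in $a_{i,j}$, produce the $M_\beta/h_0^2$ and $\delta x/h_0^2$ contributions; boundary outflow of mass generates the term $|\log h_0|^{1-\theta}(\|u(0)\|_{L^1(\mathcal{C})}-\|u(t)\|_{L^1(\mathcal{C})})$. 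Integrating in time over each $[t_{k-1},t_k]$ and using Proposition~\ref{prop:kernel_equiv} together with \eqref{eqn:absolute_drift} to transition between the norms associated with $\widetilde{x}^{(k)}$ and $\widetilde{x}^{(k+1)}$ costs a factor $(1+CM_\beta/h_0)$ per interface, producing ${\alpha} = (1+CM_\beta/h_0)^{k(t)+1}$; Gronwall's lemma then closes the estimate.

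The main obstacle is the discrete integration-by-parts step itself. Unlike in the continuum, the kernel difference $\widetilde{K}_{i',j}^h - \widetilde{K}_{i,j}^h$ interacts at once with the asymmetry $a_{i,i'}\neq a_{i',i}$ (intrinsic upwind dissipation) and with the displacement $\widetilde{x}_{i'}-\widetilde{x}_i$. Organising the triple sum so that the error terms depend sharply on $M_\gamma$, $M_\beta$ and $\delta x$, rather than on the larger dissipative quantity $a_{i,i'}+a_{i',i}$, while keeping the convexity inequality applicable at each step, is the central technical difficulty. The careful bookkeeping of logarithmic weights needed to match the $|\log h_0|^{-\theta}$ normalisation in $L_2,L_3$, and the interplay between $\widetilde{b}$ and its face projection $P_{\mathcal{F}}\widetilde{b}$, are the other delicate points.
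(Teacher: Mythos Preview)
Your overall architecture matches the paper's: Kruzkov doubling gives $\tfrac{d}{dt}Q^h \le A_K + D_K + R_K$, the divergence and boundary pieces are handled by H\"older and mass leaking, the kernel difference is Taylor-expanded against the residue identity, and Proposition~\ref{prop:kernel_equiv} handles the interfaces between sub-intervals. Two points, however, do not go through as you describe them.

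\medskip

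\textbf{The commutator bound for the $\widetilde b$ piece.} You propose to control
\(
\sum_{i,j}\nabla K^h(\widetilde x_i-\widetilde x_j)\cdot(\widetilde b_i-\widetilde b_j)\,|u_i-u_j|\,\pi_i\pi_j
\)
via the pointwise bound $|z|\,|\nabla K^h(z)|\le C K^h(z)$ and a H\"older inequality. That route yields at best a factor $|\log h|$, not $|\log h|^{\theta}$ with $\theta\ge\max\{1-1/q,1/2\}$, and it does not produce the $\|\dive\widetilde b\|_{L^\infty}\|u\|_{h_0,1,\theta}$ term that also sits in $L_1$. The paper instead passes to a \emph{continuous} commutator: using the piecewise-constant extension from Lemma~\ref{lem:kernel_equiv_extension_trick}, it writes the discrete sum as $\int_{\R^{2d}}\nabla K^h(x-y)\cdot(\widetilde b(x)-\widetilde b(y))\,|u^V(x)-u^V(y)|\,dx\,dy$ up to errors, and then invokes Lemma~16 of \cite{BeJa:19}. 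That lemma uses the Besov decomposition of $\nabla\widetilde b$ (with $r=\max\{q,2\}$) to obtain the sharp exponent $\theta\ge 1-1/r$; this is where the constraint $\theta\ge\max\{1-1/q,1/2\}$ originates and cannot be recovered from a pointwise kernel bound.

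\medskip

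\textbf{The source of the $M_\beta/h_0^2$ and $\delta x/h_0^2$ terms in $L_2$.} These do not come from ``replacing $(a_{i,j})$ by $P_{\mathcal F}\widetilde b$'' or from mollification in $a_{i,j}$; by the time you reach the $\widetilde b$ piece, the coefficients $a_{i,j}$ have already been eliminated via \eqref{eqn:linear_system_time_dependent}. They arise precisely from the discrepancy between the discrete sum above and the continuous commutator integral: one error from replacing $\nabla K^h(\widetilde x_i-\widetilde x_j)$ by $\nabla K^h(x-y)$ (this is the $M_\beta/h^2\,\|\widetilde b\|_{L^q}$ term, via Lemma~\ref{lem:parallelogram} and \eqref{eqn:absolute_drift}), and a second from replacing the cell averages $\widetilde b_i-\widetilde b_j$ by $\widetilde b(x)-\widetilde b(y)$ (this is the $\delta x/h^2\,\|\widetilde b\|_{W^{1,q}}$ term). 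Finally, no Gronwall is applied inside this theorem; the estimate \eqref{eqn:propagation_Gronwall} is already in integrated form and Gronwall is deferred to the applications.
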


What we have exhibited in this subsection is a rather incomplete result. The terms $\alpha,\, L_2$ and $L_3$ in Theorem~\ref{thm:propagation_regularity_discrete}
are due to the discretization error.
The term $L_2$  would tend to zero as the discretization size $\delta x \to 0$, provided that the virtual coordinates are chosen such that $\max\{ M_\gamma / (\delta x)^{1/2}, M_{\beta}^{1/2} \} \to 0$ and $h_0$ are chosen decaying to zero with an even slower speed.

Unfortunately, $L_3$ will not vanish so easily.
To eliminate it asymptotically, 
one should find a sophisticated way to determine suitable division $0 = t_0 < \dots < t_m = T$ and virtual coordinates $(\widetilde{x}_i^{(k)})_{i \in \mathcal{V}}$ on each $[t_{k-1}, t_{k}]$, making the norm of residue $\|r\|_{L^1_t L^p_x([0,T] \times \mathcal{C})}$ decay. To control the term $\alpha$, one should also control the number $m$ in the division of $[0,T]$.

Finally we emphasize that we do not need to assume that the coefficients $a_{i,j}(t)$ be given from the discretization of the velocity field~$\tilde b$. The connection between $a_{i,j}$ and $\tilde b$ stems only from the definition of the residue in~\eqref{eqn:linear_system_time_dependent}. And in fact, as we will see in the conclusion of the proof, the $a_{i,j}$ are typically derived from a slightly different field $b\neq \tilde b$. 

\subsection{Step 2: Controlling the residue through virtual coordinates} \label{subsec:step_2}

It remains to investigate in what circumstance Theorem~\ref{thm:propagation_regularity_discrete} give useful results, in the sense that all the additional terms due to the discretization error vanish asymptotically. The main question is how to control the so-called residue through a proper selection of virtual coordinates. We first introduce the key notion of admissible family of virtual coordinates that works for any constant field. 

\begin{defi}\label{defi:admissible}
Consider a mesh
$(\mathcal{C}, \mathcal{F})$
over $\Omega \subset \R^d$.
Let
\begin{equation*}
\begin{aligned}
(\hat x_{i}(b_\text{c}))_{i \in \mathcal{V}} \in (\R^d)^{\mathcal{V}}, \quad \forall b_\text{c} \in \R^d
\end{aligned}
\end{equation*}
be a family of virtual coordinates.
We say that it is an admissible family of virtual coordinates on $\Omega$ for constant fields
with relative drift $M_\gamma$, absolute drift $M_\beta$ and residue bound $M_\xi$ in $L^p$
if the following properties are true:
\begin{enumerate}
\item For each $n \in \N_+$ and any vector $b_\text{c} \in \R^d$, one has
\begin{equation*}
\begin{aligned}
\hat x_{i}(b_\text{c}) = \hat x_{i}(\lambda b_\text{c}), \quad \forall \lambda > 0, i \in \mathcal{V}.
\end{aligned}
\end{equation*}
\item The following bounds hold:
\begin{equation*} 
\begin{aligned} 
\sup_{|b_\text{c}| = 1, (i,i') \in \mathcal{E}}
\big|\hat x_{i}(b_\text{c}) - \hat x_{i'}(b_\text{c})\big| \leq M_\gamma,
\\ 
\sup_{|b_\text{c}| = 1, i \in \mathcal{V}}
|\hat x_{i}(b_\text{c}) - x_i| \leq M_\beta.
\end{aligned}
\end{equation*}
\item For any vector $b_\text{c} \in \R^d$, let $(a_{i,j}(b_\text{c}))_{i,j \in \mathcal{V}} = P_{\mathcal{F}}b$ and $(b_{i}(b_\text{c}))_{i \in \mathcal{V}} = P_{\mathcal{C}}b$ be the discretization of the constant velocity field $b(x) \equiv b_\text{c}$.
Define $(\hat r_{i}(b_c))_{i \in \mathcal{V}}$ as
\begin{equation}\label{eqn:linear_system_constant_residue}
\begin{aligned}
 \sum_{i' \in \mathcal{V}} \Big(\hat x_{i'}(b_\text{c}) - \hat x_{i}(b_\text{c})\Big) a_{i',i}(b_\text{c}) = b_{i}(b_\text{c}) \pi_i + \hat r_{i}(b_c) \pi_i, \quad \forall i \in \mathcal{V}.
\end{aligned}
\end{equation}
Define the maximal residue function $((\hat r_\text{max})_{i})_{i \in \mathcal{V}}$ by
\begin{equation*} 
\begin{aligned}
(\hat r_\text{max})_{i} \defeq \sup_{|b_c| = 1} |r_{i}(b_c)|, \quad \forall i \in \mathcal{V},
\end{aligned}
\end{equation*}
then its $L^p$ norm is bounded by
\begin{equation*} 
\begin{aligned}
\| (\hat r_\text{max})_{i} \|_{L^p(\mathcal{C})} \leq |\Omega|^{1/p} M_\xi.
\end{aligned}
\end{equation*}
\end{enumerate}

\end{defi}

By assuming that an admissible family of virtual coordinates exist, we have the following theorem that controls the residue for any Sobolev velocity field and whose proof is performed in Section \ref{sec:residue_term_estimate}.
\begin{thm} \label{thm:residue_term_estimate}
Consider the semi-discrete scheme \eqref{eqn:continuity_equation_scheme}
on a mesh $(\mathcal{C}, \mathcal{F})$ over a bounded domain $\Omega \subset \R^d$ with piecewise smooth boundary as in Definition~\ref{defi:partition_of_unity_mesh}, having discretization size $\delta x$ and satisfying the structural assumptions \eqref{eqn:mesh_comparable_1} and \eqref{eqn:mesh_comparable_2}. 
Let $(a_{i,j}(t))_{i,j \in \mathcal{V}}$ be the coefficients of scheme \eqref{eqn:continuity_equation_scheme}
and $D(t) = \{D_i(t)\}_{i \in \mathcal{V}}$ be the discrete divergence given by \eqref{eqn:divergence_discrete}.
Assume that $u(t) = (u_i(t))_{i \in \mathcal{V}}$, $t \in [0,T]$ is a solution of the semi-discrete scheme \eqref{eqn:continuity_equation_scheme}.

Moreover, with some $1 \leq p < q \leq \infty$ and $0 < s \leq 1$, assume that there exists a continuous velocity field $\widetilde{b}(t,x)$ bounded in
$L^{q}_t(W^{1,q}_x) \cap L^{p}_x(W^{s,p}_t)([0,T] \times \Omega)$,
and an admissible family of virtual coordinates on $\Omega$ for constant fields, defined in Definition~\ref{defi:admissible}, with relative drift $M_\gamma$, absolute drift $M_\beta$ and residue bound $M_\xi$ in $L^{1/(1/p - 1/q)}$.
In addition, let $\theta \geq \max\{1 - 1/q, 1/2\}$, and assume that $M_\beta \leq h_0 < 1/2$.

Then one can choose a division of time interval $[0,T]$ as $0 = t_0 < t_1 < \dots < t_m = T$, and on each interval $[t_{k-1},t_{k}]$, virtual coordinates $(\widetilde{x}_i^{(k)})_{i \in \mathcal{V}}$, such that the terms ${\alpha}$ and $L_3$ in estimate~\eqref{eqn:propagation_Gronwall} are bounded by
\begin{equation} \label{eqn:propagation_Gronwall_optimize}
\begin{aligned}
&L_3 = C 
(|\log h_0|^{-\theta}/h_0)
\Bigg[
(M_\gamma / \delta x) \big\| (a_{i,j})_{i,j \in \mathcal{V}} - P_{\mathcal{F}} \widetilde{b} \, \big\|_{L^p([0,T] \times \mathcal{F})}
\\
& \quad\quad\quad\quad
+(M_\beta^s M_\gamma / \delta x)^{1/(1+s)}  \|\widetilde{b}\|_{L^{p}_x(W^{s,p}_t)}
\\
& \quad\quad\quad\quad
+ \left( M_\gamma (\delta x)^{-\frac{1}{1+(1/p-1/q)}} \right) \bigg(
\|(a_{i,j}(t))_{i,j \in \mathcal{V}}\|_{L^{q}([0,T] \times \mathcal{F})}
+\|\widetilde{b}\|_{L^{q}_t(W^{1,q}_x)}
\bigg)
\\
& \quad\quad\quad\quad
+ M_\xi   \|\widetilde{b}\|_{L_t^{q}(L_x^{q})}
 \Bigg] \|u\|_{L^\infty_t L^{p^*}_x([0,t] \times \mathcal{C})}
,
\\
&{\alpha} = \exp\big( C(1/h_0) (M_\beta^s M_\gamma / \delta x)^{1/(1+s)} \big),
\end{aligned}
\end{equation}
where the constant $C$ depends on $T, \Omega$, the exponents $p,q,s$, and the constant in structural assumption \eqref{eqn:mesh_comparable_1}.

\end{thm}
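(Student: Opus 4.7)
The proof rests on a two-level approximation strategy: a temporal partition of $[0,T]$, followed by a spatially local constant-field approximation that leverages the admissible family $\{\hat x_i(b_c)\}$ whose existence is granted by hypothesis.

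First, I partition $[0,T]$ into $m$ equal subintervals $[t_{k-1},t_k]$ of length $\Delta t=T/m$, with $m$ to be fixed at the end. On each subinterval I set $\bar b^{(k)}(x)\defeq(\Delta t)^{-1}\int_{t_{k-1}}^{t_k}\widetilde b(s,x)\,\rd s$, so that a Jensen/Poincar\'e estimate together with the $L^p_x(W^{s,p}_t)$-regularity yields $\|\widetilde b-\bar b^{(k)}\|_{L^p([t_{k-1},t_k]\times\Omega)}\lesssim (\Delta t)^s\|\widetilde b\|_{L^p_x(W^{s,p}_t)}$. I then define the virtual coordinates by cellwise evaluation of the admissible family at the local time-averaged velocity,
\begin{equation*}
\widetilde x_i^{(k)}\defeq \hat x_i\bigl(\bar b^{(k)}(x_i)\bigr),\qquad i\in\mathcal{V}.
\end{equation*}
Admissibility gives $|\widetilde x_i^{(k)}-x_i|\le M_\beta$ at once, and the relative drift bound $|\widetilde x_i^{(k)}-\widetilde x_{i'}^{(k)}|\le 2M_\gamma$ on $(i,i')\in\mathcal{E}$ follows from the admissibility bound plus mild Lipschitz control of $\bar b^{(k)}$ between neighbouring cells (scale $\delta x$), up to a regularization if needed.

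Next I decompose the residue $r_i(t)$ defined in \eqref{eqn:linear_system_time_dependent} into four pieces that match precisely the four summands in \eqref{eqn:propagation_Gronwall_optimize}:
(a) replacing $a_{i',i}(t)$ by $(P_\mathcal{F}\bar b^{(k)})_{i',i}$ produces, through $|\widetilde x_{i'}^{(k)}-\widetilde x_i^{(k)}|\le M_\gamma$ and the volume bound $\pi_i\gtrsim(\delta x)^d$, a cell-$L^p$ norm bounded by $(M_\gamma/\delta x)\|a-P_\mathcal{F}\widetilde b\|_{L^p(\mathcal{F})}$;
(b) replacing $\widetilde b(t,\cdot)$ by $\bar b^{(k)}$ on $[t_{k-1},t_k]$ contributes an error controlled by $(\Delta t)^s\|\widetilde b\|_{L^p_x(W^{s,p}_t)}$ after integrating in $t$;
(c) replacing cell averages of $\bar b^{(k)}$ by pointwise values at $x_i$ is handled by a Poincar\'e inequality in $W^{1,q}_x$ combined with a H\"older interpolation closing the $L^p$-$L^q$ gap, producing the exponent $(\delta x)^{-1/(1+(1/p-1/q))}$ applied to $\|a\|_{L^q(\mathcal{F})}+\|\widetilde b\|_{L^q_t(W^{1,q}_x)}$;
(d) the admissibility residue $\hat r_i(\bar b^{(k)}(x_i))$ is dominated pointwise by $|\bar b^{(k)}(x_i)|\,(\hat r_{\max})_i$, and a final H\"older step using the $L^{1/(1/p-1/q)}$-bound on $\hat r_{\max}$ yields $M_\xi\|\widetilde b\|_{L^q_t L^q_x}$.

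Finally I insert these four bounds into the expression for $L_3$ in \eqref{eqn:propagation_Gronwall2} and balance the $(\Delta t)^s$ factor from (b) against the telescoping prefactor $\alpha=(1+CM_\beta/h_0)^{m+1}$ coming from \eqref{eqn:propagation_Gronwall}. A direct computation shows that the choice $\Delta t\sim(M_\beta\delta x/M_\gamma)^{1/(1+s)}$ is optimal, so that $m\lesssim T(M_\gamma/(M_\beta\delta x))^{1/(1+s)}$ and consequently $\alpha\le\exp\!\bigl(C(1/h_0)(M_\beta^s M_\gamma/\delta x)^{1/(1+s)}\bigr)$, which is exactly the stated bound, and the time contribution collapses to $(M_\beta^s M_\gamma/\delta x)^{1/(1+s)}\|\widetilde b\|_{L^p_x(W^{s,p}_t)}$. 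The main technical obstacle is the interaction between steps (c) and (d): propagating the cellwise composition $\widetilde x_i^{(k)}=\hat x_i(\bar b^{(k)}(x_i))$ through the residue identity while preserving \eqref{eqn:relative_drift} across cells where $\bar b^{(k)}$ changes direction, and tracking the mixed Lebesgue norms so that the H\"older interpolation with exponent gap $1/p-1/q$ lands on the prescribed powers of $\delta x$ in \eqref{eqn:propagation_Gronwall_optimize}.
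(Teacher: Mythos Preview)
Your overall architecture (uniform time partition, time-averaging of $\widetilde b$, residue decomposition, final balancing of $\Delta t$ against $\alpha$) is the right one and matches the paper. But there is a genuine gap in the \emph{spatial} part of the construction, and it is precisely the point you flag as ``the main technical obstacle'' without resolving.

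Your choice $\widetilde x_i^{(k)}=\hat x_i(\bar b^{(k)}(x_i))$ feeds a \emph{different} constant $b_c$ into the admissible family for each cell $i$. But Definition~\ref{defi:admissible} gives no continuity whatsoever of $b_c\mapsto \hat x_i(b_c)$: the relative drift bound $|\hat x_i(b_c)-\hat x_{i'}(b_c)|\le M_\gamma$ holds only for a \emph{common} $b_c$, and the residue identity \eqref{eqn:linear_system_constant_residue} requires all neighbours $i'$ of $i$ to use that same $b_c$ in both $\hat x_{i'}$ and $a_{i',i}$. With your cellwise choice, \eqref{eqn:relative_drift} is not guaranteed, and more fatally, step (d) cannot be carried out: the quantity $\sum_{i'}(\widetilde x_{i'}^{(k)}-\widetilde x_i^{(k)})\,\bar a_{i',i}$ is \emph{not} equal to $\bar b_i\pi_i+\hat r_i(\bar b^{(k)}(x_i))\pi_i$, because the $\widetilde x_{i'}^{(k)}$ are evaluated at the wrong constants.

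The paper's fix is an additional \emph{spatial} coarsening: it partitions $\overline{\mathcal V}_\Omega$ into patches $\mathcal V_k$ of diameter $\sim\eta$ (with $\delta x\ll\eta\ll 1$), averages $\widetilde b$ over each space-time block $[t_{l-1},t_l]\times\mathcal V_k$ to get a single vector $\bar b^{l,k}$, and sets $\widetilde x_i^{(k)}=\hat x_i(\bar b^{l,k})$ for \emph{all} $i\in\mathcal V_k$. Then on interior cells the admissibility identity applies verbatim, while on the thin boundary layer $\partial\mathcal V_k$ one uses a crude bound; the fraction of boundary cells is $O(\delta x/\eta)$, and H\"older with exponent gap $1/p-1/q$ converts this to $(\delta x/\eta)^{1/p-1/q}$ in $L^p$. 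Balancing this against the spatial Poincar\'e error $\eta\|\widetilde b\|_{L^q_tW^{1,q}_x}$ from the patch averaging gives the optimal $\eta=(\delta x)^{(1/p-1/q)/(1+(1/p-1/q))}$, which is where the fractional power $M_\gamma(\delta x)^{-1/(1+(1/p-1/q))}$ in \eqref{eqn:propagation_Gronwall_optimize} actually comes from. Your step (c) as written (Poincar\'e at scale $\delta x$) cannot produce this exponent; it arises only through the interplay of the two competing $\eta$-dependent errors.
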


\subsection{Step 3: Constructing admissible virtual coordinates for periodic meshes} \label{subsec:step_3}

It remains to study how to find admissible virtual coordinates for constant fields as in Definition~\ref{defi:admissible}.
At this moment, it is still unclear to us whether this is possible for any arbitrary mesh.
Nevertheless,  
for a mesh with periodic pattern, we are able to ensure that one can find an admissible family of virtual coordinates such that the residue $(\hat r_\text{max})_{i}$ actually vanish on any inner cell of the mesh.
\begin{thm} \label{thm:auxiliary_function_existence}
Let $(\mathcal{C}, \mathcal{F})$ be a periodic mesh over $\Omega \subset \R^d$ as in Definition~\ref{defi:periodic_mesh}.
Let $(a_{i,j}(b_\text{c}))_{i,j \in \mathcal{V}} = P_{\mathcal{F}}b$ be the discretization of the constant velocity field $b(x) \equiv b_\text{c}$.
Then for any constant velocity field $b_\text{c} \in \R^d$, there exist virtual coordinates $(\hat x_{i}(b_\text{c}))_{i \in \mathcal{V}} \in (\R^d)^{\mathcal{V}}$ solving the linear system
\begin{equation} \label{eqn:linear_system}
\begin{aligned}
 \sum_{i' \in \mathcal{V}} \Big(\hat x_{i'}(b_\text{c}) - \hat x_{i}(b_\text{c})\Big) a_{i',i}(b_\text{c}) = b_\text{c} \pi_i, \quad \forall i \in \mathcal{V}_0,
\end{aligned}
\end{equation}
and satisfying the following properties:
\begin{enumerate}
\item The virtual coordinates are homogeneous in the sense that
\begin{equation*}
\begin{aligned}
\hat x_{i}(b_\text{c}) = \hat x_{i}(\lambda b_\text{c}), \quad \forall b_\text{c} \in \R^d, \lambda > 0, i \in \mathcal{V}.
\end{aligned}
\end{equation*}

\item The virtual coordinates are uniformly bounded by
\begin{equation*}
\begin{aligned}
\sup_{|b_\text{c}| = 1, i \in \mathcal{V}}
|\hat x_{i}(b_\text{c}) - x_i| < C(|\mathcal{V}_0|) \delta x,
\end{aligned}
\end{equation*}
where
$\delta x$ is the discretization size
and $C(|\mathcal{V}_0|)$ depends only on the number of cell functions in a period.

\item The virtual coordinates are periodic in the sense that
\begin{equation*}
\begin{aligned}
\hat x_{[m](i)}(b_\text{c}) = \hat x_{i}(b_\text{c}) + \sum_{k=1}^d m_{k} L_k, \quad \forall b_\text{c} \in \R^d, \; i, [m](i) \in \mathcal{V},
\end{aligned}
\end{equation*}
where $L_1, \dots, L_n \in \R^d$ are given as in Definition~\ref{defi:periodic_mesh}.
\end{enumerate}
\end{thm}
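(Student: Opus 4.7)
The plan is to reduce \eqref{eqn:linear_system} to a finite-dimensional linear system on $\mathcal{V}_0$ and apply a Fredholm alternative. Setting $\eta_i \defeq \hat x_i - x_i$ for $i \in \mathcal{V}_0$ and extending periodically by $\eta_{[m](i)} \defeq \eta_i$, the cocycle property (3) becomes automatic since $x_{[m](i)} = x_i + \sum_k m_k L_k$, and the equation becomes $T\eta = f$ with
\begin{equation*}
(T\eta)_i \defeq \sum_{i' \in \mathcal{V}}(\eta_{i'} - \eta_i)\, a_{i', i}(b_c),\qquad f_i \defeq b_c\, \pi_i - \sum_{i' \in \mathcal{V}}(x_{i'} - x_i)\, a_{i', i}(b_c).
\end{equation*}
Because $\eta$ is periodic, the inner sums collapse over $i'_0 \in \mathcal{V}_0$ with aggregated torus rates $\bar a_{i'_0, i} \defeq \sum_{[m]} a_{[m](i'_0), i}$, so $T$ is a linear operator on $(\R^d)^{\mathcal{V}_0}$ acting coordinate-wise as $\pi_i$ times the generator of the Markov chain induced by the upwind scheme on the torus. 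Homogeneity (1) is immediate from $a_{i,j}(\lambda b_c) = \lambda\, a_{i,j}(b_c)$ for $\lambda > 0$.

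The main analytic step is solvability of $T\eta = f$. For $b_c \neq 0$ the induced chain is irreducible on each communicating class of $\mathcal{V}_0$, so the cokernel of $T$ is spanned by the invariant measure on each class, and Fredholm compatibility reduces to the identity
\begin{equation*}
S(b_c) \defeq \sum_{i \in \mathcal{V}_0,\, i' \in \mathcal{V}}(x_{i'} - x_i)\, a_{i', i}(b_c) = b_c\, |\Omega_0|,\qquad |\Omega_0| \defeq \textstyle\sum_{i \in \mathcal{V}_0} \pi_i,
\end{equation*}
together with its class-wise analogues. I would establish this in two steps. A periodicity-based change of indices, writing $i' = [m](i'_0)$ and setting $\ell = [-m](i)$, yields the alternative expression $S = \sum_{i'_0, \ell}(x_{i'_0} - x_\ell) a_{i'_0, \ell}(b_c)$; adding the two and using $a_{i',i} - a_{i,i'} = b_c \cdot \int_{\R^d} \bm{n}_{i', i}\,\rd z$ gives $2S = M\, b_c$ with the $b_c$-independent matrix $M^{\alpha\gamma} \defeq \sum (x_{i'} - x_i)^\alpha \int_{\R^d} \bm{n}_{i', i}^\gamma\,\rd z$. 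The identity $M = 2|\Omega_0|\,I$ then follows from a discrete Gauss argument: the bulk contribution vanishes since $\sum_{i' \in \mathcal{V}} \int \bm{n}_{i', i}\,\rd z = \int \nabla \chi_i\,\rd z = 0$, reducing $M$ to a boundary sum over translates $[m] \neq 0$; pairing $[m] \leftrightarrow [-m]$ and invoking the partition of unity $\sum_{[m], i \in \mathcal{V}_0}[m]\chi_i \equiv 1$ evaluates this to $2|\Omega_0|\,\delta^{\alpha\gamma}$.

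Once compatibility is in hand, a solution $\eta$ on $\mathcal{V}_0$ exists, unique up to additive constants (one per communicating class), and I would normalize by $\sum_{i} \eta_i \pi_i = 0$. The structural bounds \eqref{eqn:mesh_comparable_1} give $|f_i| \leq C|b_c|(\delta x)^d$, so $\|f_i/\pi_i\|_{L^\infty(\mathcal{V}_0)} \leq C|b_c|$. The generator $L$ defined by $(L\eta)_i = (T\eta)_i / \pi_i$ has a spectral gap of order $|b_c|/\delta x$ on $(\ker L)^\perp$, with a Poincar\'e constant depending only on the combinatorial graph of $\mathcal{V}_0$ and the ratios in \eqref{eqn:mesh_comparable_1}, hence only on $|\mathcal{V}_0|$. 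Inverting then yields $\|\eta\|_{L^\infty(\mathcal{V}_0)} \leq C(|\mathcal{V}_0|)\, \delta x$, which is property (2); the case $b_c = 0$ is trivial since all $a_{i,j}(0) = 0$.

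The hardest step will be rigorously establishing $M = 2|\Omega_0|\, I$: this identity encodes the first-order consistency of the upwind scheme on a periodic mesh, and although it is physically natural (the effective drift of the induced Markov chain must equal $b_c$), its proof demands careful accounting of the cocycle structure across the boundary of the fundamental domain. Notably, the continuous analog obtained by pairing a linear test function $\phi$ with $\nabla \chi_i$ produces only $|\Omega_0|$; the extra factor of $2$ arises from the systematic discrepancy between cell centers $x_{i'}$ and face midpoints in the upwind flux. A secondary difficulty is ensuring that the Poincar\'e constant remains uniform in $|\mathcal{V}_0|$ when the direction of $b_c$ approaches a lattice-aligned value at which the torus chain becomes nearly reducible across communicating classes.
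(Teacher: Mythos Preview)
Your reduction to a finite system on $\mathcal{V}_0$ via $\eta_i = \hat x_i - x_i$ and identification of $T$ with (minus) a discrete diffusion operator is correct and matches the paper's Lemma~\ref{lem:periodici_solution} and Proposition~\ref{prop:diagonal_dominant}. However, the two points you flag as difficulties are in fact genuine gaps that your outline does not close.

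First, the ``class-wise analogues'' of $S(b_\text{c}) = b_\text{c}\,|\Omega_0|$ are asserted but not proved. Your symmetrization argument yielding $2S = M b_\text{c}$ uses the change of indices $(i,i') \mapsto (j,\ell)$ globally over $\mathcal{V}_0 \times \mathcal{V}$; it does not restrict to a single communicating class $I_k$, because a translate $[-m](i)$ of $i \in I_k$ need not correspond to another element of $I_k$. So when the chain is reducible you have not established that $f$ lies in $\Range T$. Second, even granting solvability, the Poincar\'e\slash spectral-gap bound is not uniform in the direction $b_\text{c}/|b_\text{c}|$: as $b_\text{c}$ approaches a direction where the torus chain becomes reducible, the gap collapses and $\|L^{-1}\|$ on $(\ker L)^\perp$ blows up. A bound depending ``only on $|\mathcal{V}_0|$'' cannot come from a spectral argument alone, because the slow modes carry nontrivial components of $f$ which are only \emph{small}, not zero, in the nearly-reducible regime.

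The paper resolves both issues at once by replacing your single identity $S = b_\text{c}\,|\Omega_0|$ with a family of \emph{inequalities} valid for every subset $\mathcal{V}_1 \subseteq \mathcal{V}_0$ (Proposition~\ref{prop:vanishing_average}):
\[
\Big| \sum_{i \in \mathcal{V}_1} \varphi_i \Big| \;\leq\; C(\mathcal{V}_0) \sum_{i \in \mathcal{V}_1,\, j \in \mathcal{V}_0 \setminus \mathcal{V}_1} \big( |A_{ij}| + |A_{ji}| \big).
\]
This simultaneously gives exact compatibility on each communicating class (the right side vanishes when $\mathcal{V}_1 = I_k$) and, crucially, controls the projection of $\varphi$ onto slow modes by the very matrix entries that are degenerating. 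The uniform bound then follows from an abstract linear-algebra result (Theorem~\ref{thm:linear_uniform_boundedness}) proved by compactness on the normalized set $\mathcal{M}_0(n)$ and induction on $n$: near a reducible $M^{(0)}$ one splits $x = x_1 + x_2$ into block-mean-zero and block-constant parts, handles $x_1$ by a crude resolvent bound, and controls $x_2$ by passing to the reduced $m \times m$ system on blocks, where the subset inequality is exactly what is needed to invoke the induction hypothesis. This mechanism is what your spectral-gap sketch is missing.
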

Notice that for constant velocity field, one has $a_{[m](i'),[m](i)} = a_{i',i}$ when $[m](i'),[m](i)$ are well-defined. Hence \eqref{eqn:linear_system} can be naturally extend to all $i \in \mathcal{V}_\Omega^\circ$.
That is, the maximal residue function $((\hat r_\text{max})_{i})_{i \in \mathcal{V}}$ in Definition~\ref{defi:admissible} vanishes at all $i \in \mathcal{V}_\Omega^\circ$.

\subsection{Proof of Theorem~\ref{thm:periodic_regularity_result}}

We are now ready to complete the proof of Theorem~\ref{thm:periodic_regularity_result}.

\begin{proof}[Proof of Theorem~\ref{thm:periodic_regularity_result}]

We apply Theorem~\ref{thm:auxiliary_function_existence}, Theorem~\ref{thm:propagation_regularity_discrete} and Theorem~\ref{thm:residue_term_estimate} successively:

\begin{itemize}

\item \textbf{Apply Theorem~\ref{thm:auxiliary_function_existence}:} Let $(\mathcal{C}, \mathcal{F})$ be one mesh in the sequence of meshes we consider in Theorem~\ref{thm:periodic_regularity_result} and let $\delta x$ be the discretization size.
Since the meshes have periodic patterns over $\Omega$, one can choose $\hat x_i(b_\text{c})$ as the periodic solution in Theorem~\ref{thm:auxiliary_function_existence} when $i \in \mathcal{V}_\Omega$, and choose $\hat x_i(b_\text{c}) = x_i$ as barycenter when $i \in \mathcal{V} \setminus \mathcal{V}_\Omega$.
Then the maximal residue function $((\hat r_\text{max})_{i})_{i \in \mathcal{V}}$ in Definition~\ref{defi:admissible} vanishes at all $i \in \mathcal{V}_\Omega^\circ$.
Notice that
\begin{equation*}
\begin{aligned}
\sup_{|b_\text{c}| = 1, i \in \mathcal{V}}
|\hat x_{i}(b_\text{c}) - x_i| < C(|\mathcal{V}_0|) \delta x
\end{aligned}
\end{equation*}
where $C(|\mathcal{V}_0|)$ is the constant in Theorem~\ref{thm:auxiliary_function_existence}.

By our definition of discretization size and barycenter one has
\begin{equation*}
\begin{aligned}
\sup_{(i,i') \in \mathcal{E}}
\big|x_{i} - x_{i'}\big| \leq 2\delta x.
\end{aligned}
\end{equation*}
Moreover, if $i \in \mathcal{V} \setminus \overline{\mathcal{V}}_\Omega$, by definition, one has $(P_{\mathcal{C}}b)_{i} = 0$ and $(P_{\mathcal{F}}b)_{i',i} = 0$ for all $i' \in \mathcal{V}$.
Hence, one has $b_i \equiv 0$, $a_{i',i} \equiv 0$ in \eqref{eqn:linear_system_constant_residue} and the residue $((\hat r_\text{max})_{i})_{i \in \mathcal{V}}$ in Definition~\ref{defi:admissible} actually vanishes on all $i \in \mathcal{V} \setminus \overline{\mathcal{V}}_\Omega$.

In summary, the residue can be non-zero only at $i \in \overline{\mathcal{V}}_\Omega \setminus \mathcal{V}_\Omega^\circ$. Thus one has
that
\begin{equation*}
\begin{aligned}
|\Omega|^{-1/r} \| (\hat r_\text{max})_{i} \|_{L^r(\mathcal{V})} &\leq 
\left(|\Omega|^{-1} \sum_{i \in \overline{\mathcal{V}}_\Omega \setminus \mathcal{V}_\Omega^\circ}
\sup_{|b_\text{c}| = 1} \bigg(
\sum_{i' \in \mathcal{V}} \Big(\hat x_{i'}(b_\text{c}) - \hat x_{i}(b_\text{c})\Big) a_{i',i} (b_\text{c}) - b_\text{c} \pi_i \bigg) \right)^{1/r}
\\
&\leq
\left(|\Omega|^{-1} \sum_{i \in \overline{\mathcal{V}}_\Omega \setminus \mathcal{V}_\Omega^\circ}
C(\delta x)^d \right)^{1/r}
\\
&\leq
C \left(|\Omega|^{-1} |\partial \Omega| \delta x \right)^{1/r}.
\end{aligned}
\end{equation*}
Let $r = {1/(1/p - 1/q)}$ as required in Theorem~\ref{thm:residue_term_estimate}. One can see that such choice of $\hat x_i(b_\text{c})$ forms an admissible family of virtual coordinates in Definition~\ref{defi:admissible}, with relative drift $M_\gamma$, absolute drift $M_\beta$ and residue bound $M_\xi$ in $L^{1/(1/p - 1/q)}$ given by
\begin{equation*}
\begin{aligned}
M_\beta = M_\gamma = 2\big(C(|\mathcal{V}_0|) + 1\big)\delta x, \quad M_\xi = C \left(|\Omega|^{-1} |\partial \Omega| \delta x \right)^{1/p - 1/q}.
\end{aligned}
\end{equation*}
The constant $C$ here depends on $\Omega$, the exponents $p,q$, the constant in structural assumption \eqref{eqn:mesh_comparable_1}, and, in particular, the number of cell functions in a period. This is why the constant bounding pattern size is part of the requirement of Theorem~\ref{thm:periodic_regularity_result}.

Moreover, to fulfill the requirement in Theorem~\ref{thm:propagation_regularity_discrete}, the discretization size $\delta x$ needs to be chosen small so that $M_\beta \leq \min\{h_0, 1/32\}$, which is why the inequality only holds asymptotically.

\medskip

\item \textbf{Apply Theorem~\ref{thm:propagation_regularity_discrete} and Theorem~\ref{thm:residue_term_estimate}:}
We have proved the existence of an admissible family of virtual coordinates for constant velocity fields. Recall that Theorem~\ref{thm:residue_term_estimate} shows how to reduce the residue term for non-constant velocity fields provided such family exists.
Hence, we choose the coefficients $a(t) = (a_{i,j})_{i,j \in \mathcal{V}}$, $\widetilde{b}(t)$ and the solution $u(t) = (u_i(t))_{i \in \mathcal{V}}$ as in Theorem~\ref{thm:periodic_regularity_result}.
Then the rest conditions required in Theorem~\ref{thm:residue_term_estimate}, namely the boundedness of their Lebesgue and Sobolev norms and the boundedness of
\begin{equation*}
\begin{aligned}
\big\| (a_{i,j})_{i,j \in \mathcal{V}} - P_{\mathcal{F}} \widetilde{b}\, \big\|_{L^q([0,T] \times \mathcal{F})},
\end{aligned}
\end{equation*}
are directly guaranteed by the assumptions in Theorem~\ref{thm:periodic_regularity_result}.
According to Theorem~\ref{thm:residue_term_estimate}, one can choose virtual coordinates for specific coefficients, such that the propagation of regularity \eqref{eqn:propagation_Gronwall} in Theorem~\ref{thm:propagation_regularity_discrete}, is bounded by \eqref{eqn:propagation_Gronwall_optimize}.

For clarity, we recall the full result,
\begin{equation*}
\begin{aligned}
&\|u(t)\|_{h_0,1,\theta} \leq \alpha\, (L_0 + L_1 + L_2 + L_3),
\end{aligned}
\end{equation*}
where the precise formulations of these terms are given by
\begin{equation*}
\begin{aligned}
&L_0 = \|u(0)\|_{h_0,1,\theta},
\\
&L_1 = C\int_0^t 
 \bigg( \|\dive \widetilde{b}(s)\|_{L^\infty(\mathcal{C})} \|u(s)\|_{h_0,1,\theta}  + \|\widetilde{b}(s)\|_{W^{1,q}} \|u(s)\|_{L^{p^*}(\mathcal{C})}
\\ 
&\quad \quad \quad \quad + \|D(s)\|_{L^\infty(\mathcal{C})} \|u(s)\|_{h_0,1,\theta} + \|u(s)\|_{L^{p^*}(\mathcal{C})} \|D(s)\|_{h_0,p,p(\theta - 1/p^*)} \bigg) \;\rd s,
\end{aligned}
\end{equation*}
where the constant $C$ depends on $\Omega$, the exponents $p,q$ and the constant in structural assumptions $\eqref{eqn:mesh_comparable_1}$.
Also,
\begin{equation*}
\begin{aligned}
&\alpha = \exp\big( C(1/h_0) (M_\beta^s M_\gamma / \delta x)^{1/(1+s)} \big),
\\
&L_2 =  C \int_0^t \bigg(\; \big( |\log h_0|^{-\theta} M_\gamma^2 / h_0^2 \delta x \big) \|(a_{i,j}(s))_{i,j \in \mathcal{V}}\|_{L^q(\mathcal{F})} \|u(s)\|_{L^{q^*}(\mathcal{C})}
\\ 
&\quad\quad\quad\quad\quad + \big(|\log h_0|^{-\theta} M_\beta / h_0^2 \big) \|\widetilde{b}(s)\|_{L^q} \|u(s)\|_{L^{q^*}(\mathcal{C})}
\\
&\quad\quad\quad\quad\quad + (|\log h_0|^{-\theta} \delta x / h_0^2) \|\widetilde{b}(s)\|_{W^{1,q}} \|u(s)\|_{L^{q^*}(\mathcal{C})} \bigg) \;\rd s
 \\
&\quad\quad +C ( |\log h_0|^{1-\theta} ) \Big( \|u(t)\|_{L^1(\mathcal{C})} - \|u(0)\|_{L^1(\mathcal{C})} \Big),
\\ 
&L_3 = C 
(|\log h_0|^{-\theta}/h_0)
\Bigg[
(M_\gamma / \delta x) \big\| (a_{i,j})_{i,j \in \mathcal{V}} - P_{\mathcal{F}} \widetilde{b} \, \big\|_{L^q([0,T] \times \mathcal{F})}
\\
& \quad\quad\quad\quad
+(M_\beta^s M_\gamma / \delta x)^{1/(1+s)}  \|\widetilde{b}\|_{L^{p}_x(W^{s,p}_t)}
\\
& \quad\quad\quad\quad
+ \left( M_\gamma (\delta x)^{-\frac{1}{1+(1/p-1/q)}} \right) \bigg( 
\|(a_{i,j}(t))_{i,j \in \mathcal{V}}\|_{L^{q}([0,T] \times \mathcal{F})}
+ \|\widetilde{b}\|_{L^{q}_t(W^{1,q}_x)}
\bigg)
\\
& \quad\quad\quad\quad
+ M_\xi \|\widetilde{b}\|_{L_t^{q}(L_x^{q})}
 \Bigg] \|u\|_{L^\infty_t L^{p^*}_x([0,t] \times \mathcal{C})}.
\end{aligned}
\end{equation*}
where the constant $C$ depends on $T, \Omega$, the exponents $p,q,s$, the constant in structural assumption \eqref{eqn:mesh_comparable_1} and the constant bounding pattern size.

\item \textbf{Conclude Theorem~\ref{thm:periodic_regularity_result}}
We can now make all the constants explicit in the propagation of regularity with,
\begin{equation*}
\begin{aligned}
(M_\beta^s M_\gamma / \delta x)^{1/(1+s)} \lesssim (\delta x)^{1/(1+s)}, \quad \quad
&M_\gamma^2 / \delta x, M_\beta \lesssim \delta x,
\\
M_\gamma (\delta x)^{-\frac{1}{1+(1/p-1/q)}} \lesssim (\delta x)^{\frac{1/p-1/q}{1+(1/p-1/q)}}, \quad \quad
&M_\xi \lesssim (\delta x)^{1/p-1/q} \lesssim (\delta x)^{\frac{1/p-1/q}{1+(1/p-1/q)}}.
\end{aligned}
\end{equation*}
Thus $\alpha,\, L_2,\, L_3$ can be expressed as
\begin{equation*}
\begin{aligned}
&\alpha = \exp\big( C(1/h_0) (\delta x)^{s/(1+s)} \big),
\\
&L_2 = C \int_0^t \big( |\log h_0|^{-\theta} / h_0^2 \big) (\delta x) \Big( \|a(s)\|_{L^q(\mathcal{F})} + \|\widetilde{b}(s)\|_{W^{1,q}} \Big) \|u(s)\|_{L^{q^*}(\mathcal{C})} \;\rd s
\\
& \quad\quad\quad\quad+ ( |\log h_0|^{1-\theta} ) \Big( \|u(t)\|_{L^1(\mathcal{C})} - \|u(0)\|_{L^1(\mathcal{C})} \Big),
\\
&L_3 = C 
(|\log h_0|^{-\theta}/h_0)
\Bigg[
\big\| a - P_{\mathcal{F}} \widetilde{b}\, \big\|_{L^{q}([0,T] \times \mathcal{F})}
+(\delta x)^{s/(1+s)}  \|\widetilde{b}\|_{L^{p}_x(W^{s,p}_t)}
\\
& \quad\quad\quad\quad
+ (\delta x)^{\frac{1/p-1/q}{1+(1/p-1/q)}} \bigg(
\|a(t)\|_{L^{q}([0,T] \times \mathcal{F})} + \|\widetilde{b}\|_{L^{q}_t(W^{1,q}_x)}
\bigg)
\Bigg] \|u\|_{L^\infty_t L^{p^*}_x([0,t] \times \mathcal{C})}.
\end{aligned}
\end{equation*}
One can check that this exactly corresponds to \eqref{eqn:main_regularity_result} of Theorem~\ref{thm:periodic_regularity_result}.
Applying the argument to the entire sequence of meshes $(\mathcal{C}^{(n)}, \mathcal{F}^{(n)})$ concludes the proof of Theorem~\ref{thm:periodic_regularity_result}.
\end{itemize}
\end{proof}

\section{Proof of Theorem~\ref{thm:propagation_regularity_discrete}} \label{sec:propagation_regularity_discrete}

In this section we complete the proof of Theorem~\ref{thm:propagation_regularity_discrete}. For simplicity we omit the variable $t$ in the derivation when there is no ambiguity.
Throughout the calculation, $C$ denotes a generic constant that only depends on $\Omega$, the exponents $p,q$ and the constant in the structural assumptions~$\eqref{eqn:mesh_comparable_1}$.

\subsection{The Kruzkov's doubling of variables for the semi-discrete scheme}
Notice that any solution $u$ of the first-order scheme \eqref{eqn:continuity_equation_scheme} satisfies the following identity:
\begin{equation} \label{eqn:upwind_one_line}
\begin{aligned}
\frac{\rd}{\rd t} u_i = 
\frac{1}{\pi_i} \sum_{j \in \mathcal{V}} \Big( a_{i,j} u_{j} - a_{j,i} u_{i} \Big)
- \mathbbm{1}_{\mathcal{V}\setminus \mathcal{V}_{\Omega}^\circ}(i) \frac{1}{\pi_i} \sum_{j \in \mathcal{V}} a_{i,j} u_j.
\end{aligned}
\end{equation}
When $i \in \mathcal{V}_{\Omega}^\circ$ the above equality is the upwind scheme. When  $i \in (\mathcal{V}\setminus \mathcal{V}_{\Omega}^\circ)$, one has $u_i \equiv 0$ and the above equality reduce to $0 = 0$.
In this section let us use the notation
\begin{equation*}
\begin{aligned}
R_i \defeq
- \mathbbm{1}_{\mathcal{V}\setminus \mathcal{V}_{\Omega}^\circ}(i) \frac{1}{\pi_i} \sum_{j \in \mathcal{V}} a_{i,j} u_j.
\end{aligned}
\end{equation*}
The term $R_i$ measures the possible leaking of mass at boundary. It is easy to verify that $R_i \leq 0$ and
\begin{equation} \label{eqn:total_leaking}
\begin{aligned}
\frac{\rd}{\rd t} \sum_{i \in \mathcal{V}} u_i \pi_i = \sum_{i \in \mathcal{V}} R_i \pi_i.
\end{aligned}
\end{equation}
The next proposition explains how to bound the time derivative of our semi-norms.
\begin{prop} \label{prop:scheme_Kruzkov}
For any solution $u$ of the first-order scheme \eqref{eqn:continuity_equation_scheme} and any non-negative discrete kernel $\{K_{i,j}\}_{i,j \in \mathcal{V}}$, the following inequality holds in the sense of distribution:
\begin{equation*}
\begin{aligned}
\quad \frac{\rd}{ \rd t} \sum_{i,j \in \mathcal{V}} K_{i,j} |u_i - u_j| \pi_i \pi_j
&\leq 2 \sum_{i,j \in \mathcal{V}} \sum_{i' \in \mathcal{V}} (K_{i',j} - K_{i,j}) a_{i',i} |u_i - u_j| \pi_j 
\\ 
&\quad +(- 2)\sum_{i,j \in \mathcal{V}} K_{i,j} \sgn (u_i - u_j)
\left( D_i u_j \right) \pi_i \pi_j
\\
&\quad + 2 \sum_{i,j \in \mathcal{V}} K_{i,j} \sgn(u_i - u_j) R_i \pi_i \pi_j
\\
&\quad \eqdef A_K + D_K + R_K.
\end{aligned}
\end{equation*}
\end{prop}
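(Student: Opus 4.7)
The plan is to carry out a discrete analogue of Kruzkov's doubling-of-variables argument on the quantity $\sum_{i,j}K_{i,j}|u_i-u_j|\pi_i\pi_j$. First I would invoke the symmetry $K_{i,j}=K_{j,i}$ (which holds for every kernel appearing in Section~\ref{subsec:compactness_and_propagation} since the radial profile of $K^h$ is even) together with the chain rule $\frac{\rd}{\rd t}|u_i-u_j|=\sgn(u_i-u_j)\bigl(\frac{\rd u_i}{\rd t}-\frac{\rd u_j}{\rd t}\bigr)$. Swapping $i\leftrightarrow j$ inside the $\frac{\rd u_j}{\rd t}$ contribution converts it into $-\frac{\rd u_i}{\rd t}$, so the two pieces collapse into
\[
\frac{\rd}{\rd t}\sum_{i,j\in\mathcal{V}}K_{i,j}|u_i-u_j|\pi_i\pi_j \;=\; 2\sum_{i,j\in\mathcal{V}}K_{i,j}\sgn(u_i-u_j)\frac{\rd u_i}{\rd t}\pi_i\pi_j.
\]

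Next I would plug in~\eqref{eqn:upwind_one_line}, using the Kruzkov-type decomposition
\[
\pi_i\frac{\rd u_i}{\rd t} \;=\; \sum_{i'\in\mathcal{V}} a_{i,i'}(u_{i'}-u_i) \;-\; u_i\pi_i D_i \;+\; \pi_i R_i,
\]
which follows from $a_{i,i'}u_{i'}-a_{i',i}u_i = a_{i,i'}(u_{i'}-u_i)+(a_{i,i'}-a_{i',i})u_i$ together with $\sum_{i'}(a_{i,i'}-a_{i',i})=-\pi_iD_i$. The boundary-leaking contribution is immediately $R_K$. For the divergence-like contribution, the elementary identity $\sgn(u_i-u_j)u_i=|u_i-u_j|+\sgn(u_i-u_j)u_j$ decomposes $-2\sum K_{i,j}\sgn(u_i-u_j)u_iD_i\pi_i\pi_j$ into the desired $D_K$ plus an extra correction $-2\sum K_{i,j}|u_i-u_j|D_i\pi_i\pi_j$.

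The heart of the argument, and the only place where an inequality rather than an equality appears, is the treatment of the advective piece. I would apply the pointwise bound
\[
\sgn(u_i-u_j)(u_{i'}-u_i)\;\leq\;|u_{i'}-u_j|-|u_i-u_j|,
\]
which follows from $|x|\geq\sigma x$ for $|\sigma|\leq 1$. Multiplying by $a_{i,i'}\geq 0$ (the upwind construction ensures non-negativity) and summing yields an upper bound whose $|u_{i'}-u_j|$ piece becomes, after the reindexing $i\leftrightarrow i'$, equal to $2\sum K_{i',j}a_{i',i}|u_i-u_j|\pi_j$, providing the $K_{i',j}$-part of $A_K$. The $|u_i-u_j|$ piece however still carries $a_{i,i'}$ rather than $a_{i',i}$, so I would write $a_{i,i'}=a_{i',i}+(a_{i,i'}-a_{i',i})$ and invoke $\sum_{i'}(a_{i,i'}-a_{i',i})=-\pi_iD_i$ once more. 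This contributes the $K_{i,j}$-part of $A_K$ together with a correction $+2\sum K_{i,j}|u_i-u_j|D_i\pi_i\pi_j$ that exactly cancels the one produced at the divergence step. The delicate bookkeeping, and the main place where one must be careful, is to follow the asymmetry $a_{i,i'}\neq a_{i',i}$ through these reindexings so that the two stray $|u_i-u_j|D_i$ terms cancel with the correct sign and the final estimate collapses to $A_K+D_K+R_K$.
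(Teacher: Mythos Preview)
Your argument is correct and relies on the same key inequality as the paper, though you organize the computation differently. The paper does not split off the divergence contribution early; instead it keeps the raw sum $\sum_{i'}(a_{i,i'}u_{i'}-a_{i',i}u_i)$ intact and performs a single algebraic decomposition of $\sum_{i,j,i'}K_{i,j}\sgn(u_i-u_j)(a_{i,i'}u_{i'}-a_{i',i}u_i)\pi_j$ into three pieces $A_K'+D_K'+N_K$, where $A_K'=A_K/2$, $D_K'=D_K/2$, and
\[
N_K=\sum_{i,j,i'}K_{i,j}\bigl[\sgn(u_{i'}-u_j)-\sgn(u_i-u_j)\bigr]a_{i,i'}(u_j-u_{i'})\pi_j\leq 0.
\]
Your Kruzkov bound $\sgn(u_i-u_j)(u_{i'}-u_i)\leq|u_{i'}-u_j|-|u_i-u_j|$ is exactly the pointwise inequality $N_K\leq 0$ in disguise, so the two inequalities coincide. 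The practical difference is that by keeping everything together the paper never produces the two stray $\pm 2\sum K_{i,j}|u_i-u_j|D_i\pi_i\pi_j$ terms, and hence avoids the cancellation bookkeeping you flag at the end of your sketch; conversely, your early separation of the $-u_i\pi_iD_i$ piece makes the role of the divergence more transparent from the outset. Either route is fine.
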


\begin{proof}[Proof of Proposition~\ref{prop:scheme_Kruzkov}]
The following equality holds in distributional sense:
\begin{equation*}
\begin{aligned}
&\quad \frac{\rd}{ \rd t} \sum_{i,j \in \mathcal{V}} K_{i,j} |u_i - u_j| \pi_i \pi_j
\\
&= \sum_{i,j \in \mathcal{V}} K_{i,j} \sgn(u_i - u_j) \left( \frac{1}{\pi_i} \sum_{i' \in \mathcal{V}} \Big( a_{i,i'} u_{i'} - a_{i',i} u_{i} \Big) + R_i - \frac{1}{\pi_j} \sum_{j' \in \mathcal{V}} \Big( a_{j,j'} u_{j'} - a_{j',j} u_{j} \Big) - R_j \right) \pi_i \pi_j
\\
&= 2 \sum_{i,j \in \mathcal{V}} K_{i,j} \sgn(u_i - u_j) \left( \frac{1}{\pi_i} \sum_{i' \in \mathcal{V}} \Big( a_{i,i'} u_{i'} - a_{i',i} u_{i} \Big) + R_i \right) \pi_i \pi_j.
\end{aligned}
\end{equation*}
Proving the first equality is nothing but the chain rule applied to the semi-discrete scheme. The second equality follows from symmetry,  by switching the indexes $i$ and $j$, $i'$ and $j'$.

The next step is to check that this can be further decomposed into our sum $A_K + D_K + R_K$ plus a non-positive term. Indeed,
\begin{equation*}
\begin{aligned} 
&\quad \sum_{i,j \in \mathcal{V}} K_{i,j} \sgn(u_i - u_j) \left( \frac{1}{\pi_i} \sum_{i' \in \mathcal{V}} \Big( a_{i,i'} u_{i'} - a_{i',i} u_{i} \Big)  \Big) \right) \pi_i \pi_j
\\ 
&= \sum_{i,j \in \mathcal{V}}\sum_{i' \in \mathcal{V}} \Bigg\{ \;{K_{i,j} \sgn(u_i - u_j) a_{i,i'} u_{i'}}  \;{- K_{i,j} \sgn(u_i - u_j) a_{i',i} u_{i} } \Bigg\} \pi_j
\\ 
&= \sum_{i,j \in \mathcal{V}}\sum_{i' \in \mathcal{V}} \Bigg\{ \;{ K_{i',j} \sgn(u_i - u_j) a_{i',i} u_{i}}  \;{- K_{i',j} \sgn(u_i - u_j) a_{i',i} u_{j}}
\\ 
&\quad\quad\quad\quad\quad\quad\quad \;{-K_{i,j} \sgn(u_i - u_j) a_{i',i} u_{i}}  \;{+ K_{i,j} \sgn(u_i - u_j) a_{i',i} u_{j}}
\Bigg\} \pi_j
\\ 
&\quad + \sum_{i,j \in \mathcal{V}}\sum_{i' \in \mathcal{V}} \Bigg\{ \;{- K_{i,j} \sgn(u_i - u_j) a_{i',i} u_j} \;{+ K_{i,j} \sgn(u_i - u_j) a_{i,i'} u_j} \Bigg\} \pi_j
\\ 
&\quad + \sum_{i,j \in \mathcal{V}}\sum_{i' \in \mathcal{V}} \Bigg\{ \;{ - K_{i,j} \sgn(u_{i'} - u_j) a_{i,i'} u_{i'}} \;{ + K_{i,j} \sgn(u_i - u_j) a_{i,i'} u_{i'}} \Bigg\} \pi_j
\\ 
&\quad\quad\quad\quad\quad\quad\quad \;{+ K_{i,j} \sgn(u_{i'} - u_j) a_{i,i'} u_{j}} \;{ - K_{i,j} \sgn(u_i - u_j) a_{i,i'} u_{j}} \Bigg\} \pi_j
\\
&\quad \eqdef A_K' + D_K' + N_K.
\end{aligned}
\end{equation*}
By our assumption, $K_{i,j} \geq 0$, $a_{i,j} \geq 0$ for all $i,j \in \mathcal{V}$.
It is easy to verify that the third term
\begin{equation*}
\begin{aligned} 
N_K &= \sum_{i,j \in \mathcal{V}}\sum_{i' \in \mathcal{V}} \Bigg\{ \;{ - K_{i,j} \sgn(u_{i'} - u_j) a_{i,i'} u_{i'}} \;{+ K_{i,j} \sgn(u_i - u_j) a_{i,i'} u_{i'}} \Bigg\} \pi_j
\\ 
&\quad\quad\quad\quad\quad\quad \;{+ K_{i,j} \sgn(u_{i'} - u_j) a_{i,i'} u_{j}} \;{ - K_{i,j} \sgn(u_i - u_j) a_{i,i'} u_{j}} \Bigg\} \pi_j
\\
&= \sum_{i,j \in \mathcal{V}}\sum_{i' \in \mathcal{V}} K_{i,j} \big[ \sgn(u_{i'} - u_j) - \sgn(u_i - u_j) \big] a_{i,i'} (u_{j} - u_{i'}) \pi_j
\end{aligned}
\end{equation*}
is always non-positive, and it does not vanish only at edges that $\sgn(u_{i'} - u_{j}) \neq \sgn(u_{i} - u_{j})$.
In addition, one can reformulate $A_K'$ and $D_K'$ as
\begin{equation*}
\begin{aligned} 
A_K' &= \sum_{i,j \in \mathcal{V}}\sum_{i' \in \mathcal{V}} \Bigg\{ \;{ K_{i',j} \sgn(u_i - u_j) a_{i',i} u_{i}}  \;{- K_{i',j} \sgn(u_i - u_j) a_{i',i} u_{j}}
\\ 
&\quad\quad\quad\quad\quad\quad\quad \;{-K_{i,j} \sgn(u_i - u_j) a_{i',i} u_{i}}  \;{+ K_{i,j} \sgn(u_i - u_j) a_{i',i} u_{j}}
\Bigg\} \pi_j
\\
&= \sum_{i,j \in \mathcal{V}} \sum_{i' \in \mathcal{V}} (K_{i',j} - K_{i,j}) a_{i',i} |u_i - u_j| \pi_j,
\\
D_K' &= \sum_{i,j \in \mathcal{V}} \sum_{i' \in \mathcal{V}} \Bigg\{ \;{- K_{i,j} \sgn(u_i - u_j) a_{i',i} u_j} \;{+ K_{i,j} \sgn(u_i - u_j) a_{i,i'} u_j} \Bigg\} \pi_j
\\
&= - \sum_{i,j \in \mathcal{V}} K_{i,j} \sgn (u_i - u_j)
\left( \sum_{i' \in \mathcal{V}} \big(a_{i',i} - a_{i,i'}\big) \right) u_j \pi_i \pi_j.
\end{aligned}
\end{equation*}
It is straightforward to see that $A_K' + D_K' = A_K/2 + D_K/2$. Hence
\begin{equation*}
\begin{aligned} 
&\quad \sum_{i,j \in \mathcal{V}} K_{i,j} \sgn(u_i - u_j) \left( \frac{1}{\pi_i} \sum_{i' \in \mathcal{V}} \Big( a_{i,i'} u_{i'} - a_{i',i} u_{i} \Big) + R_i \right) \pi_i \pi_j
\\
&= A_K' + D_K' + N_K + R_K/2
\\
&\leq A_K/2 + D_K/2 + R_K/2.
\end{aligned}
\end{equation*}
Multiplying both sides by $2$, one obtains the inequality in the proposition.
\end{proof}

From now on we fix the kernel $K_{i,j}$ in the above proposition as $\widetilde{K}_{i,j}^h$ in Definition~\ref{defi:semi-norm_discrete}
for $0 < h < 1/2$ and $\widetilde{x} = (\widetilde{x}_i)_{i \in \mathcal{V}} \in (\R^d)^{\mathcal{V}}$. 
Moreover, assume that $h \geq \max\{ \delta x, \sup_{i \in \mathcal{V}}|\widetilde{x}_i - x_i| \}$.
Then the term $R_K$ can be bounded by
\begin{equation} \label{eqn:estimate_residue_discrete}
\begin{aligned}
|R_K| &= \bigg| 2 \sum_{i,j \in \mathcal{V}} \widetilde{K}_{i,j}^h \sgn(u_i - u_j) R_i \pi_i \pi_j \bigg|
\leq 2 \sum_{i \in \mathcal{V}}\bigg\{ \Big(\sum_{j \in \mathcal{V}} \widetilde{K}_{i,j}^h \pi_j \Big) |R_i| \pi_i \bigg\}
\\
&\leq C|\log h| \|(R_i)_{i \in \mathcal{V}}\|_{L^1(\mathcal{C})}.
\end{aligned}
\end{equation}
Moreover, the term $D_K$ can then be estimated through
\begin{equation} \label{eqn:estimate_divergence_discrete}
\begin{aligned} 
|D_K| &= \bigg| \; 2\sum_{i,j \in \mathcal{V}} \widetilde{K}_{i,j}^h \sgn (u_i - u_j)
\left( D_i u_j \right) \pi_i \pi_j \bigg|
\\
&= \bigg| \sum_{i,j \in \mathcal{V}} \widetilde{K}_{i,j}^h \sgn (u_i - u_j)
\left( D_i u_j - D_j u_i \right) \pi_i \pi_j \bigg|
\\
&= \bigg| \sum_{i,j \in \mathcal{V}} \widetilde{K}_{i,j}^h \frac{D_i + D_j}{2} |u_i - u_j| \;\pi_i \pi_j
\\
&\quad + \sum_{i,j \in \mathcal{V}} \widetilde{K}_{i,j}^h (D_i - D_j) \frac{u_i + u_j}{2} \sgn(u_i - u_j) \;\pi_i \pi_j \bigg|
\\ 
&\quad \leq
C |\log h|^{\theta} \left( \|D\|_{L^\infty(\mathcal{C})} \|u\|_{h_0,1,\theta;\widetilde{x}} + \|u\|_{L^{p^*}(\mathcal{C})} \|D\|_{h_0,p,p(\theta - 1/p^*);\widetilde{x}} \right).
\end{aligned}
\end{equation}
The last inequality is a consequence of the following two estimations:
Using the bound on the divergence, one has
\begin{equation*}
\begin{aligned}
&\bigg| \sum_{i,j \in \mathcal{V}} \widetilde{K}_{i,j}^h \frac{D_i + D_j}{2} |u_i - u_j| \;\pi_i \pi_j \bigg|
\\
&\leq \|D\|_{L^\infty(\mathcal{V})} \sum_{i,j \in \mathcal{V}} \widetilde{K}_{i,j}^h |u_i - u_j| \;\pi_i \pi_j \leq |\log h|^\theta \|D\|_{L^\infty(\mathcal{C})} \|u\|_{\alpha,1,\theta;\widetilde{x}}.
\end{aligned}
\end{equation*}
Also, by H\"older estimate
\begin{equation*}
\begin{aligned}
&\bigg| \sum_{i,j \in \mathcal{V}} \widetilde{K}_{i,j}^h (D_i - D_j) \frac{u_i + u_j}{2} \sgn(u_i - u_j) \;\pi_i \pi_j \bigg|
\\
&\leq \bigg( \sum_{i,j \in \mathcal{V}} \widetilde{K}_{i,j}^h |D_i - D_j|^p \;\pi_i \pi_j \bigg)^{1/p} \bigg( \sum_{i,j \in \mathcal{V}} \widetilde{K}_{i,j}^h \Big|\frac{u_i + u_j}{2}\Big|^{p^*} \;\pi_i \pi_j \bigg)^{1/p^*}
\\
&\leq C |\log h|^{\theta} \|u\|_{L^{p^*}(\mathcal{C})} \|D\|_{p,p(\theta - 1/p^*);\widetilde{x}}.
\end{aligned}
\end{equation*}
The above H\"older estimate is for $1 < p < \infty$ but can be extended to $p = 1$ in the obvious way.

\subsection{Bounding the discrete commutator term}
We now investigate the discrete commutator term $A_K$ when $K_{i,j}$ is chosen as $\widetilde{K}_{i,j}^h$ in Definition~\ref{defi:semi-norm_discrete} for $\widetilde{x} = (\widetilde{x}_i)_{i \in \mathcal{V}} \in (\R^d)^{\mathcal{V}}$ and $\max\{ \delta x, \sup_{i \in \mathcal{V}}|\widetilde{x}_i - x_i| \} \leq h < 1/2$.
Recall that
\begin{equation} \label{eqn:commutator_A}
\begin{aligned}
A_K / 2 = \sum_{i,j \in \mathcal{V}} \sum_{i' \in \mathcal{V}} (\widetilde{K}_{i',j}^h - \widetilde{K}_{i,j}^h) a_{i',i} |u_i - u_j| \pi_j.
\end{aligned}
\end{equation}
We begin by a short lemma about the scaling of the continuous kernel $K^h$.
\begin{lem} \label{lem:parallelogram}
Take $x, y, s \in \R^d$ such that $0 < h < 1/2$ and $|s| < h$. Then
\begin{equation}\label{eqn:parallelogram_1}
\begin{aligned}
\big| K^h(x-y) - K^h(x-y+s) + \nabla K^h(x-y) \cdot s \big| \leq \frac{C |s|^{2}}{(|x - y| + h)^{d+2}}.
\end{aligned}
\end{equation}
Also,
\begin{equation}\label{eqn:parallelogram_2}
\begin{aligned}
\big| \nabla K^h(x-y+s) - \nabla K^h(x-y) \big| \leq \frac{C |s|}{(|x - y| + h)^{d+2}}.
\end{aligned}
\end{equation}
\end{lem}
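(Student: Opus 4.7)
The plan is to obtain both estimates from one-dimensional Taylor expansions along the segment between $z := x-y$ and $z+s$, combined with pointwise bounds on the derivatives of $K^h$. First I would verify the pointwise decay bounds $|\nabla K^h(z)| \le C/(|z|+h)^{d+1}$ and $|\nabla^2 K^h(z)| \le C/(|z|+h)^{d+2}$, where $C$ depends only on the fixed cutoff $\phi$. Writing $K^h(z) = \phi(z)(|z|+h)^{-d}$ and applying the product rule, the contributions from derivatives of $\phi$ are uniformly bounded on its compact support, while the derivatives of $(|z|+h)^{-d}$ scale exactly as $(|z|+h)^{-d-k}$ by direct computation.

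With these bounds, both estimates will follow from the integral forms of Taylor's theorem:
\begin{equation*}
K^h(z) - K^h(z+s) + \nabla K^h(z)\cdot s = -\int_0^1 (1-t)\, s^{\top}\, \nabla^2 K^h(z+ts)\, s \,\rd t,
\end{equation*}
\begin{equation*}
\nabla K^h(z+s) - \nabla K^h(z) = \int_0^1 \nabla^2 K^h(z+ts)\, s\, \rd t.
\end{equation*}
Inserting the Hessian bound into each integrand and integrating in $t$ yields the right-hand sides of \eqref{eqn:parallelogram_1} and \eqref{eqn:parallelogram_2}, provided one removes the $ts$-dependence from the denominator. Here the assumption $|s|<h$ enters crucially: the reverse triangle inequality gives $\bigl||z+ts|-|z|\bigr|\le |s|<h$, and a short case split on $|z|\ge 2h$ versus $|z|<2h$ yields $|z+ts|+h \ge (|z|+h)/2$ uniformly for $t\in[0,1]$, which replaces $(|z+ts|+h)^{-d-2}$ with a constant multiple of $(|z|+h)^{-d-2}$ under the integral.

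The main obstacle is that $(|z|+h)^{-d}$ fails to be $C^2$ at $z=0$: the tangential part of its Hessian carries a factor of size $|z|^{-1}(|z|+h)^{-d-1}$ which, taken at face value, is worse than $(|z|+h)^{-d-2}$ near the origin. To deal with this I would expand $|z+s|$ through the identity $(|z+s|-|z|)(|z+s|+|z|) = 2z\cdot s + |s|^2$ and treat the radial and spherical variations of $K^h$ separately. The angular variation is controlled because the deviation $|z+s|-|z|-\hat z \cdot s$ comes with an additional factor $|s|$ and an extra factor $|z+s|+|z|$ in its denominator, which combines with the $|z|^{-1}$ in the tangential Hessian to restore the claimed $(|z|+h)^{-d-2}$ scaling. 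Once this radial/tangential decomposition is in place, the one-variable Taylor arguments go through directly and the integration in $t$ produces the advertised $|s|^2$ and $|s|$ prefactors.
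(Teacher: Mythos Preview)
The paper does not supply a proof of this lemma; it is stated as an elementary scaling fact and then used. Your Taylor-remainder strategy is the natural one, and you correctly identify the failure of $C^2$ regularity of $(|z|+h)^{-d}$ at $z=0$ as the only genuine obstacle.

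However, your proposed radial/tangential fix does not close the gap, and in fact it cannot, because the inequalities as written are false. Take $z=x-y$ with $|z|=\epsilon<h/2$ and $s=-2z$, so that $|s|=2\epsilon<h$ and $z+s=-z$; near the origin $\phi\equiv 1$. Then $K^h(z)=K^h(-z)$ while $\nabla K^h(z)\cdot s = 2d\,\epsilon\,(\epsilon+h)^{-d-1}$, so the left side of \eqref{eqn:parallelogram_1} equals $2d\,\epsilon\,(\epsilon+h)^{-d-1}$, whereas the right side is $4C\epsilon^{2}(\epsilon+h)^{-d-2}$; their ratio $d(\epsilon+h)/(2C\epsilon)$ tends to infinity as $\epsilon\to 0$. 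The same choice also violates \eqref{eqn:parallelogram_2}, since $\nabla K^h(-z)-\nabla K^h(z)$ has magnitude $2d(\epsilon+h)^{-d-1}$. In your argument the breakdown is exactly the step where you assert that the factor $(|z+s|+|z|)^{-1}$ in the angular deviation $|z+s|-|z|-\hat z\cdot s$ ``restores'' the scaling: what you would actually need is $(|z+s|+|z|)^{-1}\le C(|z|+h)^{-1}$, and this fails precisely when both $|z|$ and $|z+s|$ are small compared with $h$. A correct version of the statement requires either an additional hypothesis such as $|x-y|\gtrsim h$ (under which the pointwise Hessian bound $|\nabla^2 K^h|\le C(|\cdot|+h)^{-d-2}$ does hold along the whole segment and your Taylor argument goes through verbatim), or replacing $(|\cdot|+h)^{-d}$ near the origin by a $C^2$ radial profile of comparable size; neither modification disturbs the equivalence of the semi-norms or the downstream estimates in the paper.
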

We are going to use this lemma to reduce a few terms to simpler forms, with a tolerable error.
In particular, the following lemma mimics the continuous commutator estimate in \cite{BeJa:19}, provided that one can find suitable auxiliary functions $(\widetilde{x}_i)_{i \in \mathcal{V}}$ and $(b_i)_{i \in \mathcal{V}}$.

\begin{lem}\label{lem:linear_system}

Consider the semi-discrete scheme \eqref{eqn:continuity_equation_scheme}
on a mesh $(\mathcal{C},\mathcal{F})$ over $\Omega \subset \R^d$ as in Definition~\ref{defi:partition_of_unity_mesh}, having discretization size $\delta x$ and satisfying the structural assumptions \eqref{eqn:mesh_comparable_1}. 
Let $(a_{i,j})_{(i,j) \in \mathcal{E}}$ be the coefficients of the scheme and let $D = (D_{i})_{i \in \mathcal{V}}$ be the discrete divergence defined as in \eqref{eqn:divergence_discrete}.
Let $\widetilde{b}(x)$ be a continuous velocity field on $\R^d$ and denote $(\widetilde{b}_i)_{i \in \mathcal{V}} = P_{\mathcal{V}} \widetilde{b}$.
Choose virtual coordinates $(\widetilde{x}_i)_{i \in \mathcal{V}}$ on the mesh satisfying
\begin{equation} \label{eqn:drift_time_independent}
\begin{aligned} 
|\widetilde{x}_i - \widetilde{x}_{i'}| &< M_\gamma, \quad \forall (i,i') \in \mathcal{E},
\\ 
|\widetilde{x}_i - x_i| &< M_\beta, \quad \forall i \in \mathcal{V}.
\end{aligned}
\end{equation}
Let
$\widetilde{K}_{i,j}^h$ be as in Definition~\ref{defi:semi-norm_discrete}
corresponding to $(\widetilde{x}_i)_{i \in \mathcal{V}}$
and
let $(r_i(t))_{i \in \mathcal{V}}$ be the residue function given by
\begin{equation} \label{eqn:linear_system_time_independent}
\begin{aligned}
 \sum_{i' \in \partial \{i\}} (\widetilde{x}_{i'} - \widetilde{x}_i) a_{i',i} = \widetilde{b}_i \pi_i + r_i \pi_i, \quad \forall i \in \mathcal{V}.
\end{aligned}
\end{equation}
Then the discrete commutator term $A_K$ given through \eqref{eqn:commutator_A} can be bounded by
\begin{equation} \label{eqn:estimate_commutator_discrete}
\begin{aligned}
|A_K| &\leq C |\log h|^{\theta} \left( \|\dive \widetilde{b}\|_{L^\infty} \|u\|_{h_0,1,\theta;\widetilde{x}} + \|\widetilde{b}\|_{W^{1,q}} \|u\|_{L^{p^*}(\mathcal{C})}\right)
\\
&\quad + C \big( M_\gamma^2 / h^2 \delta x \big) \|(a_{i,j})_{i,j\in \mathcal{V}}\|_{L^q(\mathcal{F})} \|u\|_{L^{q^*}} + C \big(M_\beta / h^2 \big) \|(\widetilde{b}_i)_{i \in \mathcal{V}}\|_{L^q(\mathcal{C})} \|u\|_{L^{q^*}}
\\
&\quad + C(1/h) \|r\|_{L^p} \|u\|_{L^{p^*}},
\end{aligned}
\end{equation}
provided that $1 \leq p < q \leq \infty$, $\theta \geq \max\{1 - 1/q, 1/2\}$, and $M_\gamma < M_\beta < h_0 < h$.

\end{lem}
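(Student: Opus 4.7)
The plan is to mimic the continuous commutator estimate from \cite{BeJa:19} at the discrete level, using Lemma~\ref{lem:parallelogram} as the replacement for second-order Taylor expansion. Starting from
\begin{equation*}
A_K/2 = \sum_{i,j,i'\in\mathcal{V}} \bigl[K^h(\widetilde{x}_{i'}-\widetilde{x}_j) - K^h(\widetilde{x}_i-\widetilde{x}_j)\bigr]\, a_{i',i}\,|u_i-u_j|\,\pi_j,
\end{equation*}
apply~\eqref{eqn:parallelogram_1} with $s=\widetilde{x}_{i'}-\widetilde{x}_i$ (licit because $|s|\le M_\gamma<h$) to replace the kernel difference by $\nabla K^h(\widetilde{x}_i-\widetilde{x}_j)\cdot(\widetilde{x}_{i'}-\widetilde{x}_i)$ plus a quadratic remainder of size $|s|^2/(|\widetilde{x}_i-\widetilde{x}_j|+h)^{d+2}$. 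Summing the resulting linear piece in $i'$ and invoking~\eqref{eqn:linear_system_time_independent} produces the key reduction
\begin{equation*}
\sum_{i,j\in\mathcal{V}} \nabla K^h(\widetilde{x}_i-\widetilde{x}_j)\cdot\bigl(\widetilde{b}_i+r_i\bigr)\,|u_i-u_j|\,\pi_i\pi_j,
\end{equation*}
which cleanly splits $A_K$ into a $\widetilde{b}$--commutator, a residue piece, a quadratic error, and a ``shift'' error coming from the mismatch between $\widetilde{x}_i$ and the barycenter $x_i$.

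The two easy pieces are handled first. For the quadratic remainder, H\"older in $i'$ against $\|(a_{i,j})\|_{L^q(\mathcal{F})}$ combined with the pointwise estimate $\sum_j \pi_j/(|\widetilde{x}_i-\widetilde{x}_j|+h)^{d+2}\le C/h^2$ and the face-scaling factor $(\delta x)^{d/q-(d-1)}$ built into the $L^q(\mathcal{F})$-norm reproduces the $(M_\gamma^2/h^2\delta x)\|(a_{i,j})\|_{L^q(\mathcal{F})}\|u\|_{L^{q^*}(\mathcal{C})}$ term. For the residue, H\"older in $i$ together with $\sum_j|\nabla K^h(\widetilde{x}_i-\widetilde{x}_j)|\,\pi_j\le C/h$ gives the $(1/h)\|r\|_{L^p(\mathcal{C})}\|u\|_{L^{p^*}(\mathcal{C})}$ bound.

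The core of the argument is the $\widetilde{b}$--commutator. Symmetrize via the $i\leftrightarrow j$ swap, using the oddness of $\nabla K^h$, to turn $\widetilde{b}_i$ into $\tfrac12(\widetilde{b}_i-\widetilde{b}_j)$. Writing $\widetilde{b}_i-\widetilde{b}_j$ as a mean-value integral of $\nabla\widetilde{b}$ along $[\widetilde{x}_j,\widetilde{x}_i]$ recasts the sum as the tensor pairing $(\nabla K^h(z)\otimes z):\nabla\widetilde{b}$ weighted by $|u_i-u_j|\,\pi_i\pi_j$. The trace $z\cdot\nabla K^h(z)$ is a symmetric kernel sharing the same logarithmic $L^1$ scaling as $K^h$ itself, so pairing it with $\dive\widetilde{b}$ and invoking the semi-norm definition produces the $C|\log h|^\theta\|\dive\widetilde{b}\|_{L^\infty}\|u\|_{h_0,1,\theta;\widetilde{x}}$ bound. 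The traceless part behaves as a Calder\'on--Zygmund-type kernel; H\"older with exponents $(q,q^*)$ applied to $\nabla\widetilde{b}$ against $|u_i-u_j|$ then yields the $C|\log h|^\theta\|\widetilde{b}\|_{W^{1,q}}\|u\|_{L^{p^*}(\mathcal{C})}$ contribution, the hypothesis $\theta\ge 1-1/q$ being exactly what keeps the interpolated exponents compatible with the $|\log h|^\theta$ prefactor.

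Finally, the shift error appears because $\widetilde{b}_i=(P_{\mathcal{C}}\widetilde{b})_i$ averages $\widetilde{b}$ against $\chi_i$, which is essentially centered at $x_i$ rather than at $\widetilde{x}_i$; using~\eqref{eqn:parallelogram_2} to quantify the sensitivity of $\nabla K^h$ to an $O(M_\beta)$ shift and then applying H\"older produces the $(M_\beta/h^2)\|(\widetilde{b}_i)\|_{L^q(\mathcal{C})}\|u\|_{L^{q^*}(\mathcal{C})}$ term. The hardest step is the trace/traceless decomposition: one must verify that the traceless component of $\nabla K^h(z)\otimes z$ genuinely acts as a Calder\'on--Zygmund kernel with only logarithmic loss in our weighted discrete $L^p$ setting, and this is precisely what forces $\theta\ge\max\{1-1/q,\,1/2\}$.
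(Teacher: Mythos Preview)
Your opening decomposition matches the paper exactly: Taylor-expand via Lemma~\ref{lem:parallelogram}, isolate the quadratic remainder (giving the $M_\gamma^2/h^2\delta x$ term), invoke \eqref{eqn:linear_system_time_independent} to split off the residue piece (giving the $1/h$ term), and symmetrize to obtain $\tfrac12\sum_{i,j}\nabla K^h(\widetilde{x}_i-\widetilde{x}_j)\cdot(\widetilde{b}_i-\widetilde{b}_j)|u_i-u_j|\pi_i\pi_j$. The divergence from the paper is in how you handle this last sum.

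The paper does \emph{not} attempt a discrete trace/traceless argument. Instead it introduces the piecewise-constant extension $u^V$ from Lemma~\ref{lem:kernel_equiv_extension_trick}, writes the genuinely continuous commutator $A_K^{(1,1,1)}=\tfrac12\int_{\R^{2d}}\nabla K^h(x-y)\cdot(\widetilde{b}(x)-\widetilde{b}(y))|u^V(x)-u^V(y)|\,dx\,dy$, and applies Lemma~16 of \cite{BeJa:19} to it as a black box. The $M_\beta/h^2$ and an additional $\delta x/h$ term arise purely from the discrepancy $|A_K^{(1,1)}-A_K^{(1,1,1)}|$, controlled again by Lemma~\ref{lem:parallelogram}.

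Your direct route has a real gap. The trace identity you need is $(\nabla K^h(z)\otimes z):\mathrm{Id}=z\cdot\nabla K^h(z)$, which requires the kernel argument and the displacement to be the \emph{same} vector $z$. But after symmetrizing, the kernel is evaluated at $\widetilde{x}_i-\widetilde{x}_j$ while $\widetilde{b}_i-\widetilde{b}_j=\frac{1}{\pi_i\pi_j}\iint(\widetilde{b}(x)-\widetilde{b}(y))\chi_i(x)\chi_j(y)\,dx\,dy$ is governed by displacements $x-y$ with $x\in\supp\chi_i$, $y\in\supp\chi_j$; these differ from $\widetilde{x}_i-\widetilde{x}_j$ by $O(M_\beta+\delta x)$. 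Your ``mean-value integral along $[\widetilde{x}_j,\widetilde{x}_i]$'' is therefore not what $\widetilde{b}_i-\widetilde{b}_j$ actually is, and the misalignment prevents the trace from cleanly producing $\dive\widetilde{b}$ without first passing to an aligned (continuous) expression. That alignment step is exactly what the paper's lift to $u^V$ accomplishes, and it is where the $M_\beta/h^2$ term genuinely originates---not from a post-hoc ``shift error'' as in your last paragraph. Furthermore, even granting alignment, the Calder\'on--Zygmund bound on the traceless part with only $|\log h|^\theta$ loss is itself the content of Lemma~16 in \cite{BeJa:19} (proved there via Besov-space dyadic decomposition); asserting it at the discrete level without either reproving it or lifting to the continuous setting is not a complete argument.
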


Note that conditions \eqref{eqn:drift_time_independent} and \eqref{eqn:linear_system_time_independent} 
exactly correspond to \eqref{eqn:drift} and \eqref{eqn:linear_system_time_dependent} in Theorem~\ref{thm:propagation_regularity_discrete} once the time-dependency is removed.
\begin{proof} We have that
\begin{equation*}
\begin{aligned}
A_K/2 &= \sum_{i,j \in \mathcal{V}} \sum_{i' \in \mathcal{V}} (\widetilde{K}_{i',j}^h - \widetilde{K}_{i,j}^h) a_{i',i} |u_i - u_j| \pi_j
\\ 
&= \sum_{i,j \in \mathcal{V}} \sum_{i' \in \mathcal{V}} \nabla K^h(\widetilde{x}_i - \widetilde{x}_j) \cdot (\widetilde{x}_{i'} - \widetilde{x}_i) a_{i'i} |u_i - u_j| \pi_j
\\ 
&\quad+ \sum_{i,j \in \mathcal{V}} \sum_{i' \in \mathcal{V}} \big[ (\widetilde{K}_{i',j}^h - \widetilde{K}_{i,j}^h) - \nabla K^h(\widetilde{x}_i - \widetilde{x}_j) \cdot (\widetilde{x}_{i'} - \widetilde{x}_i) \big] a_{i'i} |u_i - u_j| \pi_j
\\
&\eqdef A_K^{(1)} + A_K^{(2)}
\end{aligned}
\end{equation*}

By Lemma~\ref{lem:parallelogram} and assumption \eqref{eqn:drift_time_independent}, one has that
\begin{equation*}
\begin{aligned}
\big| (\widetilde{K}_{i',j}^h - \widetilde{K}_{i,j}^h) - \nabla K^h(\widetilde{x}_i - \widetilde{x}_j) \cdot (\widetilde{x}_{i'} - \widetilde{x}_i) \big| \leq \frac{C M_\gamma^2}{(|\widetilde{x}_i - \widetilde{x}_j| + h)^{d+2}} \leq \frac{C M_\gamma^2}{(|x_i - x_j| + h)^{d+2}}.
\end{aligned}
\end{equation*}
Therefore
one can bound $A_K^{(2)}$ by 
\begin{equation*}
\begin{aligned}
\big| A_K^{(2)} \big| 
&\leq
\sum_{i,j \in \mathcal{V}} \sum_{i' \in \mathcal{V}} \frac{C M_\gamma^2}{(|x_i - x_j| + h)^{d+2}} a_{i'i} |u_i - u_j| \pi_j
\\
&\leq \sum_{i,j \in \mathcal{V}} \sum_{i' \in \mathcal{V}}  \frac{C M_\gamma^2}{(|x_i - x_j| + h)^{d+2}} a_{i',i} |u_i| \pi_j
+ \sum_{i,j \in \mathcal{V}} \sum_{i' \in \mathcal{V}}  \frac{C M_\gamma^2}{(|x_i - x_j| + h)^{d+2}} a_{i',i} |u_j| \pi_j
\\
& \leq C M_\gamma^2(\delta x)^{-1} \sum_{i, i' : (i,i') \in \mathcal{E}}  a_{i',i} |u_i| \delta x \sum_{j \in \mathcal{V}} \frac{1}{(|x_i - x_j| + h)^{d+2}} \pi_j
\\
&+ C M_\gamma^2(\delta x)^{-1} \left( \sum_{i, i' : (i,i') \in \mathcal{E}}  \big( a_{i',i} \big)^q (\delta x)^{d-q(d-1)}  \right)^{1/q}
\\
&\quad\quad\quad\quad\quad\quad \left( \sum_{i, i' : (i,i') \in \mathcal{E}} \left(\sum_{j \in \mathcal{V}} \frac{1}{(|x_i - x_j| + h)^{d+2}} |u_j| \pi_j \right)^{q^*} (\delta x)^d \right)^{1/q^*}
\\
& \leq C \big( M_\gamma^2 / h^2 \delta x \big) \|(a_{i,j})_{(i,j)\in \mathcal{E}}\|_{L^q(\mathcal{F})} \|u\|_{L^{q^*}(\mathcal{C})}.
\end{aligned}
\end{equation*}

For $A_K^{(1)}$, one can apply the identity \eqref{eqn:linear_system_time_independent} to obtain
\begin{equation*}
\begin{aligned}
A_K^{(1)} &= \sum_{i,j \in \mathcal{V}} \sum_{i' \in \mathcal{V}} \nabla K^h(\widetilde{x}_i - \widetilde{x}_j) \cdot (\widetilde{x}_{i'} - \widetilde{x}_i) a_{i',i} |u_i - u_j| \pi_j
\\ 
&= \sum_{i,j \in \mathcal{V}} \left( \nabla K^h(\widetilde{x}_i - \widetilde{x}_j) \cdot \widetilde{b}_i \pi_i \right) |u_i - u_j| \pi_j
\\ 
&+ \sum_{i,j \in \mathcal{V}} \left( \nabla K^h(\widetilde{x}_i - \widetilde{x}_j) \cdot r_i \pi_i \right) |u_i - u_j| \pi_j
\\
&\eqdef A_K^{(1,1)} + A_K^{(1,2)}.
\end{aligned}
\end{equation*}
Repeating the argument on $A_K^{(2)}$, we bound the residue term $A_K^{(1,2)}$ by
\begin{equation*}
\begin{aligned}
\big|A_K^{(1,2)}\big| &= \left| \sum_{i,j \in \mathcal{V}} \left( \nabla K^h(\widetilde{x}_i - \widetilde{x}_j) \cdot r_i \pi_i \right) |u_i - u_j| \pi_j \right|
\\
&\leq \sum_{i,j \in \mathcal{V}} \frac{C}{(|x - y| + h)^{d+1}} r_i |u_i - u_j| \pi_i \pi_j
\\
&\leq C(1/h) \|r\|_{L^p(\mathcal{C})} \|u\|_{L^{p^*}(\mathcal{C})}.
\end{aligned}
\end{equation*}
Finally, symmetrize the expression of $A_K^{(1,1)}$ to obtain
\begin{equation*}
\begin{aligned}
A_K^{(1,1)} &= \sum_{i,j \in \mathcal{V}} \left( \nabla K^h(\widetilde{x}_i - \widetilde{x}_j) \cdot \widetilde{b}_i \pi_i \right) |u_i - u_j| \pi_j
\\
&= \frac{1}{2}\sum_{i,j \in \mathcal{V}} \nabla K^h(\widetilde{x}_i - \widetilde{x}_j) \cdot (\widetilde{b}_i - \widetilde{b}_j) |u_i - u_j| \pi_i \pi_j.
\end{aligned}
\end{equation*}
Choose measurable sets $(V_i)_{i \in \mathcal{V}} \subset \R^d$ and a piecewise constant extension $u^V \defeq \sum_{i \in \mathcal{V}} u_i \mathbbm{1}_{V_i}$ by Lemma~\ref{lem:kernel_equiv_extension_trick}. Those satisfy
\begin{equation*}
\begin{aligned}
|V_i| = \pi_i = \int_{\R^d} \chi_i, \quad \sup_{x \in V_i} |x - x_i| < 2\delta x, \quad \forall i \in \mathcal{V}, \quad  \quad V_{i} \cap V_{j} = \varnothing, \quad \forall i,j \in \mathcal{V}.
\end{aligned}
\end{equation*}
and
\begin{equation*}
\begin{aligned}
\supp u^V \subset \Big( \Omega + B(0,1) \Big) \subset \Big( \Omega + B(0,3) \Big) \subset \Big( \bigcup_{i \in \mathcal{V}} V_i \Big).
\end{aligned}
\end{equation*}
This leads us to introduce the continuous commutator term
\begin{equation*}
\begin{aligned}
A_K^{(1,1,1)} = \frac{1}{2} \int_{\R^{2d}} \nabla K^h(x - y) \cdot (\widetilde{b}(x) - \widetilde{b}(y))|u^V(x) - u^V(y)| \;\rd x\rd y.
\end{aligned}
\end{equation*}
Notice that $\supp u^V \subset \Omega + B(0,1)$ and $\supp \nabla K^h \in B(0,2)$. Then for $x \notin \Omega + B(0,3)$, either $y \notin \Omega + B(0,1)$, making $|u^V(x) - u^V(y)| = 0$, or $y \in \Omega + B(0,1)$, making $\nabla K^h(x - y) = 0$.
The same argument applies to $y$.
As a consequence, the integral formulating $A_K^{(1,1,1)}$ can be taken over any subset of $\R^{2d}$ including $\big( \Omega + B(0,3) \big)^2$. In particular,
\begin{equation*}
\begin{aligned}
A_K^{(1,1,1)} = \frac{1}{2} \int_{\left( \bigcup_{i\in \mathcal{V}} V_i \right)^2} \nabla K^h(x - y) \cdot (\widetilde{b}(x) - \widetilde{b}(y))|u^V(x) - u^V(y)| \;\rd x\rd y.
\end{aligned}
\end{equation*}
Combine the above discussion with  Lemma~\ref{lem:parallelogram} and assumption \eqref{eqn:drift_time_independent}, one has
\begin{equation*}
\begin{aligned}
2\Big|A_K^{(1,1,1)}-A_K^{(1,1)}\Big|&=\Bigg| \int_{\left( \bigcup_{i\in \mathcal{V}} V_i \right)^2} \nabla K^h(x - y) \cdot (\widetilde{b}(x) - \widetilde{b}(y))|u^V(x) - u^V(y)| \;\rd x\rd y
\\
&\quad\quad - \sum_{i,j \in \mathcal{V}} \nabla K^h(\widetilde{x}_i - \widetilde{x}_j) \cdot (\widetilde{b}_i - \widetilde{b}_j) |u_i - u_j| \pi_i \pi_j \Bigg|
\\
&\leq
\bigg| \sum_{i,j \in \mathcal{V}} 
\int_{V_i \times V_j} \Big( \nabla K^h(x - y) - \nabla K^h(\widetilde{x}_i - \widetilde{x}_j) \Big) \cdot (\widetilde{b}(x) - \widetilde{b}(y)) |u_i - u_j| \;\rd x\rd y \bigg|
\\
&\quad\quad +
\bigg| \sum_{i,j \in \mathcal{V}} 
\int_{V_i \times V_j} \nabla K^h(\widetilde{x}_i - \widetilde{x}_j) \cdot \big[(\widetilde{b}(x) - \widetilde{b}(y)) - (\widetilde{b}_i - \widetilde{b}_j)\big] |u_i - u_j| \;\rd x\rd y \bigg|
\\
&\leq
\sum_{i,j \in \mathcal{V}} \int_{V_i \times V_j} \frac{CM_\beta}{(|x - y| + h)^{d+2}} \cdot \big|\widetilde{b}(x) - \widetilde{b}(y)\big| \; |u_i - u_j| \;\rd x\rd y
\\
&\quad\quad + \sum_{i,j \in \mathcal{V}} \int_{V_i \times V_j} \frac{C}{(|x - y| + h)^{d+1}} \big|(\widetilde{b}(x) - \widetilde{b}(y)) - (\widetilde{b}_i - \widetilde{b}_j)\big| |u_i - u_j| \;\rd x\rd y
 \\
&\leq C \big(M_\beta / h^2 \big) \|\widetilde{b}\|_{L^q} \|u\|_{L^{q^*}(\mathcal{C})} + C(\delta x / h) \|\widetilde{b}\|_{W^{1,q}} \|u\|_{L^{q^*}(\mathcal{C})}.
\end{aligned}
\end{equation*}
Finally, the continuous commutator term $A_K^{(1,1,1)}$ can be estimated by Lemma~16 in \cite{BeJa:19}. The paper \cite{BeJa:19} also considered some non-linearity within the advection equation, which makes the formulations more complicated than what we need here. For the sake of completeness, we thus restate a simplified version of Lemma~16 in \cite{BeJa:19} with the notations of our paper.
\begin{lem} \textnormal{\textbf{(Lemma~16 in \cite{BeJa:19}, reformulated)}}
Assume that for some $1< q,r < \infty$, we have $u \in L^{q^*,1}$ and $b$ belonging to Besov space $B_{q,r}^1$. Then provided $\theta \geq 1 - 1 / r$,
\begin{equation*}
\begin{aligned}
&\quad \left| \int_{\R^{2d}} \nabla K^h (x - y) (b(x) - b(y)) \;\rd x \rd y \right|
\\
&\leq C |\log h|^{\theta} \Bigg( \|\dive b\|_{L^\infty} |\log h|^{-\theta} \int_{\R^{2d}} K^h(x-y) |u(x) - u(y)| \;\rd x \rd y
\;+\; \| \nabla b \|_{B_{q,r}^0} \|u\|_{L^{q^*,1}} \Bigg).
\end{aligned}
\end{equation*}

\end{lem}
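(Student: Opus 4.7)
The plan is to start from the fundamental divergence identity
\begin{equation*}
\nabla K^h(x-y) \cdot (b(x) - b(y)) = \nabla_x \!\cdot\! \bigl[ K^h(x-y)\,(b(x) - b(y)) \bigr] - K^h(x-y)\,\dive b(x),
\end{equation*}
which is valid pointwise. Integrating against $|u(x)-u(y)|$ over $\R^{2d}$, the $\dive b$ term contributes exactly the first summand $\|\dive b\|_{L^\infty} \int K^h |u(x)-u(y)|\,\rd x\,\rd y$ of the right-hand side of the lemma. The divergence-form term can be handled by integration by parts in~$x$; combined with a symmetric manipulation using the skew-symmetry $\nabla K^h(x-y) = -\nabla K^h(y-x)$, this reduces the problem to bounding a commutator of the schematic form
\begin{equation*}
\mathcal{R}^h(u,b) = \int_{\R^{2d}} K^h(x-y)\,\bigl(b(x)-b(y)\bigr) \cdot \nabla_x |u(x)-u(y)|\,\rd x\,\rd y,
\end{equation*}
understood in the distributional sense after regularizing~$u$.

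Second, I would estimate $\mathcal{R}^h(u,b)$ using a Littlewood--Paley decomposition $\nabla b = \sum_k \Delta_k \nabla b$, combined with the change of variables $z = x-y$ and the pointwise bound $|b(x)-b(x-z)| \leq |z|\int_0^1 |\nabla b|(x-tz)\,\rd t$. For a single dyadic band $\Delta_k \nabla b$ at frequency $2^k$, Hölder's inequality, together with the pointwise control $|\nabla K^h(z)| \lesssim (|z|+h)^{-d-1}$, produces a bound in terms of $\|\Delta_k \nabla b\|_{L^q}$ and a maximal-type expression applied to $|u|$; the Lorentz refinement $\|u\|_{L^{q^*,1}}$ arises here via the standard improvement for maximal operators over $L^{q^*,1}$. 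The key observation is that frequency bands at scales much coarser than $h$ (say $2^{-k} \gg h$) see essentially the trivial bound $\nabla K^h = O(1/h)$, while finer bands are absorbed by $K^h$; only the roughly $\log_2(1/h)$ dyadic bands near scale $h$ contribute nontrivially. Summing these contributions by Hölder in $\ell^r$ against $\ell^{r'}$ (with $r'$ the conjugate exponent) on the finite range of scales yields the factor $|\log h|^{1-1/r}\leq|\log h|^\theta$ and the Besov norm $\|\nabla b\|_{B^0_{q,r}}$.

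The principal obstacle is precisely the extraction of this sharp logarithmic factor $|\log h|^\theta$ rather than a crude $|\log h|$: naive absolute-value estimates inside $\mathcal{R}^h$ lose the cancellation that lives in the odd kernel $\nabla K^h$ and produce only $h^{-1}\|b\|_{L^q}\|u\|_{L^{q^*}}$, which is useless as $h \to 0$. Preserving cancellations across dyadic scales, so that the logarithm appears only to the power $1 - 1/r$, is the technical heart of the estimate; it is also what forces the Besov regularity $B^1_{q,r}$ rather than merely $W^{1,q}$ (i.e.\ $B^1_{q,q}$ when $q < \infty$). Once the frequency-localized estimate is in hand, assembling the bound proceeds by interpolation and standard Lorentz-space duality, in direct analogy with Lemma 16 of \cite{BeJa:19} whose argument I would follow with only cosmetic changes.
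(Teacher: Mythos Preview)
The paper does not prove this lemma; it merely restates Lemma~16 of \cite{BeJa:19} and directs the reader there. So the comparison is to the argument in \cite{BeJa:19}.

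Your outline captures the correct high-level ingredients (Littlewood--Paley decomposition of $\nabla b$, the factor $|\log h|^{1-1/r}$ from counting dyadic scales near $h$, Lorentz duality for $u$), but there is a concrete problem with your first step. The integration by parts you propose moves a derivative onto $|u(x)-u(y)|$, producing $\sgn(u(x)-u(y))\,\nabla u(x)$. But the hypothesis is only $u\in L^{q^*,1}$, with no Sobolev regularity whatsoever; $\nabla u$ is simply not available, and regularizing $u$ to $u_\varepsilon$ does not help because any bound on $\nabla u_\varepsilon$ blows up as $\varepsilon\to 0$. Your second paragraph then invokes the pointwise bound $|\nabla K^h(z)|\lesssim (|z|+h)^{-d-1}$ and a ``maximal-type expression applied to $|u|$'', but $\nabla K^h$ no longer appears in your $\mathcal{R}^h$ (only $K^h$ does), and there is no mechanism explained for converting $\nabla u$ back into a maximal function of $u$. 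So paragraphs one and two are not describing the same object, and as written the argument does not close.

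The argument in \cite{BeJa:19} never moves a derivative onto $u$. Instead it keeps the derivative on $K^h$ and exploits the specific structure of the odd kernel $\nabla K^h$: writing $b(x)-b(y)$ as a line integral of $\nabla b$ and using that $z\cdot\nabla K^h(z)$ is, up to lower order, a multiple of $K^h(z)$ (together with the cancellation of $\nabla K^h$ under angular averaging), one extracts the $\|\dive b\|_{L^\infty}\int K^h|u(x)-u(y)|$ contribution directly. The remainder is controlled by a Littlewood--Paley decomposition of $\nabla b$ and H\"older against $|u(x)-u(y)|$ (hence $\|u\|_{L^{q^*,1}}$ via maximal functions), and it is there that the Besov norm $\|\nabla b\|_{B^0_{q,r}}$ and the sharp logarithm emerge, essentially as you sketch in your final paragraph. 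The fix to your outline is therefore: drop the integration by parts onto $u$, and isolate the $\dive b$ term from the kernel identity $z\cdot\nabla K^h(z)\approx -d\,K^h(z)$ instead.
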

We are going to apply this lemma by taking $b=\widetilde{b}$,  $u=u^V$ and $r = \max\{q,2\}$.

First we recall the classical bound $\|\nabla b \|_{B_{q,q \vee 2}^0} \leq C\|\nabla b \|_{L^q} \leq C\| b \|_{W^{1,q}}$. We also remark that $\widetilde{b}$ is defined on $\Omega$, but one can nevertheless extend it to $\R^d$ with $\| \widetilde{b} \|_{W^{1,q}(\R^d)} \leq C \| \widetilde{b} \|_{W^{1,q}(\Omega)}$.
We also recall that, since we consider a bounded domain $\Omega$ and assume $1 \leq p < q \leq \infty$, then o $\|u\|_{L^{q^*,1}} \leq C \|u\|_{L^{p^*}}$.

Therefore, for $\theta \geq 1 - 1/r = \max\{1 - 1/q, 1/2\}$, one has
\begin{equation*}
\begin{aligned}
&2A_K^{(1,1,1)} = \Bigg| \int_{\R^{2d}} \nabla K_h(x - y) \cdot (\widetilde{b}(x) - \widetilde{b}(y))|u^V(x) - u^V(y)| \;\rd x\rd y \Bigg|
\\
&\leq C |\log h|^{\theta} \Bigg( \|\dive \widetilde{b}\|_{L^\infty(\mathcal{C})} |\log h|^{-\theta} \int_{\R^{2d}} K^h(x-y) |u^V(x) - u^V(y)| \;\rd x \rd y
\;+\; \|\widetilde{b}\|_{W^{1,q}} \|u^V\|_{L^{p^*}}\Bigg).
\end{aligned}
\end{equation*}
The terms involving $u^V$ can be further bounded by the discrete density $(u_i)_{i \in \mathcal{V}}$. In particular, applying Lemma~\ref{lem:kernel_equiv_Euclidean} by choosing $u=v=u^V$, $f_k,g_k : \R^d \to \R^d$, $k = 1,2$ satisfying $f_1(x) = f_2(x) = \widetilde{x}_i$ for all $x \in V_i$, and $g_1(x) = g_2(x) = x$ for all $x \in \R^d$, one has
\begin{equation*}
\begin{aligned}
\int_{\R^{2d}} K^h(x-y) |u^V(x) - u^V(y)| \;\rd x \rd y 
\leq C \sum_{i,j \in \mathcal{V}} \widetilde{K}_{i,j}^h |u_i - u_j| \pi_i \pi_j \leq |\log h|^{\theta} \|u\|_{h_0,1,\theta;\widetilde{x}}.
\end{aligned}
\end{equation*}
This finally leads to the estimate
\begin{equation*}
\begin{aligned}
A_K^{(1,1)} 
&\leq C |\log h|^{\theta} \left( \|\dive \widetilde{b}\|_{L^\infty(\mathcal{C})} \|u\|_{h_0,1,\theta;\widetilde{x}} + \|\widetilde{b}\|_{W^{1,q}} \|u\|_{L^{p^*}(\mathcal{C})}\right)
\\
&\quad + C \big(M_\beta / h^2 \big) \|\widetilde{b}\|_{L^q} \|u\|_{L^{q^*}(\mathcal{C})} + C(\delta x / h) \|\widetilde{b}\|_{W^{1,q}} \|u\|_{L^{q^*}(\mathcal{C})}.
\end{aligned}
\end{equation*}
Combine the estimate for $A_K^{(1,1)}$, $A_K^{(1,2)}$ and $A_K^{(2)}$, we conclude \eqref{eqn:estimate_commutator_discrete}, which finishes the proof of Lemma~\ref{lem:linear_system}.
\end{proof}

We are now ready to conclude the proof of Theorem~\ref{thm:propagation_regularity_discrete}.
\begin{proof}[Proof of Theorem~\ref{thm:propagation_regularity_discrete}]
Let us first consider the case $m = 1$, i.e. the $(\widetilde{x}_i)_{i \in \mathcal{V}}$ are time-independent instead of just piecewise constant along time.

Then by Definition~\ref{defi:semi-norm_discrete} and Proposition~\ref{prop:scheme_Kruzkov}, one has that
\begin{equation*}
\begin{aligned}
&\|u(t)\|_{h_0,1,\theta;\widetilde{x}} \leq \|u(0)\|_{h_0,1,\theta;\widetilde{x}} + \int_0^t \sup_{h_0 \leq h \leq 1/2} |\log h|^{-\theta} \frac{\rd}{\rd s} \left( \sum_{i,j \in \mathcal{V}} \widetilde{K}_{i,j}^h |u_i(s) - u_j(s)| \pi_i \pi_j \right) \;\rd s
\\
&\quad \leq
\|u(0)\|_{h_0,1,\theta;\widetilde{x}} + \int_0^t \sup_{h_0 \leq h \leq 1/2} |\log h|^{-\theta} \Big(
A_K(s) + D_K(s) + R_K(s) \Big) \;\rd s
.
\end{aligned}
\end{equation*}
Substituting $A_K(s) + D_K(s) + R_K(s)$ by \eqref{eqn:estimate_residue_discrete}, \eqref{eqn:estimate_divergence_discrete} and \eqref{eqn:estimate_commutator_discrete}, and rearranging the terms, one deduces that
\begin{equation*}
\begin{aligned}
\|u(t)\|_{h_0,1,\theta;\widetilde{x}} &\leq \|u(0)\|_{h_0,1,\theta;\widetilde{x}}
\\ 
&+ C \int_0^t 
\bigg( \|\dive \widetilde{b}(s)\|_{L^\infty(\mathcal{C})} \|u(s)\|_{h_0,1,\theta;\widetilde{x}} + \|\widetilde{b}(s)\|_{W^{1,q}} \|u(s)\|_{L^{p^*}(\mathcal{C})}
\\ 
&\quad \quad \quad + \|D(s)\|_{L^\infty(\mathcal{C})} \|u(s)\|_{h_0,1,\theta;\widetilde{x}} + \|u(s)\|_{L^{p^*}(\mathcal{C})} \|D(s)\|_{h_0,p,p(\theta - 1/p^*);\widetilde{x}} \bigg) \;\rd s
\\
&
\begin{aligned}
+ \, C \int_0^t \bigg(\; &\big( |\log h_0|^{-\theta} M_\gamma^2 / h_0^2 \delta x \big) \|(a_{i,j}(s))_{i,j \in \mathcal{V}}\|_{L^q(\mathcal{F})} \|u(s)\|_{L^{q^*}(\mathcal{C})}
\\ 
 + &\big(|\log h_0|^{-\theta} M_\beta / h_0^2 \big) \|\widetilde{b}(s)\|_{L^q} \|u(s)\|_{L^{q^*}(\mathcal{C})}
\\
 + &(|\log h_0|^{-\theta} \delta x / h_0^2) \|\widetilde{b}(s)\|_{W^{1,q}} \|u(s)\|_{L^{q^*}(\mathcal{C})}
 \\
 +  &(|\log h_0|^{1-\theta})  \|(R_i(s))_{i \in \mathcal{V}}\|_{L^1(\mathcal{C})}  \bigg) \;\rd s
\end{aligned}
\\ 
&+ C \int_0^t  (|\log h_0|^{-\theta}/h_0) \|(r_i(s))_{i \in \mathcal{V}}\|_{L^p(\mathcal{C})} \|u(s)\|_{L^{p^*}(\mathcal{C})}
 \;\rd s.
\end{aligned}
\end{equation*}
We can change the $\|u(s)\|_{h_0,1,\theta;\widetilde{x}}$ norms into $\|u(s)\|_{h_0,1,\theta}$ norms through Proposition~\ref{prop:kernel_equiv} with $h_2=M_\beta$,
\begin{equation*}
\begin{aligned}
\|u(t)\|_{h_0,1,\theta} \leq &\; \alpha\, (L_0 + L_1 + L_2 + L_3)
\\
= & \big(1 + C(M_\beta / h_0) \big)^2\, \Bigg[ \|u(0)\|_{h_0,1,\theta}
\\ 
&+  C\int_0^t 
 \bigg( \|\dive \widetilde{b}(s)\|_{L^\infty(\mathcal{C})} \|u(s)\|_{h_0,1,\theta}  + \|\widetilde{b}(s)\|_{W^{1,q}} \|u(s)\|_{L^{p^*}(\mathcal{C})}
\\ 
&\quad \quad \quad \quad + \|D(s)\|_{L^\infty(\mathcal{C})} \|u(s)\|_{h_0,1,\theta} + \|u(s)\|_{L^{p^*}(\mathcal{C})} \|D(s)\|_{h_0,p,p(\theta - 1/p^*)} \bigg) \;\rd s
\\
&
\begin{aligned}
+ \, C \int_0^t \bigg(\; &\big( |\log h_0|^{-\theta} M_\gamma^2 / h_0^2 \delta x \big) \|(a_{i,j}(s))_{i,j \in \mathcal{V}}\|_{L^q(\mathcal{F})} \|u(s)\|_{L^{q^*}(\mathcal{C})}
\\ 
 + &\big(|\log h_0|^{-\theta} M_\beta / h_0^2 \big) \|\widetilde{b}(s)\|_{L^q} \|u(s)\|_{L^{q^*}(\mathcal{C})}
\\
 + &(|\log h_0|^{-\theta} \delta x / h_0^2) \|\widetilde{b}(s)\|_{W^{1,q}} \|u(s)\|_{L^{q^*}(\mathcal{C})}
 \\
 +  &(|\log h_0|^{1-\theta})  \|(R_i(s))_{i \in \mathcal{V}}\|_{L^1(\mathcal{C})}  \bigg) \;\rd s
\end{aligned}
\\ 
&+ C \int_0^t  (|\log h_0|^{-\theta}/h_0) \|(r_i(s))_{i \in \mathcal{V}}\|_{L^p(\mathcal{C})} \|u(s)\|_{L^{p^*}(\mathcal{C})}
 \;\rd s \quad\quad\quad\quad\quad\quad \Bigg].
\end{aligned}
\end{equation*}
Here we rewrite the last term of $L_2$ by
\begin{equation*}
\begin{aligned}
\int_0^t \|(R_i(s))_{i \in \mathcal{V}}\|_{L^1(\mathcal{C})} \;\rd s = \int_0^t \bigg( \sum_{i \in \mathcal{V}} -R_i(s) \pi_i \bigg) \;\rd s &= \int_0^t - \frac{\rd}{\rd s} \sum_{i \in \mathcal{V}} u_i(s) \pi_i \;\rd s
\\
&= \|u(0)\|_{L^1(\mathcal{C})} - \|u(t)\|_{L^1(\mathcal{C})}.
\end{aligned}
\end{equation*}
where we use $R_i \leq 0$ and identity \eqref{eqn:total_leaking}. The coefficient $\big(1 + C(M_\beta / h_0) \big)$ is multiplied twice because Proposition~\ref{prop:kernel_equiv} is actually applied to the left and right hand side separately. Since $k(t) = \min \{k: t < t_k\} = 1$ (as we assume $m = 1$) we have $k(t) + 1 = 2$, so all coefficients matches to \eqref{eqn:propagation_Gronwall}, which finishes the proof for the case $m=1$.

When $m > 1$, within each interval $[t_{k-1},t_k]$ we still have constant $(\widetilde{x}_i^{(k)})_{i \in \mathcal{V}}$ constant, and the semi-norm $\|u(t)\|_{h_0,1,\theta;\widetilde{x}^{(k)}}$ propagates exactly as above. 
However, at every endpoint $t_k$ one need to shift from $(\widetilde{x}_i^{(k)})_{i \in \mathcal{V}}$ to $(\widetilde{x}_i^{(k+1)})_{i \in \mathcal{V}}$, yielding an extra $C(M_\beta / h_0)$ factor.

Define
\begin{equation*}
\begin{aligned} 
L_1(\tau,t) &= C \int_{\tau}^t 
\bigg( \|\dive \widetilde{b}(s)\|_{L^\infty(\mathcal{C})} \|u(s)\|_{h_0,1,\theta} + \|\widetilde{b}(s)\|_{W^{1,q}} \|u(s)\|_{L^{p^*}(\mathcal{C})}
\\ 
&\quad \quad \quad \quad + \|D(s)\|_{L^\infty(\mathcal{C})} \|u(s)\|_{h_0,1,\theta} + \|u(s)\|_{L^{p^*}(\mathcal{C})} \|D(s)\|_{h_0,p,p(\theta - 1/p^*)} \bigg) \;\rd s
\\
L_2(\tau,t) &= \, C \int_{\tau}^t \bigg(\big( |\log h_0|^{-\theta} M_\gamma^2 / h_0^2 \delta x \big) \|(a_{i,j}(s))_{i,j \in \mathcal{V}}\|_{L^q(\mathcal{F})} \|u(s)\|_{L^{q^*}(\mathcal{C})}
\\ 
&\quad \quad \quad \quad + \big(|\log h_0|^{-\theta} M_\beta / h_0^2 \big) \|\widetilde{b}(s)\|_{L^q} \|u(s)\|_{L^{q^*}(\mathcal{C})}
\\
&\quad \quad \quad \quad + (|\log h_0|^{-\theta} \delta x / h_0^2) \|\widetilde{b}(s)\|_{W^{1,q}} \|u(s)\|_{L^{q^*}(\mathcal{C})}
 \\
&\quad \quad \quad \quad + (|\log h_0|^{1-\theta})  \|(R_i(s))_{i \in \mathcal{V}}\|_{L^1(\mathcal{C})}  \bigg) \;\rd s
\\ 
L_3(\tau,t) &= C \int_{\tau}^t  (|\log h_0|^{-\theta}/h_0) \|(r_i(s))_{i \in \mathcal{V}}\|_{L^p(\mathcal{C})} \|u(s)\|_{L^{p^*}(\mathcal{C})}
 \;\rd s.
\end{aligned}
\end{equation*}
We now argue by induction that
\begin{equation*}
\begin{aligned} 
\|u(t)\|_{h_0,1,\theta;\widetilde{x}^{(k)}} \leq \big(1 + C(M_\beta / h_0) \big)^{k(t)-1} (\|u(0)\|_{h_0,1,\theta;\widetilde{x}^{(1)}} + L_1(0,t) + L_2(0,t) + L_3(0,t))
\end{aligned}
\end{equation*}
by induction. The base case $k = 1$ was obtained as before, and for $k > 1$, one has
\begin{equation*}
\begin{aligned}
\|u(t)\|_{h_0,1,\theta;\widetilde{x}^{(k)}}
&\leq \|u(t_k)\|_{h_0,1,\theta;\widetilde{x}^{(k)}}
+ L_1(t_{k-1},t) + L_2(t_{k-1},t) + L_3(t_{k-1},t)
\\
&\leq
\big(1 + C(M_\beta / h_0) \big)  \|u(t_k)\|_{h_0,1,\theta;\widetilde{x}^{(k-1)}} + L_1(t_{k-1},t) + L_2(t_{k-1},t) + L_3(t_{k-1},t)
\\
&\leq \big(1 + C(M_\beta / h_0) \big)^{k(t)-1}(\|u(0)\|_{h_0,1,\theta;\widetilde{x}^{(1)}} + L_1(0,t) + L_2(0,t) + L_3(0,t)).
\end{aligned}
\end{equation*}
Finally, multiplying by $\big(1 + C(M_\beta / h_0) \big)$ twice more, we are able to replace the discrete semi-norm on both sides to $\|u(t)\|_{h_0,1,\theta}$ or $\|u(0)\|_{h_0,1,\theta}$. This gives
\begin{equation*}
\begin{aligned}
\|u(t)\|_{h_0,1,\theta} 
&\leq \big(1 + C(M_\beta / h_0) \big)^{k(t)+1}(\|u(0)\|_{h_0,1,\theta} + L_1 + L_2 + L_3)
\\
&= \alpha\,(L_0 + L_1 + L_2 + L_3),
\end{aligned}
\end{equation*}
which finishes the proof of Theorem~\ref{thm:propagation_regularity_discrete}.

\end{proof}
\section{Proof of Theorem~\ref{thm:residue_term_estimate}} \label{sec:residue_term_estimate}

This section is devoted to the proof of Theorem~\ref{thm:residue_term_estimate}.
We first note that all of our estimates  are on domains with bounded measure, which lets us immediately bound any $L^p$ or $W^{s,p}$ norms by $L^q$ or $W^{s,q}$ with any $q\geq p$.

Within this section, the generic constant $C$ that we use depends on $\Omega$, the exponents $p,q$ and the constant in the structural assumptions $\eqref{eqn:mesh_comparable_1}$, and also on $T$ and the exponent $s$ in the statement of Theorem~\ref{thm:residue_term_estimate}.

\medskip

$\bullet$ {\em Step 1: Constructing the space and time partitions.}
Choose $m\in \mathbb{N}^*$, and introduce the straightforward time partition 
\begin{equation*}
\begin{aligned}
&0 = t_{0} < t_{1} < \dots < t_{m} = T,
\\
&\tau = \frac{T}{m}, \quad t_l = (l/m)T \quad \forall l = 0,\dots,m.
\end{aligned}
\end{equation*}
A very small choice of time step $\tau$ will lead to large terms $\alpha$ in \eqref{eqn:propagation_Gronwall}, while a larger value of $\tau$ allows the velocity field to oscillate more in each time interval making controlling $L_3$ in \eqref{eqn:propagation_Gronwall} more difficult. Thus, determine the optimal choice of $\tau$ turns out to a key step of the proof.

the partition is more complicated in the spatial direction.
We divide the mesh into large partitions roughly corresponds to large hypercubes of size $\eta$ with $\eta \gg \delta x$ that will be determined later.
At this moment, it suffices to assume that we have for example $8 \, \delta x \leq \eta \ll 1$.

More precisely, we divide $\Omega$ into subdomains $\{\Omega_k\}_{k \in J}$, roughly centered around points $\{y_k\}_{k \in J} \in \R^d$ such that
\begin{equation*}
\begin{aligned}
B(y_k; \eta) \subset \Omega_k \subset B(y_k; C\eta) \quad \text{ and } \quad |\partial \Omega_k| \leq C \eta^{d-1}.
\end{aligned}
\end{equation*}
Next, choose a partition $\{\mathcal{V}_k\}_{k \in J}$ of the index set $\overline{\mathcal{V}}_\Omega$, by assigning $i \in \overline{\mathcal{V}}_\Omega$ to any $\mathcal{V}_k$ such that
\begin{equation*}
\begin{aligned}
\supp \chi_i \cap \Omega_k \neq \varnothing.
\end{aligned}
\end{equation*}
By definition $\supp \chi_i \cap \Omega \neq \varnothing$ and one can find at least one $\Omega_k$ intersecting $\supp \chi_i$ so that $\bigcup_{k\in J} \mathcal{V}_k=\overline{\mathcal{V}}_\Omega$.

Then, for all $k \in J$, define
\begin{equation*}
\begin{aligned}
\psi_k \defeq \sum_{i \in \mathcal{V}_k} \chi_{i},
\quad
U_k \defeq \supp \psi_k, \quad U_k' \defeq \{x \in \R^d : \psi_k(x) = 1\}.
\end{aligned}
\end{equation*}
By definition $U_k' \subset U_k$ and it is easy to verify that
\begin{equation*}
\begin{aligned}
U_k \subset \Omega_k + B_{\delta x}, \quad \Omega_k \subset U_k' + B_{\delta x}, \quad \text{ and } \quad
C^{-1} \eta^d \leq \|\psi_k\|_{L^1(\Omega)} \leq \|\psi_k\|_{L^1} \leq C \eta^d.
\end{aligned}
\end{equation*}
Moreover, for all $k \in J$, define the ``boundary'' of $\mathcal{V}_k$ as those indices that do not intersect with any index in another part of the domain or
\begin{equation*}
\begin{aligned}
\partial \mathcal{V}_k \defeq \mathcal{V}_k \setminus \{i \in \mathcal{V}_k\,|\; \text{ if } j \in \mathcal{V} \text{ and } \supp \chi_i \cap \supp \chi_j \neq \varnothing, \text{ then } \psi_k(\supp \chi_j) \equiv 1 \}.
\end{aligned}
\end{equation*}
Then this boundary has codimension 1 in the sense that by
decomposing $\psi_k = \psi_k^{(0)} + \psi_k^{(1)}$
\begin{equation*}
\begin{aligned}
\psi_k^{(0)} \defeq \sum_{i \in (\mathcal{V}_k \setminus \partial \mathcal{V}_k)} \chi_i, \quad \psi_k^{(1)} \defeq \sum_{i \in \partial \mathcal{V}_k} \chi_i, \quad U_k'' \defeq \{x \in \R^d : \psi_k^{(0)}(x) = 1\},
\end{aligned}
\end{equation*}
one has
\begin{equation*}
\begin{aligned}
\Omega_k \subset U_k'' + B_{4\delta x}, \quad (U_k \setminus U_k'') \subset \partial \Omega_k + B_{4\delta x}, \quad
\|\psi_k^{(1)}\|_{L^1} \leq C \eta^{d-1} \delta x.
\end{aligned}
\end{equation*}
Observe that the number $|J|$ of domains $\Omega_k$  can be estimated by $|J| \sim |\Omega| / \eta^d$. Hence the previous estimates yield
\begin{equation*}
\begin{aligned}
\bigg| \bigcup_{k \in J} (U_k \setminus U_k'') \bigg| \leq C (\delta x / \eta) |\Omega|, \quad
\bigg\|\sum_{k \in J} \psi_k^{(1)}\bigg\|_{L^1} \leq C (\delta x / \eta) |\Omega|.
\end{aligned}
\end{equation*}
For later discussion, we define $\partial \mathcal{V} = \bigcup_{k \in J}\partial \mathcal{V}_{k}$ as the set of all boundary indices, and $\partial \mathcal{E} = \{(i,j) \in \mathcal{E}: i \in \partial \mathcal{V} \text{ or } j \in \partial \mathcal{V}\}$ the set of all boundary edges.

\medskip

$\bullet$ {\em Step 2: Constructing the virtual coordinates in Theorem~\ref{thm:propagation_regularity_discrete}.}
Introduce
\begin{equation*}
\begin{aligned}
(\widetilde{b}_{i}(t))_{i \in \mathcal{V}} = P_{\mathcal{C}} \widetilde{b}(t), \quad (\widetilde{a}_{i,j}(t))_{i,j \in \mathcal{V}} = P_{\mathcal{F}} \widetilde{b}(t).
\end{aligned}
\end{equation*}
the discretization of $\widetilde{b}(t,x)$ on faces and cells as in \eqref{eqn:proj_to_cell} and \eqref{eqn:proj_to_edge} respectively.

We introduce another piecewise constant velocity field corresponding to the partition we just constructed:
For all $1 \leq l \leq m$ and all $k \in J$, we take the average of $\widetilde{b}(t,x)$ on $[t_{l-1},t_{l}] \times \mathcal{V}_k$ in the following sense
\begin{equation*}
\begin{aligned}
\bar b^{l,k} \defeq \frac{1}{|t_{l} - t_{l-1}| \, \|\psi_k\|_{L^1(\Omega)}}\int_{[t_{l-1},t_{l}] \times \Omega} \widetilde{b}(t,x) \psi_k(x) \;\rd t \rd x,
\end{aligned}
\end{equation*}
and define the ``piecewise'' extension $\bar b (t,x)$ by
\begin{equation*}
\begin{aligned}
\bar b(t,x) &\defeq \sum_{k \in J} \psi_k(x) \bar b^{l,k}, &&\quad \forall (t,x) \in [t_{l-1},t_{l}) \times \Omega.
\end{aligned}
\end{equation*}
We finally introduce as before the discretization on faces and cells of $\bar b(t,x)$,
\begin{equation*}
\begin{aligned}
(\bar b_{i}(t))_{i \in \mathcal{V}} = P_{\mathcal{C}} \bar b(t), \quad (\bar a_{i,j}(t))_{i,j \in \mathcal{V}} = P_{\mathcal{F}} \bar b(t).
\end{aligned}
\end{equation*}
The extension $\bar b (t,x)$ from $(\bar b^{l,k})_{1\leq l \leq m, k \in J}$ is not exactly piecewise. Nevertheless, by our construction in Step 1, each $\psi_k$ has compact support on $U_k$, with $\psi_k = 1$ on  the set $U_k'$, with $U_k\setminus U_k'$ small. In such sets $U_k'$, one has that $\bar b = \bar b^{l,k}$.
Moreover, for interior indices $i \in \mathcal{V}_k \setminus \partial \mathcal{V}_k$,
$\bar a_{i',i}(t)$ and $\bar b_{i}(t)$ are not only the discretization of $\bar b(t,\cdot)$, but also coincide with the discretization of the constant velocity field $\bar b^{l,k}$.

Theorem~\ref{thm:residue_term_estimate} assumes that Definition~\eqref{eqn:linear_system_constant_residue} applies for constant fields. This yields virtual coordinates $(\hat x_{i}(b_\text{c}))_{i \in \mathcal{V}}, b_\text{c} \in \R^d$,
\begin{equation} \label{eqn:linear_system_constant_asym}
\begin{aligned}
 \sum_{i' \in \mathcal{V}} \Big(\hat x_{i'}\big( \bar b^{l,k} \big) - \hat x_{i}\big( \bar b^{l,k} \big)\Big) \bar a_{i',i}(t) = \bar b_{i}(t) \pi_{i} + \hat r_{i}\big(\bar b^{l,k}\big) \pi_{i},
\quad
\forall t \in [t_{l-1}, t_{l}], i \in \mathcal{V}_k \setminus \partial \mathcal{V}_k.
\end{aligned}
\end{equation}
Inspired by \eqref{eqn:linear_system_constant_asym}, we choose piecewise constant in time virtual coordinates on the mesh $(\widetilde{x}_{i}(t))_{i \in \mathcal{V}}$ by
\begin{equation*} 
\begin{aligned}
\widetilde{x}_{i}(t) = \hat x_{i}\left( \bar b_{i}(t) \right), \quad \forall t \in [0,T], i \in \mathcal{V}.
\end{aligned}
\end{equation*}
Then in the interiors indices $i\in\overline{\mathcal{V}}_\Omega \setminus \partial \mathcal{V}$, \eqref{eqn:linear_system_constant_asym} implies that
\begin{equation} \label{eqn:linear_system_constant_asym_1}
\begin{aligned}
 \sum_{i' \in \mathcal{V}} (\widetilde{x}_{i'}(t) - \widetilde{x}_{i}(t)) \bar a_{i',i}(t) = \bar b_{i}(t) \pi_{i}
+ \hat r_{i}(\bar b_{i}(t)) \pi_{i},
\quad
\forall t \in [0,T], i \in \overline{\mathcal{V}}_\Omega \setminus \partial \mathcal{V}.
\end{aligned}
\end{equation}
By our construction, we can see that on each time interval $(t_{k-1},t_{k})$, the virtual coordinates $(\widetilde{x}_{i}(t))_{i \in \mathcal{V}}$ are time-independent and we may use the notation $(\widetilde{x}_{i}^{(k)})_{i \in \mathcal{V}}$ as in Theorem~\ref{thm:propagation_regularity_discrete}.

It is straightforward to deduce the uniform bounds
\begin{equation*}
\begin{aligned} 
\sup_{(i,i') \in \mathcal{E}, 1 \leq k \leq m}
\big|\widetilde{x}_{i}^{(k)} - \widetilde{x}_{i'}^{(k)}\big| &< 2M_\gamma,
\\ 
\sup_{i \in \mathcal{V}, 1 \leq k \leq m}
|\widetilde{x}_{i}^{(k)} - x_i| &< 2M_\beta,
\end{aligned}
\end{equation*}
because the $(\widetilde{x}_{i}^{(k)})_{i \in \mathcal{V}}$ are obtained through $(\hat x_{i}(b_\text{c}))_{i \in \mathcal{V}}$.
Therefore, those virtual coordinates satisfy the requirement of Theorem~\ref{thm:propagation_regularity_discrete}.

We reformulate the residue equation \eqref{eqn:linear_system_time_dependent} as
\begin{equation} \label{eqn:linear_system_time_dependent_asym}
\begin{aligned}
 \sum_{i' \in \mathcal{V}} (\widetilde{x}_{i'}(t) - \widetilde{x}_{i}(t)) a_{i',i}(t) = \widetilde{b}_{i}(t) \pi_{i} + r_{i}(t) \pi_{i},
\quad \forall t \in [0,T],\; i \in \mathcal{V}.
\end{aligned}
\end{equation}
Note that if $i \in \mathcal{V} \setminus \overline{\mathcal{V}}_\Omega$, one has $a_{i',i} \equiv 0$ for all $i' \in \mathcal{V}$ and $\widetilde{b}_i = (P_{\mathcal{F}}\widetilde{b})_{i} = 0$. Hence the residue $r_i$ vanishes for all $i \in \mathcal{V} \setminus \overline{\mathcal{V}}_\Omega$. 

Subtracting \eqref{eqn:linear_system_constant_asym_1} from \eqref{eqn:linear_system_time_dependent_asym}, we obtain
\begin{equation} \label{eqn:linear_system_difference}
\begin{aligned}
 \sum_{i' \in \mathcal{V}} (\widetilde{x}_{i'}(t) - \widetilde{x}_{i}(t)) \big(a_{i',i}(t) - \bar a_{i',i}(t) \big) = \big( \widetilde{b}_{i}(t) - \bar b_{i}(t) \big) \pi_{i}
+ \big( r_{i}(t) - \hat r_{i}(\bar b_{i}(t)) \big) \pi_{i},
\\
\forall t \in [0,T], i \in \overline{\mathcal{V}}_\Omega \setminus \partial \mathcal{V}.
\end{aligned}
\end{equation}
By definition, we have $|\hat r_{i}(\bar b_{i}(t))| \leq (\hat r_\text{max})_{i} |\bar b_{i}(t)|$ with
in addition, 
\begin{equation*}
\begin{aligned}
\|(\widetilde{a}_{i,j} - \bar a_{i,j})_{i,j \in \mathcal{V}}\|_{L^p([0,T] \times \mathcal{F})}
= \|P_{\mathcal{F}}(\widetilde{b} - \bar b)\|_{L^p([0,T] \times \mathcal{F})}
\leq
C\|\widetilde{b} - \bar b\|_{L^p([0,T] \times \Omega)},
\\
\|(\widetilde{b}_{i} - \bar b_{i})_{i \in \mathcal{V}}\|_{L^p([0,T] \times \mathcal{C})}
= \|P_{\mathcal{C}}(\widetilde{b} - \bar b)\|_{L^p([0,T] \times \mathcal{F})}
\leq
C\|\widetilde{b} - \bar b\|_{L^p([0,T] \times \Omega)}.
\end{aligned}
\end{equation*}
Therefore, the main obstacle to bound $\|r\|_{L^p([0,T] \times \mathcal{C})}$ is to derive good estimates on $\|\widetilde{b} - \bar b\|_{L^p}$.

\medskip

$\bullet$ {\em Step 3: Bounding $\|\widetilde{b} - \bar b\|_{L^p}$.}
We introduce the average in time of $b(t,x)$ by
\begin{equation*}
\begin{aligned}
\bar b^{l}(x) &= \dashint_{[t_{l-1},t_{l}]} \widetilde{b}(t,x) \;\rd t, \quad \forall x \in \Omega,
\\
\bar b'(t,x) &= \bar b^{l}(x), \quad \forall (t,x) \in [t_{l-1},t_{l}) \times \Omega.
\end{aligned}
\end{equation*}
It is obvious that the two ways of averaging of velocity field
\begin{equation*}
\begin{aligned}
\widetilde{b} \mapsto \bar b , \quad \widetilde{b} \mapsto \bar b'
\end{aligned}
\end{equation*}
and the discretizations $P_{\mathcal{C}}$, $P_{\mathcal{F}}$ are all linear mappings.

We can first quantify the oscillation in time by comparing $\widetilde{b}$ and $\bar b'$.
For fixed $x \in \R^d$, the function $\bar b'(\cdot,x)$ is constant on each time interval $[t_{l-1},t_{l})$, $1 \leq l \leq m$.
Therefore,
\begin{equation*}
\begin{aligned}
\int_{[0,T] \times \Omega} \left|\widetilde{b}(t,x) - \bar b'(t,x)\right|^p \;\rd x\rd t
&= \int_{[0,T] \times \Omega} \left|\widetilde{b}(t,x) - \dashint_{I(t)} \widetilde{b}(\tau,x) \rd \tau \right|^p \;\rd x\rd t
\\
&\leq \int_{[0,T] \times \Omega} \dashint_{I(t)} \left|\widetilde{b}(t,x) - \widetilde{b}(\tau,x) \right|^p \;\rd \tau \rd x\rd t,
\end{aligned}
\end{equation*}
where $I(t)$ denotes the interval $\tau\in[t_{l-1},t_{l})$.
  
Since $t_l-t_{l-1}=\tau$, we have that
\begin{equation*}
\begin{aligned}
\left( \int_{[0,T] \times \Omega} \left|\widetilde{b}(t,x) - \bar b'(t,x)\right|^p \;\rd x\rd t \right)^{1/p}
\leq C\tau \|\widetilde{b}\|_{L^p_x(W^{1,p}_t)([0,T] \times \Omega)},
\end{aligned}
\end{equation*}
Through interpolation, this shows that for $0 \leq s \leq 1$,
\begin{equation*} 
\begin{aligned}
\|\widetilde{b} - \bar b'\|_{L^p([0,T] \times \Omega)} \leq  C \tau^s \|\widetilde{b}\|_{L^{p}_x(W^{s,p}_t)([0,T] \times \Omega)}.
\end{aligned}
\end{equation*}
We can also bound spatial oscillations on $\bar b$ thanks to $\bar b'$. For any $t$, denote $l$ s.t. $t\in [t_{l-1}, t_l)$, and write
\begin{equation*}
\begin{aligned}
\bar b(t,x) = \sum_{k \in J} \psi_k(x) \bar b^{l,k} &= \sum_{k \in J} \psi_k(x) \frac{1}{|t_{l} - t_{l-1}| \, \|\psi_k\|_{L^1(\Omega)}}\int_{[t_{l-1},t_{l}] \times \Omega} \widetilde{b}(t,y) \psi_k(y) \;\rd t \rd y
\\
&= \sum_{k \in J} \psi_k(x) \frac{1}{\|\psi_k\|_{L^1(\Omega)}}\int_{\Omega} \bar b'(t,y) \psi_k(y) \; \rd y.
\end{aligned}
\end{equation*}
Moreover, since $\sum_{k \in J} \psi_k(x) = \sum_{i \in \overline{\mathcal{V}}_\Omega} \chi_i(x) = 1$ for all $x \in \Omega$, one has
\begin{equation*}
\begin{aligned}
\bar b'(t,x) - \bar b(t,x)
&= \sum_{k \in J} \psi_k(x) \bar b'(t,x) - \sum_{k \in J} \psi_k(x) \frac{1}{\|\psi_k\|_{L^1(\Omega)}}\int_{\Omega} \bar b'(t,y) \psi_k(y) \; \rd y
\\
&= \sum_{k \in J} \psi_k(x) \frac{1}{\|\psi_k\|_{L^1(\Omega)}}\int_{\Omega} \big[ \bar b'(t,x) - \bar b'(t,y) \big] \psi_k(y) \; \rd y.
\end{aligned}
\end{equation*}
Therefore,
\begin{equation*}
\begin{aligned}
&\quad \int_{[0,T] \times \Omega} \left|\bar b'(t,x) - \bar b(t,x)\right|^q \;\rd x\rd t
\\
&= \int_{[0,T] \times \Omega} \left| \sum_{k \in J} \psi_k(x) \frac{1}{\|\psi_k\|_{L^1(\Omega)}}\int_{\Omega} \big[ \bar b'(t,x) - \bar b'(t,y) \big] \psi_k(y) \; \rd y \right|^q \;\rd x\rd t
\\
&\leq \int_{[0,T] \times \Omega} \sum_{k \in J} \psi_k(x) \frac{1}{\|\psi_k\|_{L^1(\Omega)}}\int_{\Omega} \big| \bar b'(t,x) - \bar b'(t,y) \big|^q \psi_k(y) \; \rd y \rd x\rd t
\\
&\leq C \int_{[0,T]} \sum_{k \in J} \frac{1}{|U_k \cap \Omega|} \int_{U_k \cap \Omega}\int_{U_k \cap \Omega} \big| \bar b'(t,x) - \bar b'(t,y) \big|^q \;\rd y\rd x.
\end{aligned}
\end{equation*}
We recall that the last part of our assumption \eqref{eqn:mesh_comparable_1} states that $\big| \{k \in \mathcal{V} : (\supp \chi_{k}) \cap B(x;\delta x) \neq \varnothing\} \big| \leq C$ for all $x \in \R^d$. From their construction, any point $x\in \Omega$ also belongs to at most $C_\Omega$ sets $U_k$ so that
\begin{equation*}
\begin{aligned}
\int_{[0,T] \times \Omega} \left|\bar b'(t,x) - \bar b(t,x)\right|^q \;\rd x\rd t
\leq C \int_{[0,T]} \frac{1}{|B_{C \eta}|} \int_{\Omega^2} \mathbbm{1}_{B_{C \eta}}(x-y) \big| \bar b'(t,x) - \bar b'(t,y) \big|^q \;\rd y\rd x,
\end{aligned}
\end{equation*}
by which we conclude
\begin{equation*} 
\begin{aligned}
\|\bar b' - \bar b\|_{L^q([0,T] \times \Omega)} \leq C \eta \|\widetilde{b}\|_{L^{q}_t(W^{1,q}_x)([0,T] \times \Omega)}.
\end{aligned}
\end{equation*}
Finally for any $1 \leq p < q \leq \infty$, one obtains that
\begin{equation} \label{eqn:Poincare_interpolate_sum_7}
\begin{aligned}
\|\widetilde{b} - \bar b\|_{L^p([0,T] \times \Omega)} \leq C \Big( \tau^s \|\widetilde{b}\|_{L^{p}_x(W^{s,p}_t)([0,T] \times \Omega)}
+ \eta \|\widetilde{b}\|_{L^{q}_t(W^{1,q}_x)([0,T] \times \Omega)} \Big).
\end{aligned}
\end{equation}
As we mentioned in Step 2, one can also bound $(\widetilde{a}_{i,j} - \bar a_{i,j})_{i,j \in \mathcal{V}}$ and $(\widetilde{b}_{i} - \bar b_{i})_{i \in \mathcal{V}}$ by the right hand side of \eqref{eqn:Poincare_interpolate_sum_7}.

\medskip

$\bullet$ {\em Step 4: Optimizing all parameters.}
We finally combine all previous estimates to try to derive the best bound on the residue term $(r_{i}(t))_{i \in \mathcal{V}}$.

On the interior set $\overline{\mathcal{V}}_\Omega \setminus \partial \mathcal{V}$, by expanding \eqref{eqn:linear_system_difference} and using that $|\widetilde{x}_{i'}(t) - \widetilde{x}_{i}(t)| \leq 2M_\gamma$, we have that
\begin{equation*}
\begin{aligned}
\|r \, \mathbbm{1}_{\overline{\mathcal{V}}_\Omega \setminus \partial \mathcal{V}}\|_{L^p([0,T] \times \mathcal{C})}
&\leq C(M_\gamma / \delta x) \|(a_{i,j} - \bar a_{i,j})_{i,j \in \mathcal{V}}\|_{L^p([0,T] \times \mathcal{F})}
+ \|(\widetilde{b}_{i} - \bar b_{i})_{i \in \mathcal{V}}\|_{L^p([0,T] \times \mathcal{C})}
\\
&\quad + \|((\hat r_\text{max})_{i} |\bar b_{i}|)_{i \in \mathcal{V}}\|_{L^p([0,T] \times \mathcal{C})} 
\\
&\leq C(M_\gamma / \delta x) \|(a_{i,j} - \widetilde{a}_{i,j})_{i,j \in \mathcal{V}}\|_{L^p([0,T] \times \mathcal{F})}
\\
&\quad+ \; C(M_\gamma / \delta x) \|(\widetilde{a}_{i,j} - \bar a_{i,j})_{i,j \in \mathcal{V}}\|_{L^p([0,T] \times \mathcal{F})} + \|(\widetilde{b}_{i} - \bar b_{i})_{i \in \mathcal{V}}\|_{L^p([0,T] \times \mathcal{C})}
\\
&\quad+ \; C\|\hat r_\text{max}\|_{L^{(1/p-1/q)^{-1}}([0,T] \times \mathcal{C})} \|(\bar b_{i})_{i \in \mathcal{V}}\|_{L^{q}([0,T] \times \mathcal{C})}.
\end{aligned}
\end{equation*}
We recall that the admissible family of virtual coordinates has residue bound $M_\xi$ in $L^{1/(1/p - 1/q)}$. Applying \eqref{eqn:Poincare_interpolate_sum_7} and using $1 < (M_\gamma / \delta x)$ leads to
\begin{equation}
\begin{aligned}
\|r \, \mathbbm{1}_{\overline{\mathcal{V}}_\Omega \setminus \partial \mathcal{V}}\|_{L^p([0,T] \times \mathcal{C})} &\leq C(M_\gamma / \delta x) \| (a_{i,j})_{i,j \in \mathcal{V}} - P_{\mathcal{F}} \widetilde{b} \, \|_{L^p([0,T] \times \mathcal{F})}
\\
&\quad+  \; C(M_\gamma / \delta x) \Big( \tau^s \|\widetilde{b}\|_{L^{p}_x(W^{s,p}_t)([0,T] \times \Omega)} + \eta \|\widetilde{b}\|_{L^{q}_t(W^{1,q}_x)([0,T] \times \Omega)} \Big)
\\
&\quad+ \; C M_\xi \|\widetilde{b}\|_{L^{q}([0,T]\times \Omega)}.
\end{aligned}
\end{equation}
As for the boundary $\partial \mathcal{V}$, we directly expand \eqref{eqn:linear_system_time_dependent_asym} to find that
\begin{equation}
\begin{aligned}
&\|r \, \mathbbm{1}_{\partial \mathcal{V}}\|_{L^p([0,T] \times \mathcal{C})}
\\
&\quad\leq C(M_\gamma / \delta x) \|(a_{i,j})_{i,j \in \mathcal{V}} \, \mathbbm{1}_{\partial \mathcal{E}}\|_{L^p([0,T] \times \mathcal{F})}
+ \|(\widetilde{b}_{i})_{i \in \mathcal{V}} \, \mathbbm{1}_{\partial \mathcal{V}}\|_{L^p([0,T] \times \mathcal{C})}
\\
&\quad\leq C(M_\gamma / \delta x) \Big( (\delta x / \eta) T |\Omega| \Big)^{1/p-1/q} \big( \|(a_{i,j})_{i,j \in \mathcal{V}}\|_{L^{q}([0,T] \times \mathcal{F})} + \|\widetilde{b}\|_{L^{q}([0,T] \times \Omega)} \big),
\end{aligned}
\end{equation}
by H\"older inequality.

Since the residue $r_i$ vanishes for all $i \in \mathcal{V} \setminus \overline{\mathcal{V}}_\Omega$, we have
\begin{equation} \label{eqn:residue_bound}
\begin{aligned}
&\|r\|_{L^p([0,T] \times \mathcal{C})} \leq \|r \, \mathbbm{1}_{\overline{\mathcal{V}}_\Omega \setminus \partial \mathcal{V}}\|_{L^p([0,T] \times \mathcal{C})} + \|r \, \mathbbm{1}_{\partial \mathcal{V}}\|_{L^p([0,T] \times \mathcal{C})}
\\
&\quad\leq C(M_\gamma / \delta x) \| (a_{i,j})_{i,j \in \mathcal{V}} - P_{\mathcal{F}} \widetilde{b} \, \|_{L^p([0,T] \times \mathcal{F})}
\\
&\qquad+ C(M_\gamma / \delta x) \Bigg( \tau^s \|\widetilde{b}\|_{L^{p}_x(W^{s,p}_t)([0,T] \times \Omega)} +
\eta \|\widetilde{b}\|_{L^{q}_t(W^{1,q}_x)([0,T] \times \Omega)}
\\
&\quad \hphantom{\leq C(\delta x_{(n)})^{\gamma - 1} }+  (\delta x / \eta)^{1/p-1/q} \big( \|(a_{i,j}(t))_{i,j \in \mathcal{V}}\|_{L^{q}([0,T] \times \mathcal{F})} + \|\widetilde{b}\|_{L^{q}([0,T] \times \Omega)} \big)
\Bigg)
\\
&\qquad+ C M_\xi \|\widetilde{b}\|_{L^{q}([0,T]\times \Omega)},
\end{aligned}
\end{equation}
We are now ready to choose the parameters $\eta$ and $\tau$. We also need to control the term $\alpha = \big(1 + C(M_\beta / h_0) \big)^{k(t)+1}$ in Theorem~\ref{thm:propagation_regularity_discrete}, where $k(t)$ represents the number of times $(\widetilde{x}_{i}(t))_{i \in \mathcal{V}}$ jumps within $[0,t]$.
Notice that by increasing the constant $C$ we have $\alpha \leq \exp\big( Ck(t)(M_\beta / h_0) \big)$, and we can bound $k(t)$ by $k(T) \leq  CT/\tau$.
To control $L_3$ and $\alpha$ simultaneously, we use the following choice
\begin{equation*}
\begin{aligned}
\eta = (\delta x)^{\frac{1/p-1/q}{1+(1/p-1/q)}}, \quad \tau = (\delta x \, M_\beta / M_\gamma)^{1/(1+s)}.
\end{aligned}
\end{equation*}
It first results the claimed bound on $\alpha$ in \eqref{eqn:propagation_Gronwall_optimize}, namely
\[
\alpha = \exp\big( C(1/h_0) (M_\beta^s M_\gamma / \delta x)^{1/(1+s)} \big).
\]
This also yields a bound on the main residue term $L_3$ in Theorem~\ref{thm:propagation_regularity_discrete} by
\begin{equation*}
\begin{aligned}
L_3 &= C(|\log h_0|^{-\theta}/h_0) \; \|r\|_{L^1_t L^p_x([0,t] \times \mathcal{C})} \|u\|_{L^\infty_t L^{p^*}_x([0,t] \times \mathcal{C})}
\\
&\leq C(|\log h_0|^{-\theta}/h_0) \|r\|_{L^p_t L^p_x([0,t] \times \mathcal{C})} \|u\|_{L^\infty_t L^{p^*}_x([0,t] \times \mathcal{C})}.
\end{aligned}
\end{equation*}
Inserting  \eqref{eqn:residue_bound} on $\|r\|_{L^p([0,T] \times \mathcal{C})}$ finally provides
\begin{equation*}
\begin{aligned}
&L_3 \leq C\, 
(|\log h_0|^{-\theta}/h_0)
\Bigg[
(M_\gamma / \delta x) \big\| (a_{i,j})_{i,j \in \mathcal{V}} - P_{\mathcal{F}} \widetilde{b} \, \big\|_{L^p([0,T] \times \mathcal{F})}
\\
& \quad\quad\quad\quad
+(M_\beta^s M_\gamma / \delta x)^{1/(1+s)}  \|\widetilde{b}\|_{L^{p}_x(W^{s,p}_t)}
\\
& \quad\quad\quad\quad
+ \left( M_\gamma (\delta x)^{-\frac{1}{1+(1/p-1/q)}} \right) \bigg(
\|(a_{i,j}(t))_{i,j \in \mathcal{V}}\|_{L^{q}([0,T] \times \mathcal{F})}
+\|\widetilde{b}\|_{L^{q}_t(W^{1,q}_x)}
\bigg)
\\
& \quad\quad\quad\quad
+ M_\xi   \|\widetilde{b}\|_{L_t^{q}(L_x^{q})}
 \Bigg] \|u\|_{L^\infty_t L^{p^*}_x([0,t] \times \mathcal{C})}
,
\\
\end{aligned}
\end{equation*}
which finishes the proof of Theorem~\ref{thm:residue_term_estimate}.
%
%
\section{Proof of Theorem~\ref{thm:auxiliary_function_existence}} \label{sec:auxiliary_function_existence}

We first reduce, in subsection \ref{subsec:linear_system}, the infinite linear system \eqref{eqn:linear_system} to a finite linear system whose variables are $\{\hat x_i\}_{i \in \mathcal{V}_0}$, by making use of the periodic nature of the mesh.
Due to the geometric nature of the polygon meshes, 
both the matrix and the inhomogeneous term in the linear system have certain properties, which we focus on in subsections \ref{subsec:coefficients} and \ref{subsec:linear_lemmas}.
Finally, in subsection \ref{subsec:linear_uniform_boundedness} we conclude the uniform boundedness result.
\subsection{The linear system for periodic meshes} \label{subsec:linear_system}
We rewrite \eqref{eqn:linear_system} as
\begin{equation} \label{eqn:linear_system_1}
\begin{aligned}
\textstyle \left( \sum_{j \in \mathcal{V}} a_{j,i} (b_\text{c}) \right) \hat x_i (b_\text{c}) = \left( \sum_{j \in \mathcal{V}} a_{j,i} (b_\text{c}) \hat x_{j} (b_\text{c}) \right) - b_\text{c} \pi_i, \quad \forall i \in \mathcal{V}.
\end{aligned}
\end{equation}
Since the mesh is periodic as in Definition~\ref{defi:periodic_mesh}, we are looking for periodic solutions as well, namely solutions satisfying
\begin{equation} \label{eqn:def_periodic_solution}
\begin{aligned}
\hat x_{[m](i)}(b_\text{c}) = \hat x_{i}(b_\text{c}) + [m] L, \quad \forall b_\text{c} \in \R^d, [m] \in \Z^d, i \in \mathcal{V},
\end{aligned}
\end{equation}
where $[m] L = \sum_{k=1}^d m_{k} L_k \in \R^d$.

The following lemma makes explicit the finite linear systems that the variables $\hat x_i$, reduced to ${i \in \mathcal{V}_0}$, need to solve to be solutions to \eqref{eqn:linear_system_1} over the full mesh.
\begin{lem} \label{lem:periodici_solution}
Let  $(\mathcal{C}, \mathcal{F}) = \big( \{\chi_i\}_{i \in \mathcal{V}}, \{\bm{n}_{i,j}\}_{(i,j) \in \mathcal{E}} \big)$ be a periodic mesh as in Definition~\ref{defi:periodic_mesh} and $b \equiv b_\text{c}$ be a constant velocity field.
Consider a function $(\hat x_i)_{i \in \mathcal{V}}$ defined on all cells, satisfying \eqref{eqn:def_periodic_solution}.
It is a solution of \eqref{eqn:linear_system_1} if and only if its restriction $(\hat x_i)_{i \in \mathcal{V}_0}$ satisfying the following finite linear system: for all $i \in \mathcal{V}_0$,
\begin{equation} \label{eqn:linear_system_2}
\begin{aligned}
&\quad \textstyle \left( \sum_{j \in \mathcal{V}_0} \left( \sum_{[m] \in \Z^d} a_{[m](j),i}(b_\text{c}) \right) \right) \hat x_i(b_\text{c})
\\
&= \textstyle \left( \sum_{j \in \mathcal{V}_0} \left( \sum_{[m] \in \Z^d} a_{[m](j),i}(b_\text{c}) \right) \hat x_{j}(b_\text{c}) \right)
\\
&+ \textstyle \left( \sum_{j \in \mathcal{V}_0} \left( \sum_{[m] \in \Z^d} a_{[m](j),i}(b_\text{c}) [m]L \right) \right)
- b_\text{c} \pi_i.
\end{aligned}
\end{equation}
In the formulation above we let $a_{[m](j),i} = 0$ if $[m](j)$ lie outside the mesh $\mathcal{V}$.
\end{lem}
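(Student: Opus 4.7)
The plan is to verify that, under the periodicity ansatz \eqref{eqn:def_periodic_solution}, both sides of \eqref{eqn:linear_system_1} admit a natural rewriting as a finite sum indexed by $\mathcal{V}_0$, and then to check that the resulting equations at translates $[m](i)$ reduce to equations at the representative $i\in \mathcal{V}_0$. This splits the argument into two essentially bookkeeping steps, and there is no serious analytical obstacle — the only care needed is in the handling of the convention $a_{[m](j),i}=0$ when $[m](j)\notin\mathcal{V}$.

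First, I would fix $i\in\mathcal{V}_0$ and decompose the index set using the injection $\sigma:\mathcal{V}\hookrightarrow \Z^d\times\mathcal{V}_0$ from Definition~\ref{defi:periodic_mesh}. Writing the sum $\sum_{j\in\mathcal{V}}$ as $\sum_{j_0\in\mathcal{V}_0}\sum_{[m]\in\Z^d}$ (with the convention $a_{[m](j_0),i}=0$ if $[m](j_0)\notin\mathcal{V}$), and applying \eqref{eqn:def_periodic_solution} to write $\hat x_{[m](j_0)}(b_{\text{c}}) = \hat x_{j_0}(b_{\text{c}}) + [m]L$, the right-hand side of \eqref{eqn:linear_system_1} becomes exactly
\[
\sum_{j_0\in\mathcal{V}_0}\bigl(\textstyle\sum_{[m]}a_{[m](j_0),i}(b_{\text{c}})\bigr)\hat x_{j_0}(b_{\text{c}}) \;+\; \sum_{j_0\in\mathcal{V}_0}\sum_{[m]}a_{[m](j_0),i}(b_{\text{c}})\,[m]L \;-\; b_{\text{c}}\pi_i,
\]
while the left-hand side becomes $\bigl(\sum_{j_0}\sum_{[m]}a_{[m](j_0),i}(b_{\text{c}})\bigr)\hat x_i(b_{\text{c}})$. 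This is precisely \eqref{eqn:linear_system_2}. Hence \eqref{eqn:linear_system_1} at indices $i\in\mathcal{V}_0$ is equivalent to \eqref{eqn:linear_system_2}, giving one implication and half of the other.

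Second, I need to verify that, once \eqref{eqn:linear_system_1} holds for all $i\in\mathcal{V}_0$, it automatically holds for every $i\in\mathcal{V}$. For this I use that both the cell volumes and the flux coefficients are invariant under the translation action when $b$ is constant: from $[m]\chi_i = \chi_{[m](i)}$ we get $\pi_{[m](i)}=\pi_i$, and from $[m]\bm{n}_{j,i}=\bm{n}_{[m](j),[m](i)}$ together with the change of variable $x\mapsto x-[m]L$,
\[
a_{[m](j),[m](i)}(b_{\text{c}}) = \int_{\R^d}\bigl(b_{\text{c}}\cdot \bm{n}_{[m](j),[m](i)}(x)\bigr)^+ \rd x = a_{j,i}(b_{\text{c}}).
\]
Writing \eqref{eqn:linear_system_1} at $[m](i)$, reindexing the summation variable $j' = [-m](j)$ (extended by the zero convention whenever the translate falls outside $\mathcal{V}$), and applying the ansatz \eqref{eqn:def_periodic_solution} to cancel the additive $[m]L$ contributions on both sides, the equation at $[m](i)$ collapses to the equation at $i$. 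Combined with the first step, this shows \eqref{eqn:linear_system_1} over all of $\mathcal{V}$ is equivalent to \eqref{eqn:linear_system_2} over $\mathcal{V}_0$, completing the proof.

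The only point that requires vigilance is the zero-extension convention: because a translate $[m](j_0)$ may fail to lie in $\mathcal{V}$ (so that the corresponding $a_{[m](j_0),i}$ does not arise from an actual mesh edge), one must check that setting these coefficients to $0$ is consistent both with the reindexing in step two and with the inner-sum $\sum_{[m]}a_{[m](j_0),i}$ appearing in \eqref{eqn:linear_system_2}. This is built into the statement of the lemma, so once the convention is adopted both sums over $\Z^d$ are in fact finite and the manipulations above are unambiguous.
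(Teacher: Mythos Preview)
Your proposal is correct and follows essentially the same reindexing argument as the paper: rewrite the sum over $j\in\mathcal{V}$ as a double sum over $j_0\in\mathcal{V}_0$ and $[m]\in\Z^d$, then apply the periodicity ansatz \eqref{eqn:def_periodic_solution} to split $\hat x_{[m](j_0)}$ into $\hat x_{j_0}+[m]L$. Your second step---verifying that the equations at translates $[m](i)$ collapse to those at $i\in\mathcal{V}_0$ via the translation invariance $a_{[m](j),[m](i)}=a_{j,i}$ and $\pi_{[m](i)}=\pi_i$---is actually more explicit than the paper, which essentially leaves this direction implicit.
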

\begin{proof}
By Definition~\ref{defi:periodic_mesh}, for any $i \in \mathcal{V}_0$ and any $l \in \mathcal{V}$, such that $a_{l,i} \neq 0$, there exists unique $(j,[m]) \in \mathcal{V}_0 \times \Z^d$ such that $l = [m](j)$. Therefore \eqref{eqn:linear_system_1} is identical to 
\begin{equation*}
\begin{aligned}
\textstyle \left( \sum_{j \in \mathcal{V}_0} \sum_{[m] \in \Z^d} a_{j,i} (b_\text{c}) \right) \hat x_i (b_\text{c}) = \left( \sum_{j \in \mathcal{V}_0} \sum_{[m] \in \Z^d} a_{j,i} (b_\text{c}) \hat x_{j} (b_\text{c}) \right) - b_\text{c} \pi_i, \quad \forall i \in \mathcal{V}_0.
\end{aligned}
\end{equation*}
By the periodic condition \eqref{eqn:def_periodic_solution}, this is also identical to \eqref{eqn:linear_system_2}, which finishes the proof.
\end{proof}

We now introduce matrix notations on \eqref{eqn:linear_system_2} to simplify the discussion in later subsections,
\begin{equation*}
\begin{aligned}
\textstyle A(b_\text{c}) = (A_{ij}(b_\text{c}))_{i,j \in \mathcal{V}_0}, \quad A_{ij}(b_\text{c}) = \sum_{[m] \in \Z^d} a_{[m](i),j}(b_\text{c}).
\end{aligned}
\end{equation*}
Let $A^T(b_\text{c}) = (A_{ji}(b_\text{c}))_{i,j \in \mathcal{V}_0}$ denote the usual transpose matrix.
In addition, let
\begin{equation*} 
\begin{aligned}
\Phi(b_\text{c}) &= \textnormal{diag} \{\Phi_{ii}(b_\text{c})\}_{i \in \mathcal{V}_0}, \quad \textstyle \Phi_{ii}(b_\text{c})= \sum_{l \in \mathcal{V}_0} \sum_{[m] \in \Z^d} a_{[m](l),i}(b_\text{c}),
\\
\varphi(b_\text{c}) &= (\varphi_i(b_\text{c}))_{i \in \mathcal{V}_0}, \quad \textstyle \varphi_i(b_\text{c}) = \left( \sum_{j \in \mathcal{V}_0} \left( \sum_{[m] \in \Z^d} a_{[m](j),i} (b_\text{c}) [m]L \right) \right)
- b_\text{c} \pi_i.
\end{aligned}
\end{equation*}
Then the linear system \eqref{eqn:linear_system_2} can be rewritten as
\begin{equation*}
\begin{aligned}
\Phi_{ii}(b_\text{c}) \hat x_i (b_\text{c})
&= \textstyle \left( \sum_{j \in \mathcal{V}_0} A_{ji}(b_\text{c}) \hat x_{j}(b_\text{c}) \right) + \varphi_i(b_\text{c}), \quad \forall i \in \mathcal{V}_0,
\end{aligned}
\end{equation*}
or more compactly,
\begin{equation} \label{eqn:linear_system_3}
\begin{aligned}
(\Phi - A^T) (b_\text{c}) \hat x(b_\text{c}) =  \varphi(b_\text{c}),
\end{aligned}
\end{equation}
where $(\Phi - A^T)(b_\text{c})$ is a square matrix in $\R^{\mathcal{V}_0 \times \mathcal{V}_0}$ and $\hat x(b_\text{c}), \varphi(b_\text{c}) \in \R^{\mathcal{V}_0 \times d}$. We emphasize that \eqref{eqn:linear_system_3} each $d$ coordinate separately, namely it should be understood as
\[
(\Phi - A^T) (b_\text{c}) \hat x(b_\text{c})_\alpha =  \varphi(b_\text{c})_\alpha,\quad\alpha=1\dots d,
\]
with the same matrix $(\Phi - A^T) (b_\text{c})$ for each coordinate.

We may also omit the variable $b_\text{c} \in \R^d$ in the bracket when there is no ambiguity.
%
\subsection{Recasting \eqref{eqn:linear_system_3} into a discrete diffusion operator}  \label{subsec:coefficients}
We can characterize the matrix $(\Phi - A^T)$ in the following manner.
\begin{prop} \label{prop:diagonal_dominant}
Define the space of discrete diffusion operators as
\begin{equation*}
\begin{aligned}
\mathcal{M}(n) \defeq \left\{M \in \R^{n \times n} : M_{ii} \geq 0, M_{ij} \leq 0, \textstyle \sum_{l = 1}^n M_{il} = \sum_{l = 1}^n M_{li} = 0,
\; \forall i,j = 1,\dots,n, \, i \neq j
\right\}.
\end{aligned}
\end{equation*}
Then for all $b_\text{c} \in \R^d$,
$(\Phi - A^T)(b_\text{c}) \in \mathcal{M}(|\mathcal{V}_0|)$.
\end{prop}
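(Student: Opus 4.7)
The plan is to verify directly the four defining properties of $\mathcal{M}(|\mathcal{V}_0|)$ by unwinding the definitions of $\Phi$ and $A^T$ and exploiting three structural facts available for a constant velocity field on a periodic mesh: (i) pointwise non-negativity $a_{i,j}(b_\text{c}) \geq 0$; (ii) translation invariance $a_{[m](i),[m](j)}(b_\text{c}) = a_{i,j}(b_\text{c})$, which follows from $[m]\chi_i = \chi_{[m](i)}$ and $[m]\bm{n}_{i,j} = \bm{n}_{[m](i),[m](j)}$ combined with the change of variables $y = x - \sum m_k L_k$ in the definition of $a_{i,j}$; and (iii) the divergence identity $\sum_{i' \in \mathcal{V}} a_{l,i'}(b_\text{c}) = \sum_{i' \in \mathcal{V}} a_{i',l}(b_\text{c})$, which is a direct consequence of \eqref{eqn:divergence_commutative} applied to the constant field with $\dive b_\text{c} \equiv 0$.

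First I would dispatch the sign conditions. The off-diagonal entries satisfy $(\Phi - A^T)_{ij} = -A_{ji} = -\sum_{[m]\in\Z^d} a_{[m](j),i}$, which is non-positive by (i). For the diagonal, writing $(\Phi - A^T)_{ii} = \Phi_{ii} - A_{ii} = \sum_{l \in \mathcal{V}_0 \setminus \{i\}} \sum_{[m]} a_{[m](l),i} + \sum_{[m]\neq 0} a_{[m](i),i}$ exhibits it as a sum of non-negative terms. Next, the row sums are essentially tautological: $\sum_{j \in \mathcal{V}_0}(\Phi - A^T)_{ij} = \Phi_{ii} - \sum_{j \in \mathcal{V}_0} A_{ji} = \sum_{l \in \mathcal{V}_0,[m]} a_{[m](l),i} - \sum_{j \in \mathcal{V}_0,[m]} a_{[m](j),i} = 0$, since the two double sums differ only by relabeling $l \leftrightarrow j$.

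The main obstacle, and the only step that genuinely uses the geometry of the field, is the column sum. Here I plan to compute
\begin{equation*}
\sum_{i \in \mathcal{V}_0}(\Phi - A^T)_{ij} = \Phi_{jj} - \sum_{i \in \mathcal{V}_0}\sum_{[m]\in\Z^d} a_{[m](j),i}.
\end{equation*}
For the second sum I would apply translation invariance (ii) in the form $a_{[m](j),i} = a_{j,[-m](i)}$, and then invoke the fact that on the periodic mesh the map $(i,[m]) \mapsto [-m](i)$ is a bijection from $\mathcal{V}_0 \times \Z^d$ onto $\mathcal{V}$, to rewrite the double sum as $\sum_{i' \in \mathcal{V}} a_{j,i'}$, i.e.\ the total outflow from cell $j$. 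An identical manipulation, applied in the opposite direction to $\Phi_{jj} = \sum_{l \in \mathcal{V}_0}\sum_{[m]} a_{[m](l),j}$, recasts it as the total inflow $\sum_{i'\in\mathcal{V}} a_{i',j}$. The divergence identity (iii) then equates the two, yielding the required cancellation. The only care needed is with the boundary convention $a_{[m](j),i}=0$ when $[m](j)\notin\mathcal{V}$: the translation-invariance rewriting implicitly treats the mesh as extended periodically to $\R^d$, so one should either work on the infinite periodic extension (where $\sigma$ becomes a bijection onto $\Z^d \times \mathcal{V}_0$) or observe that the missing terms are already consistently zero on both sides of the identity, which leaves the column-sum computation unaffected.
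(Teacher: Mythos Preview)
Your proposal is correct and follows essentially the same route as the paper: off-diagonal non-positivity from $a_{i,j}\geq 0$, the row sum as a tautology, and the column sum via the translation identity $a_{[m](j),i}=a_{j,[-m](i)}$ combined with the divergence-free relation $\sum_{i'} a_{j,i'}=\sum_{i'} a_{i',j}$ for constant fields. The only slip is in your display for the diagonal entry---since $A_{ii}=\sum_{[m]} a_{[m](i),i}$, the extra term $\sum_{[m]\neq 0} a_{[m](i),i}$ should not appear---but this is harmless, as the paper itself observes that $M_{ii}\geq 0$ is redundant given the other conditions.
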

\begin{proof}
The condition $M_{ii} \geq 0$ in the definition of $\mathcal{M}(n)$ is actually redundant.
It can be easily derived by the conditions $M_{ij} \leq 0$ and $\textstyle \sum_{l = 1}^n M_{il} = 0$.
Thus, if suffice to check that $(\Phi - A^T)$ satisfies the remaining properties in the definition.

Firstly, since $A^T$ is non-negative and $\Phi$ is diagonal, it is obvious that the non-diagonal entries of $(\Phi - A^T)$ are non-positive.

Secondly, the identity $\sum_{l \in \mathcal{V}_0} (\Phi - A^T)_{il} = 0$ can be derived by an expansion
\begin{equation*}
\begin{aligned}
\textstyle \sum_{l \in \mathcal{V}_0} (\Phi - A^T)_{il} 
&= \textstyle \Phi_{ii} - \sum_{l \in \mathcal{V}_0}
A_{li}
\\
&= \textstyle \sum_{l \in \mathcal{V}_0} \sum_{[m] \in \Z^d} a_{[m](l),i} - \sum_{l \in \mathcal{V}_0} \sum_{[m] \in \Z^d} a_{[m](l),i} = 0.
\end{aligned}
\end{equation*}
Now turning to the last property, as the velocity field is constant, we have that $a_{[m](i),l} = a_{i,[-m](l)}$.
By taking the same expansion as before, we have
\begin{equation*}
\begin{aligned}
\textstyle \sum_{l \in \mathcal{V}_0} (\Phi - A^T)_{li} 
&= \textstyle \sum_{l \in \mathcal{V}_0} \sum_{[m] \in \Z^d} a_{[m](l),i} - \sum_{l \in \mathcal{V}_0} \sum_{[m] \in \Z^d} a_{[m](i),l}
\\
&= \textstyle \sum_{l \in \mathcal{V}_0} \sum_{[m] \in \Z^d} a_{[m](l),i} - \sum_{l \in \mathcal{V}_0} \sum_{[m] \in \Z^d} a_{i,[-m](l)}.
\end{aligned}
\end{equation*}
Hence to prove $\sum_{l \in \mathcal{V}_0} (\Phi - A^T)_{li} = 0$, it suffices to show that 
\begin{equation*}
\begin{aligned}
\textstyle \sum_{l \in \mathcal{V}} a_{i,l} - \sum_{l \in \mathcal{V}} a_{l,i} = 0,
\end{aligned}
\end{equation*}
Since the constant velocity field is divergence-free,
applying the divergence theorem to the extended cell $C_i$, we have
\begin{equation*}
\begin{aligned}
{\textstyle \sum_{l \in \mathcal{V}} a_{i,l} - \sum_{l \in \mathcal{V}} a_{l,i}}
&= \sum_{l \in \mathcal{V}} \int \big( b_\text{c} \cdot \bm{n}_{l,i}(x) \big)^- \; \rd x - \sum_{l \in \mathcal{V}} \int \big( b_\text{c} \cdot \bm{n}_{l,i}(x) \big)^+ \; \rd x
\\
&= - \sum_{l \in \mathcal{V}} \int \big( b_\text{c} \cdot \bm{n}_{l,i}(x) \big) \; \rd x
\\
&= \int b_\text{c} \cdot \nabla \chi_i(x) \;\rd x = - \int_{C_i} (\dive b_\text{c}) \chi_i(x) \;\rd x = 0,
\end{aligned}
\end{equation*}
which finish the proof.
\end{proof}

Our next result is an inequality bounding the entries of $\varphi(b_\text{c})$ by the entries of $(\Phi - A^T)(b_\text{c})$, which still relies on the divergence theorem but in a more intricate way.

\begin{prop} \label{prop:vanishing_average} For all $b_\text{c} \in \R^d$ and $\mathcal{V}_1 \subseteq \mathcal{V}_0$, the inhomogeneous term $\varphi(b_\text{c})$ in the linear system \eqref{eqn:linear_system_3}
satisfies
\begin{equation*}
\begin{aligned}
\textstyle \big| \sum_{l \in \mathcal{V}_1} \varphi(b_\text{c}) \big| \leq C(\mathcal{V}_0) \sum_{i \in \mathcal{V}_1, j \in \mathcal{V}_0 \setminus \mathcal{V}_1} \big( |A_{ij}(b_\text{c})| + |A_{ji}(b_\text{c})| \big),
\end{aligned}
\end{equation*}
where 
\begin{equation*}
\begin{aligned}
C(\mathcal{V}_0) = \sup_{x,y \in \bigcup_{i \in \mathcal{V}_0} \supp \chi_i}|x - y|.
\end{aligned}
\end{equation*}

\end{prop}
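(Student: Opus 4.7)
The plan is to derive the identity from a discrete integration by parts on $\psi_{\mathcal{V}_1} := \sum_{i \in \mathcal{V}_1} \chi_i$ and then reduce it to boundary fluxes using periodicity. First, I will observe that $b_\text{c} = \dive(x \otimes b_\text{c})$, so integrating $b_\text{c} \chi_i$ by parts against $\chi_i$ and using $\nabla \chi_i = -\sum_l \bm{n}_{l,i}$ yields $b_\text{c} \pi_i = \sum_l \int (b_\text{c} \cdot \bm{n}_{l,i}(x))\, x\, dx$. Introducing the flux-weighted face centroids $y_{l,i} := a_{l,i}^{-1} \int (b_\text{c} \cdot \bm{n}_{l,i})^+ x\, dx$, which satisfy $y_{l,i} \in \supp\chi_l \cap \supp\chi_i$, this reads $b_\text{c} \pi_i = \sum_l (a_{l,i} y_{l,i} - a_{i,l} y_{i,l})$. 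Summing over $i \in \mathcal{V}_1$, the contributions from pairs $(l,i) \in \mathcal{V}_1 \times \mathcal{V}_1$ cancel by antisymmetry $\bm{n}_{l,i} = -\bm{n}_{i,l}$.

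Substituting this into the definition of $\varphi_i$ and using $\eta(l) = 0$ for $l \in \mathcal{V}_0$ (in particular $\mathcal{V}_1$), a direct rearrangement gives
\begin{equation*}
\sum_{i \in \mathcal{V}_1} \varphi_i = \sum_{\substack{i \in \mathcal{V}_1 \\ l \notin \mathcal{V}_1}} a_{l,i}\bigl(\eta(l) - y_{l,i}\bigr) - \sum_{\substack{l \in \mathcal{V}_1 \\ i \notin \mathcal{V}_1}} a_{l,i}\bigl(\eta(l) - y_{l,i}\bigr).
\end{equation*}
The next step is a shift trick: for any fixed $x_0 \in \bigcup_{j\in\mathcal{V}_0} \supp\chi_j$, adding $x_0$ inside both parentheses introduces a correction proportional to $\sum_{i \in \mathcal{V}_1}\sum_{l \notin \mathcal{V}_1}(a_{l,i} - a_{i,l})$, which by the divergence-free relation $\sum_{l \in \mathcal{V}}(a_{l,i} - a_{i,l}) = 0$ equals $-\sum_{i,l \in \mathcal{V}_1}(a_{l,i} - a_{i,l})$ and vanishes by antisymmetry. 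Since $y_{l,i} - \eta(l) \in \supp\chi_{l_0}$ (where $l = [m](l_0)$) and $x_0$ both lie in the fundamental domain $\bigcup_{j \in \mathcal{V}_0}\supp\chi_j$, we conclude $|\eta(l) - y_{l,i} + x_0| \leq C(\mathcal{V}_0)$ for every contributing pair.

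To turn the resulting bound into the claimed RHS, I need to convert sums over $l \in \mathcal{V} \setminus \mathcal{V}_1$ into sums of $A_{ij}$ indexed by $\mathcal{V}_0$. This uses the key cancellation that comes from periodicity: if $l = [m](l_0)$ with $l_0 \in \mathcal{V}_1$ and $[m] \neq 0$, then $a_{l,i} = a_{l_0, [-m](i)}$ and $y_{l,i} - \eta(l) = y_{l_0, [-m](i)}$, so a contribution in the first sum with $l \in \widetilde{\mathcal{V}_1}\setminus \mathcal{V}_0$ and $i \in \mathcal{V}_1$ is exactly the negative of the corresponding contribution in the second sum with $(l_0, [-m](i)) \in \mathcal{V}_1 \times (\widetilde{\mathcal{V}_1}\setminus\mathcal{V}_0)$. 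After this translate--translate cancellation, every surviving term has its ``outside'' endpoint belonging to $\mathcal{V}_0\setminus\mathcal{V}_1$ or to a translate of $\mathcal{V}_0\setminus\mathcal{V}_1$, and collecting all translates of a given $l_0 \in \mathcal{V}_0\setminus\mathcal{V}_1$ precisely produces $A_{l_0,i}$ (or $A_{i,l_0}$ after the analogous bookkeeping on the second sum).

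The main obstacle is this last step of bookkeeping: one has to track carefully, for each pair $(l,i)$ appearing in the two sums, whether it arises from a direct face in the fundamental domain, a translate of a boundary face, or a translate--translate pair, and verify that after the periodic cancellations exactly the pairs counted by $A_{ij} + A_{ji}$ with $(i,j) \in \mathcal{V}_1 \times (\mathcal{V}_0 \setminus \mathcal{V}_1)$ remain. Once this identification is made, the $C(\mathcal{V}_0)$-bound on $|\eta(l) - y_{l,i} + x_0|$ yields the proposition.
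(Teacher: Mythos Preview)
Your proposal is correct and follows essentially the same approach as the paper. Both arguments rest on the identity $b_\text{c}\pi_i = \int x\,(b_\text{c}\cdot(-\nabla\chi_i))\,dx$ and the periodicity relation for the face moments; your $a_{l,i}y_{l,i}$ is exactly the paper's $X_{l,i}^+$, your shift $\eta(l)=[m]L$ plays the role of the paper's normalization $0\in\bigcup_{i\in\mathcal V_0}\supp\chi_i$, and your translate--translate cancellation is the same mechanism the paper encodes via its decomposition $Y_1+Y_2$ and the computation $\sum_{i\in\mathcal V_1}\varphi_i=-Y_3$. The bookkeeping you flag as the main obstacle does go through cleanly: once the $\widetilde{\mathcal V_1}$-to-$\widetilde{\mathcal V_1}$ pairs cancel, every surviving $(l,i)$ in the first sum has $l_0\in\mathcal V_0\setminus\mathcal V_1$ and summing over $[m]$ produces $A_{ji}$, while in the second sum every surviving $(l,i)$ has $i_0\in\mathcal V_0\setminus\mathcal V_1$ and (using $a_{l,[m](j')}=a_{[-m](l),j'}$) summing over $[m]$ produces $A_{lj'}$, giving precisely $\sum_{i\in\mathcal V_1,j\in\mathcal V_0\setminus\mathcal V_1}(A_{ji}+A_{ij})$.
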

\begin{proof} 

By choosing an appropriate basis of $\R^d$, we may assume $0 \in \bigcup_{i \in \mathcal{V}_0} \supp \chi_i$ and $b_\text{c} = (0,\dots,0,1)$ without loss of generality.
For $r = 1,\dots,(d-1)$, we then have
\begin{equation*} 
\begin{aligned}
(\varphi_{i})_r = \textstyle \sum_{l \in \mathcal{V}_0} \left( \sum_{[m] \in \Z^d} a_{[m](l),i} \sum_{p=1}^n m_p (L_p)_r \right), \quad i \in \mathcal{V}_0,
\end{aligned}
\end{equation*}
while for $k = d$, the equation reads
\begin{equation*} 
\begin{aligned}
(\varphi_{i})_d = \textstyle \sum_{l \in \mathcal{V}_0} \left( \sum_{[m] \in \Z^d} a_{[m](l),i} \sum_{p=1}^n m_p (L_p)_d \right) - \pi_i, \quad i \in \mathcal{V}_0.
\end{aligned}
\end{equation*}
For $r = 1,\dots,d-1$, notice that $\dive \big( x_r \; b_\text{c} \big) = \big( \partial_d x_r \big) = 0$, so that
\begin{equation*}
\begin{aligned}
- \int x_r \; b_\text{c} \cdot \nabla \left( {\textstyle\sum_{i \in \mathcal{V}_1}} \chi_i(x) \right) \;\rd x = \int \dive \big( x_r \; b_\text{c} \big) \cdot \left( {\textstyle\sum_{i \in \mathcal{V}_1}} \chi_i(x) \right) \;\rd x = 0.
\end{aligned}
\end{equation*}
For $k = d$, we have instead that $\dive \big( x_d \; b_\text{c} \big) = \big( \partial_d x_d \big) = 1$, leading
\begin{equation*}
\begin{aligned}
- \int x_d \; b_\text{c} \cdot \nabla \left( {\textstyle\sum_{i \in \mathcal{V}_1}} \chi_i(x) \right) \;\rd x = \int \dive \big( x_d \; b_\text{c} \big) \cdot \left( {\textstyle\sum_{i \in \mathcal{V}_1}} \chi_i(x) \right) \;\rd x = {\textstyle\sum_{i \in \mathcal{V}_1}} \pi_i.
\end{aligned}
\end{equation*}
We can summarize all those relation in vector form, as
\begin{equation*} 
\begin{aligned}
\varphi_{i} = \textstyle \sum_{l \in \mathcal{V}_0} \left( \sum_{[m] \in \Z^d} a_{[m](l),i} \sum_{p=1}^n m_p L_p \right) - b_\text{c} \pi_i, \quad i \in \mathcal{V}_0,
\end{aligned}
\end{equation*}
and
\begin{equation*}
\begin{aligned}
- \int x \; b_\text{c} \cdot \nabla \left( {\textstyle\sum_{i \in \mathcal{V}_1}} \chi_i(x) \right) \;\rd x =  {\textstyle\sum_{i \in \mathcal{V}_1}} b_\text{c} \; \pi_i.
\end{aligned}
\end{equation*}
For any $i,j$, denote
\begin{equation*}
\begin{aligned}
X_{j,i}^+ &\defeq \int x \big( b_\text{c} \cdot \bm{n}_{j,i}(x) \big)^+ \; \rd x,
\\
X_{j,i}^- &\defeq \int x \big( b_\text{c} \cdot \bm{n}_{j,i}(x) \big)^- \; \rd x,
\\
X_{j,i} &\defeq X_{j,i}^+ - X_{j,i}^- = \int x \big( b_\text{c} \cdot \bm{n}_{j,i}(x) \big) \; \rd x.
\end{aligned}
\end{equation*}
Notice that
$\bm{n}_{[m](j),i}(x) = \bm{n}_{j,[-m](i)}(x- {\textstyle\sum_{p=1}^n m_p L_p})$ by our periodic assumption, so that one obtains that
\begin{equation*} 
\begin{aligned}
&\quad\quad \left( X_{[m](j),i}^+ - X_{[-m](i),j}^- \right)
\\
&= \int x \big( b_\text{c} \cdot \bm{n}_{[m](j),i}(x) \big)^+ \; \rd x - \int x \big( b_\text{c} \cdot \bm{n}_{[-m](i),j}(x) \big)^- \; \rd x
\\
&= \int x \big( b_\text{c} \cdot \bm{n}_{[m](j),i}(x) \big)^+ \; \rd x - \int x \big( b_\text{c} \cdot \bm{n}_{j,[-m](i)}(x) \big)^+ \; \rd x
\\
&= \int x \big( b_\text{c} \cdot \bm{n}_{[m](j),i}(x) \big)^+ \; \rd x - \int (x- {\textstyle\sum_{p=1}^n m_p L_p})\big( b_\text{c} \cdot \bm{n}_{[m](j),i}(x) \big)^+ \; \rd x
\\
&= \int \big( b_\text{c} \cdot \bm{n}_{[m](j),i}(x) \big)^+ \; \rd x \;{\textstyle\sum_{p=1}^n m_p L_p}
\\
&= \textstyle a_{[m](j),i} \sum_{p=1}^n m_p L_p.
\end{aligned}
\end{equation*}
Hence $\varphi_{i}$ can be reformulated as
\begin{equation*}
\begin{aligned}
\varphi_{i} = \textstyle \sum_{j \in \mathcal{V}_0} \sum_{[m] \in \Z^d} \left( X_{[m](j),i}^+ - X_{[-m](i),j}^- \right) - b_\text{c} \pi_i, \quad i \in \mathcal{V}_0,
\end{aligned}
\end{equation*}
and the summation of $\varphi_{i}$ over any $\mathcal{V}_1\subset\mathcal{V}$ reads
\begin{equation*}
\begin{aligned}
\textstyle \sum_{i \in \mathcal{V}_1} \varphi_{i} = \sum_{i \in \mathcal{V}_1} \sum_{j \in \mathcal{V}_0} \sum_{[m] \in \Z^d} \left( X_{[m](j),i}^+ - X_{[-m](i),j}^- \right) - \sum_{i \in \mathcal{V}_1} b_\text{c} \pi_i.
\end{aligned}
\end{equation*}
Decompose
\begin{equation*} 
\begin{aligned}
&\quad \textstyle \sum_{i \in \mathcal{V}_1} \sum_{j \in \mathcal{V}_0} \sum_{[m] \in \Z^d} \left( X_{[m](j),i}^+ - X_{[-m](i),j}^- \right)
\\
&= \textstyle \sum_{i \in \mathcal{V}_1} \sum_{j \in \mathcal{V}_1} \sum_{[m] \in \Z^d} \left( X_{[m](j),i}^+ - X_{[-m](i),j}^- \right)
\\
&\quad + \textstyle \sum_{i \in \mathcal{V}_1} \sum_{j \in \mathcal{V}_0 \setminus \mathcal{V}_1} \sum_{[m] \in \Z^d} \left( X_{[m](j),i}^+ - X_{[-m](i),j}^- \right)
\\
&\eqdef Y_1 + Y_2.
\end{aligned}
\end{equation*}
The term $Y_1$ can be further simplified. By switching $i$ and $j$ and take the inverse $m$ to $-m$ in $\Z^d$, we have
\begin{equation*}
\begin{aligned}
Y_1 &= \textstyle \sum_{i \in \mathcal{V}_1} \sum_{j \in \mathcal{V}_1} \sum_{[m] \in \Z^d} X_{[m](j),i}^+ - \sum_{i \in \mathcal{V}_1} \sum_{j \in \mathcal{V}_1} \sum_{[m] \in \Z^d} X_{[-m](i),j}^-
\\
&= \textstyle \sum_{i \in \mathcal{V}_1} \sum_{j \in \mathcal{V}_1} \sum_{[m] \in \Z^d} X_{[m](j),i}.
\end{aligned}
\end{equation*}
Moreover, 
\begin{equation*}
\begin{aligned}
{\textstyle \sum_{i \in \mathcal{V}_1} b_\text{c} \pi_i } &= \int x \; b_\text{c} \cdot \left( {\textstyle - \sum_{i \in \mathcal{V}_1}} \nabla \chi_i(x) \right) \;\rd x
\\
&= \int x \; b_\text{c} \cdot \left( {\textstyle \sum_{i \in \mathcal{V}_1} \sum_{j \in \mathcal{V}_0} \sum_{[m] \in \Z^d} } \bm{n}_{[m](j),i}(x) \right) \;\rd x
\\
&= \textstyle \sum_{i \in \mathcal{V}_1} \sum_{j \in \mathcal{V}_0} \sum_{[m] \in \Z^d} X_{[m](j),i}
\\
&= \textstyle \sum_{i \in \mathcal{V}_1} \sum_{j \in \mathcal{V}_1} \sum_{[m] \in \Z^d} X_{[m](j),i}
+ \sum_{i \in \mathcal{V}_1} \sum_{j \in \mathcal{V}_0 \setminus \mathcal{V}_1} \sum_{[m] \in \Z^d} X_{[m](j),i}
\\
&= Y_1 + \left[ Y_2 + \textstyle \sum_{i \in \mathcal{V}_1} \sum_{j \in \mathcal{V}_0 \setminus \mathcal{V}_1} \sum_{[m] \in \Z^d} \left( X_{[m](j),i} - X_{[m](j),i}^+ + X_{[-m](i),j}^- \right) \right].
\end{aligned}
\end{equation*}
Therefore, if we define
\begin{equation*}
\begin{aligned}
Y_3 &\defeq \textstyle \sum_{i \in \mathcal{V}_1} \sum_{j \in \mathcal{V}_0 \setminus \mathcal{V}_1} \sum_{[m] \in \Z^d} \left( X_{[m](j),i} - X_{[m](j),i}^+ + X_{[-m](i),j}^- \right)
\\
&= \textstyle \sum_{i \in \mathcal{V}_1} \sum_{j \in \mathcal{V}_0 \setminus \mathcal{V}_1} \sum_{[m] \in \Z^d} \left( -X_{[m](j),i}^- + X_{[-m](i),j}^- \right)
\end{aligned}
\end{equation*}
then we obtain that
\begin{equation*}
\begin{aligned}
&\quad \textstyle \sum_{i \in \mathcal{V}_1} \varphi_i = (Y_1 + Y_2) - (Y_1 + Y_2 + Y_3) = - Y_3.
\end{aligned}
\end{equation*}
Since
\begin{equation*}
\begin{aligned}
\left| -X_{[m](j),i}^- + X_{[-m](i),j}^- \right| \leq C(\mathcal{V}_0) \left( a_{i, [m](j)} + a_{j,[-m](i)} \right) = C(\mathcal{V}_0) \left( a_{[-m](i),j} + a_{[m](j),i} \right),
\end{aligned}
\end{equation*}
by taking the summation over $i \in \mathcal{V}_1, j \in \mathcal{V}_0$ and $[m] \in \Z^d$, it is straightforward that
\begin{equation*}
\begin{aligned}
|Y_3| \leq C(\mathcal{V}_0) \sum_{i \in \mathcal{V}_1, j \in \mathcal{V}_0 \setminus \mathcal{V}_1} \big( |A_{ij}(b_\text{c})| + |A_{ji}(b_\text{c})| \big),
\end{aligned}
\end{equation*}
which completes the proof.
\end{proof}
\subsection{Some properties of discrete diffusion operators} \label{subsec:linear_lemmas}

To begin with, let us recall the definition of irreducibility.
A matrix is irreducible if it is not similar via a permutation of indices to a block upper triangular matrix with more than one block of strictly positive size.
An equivalent definition is the following: Each matrix $M$ can be associated to a directed graph $G$, with $n$ vertices labeled with $1,\dots,n$.
There is an edge from $i$ to $j$ in $G$ if and only if $M_{ij} \neq 0$.
Then $M$ is irreducible if and only if $G$ is strongly connected, i.e. one can reach any vertex starting from any vertex.

Let $I = \{1,\dots, n\}$, and consider any $J, \widetilde{J} \subseteq I$. We denote by the submatrix $M(J, \widetilde{J})$, the matrix obtained by deleting from $M$ all rows whose indexes are not in $J$ and all columns whose indexes are not in $\widetilde{J}$. More precisely, if $J=\{j_1,\ldots j_k\}$ and $\widetilde{J}=\{ \widetilde{j}_1,\ldots \widetilde{j}_{\widetilde{k}}\}$, $M(J, \widetilde{J})$ it is defined by
\begin{equation} \notag
\begin{aligned}
M(J, \widetilde{J})_{i_1,i_2} = M_{j_{i_1},\widetilde{j}_{i_2}}.
\end{aligned}
\end{equation}

The following lemma shows that a discrete diffusion operator, if it is not irreducible, must be block-diagonal up to a permutation.
\begin{lem} \label{lem:block_diagonal}
For $M \in \mathcal{M}(n)$,
there exists a decomposition $I_1 \cup \dots \cup I_m = I = \{1,\dots, n\}$, such that for all $k \in \{1,\dots,m\}$, the diagonal square submatrix $M(I_k,I_k)$ is irreducible and for all $k,l \in \{1,\dots,m\}, k \neq l$, the submatrix $M(I_k,I_l) = 0$. Moreover, the null space and image of $M$ are 
\begin{equation} \notag
\begin{aligned}
N(M) &= \spn \{ \mathbbm{1}_{I_k} \}_{k=1}^m,
\\
\Range M &= \textstyle \{x \in \R^I : \sum_{i \in I_k} x_i = 0, \forall 1 \leq k \leq m \}.
\end{aligned}
\end{equation}
\end{lem}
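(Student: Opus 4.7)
The plan is to work with the directed graph $G$ on $I=\{1,\dots,n\}$ with an edge $i\to j$ iff $M_{ij}\neq 0$, and to exploit the joint row/column sum conditions to force every strongly connected component (SCC) of $G$ to be completely isolated from the rest.

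\textbf{Step 1: Isolation of SCCs.} First I would decompose $I$ into the SCCs $I_1,\dots,I_m$ of $G$ and consider the condensation DAG. Pick any SCC $I_k$ that is a \emph{sink} in the condensation (at least one exists). For every $i\in I_k$, by the definition of a sink, all edges out of $i$ land in $I_k$, so $M_{il}=0$ for $l\notin I_k$; hence the row sum condition gives $\sum_{l\in I_k} M_{il}=\sum_l M_{il}=0$. Summing over $i\in I_k$ yields $\sum_{i,l\in I_k} M_{il}=0$. Now apply the \emph{column} sum condition to each $l\in I_k$: $\sum_{i}M_{il}=0$, and summing over $l\in I_k$ gives
\begin{equation*}
\sum_{l\in I_k}\sum_{i\in I_k}M_{il}+\sum_{l\in I_k}\sum_{i\notin I_k}M_{il}=0.
\end{equation*}
The first sum is zero by the previous computation, so $\sum_{l\in I_k,\, i\notin I_k} M_{il}=0$. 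Since each such term is $\leq 0$, this forces $M_{il}=0$ for $i\notin I_k$ and $l\in I_k$, i.e.\ no edge enters $I_k$ either. Thus $I_k$ is isolated, and iterating on $I\setminus I_k$ produces the block decomposition; by the SCC construction, each diagonal block $M(I_k,I_k)$ is irreducible.

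\textbf{Step 2: Kernel computation.} Each block $M(I_k,I_k)$ again lies in $\mathcal{M}(|I_k|)$, so $\mathbbm{1}_{I_k}\in\Ker M$. To show these span the kernel, I would use a discrete maximum principle on each irreducible block: if $Mx=0$ and $i\in I_k$ attains $x_i=\max_{j\in I_k}x_j$, the identity $M_{ii}=\sum_{j\neq i}|M_{ij}|$ combined with $M_{ii}x_i=\sum_{j\neq i}|M_{ij}|x_j$ gives $\sum_{j\neq i}|M_{ij}|(x_i-x_j)=0$, so $x_j=x_i$ whenever $M_{ij}\neq 0$. Irreducibility then propagates equality throughout $I_k$, yielding $x|_{I_k}\in\spn\{\mathbbm{1}_{I_k}\}$.

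\textbf{Step 3: Range characterization.} Observe that $M^T\in\mathcal{M}(n)$ as well (the defining conditions are symmetric under transpose), with the same SCCs (reversing edges preserves SCCs). Applying Step 2 to $M^T$ gives $\Ker M^T=\spn\{\mathbbm{1}_{I_k}\}_{k=1}^m$, and then $\Range M=(\Ker M^T)^\perp=\{x\in\R^I:\sum_{i\in I_k}x_i=0\ \forall k\}$ by the fundamental theorem of linear algebra.

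\textbf{Main obstacle.} The delicate point is the isolation argument in Step 1: it is tempting to think any sink SCC in a non-negative-offdiagonal matrix must also be a source, but this is false without the column sum hypothesis. The two-sided sum condition is used in a genuinely non-trivial way, chaining a row-sum computation and a column-sum computation to pin down a sum of nonpositive terms to zero. The rest of the argument (maximum principle, rank-nullity) is relatively standard once the block structure is in hand.
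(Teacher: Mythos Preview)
Your proof is correct and follows essentially the same approach as the paper: both exploit the combined row/column sum conditions to force the block-diagonal structure, then apply the discrete maximum principle on each irreducible block for the kernel, and a dimension argument for the range. Your framing via SCCs and sink components is a bit more direct than the paper's induction on $n$, and your range computation via $\Ker M^T$ and orthogonality is a clean variant of the paper's explicit column-sum plus rank-nullity argument, but the core ideas coincide.
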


\begin{proof}
We are going to prove there exists a decomposition $I_1 \cup \dots \cup I_m = I = \{1,\dots, n\}$ by induction on the dimension $n$. When $n = 1$ there is nothing to prove.
Let's consider $n \geq 2$ and assume the decomposition exists for $1,\dots,(n-1)$. If $M$ is an irreducible $n \times n$ matrix, again there is nothing to prove. 
If $M$ is reducible, then it is similar to a block upper diagonal matrix via a permutation of indices, which means there exists $J \subseteq I = \{1, \dots, n\}$ s.t. $M_{ij} = 0$ for all $i \in I \setminus J$ and $j \in J$. Thus
\begin{equation} \notag
\begin{aligned}
0 = \sum_{i \in I} \sum_{j \in J} M_{ij} = \sum_{i \in J} \sum_{j \in J} M_{ij} + \sum_{i \in I \setminus J} \sum_{j \in J} M_{ij} = \sum_{i \in J} \sum_{j \in J} M_{ij}.
\end{aligned}
\end{equation}
On the other hand,
\begin{equation} \notag
\begin{aligned}
\sum_{i \in J} \sum_{j \in I} M_{ij} = 0.
\end{aligned}
\end{equation}
Hence
\begin{equation} \notag
\begin{aligned}
\sum_{i \in J} \sum_{j \in I \setminus J} M_{ij} = \sum_{i \in J} \sum_{j \in I} M_{ij} - \sum_{i \in J} \sum_{j \in J} M_{ij} = 0.
\end{aligned}
\end{equation}
Since the off-diagonal entries are all non-negative, one must have $M_{ij} = 0$ for all $i \in J$ and $j \in I \setminus J$. Therefore $M(I,I\setminus J) = 0$ and $M(I\setminus J, I) = 0$.

Furthermore, $M(J,J) \in \mathcal{M}(|J|)$ and $M(I\setminus J, I\setminus J) \in \mathcal{M}(|I \setminus J|)$. Note that $|J|, |I \setminus J| < n$.
Applying the induction argument on $M(J,J)$ and $M(I\setminus J, I\setminus J)$, we get decompositions $J = I_1^{(1)} \cup \dots \cup I_{m^{(1)}}^{(1)}$ and $I \setminus J = I_1^{(2)} \cup \dots \cup I_{m^{(2)}}^{(2)}$.
It is easy to verify that the decomposition
\begin{equation} \notag
\begin{aligned}
I = \left( I_1^{(1)} \cup \dots \cup I_{m^{(1)}}^{(1)} \right) \cup \left( I_1^{(2)} \cup \dots \cup I_{m^{(2)}}^{(2)} \right)
\end{aligned}
\end{equation}
satisfies the properties asserted in the lemma.

It remains to determine the null space and range of $M$. Assume first that $M$ is irreducible.
Let $x \in N(M)$ and
\begin{equation} \notag
\begin{aligned}
\textstyle J = \{i \in I: x_i = \max_l x_l \}.
\end{aligned}
\end{equation}
By construction, $J \neq \varnothing$ and we can argue by contradiction that $J = I$.
If $J \neq I$ and $J \neq \varnothing$, by the irreducibility of $M$, one can find $i \in J$, $j \in I \setminus J$ such that $M_{ij} > 0$.
However, for any $i$ and so in particular any $i \in J$,
\begin{equation} \notag
\begin{aligned}
\textstyle \sum_{j} M_{ij} x_j = 0.
\end{aligned}
\end{equation}
By our assumption of $M$, 
\begin{equation} \notag
\begin{aligned}
\textstyle \sum_{j \in I \setminus \{i\}} \left( M_{ij} x_j \right)-  \left( \sum_{j \in I \setminus \{i\}} M_{ij}\right) \max_l x_l = 0,
\end{aligned}
\end{equation}
which implies that $x_j = \max_l x_l$ if $M_{ij} > 0$.
Therefore, $j \in J$ whenever $i \in J$ and $M_{ij} > 0$, a contradiction. In conclusion, we have $J = I$ and $x_i = $ constant for all $i \in I$ or  $N(M) = \spn \{\mathbbm{1}\}$. As any column summation of $M$ is zero, for any $x \in \R^I$, one has
\begin{equation} \notag
\begin{aligned}
\textstyle \sum_{i} \left( \sum_{j} M_{ij} x_j \right) = \sum_{j} \left(\sum_{i} M_{ij}\right)x_j = 0.
\end{aligned}
\end{equation}
Since $\textnormal{codim} \Range M = \dim N(M) = 1$, one has $\Range M = \{x \in \R^I : \sum_{i \in I} x_i = 0\}$.
This concludes the proof when $M$ is irreducible. 

When $M$ is reducible, we know, up to a permutation, that $M$ is block diagonal and each diagonal block $M(I_k,I_k)$ is irreducible.
Applying the previous result to each $M(I_k,I_k)$ completes the proof.
\end{proof}

The next lemma allows us to estimate the $\ell^\infty$ norm of $M^{-1}$ when the non-negative entries of $M$ are bounded from both above and below.
\begin{lem} \label{lem:maximum_principle}
Define
\begin{equation} \notag
\begin{aligned}
\mathcal{M}(n,\eta_0,\eta_1) = \{M \in \mathcal{M}(n) : M_{ij} = 0 \textnormal{ or } \eta_0 < |M_{ij}|< \eta_1, \; \forall 1\leq i,j \leq n, i \neq j \; \}.
\end{aligned}
\end{equation}
Then there exists a constant $C(n,\eta_0,\eta_1) = C_2(n)\eta_1^{(n-2)} \eta_0^{-(n-1)}$ s.t. for any matrix $M \in \mathcal{M}(n,\eta_0,\eta_1)$ and $\varphi \in \Range M$, there exists one $\hat x$ satisfying $M \hat x = \varphi$ and $\| \hat x \|_{\ell^\infty} \leq C(n,\eta_0,\eta_1) \|\varphi\|_{\ell^\infty}$.
Moreover, for fixed $M \in \mathcal{M}(n,\eta_0,\eta_1)$ let us take the decomposition $I = I_1\cup \dots \cup I_m$ as in Lemma~\ref{lem:block_diagonal}.
The solution $\hat x$ above is uniquely determined by imposing the conditions
\begin{equation} \notag
\begin{aligned}
\textstyle \sum_{i \in I_k} \hat x_i = 0, \quad \forall k = 1, \dots, m.
\end{aligned}
\end{equation}
\end{lem}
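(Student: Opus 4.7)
My plan is to reduce the problem, by Lemma~\ref{lem:block_diagonal}, to the case where $M$ is irreducible: the general $M$ decomposes as a block‑diagonal matrix whose diagonal blocks $M(I_k,I_k)$ lie in $\mathcal{M}(|I_k|,\eta_0,\eta_1)$, the range condition $\varphi\in\Range M$ translates to $\sum_{i\in I_k}\varphi_i=0$ for every $k$, and the normalization $\sum_{i\in I_k}\hat x_i=0$ decouples the blocks. Within each irreducible block the null space is $\spn\{\mathbbm 1\}$, so uniqueness under the sum-to-zero constraint is immediate, and it suffices to prove the a priori $\ell^\infty$ bound for irreducible $M$ of size $n$.

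To produce a solution with an explicit bound, I would fix any vertex $i_0\in I$ and look for the unique $x$ with $x_{i_0}=0$ solving $M x=\varphi$. Using that the column sums of $M$ vanish and $\sum_i\varphi_i=0$, the equation indexed by $i_0$ follows from the remaining $n-1$ equations, so the problem collapses to solving the linear system $M^{[i_0,i_0]}x'=\varphi^{[i_0]}$, where $M^{[i_0,i_0]}$ is the $(n-1)\times(n-1)$ submatrix obtained by deleting row and column $i_0$, and $\varphi^{[i_0]}$ is $\varphi$ with the $i_0$-th entry dropped. Once $\|x'\|_{\ell^\infty}$ is controlled, the required bound on the sum-to-zero representative $\hat x$ follows by subtracting $(\sum_i x_i)/n\cdot\mathbbm 1$, which at most doubles the $\ell^\infty$ norm and is absorbed in the constant $C_2(n)$.

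The crux of the argument is the two-sided determinant estimate driving Cramer's rule on $M^{[i_0,i_0]}$. For the lower bound I would invoke the (weighted, directed) Matrix-Tree/Kirchhoff theorem: writing $M=D-W$ with $W_{ij}=|M_{ij}|\ge 0$ viewed as edge weights, $\det M^{[i_0,i_0]}$ equals the sum over spanning in-arborescences rooted at $i_0$ of the product of the $n-1$ edge weights along the arborescence. Irreducibility of $M$ is exactly strong connectedness of the weighted digraph, so at least one such arborescence exists, and every nonzero weight satisfies $W_{ij}>\eta_0$, giving $\det M^{[i_0,i_0]}\ge \eta_0^{n-1}$. For the upper bound on the numerator, the matrix has $n-2$ columns with entries bounded by $(n-1)\eta_1$ (both diagonal and off-diagonal, since $|M_{ii}|\le (n-1)\eta_1$) and one column equal to $\varphi^{[i_0]}$; Leibniz's formula yields a bound of $(n-1)!\,((n-1)\eta_1)^{n-2}\,\|\varphi\|_{\ell^\infty}$. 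Combining these estimates gives $\|x'\|_{\ell^\infty}\le C_2(n)\,\eta_1^{n-2}\,\eta_0^{-(n-1)}\,\|\varphi\|_{\ell^\infty}$ with $C_2(n)=2(n-1)!\,(n-1)^{n-2}$ after the renormalization, as claimed.

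The main obstacle I expect is obtaining the sharp lower bound $\eta_0^{n-1}$ on $|\det M^{[i_0,i_0]}|$ that corresponds to the exponent $-(n-1)$ in $C(n,\eta_0,\eta_1)$; a naive expansion would only give $\eta_0$ to a much larger (and useless) power, whereas the Matrix-Tree theorem exploits the Laplacian structure of $M$ exactly once per edge of a spanning arborescence, which is what yields the correct power of $\eta_0$. Everything else is bookkeeping around the block decomposition and Cramer's rule.
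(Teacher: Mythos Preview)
Your argument is correct and takes a genuinely different route from the paper. The paper proves the irreducible case by a discrete maximum-principle/layer-peeling argument: it starts from the set $J_0$ where $\hat x$ attains its maximum, grows $J_k$ one neighborhood at a time using irreducibility, and at each step reads off from the $j$-th equation (with $j\in J_{k-1}$) a recursion
\[
D_k \le \Big[1+(n-2)\tfrac{\eta_1}{\eta_0}\Big]D_{k-1}+\tfrac{1}{\eta_0}\|\varphi\|_{\ell^\infty}
\]
for the maximal drop $D_k=\max_{i\in J_k}(\max_l\hat x_l-\hat x_i)$; after at most $n-1$ steps $J_{n-1}=I$, and the geometric sum produces exactly $C_2(n)\,\eta_1^{\,n-2}\eta_0^{-(n-1)}$. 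Your approach instead deletes a row/column, applies Cramer's rule, bounds the numerator by Leibniz, and crucially lower-bounds $\det M^{[i_0,i_0]}\ge \eta_0^{\,n-1}$ via the directed Matrix--Tree theorem (which applies cleanly here because $M$ has both row and column sums zero, and irreducibility gives at least one spanning arborescence). Both methods yield the same scaling in $\eta_0,\eta_1$; the paper's proof is fully self-contained and makes the maximum-principle intuition behind the lemma's name explicit, while yours is shorter once one is willing to invoke Matrix--Tree and makes the exponent $-(n-1)$ transparent as ``one edge per spanning-tree edge.'' The reductions (block-diagonal decomposition via Lemma~\ref{lem:block_diagonal}, uniqueness from $N(M)=\spn\{\mathbbm 1_{I_k}\}$, and the sum-to-zero normalization) are handled the same way in both.
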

\begin{proof}
Let us first assume that $M$ is irreducible.
Let $x$ be a solution of $Mx = \varphi$. 
Since $N(M) = \textnormal{span}\{\mathbbm{1}\}$, we have that $M(x - \lambda \mathbbm{1}) = \varphi$ for all $\lambda \in \R$.
By taking $\lambda = \frac{1}{n}\sum_{i} x_i$ and $\hat x = (x - \lambda \mathbbm{1})$ we have $M \hat x = \varphi$ and $\sum_{i} \hat x_i = 0$.
This solution is uniquely determined and it remains to show that there is a uniform bound $\| \hat x \|_{\ell^\infty} \leq C(n,\eta_0,\eta_1) \|\varphi\|_{\ell^\infty}$.

Define
\begin{equation} \notag
\begin{aligned}
J_0 &= \{1 \leq i \leq n : \hat x_i = \max_{1\leq j \leq n} \hat x_j \},
\\
J_k &= \{1 \leq i \leq n : \exists j \in J_{k-1} \text{ s.t. } |M_{ji}| > 0\} \cup J_{k-1} , \quad \forall k \geq 1, \vphantom{\max_{1\leq j \leq d} \hat x_j \},}
\\
D_k &= \max_{i \in J_k} \{(\max_{1 \leq j \leq n} \hat x_j) - \hat x_i \}.
\end{aligned}
\end{equation}
For $i \in J_k \setminus J_{k-1}$, take $j \in J_{k-1}$ s.t. $|M_{ji}| > 0$, the equality on the $j$-th entry now reads
\begin{equation} \notag
\begin{aligned}
\textstyle -|M_{ji}| \hat x_i + \left( \sum_{k \neq j} |M_{jk}| \right) \hat x_j - \left( \sum_{k \neq i,j} |M_{jk}| \hat x_k \right) = \varphi_{j}.
\end{aligned}
\end{equation}
By the fact that the summation of each row of $M$ is zero, one can rewritten the above equation as
\begin{equation} \notag
\begin{aligned}
\textstyle |M_{ji}| (\max_{l}\hat x_l - \hat x_i) = 
\left( \sum_{k \neq j} |M_{jk}| \right) (\max_l \hat x_l - \hat x_j)
+ \varphi_{j} - \left( \sum_{k \neq i,j} |M_{jk}| (\max_l \hat x_l - \hat x_k) \right),
\end{aligned}
\end{equation}
by which we have
\begin{equation} \notag
\begin{aligned}
D_k \leq \left[ 1 + (n - 2)\frac{\eta_1}{\eta_0} \right] D_{k-1} + \frac{1}{\eta_0}\|\varphi\|_{\ell^\infty}.
\end{aligned}
\end{equation}
By definition, $D_0 = 0$. Hence by induction we obtain
\begin{equation} \notag
\begin{aligned}
D_k \leq  \frac{\left[ 1 + (n - 2)\frac{\eta_1}{\eta_0} \right]^k - 1}{(n - 2) \eta_1}\|\varphi\|_{\ell^\infty}.
\end{aligned}
\end{equation}
By the irreducibility of $M$, unless $J_k = \{1 \leq i \leq n\}$ we have $J_{k} \subsetneq J_{k+1}$ (i.e. $J_{k}$ is a proper subset of $J_k$). This implies that $|J_{k+1}|\geq |J_k|+1$ and since $|J_0| \geq 1$, it proves that
that
$J_{n-1} = \{1 \leq i \leq n\}$.

Therefore, by taking $C(n,\eta_0,\eta_1)$ and $C_2(n)$ such that
\begin{equation} \notag
\begin{aligned}
C(n,\eta_0,\eta_1)  = C_2(n)\eta_1^{(n-2)} \eta_0^{-(n-1)} \geq D_{n-1},
\end{aligned}
\end{equation}
we have $(\max_l \hat x_l - \min_l \hat x_l) \leq C(n,\eta_0,\eta_1) \|\varphi\|_{\ell^\infty}$.
By $\sum_{i} \hat x_i = 0$ we have $\min_l \hat x_l \leq 0 \leq \max_l \hat x_l$. Hence
 $\| \hat x \|_{\ell^\infty} \leq C(n,\eta_0,\eta_1) \|\varphi\|_{\ell^\infty}$.

When $M$ is reducible, we know, up to a permutation, that $M$ is block diagonal and each diagonal block $M(I_k,I_k)$ is irreducible.
Applying the previous result to each $M(I_k,I_k)$ completes the proof.
\end{proof}
%
\subsection{Uniform boundedness} \label{subsec:linear_uniform_boundedness}
We are finally able to show our uniform boundedness result.
\begin{thm} \label{thm:linear_uniform_boundedness}
Consider $M \in \mathcal{M}(n)$ and $\varphi \in \R^n$ satisfying that $\exists C_0 > 0$, $\forall I' \subseteq I = \{1,\dots n\}$,
\begin{equation} \label{eqn:inhomogeneous_bound}
\begin{aligned}
\big| \textstyle \sum_{j \in I'} \varphi \big| \leq C_0 \sum_{j \in I', i \in I \setminus I'} \big( |M_{ij}| + |M_{ji}| \big).
\end{aligned}
\end{equation}
Then there exists $x \in \R^n$ satisfying
\begin{equation} \notag
\begin{aligned}
Mx = \varphi, \quad \text{and} \quad \|x\|_{\ell^\infty} \leq C_0 C_1(n),
\end{aligned}
\end{equation}
for some constant $C_1(n)$ depending only on the dimension $n$.
\end{thm}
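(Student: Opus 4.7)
\medskip

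\noindent\textbf{Proof plan.} The plan is to combine scale invariance, a reduction to the irreducible case, and an induction on $n$ with a scale-splitting argument.

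First I would exploit two straightforward reductions. Both the equation $Mx=\varphi$ and the hypothesis \eqref{eqn:inhomogeneous_bound} are linear and homogeneous of degree one in $(M,\varphi)$ jointly (rescaling $M\mapsto \lambda M$ is absorbed by $\varphi\mapsto \lambda\varphi$ with the same $x$, and both sides of \eqref{eqn:inhomogeneous_bound} scale by $\lambda$), so we may normalize $\eta_1:=\max_{i,j}|M_{ij}|=1$. Taking singletons $I'=\{i\}$ in \eqref{eqn:inhomogeneous_bound} then yields the uniform bound $\|\varphi\|_{\ell^\infty}\le 2(n-1)C_0$. Next, Lemma~\ref{lem:block_diagonal} gives a decomposition $I=I_1\cup\cdots\cup I_m$ with $M(I_k,I_l)=0$ for $k\neq l$ and $M(I_k,I_k)$ irreducible. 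Choosing $I'=I_k$ in \eqref{eqn:inhomogeneous_bound} makes the right-hand side vanish because all cross-block entries of $M$ are zero; hence $\sum_{j\in I_k}\varphi_j=0$, which by Lemma~\ref{lem:block_diagonal} means $\varphi|_{I_k}\in\Range M(I_k,I_k)$. Moreover, for any $I'\subseteq I_k$, the only non-vanishing contributions to the right-hand side of \eqref{eqn:inhomogeneous_bound} come from pairs $(i,j)$ both inside $I_k$, so the hypothesis is inherited by each block with the same constant $C_0$. Thus it suffices to treat the case when $M$ is irreducible.

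For the irreducible case I would induct on $n$. The base $n=1$ is trivial: \eqref{eqn:inhomogeneous_bound} with $I'=\{1\}$ forces $\varphi_1=0$ and $x=0$ works. For the inductive step, fix a threshold $\delta=\delta(n)\in(0,1)$ to be chosen, and split into two cases. If $\min_{i\ne j,\ M_{ij}\ne 0}|M_{ij}|\ge \delta$, then $M\in\mathcal{M}(n,\delta,1)$ and Lemma~\ref{lem:maximum_principle} directly produces a solution with
\begin{equation*}
\|x\|_{\ell^\infty}\le C_2(n)\,\delta^{-(n-1)}\,\|\varphi\|_{\ell^\infty}\le 2(n-1)C_0\,C_2(n)\,\delta^{-(n-1)}.
\end{equation*}
Otherwise, let $\widetilde M$ be obtained from $M$ by removing all off-diagonal entries of size $<\delta$ and correcting the diagonal so that $\widetilde M\in\mathcal{M}(n)$. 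Because $M$ was irreducible but $\widetilde M$ omits some edges, Lemma~\ref{lem:block_diagonal} decomposes $\widetilde M$ into blocks $\widetilde I_1,\dots,\widetilde I_{\widetilde m}$ each of size strictly less than $n$. Applying \eqref{eqn:inhomogeneous_bound} to each $I'=\widetilde I_k$ controls the net flux across the weak cut by $C_0 n^2\delta$, which is the whole reason the hypothesis was designed this way.

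The delicate point, and the main obstacle, is combining these two cases into a single bound that is independent of $\delta$. The idea is to write $x=y+z$, where $z$ solves the block-diagonal system $\widetilde M z=\widetilde\varphi$ with $\widetilde\varphi$ the projection of $\varphi$ onto $\Range\widetilde M$ (so $z$ is bounded by the inductive hypothesis applied blockwise, using that each $\widetilde I_k$ inherits the analogue of \eqref{eqn:inhomogeneous_bound}), and $y$ absorbs the residual $\varphi-\widetilde M z-(M-\widetilde M)z$, whose $\ell^\infty$-norm is of order $C_0 n^2\delta+(n\delta)\|z\|_{\ell^\infty}$. The heuristic, which can be made precise via an electrical-network interpretation (a weak cut of conductance $\delta$ carrying net flux $O(C_0 n^2\delta)$ produces a voltage drop of order $C_0 n^2$), is that the $\delta$'s cancel and one obtains a bound independent of $\delta$. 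Choosing $\delta_n$ sufficiently small so that the Case A constant $2(n-1)C_0 C_2(n)\delta_n^{-(n-1)}$ can be balanced against the inductive constant $C_0 C_1(n-1)$ from Case B then yields the desired $C_1(n)$ depending only on $n$. Making the recombination rigorous, in particular verifying that the residual $\varphi-\widetilde M z-(M-\widetilde M)z$ again satisfies a variant of \eqref{eqn:inhomogeneous_bound} so that Lemma~\ref{lem:maximum_principle} applies to produce $y$, is where the bulk of the bookkeeping lies.
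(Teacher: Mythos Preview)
Your opening reductions---normalization to $\max_{i\neq j}|M_{ij}|=1$, solvability via Lemma~\ref{lem:block_diagonal}, the bound $\|\varphi\|_{\ell^\infty}\le 2(n-1)C_0$, and the reduction to irreducible $M$---are correct and match the paper. The inductive skeleton is also right. But the threshold-based splitting has two structural problems, not just bookkeeping.

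First, the construction of $\widetilde M$ is not as simple as stated. Matrices in $\mathcal{M}(n)$ have \emph{both} row and column sums zero; if you zero out a single off-diagonal entry $M_{ij}$ and add it back to $M_{ii}$ to fix row $i$, you break column $i$. There is no diagonal correction that repairs this, so ``remove small entries and correct the diagonal'' does not keep you in $\mathcal{M}(n)$. (The paper, in an appendix-style remark, shows one must instead subtract $\eta(P-I)$ along a full cycle through $(i,j)$---a nontrivial maneuver.)

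Second, and more seriously, your recombination heuristic fails. To produce $y$ you need to solve $My=\psi$ with $\|\psi\|_{\ell^\infty}=O(C_0\delta)$, but Lemma~\ref{lem:maximum_principle} gives $\|y\|_{\ell^\infty}\le C_2(n)\,\eta_0^{-(n-1)}\|\psi\|_{\ell^\infty}$ where $\eta_0$ is the smallest nonzero off-diagonal entry of $M$. By hypothesis $\eta_0<\delta$, so you only get $\|y\|\lesssim C_0\,\delta\,\eta_0^{-(n-1)}$, which is not a dimension-only constant. The ``$\delta$'s cancel'' intuition would need $\|M^{-1}\|\lesssim\delta^{-1}$, which is false in general. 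Trying instead to feed $\psi$ back into the theorem (since $\psi$ does satisfy \eqref{eqn:inhomogeneous_bound}, with constant $C_0(1+\|z\|_{\ell^\infty})$) is circular: same $M$, larger constant.

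The paper sidesteps both issues by replacing the global threshold with a \emph{compactness} argument on $\mathcal{M}_0(n)=\{M\in\mathcal{M}(n):\max_{i\neq j}|M_{ij}|=1\}$. Near each fixed $M^{(0)}$ one uses the irreducible-block decomposition of $M^{(0)}$ (whose smallest nonzero entry $\eta_0$ is now a fixed positive number) and decomposes the \emph{unknown} $x=x_1+x_2+\lambda\mathbbm{1}$ into a within-block fluctuation $x_1$ and a between-block mean $x_2$. A dichotomy on which piece dominates finishes the job: if $\|x_2\|\gtrsim\|x_1\|$, project to the $m\times m$ quotient system $WQ^{-1}PMQ$ (with $m<n$), verify its right-hand side satisfies \eqref{eqn:inhomogeneous_bound} with constant $1+\|x_1\|_{\ell^\infty}$, and apply induction; if $\|x_1\|\gtrsim\|x_2\|$, write $M^{(0)}x_1=\varphi+(M^{(0)}-M)(x_1+x_2)$ and apply Lemma~\ref{lem:maximum_principle} to $M^{(0)}$ (not $M$), with the perturbation absorbed because $|M-M^{(0)}|<r$ is chosen small relative to the fixed $\eta_0$. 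The key point is that one never inverts $M$ itself---only the fixed nearby $M^{(0)}$ and the lower-dimensional quotient.
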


Assuming for the moment that this theorem is correct. We can then immediately derive Theorem~\ref{thm:auxiliary_function_existence}.

\begin{proof}[Proof of Theorem~\ref{thm:auxiliary_function_existence}]
Combining Proposition~\ref{prop:diagonal_dominant}~and~\ref{prop:vanishing_average}, we see that the linear system \eqref{eqn:linear_system_3} satisfies the condition in Theorem~\ref{thm:linear_uniform_boundedness} with $n = |\mathcal{V}_0|$, $M = (\Phi - A^T)$, $C_0 = C(\mathcal{V}_0)$.
Moreover, since we assume $\bigcup_{i \in \mathcal{V}_0} \supp \chi_i$ being connected in Definition~\ref{defi:periodic_mesh}, it is easy to verify that for some constant $C$
\begin{equation*}
\begin{aligned}
C(\mathcal{V}_0) = \sup_{x,y \in \bigcup_{i \in \mathcal{V}_0} \supp \chi_i}|x - y| \leq C\,|\mathcal{V}_0| \delta x,
\end{aligned}
\end{equation*}
where $\delta x$ is the discretization size of the mesh.

Hence by applying Theorem~\ref{thm:linear_uniform_boundedness} to \eqref{eqn:linear_system_3}, we conclude that there are solutions $\hat x(b_\text{c})$ for all possible $b_\text{c}$, with $\ell^\infty$ norm uniformly bounded by
\begin{equation*}
\begin{aligned}
\|\hat x\|_{\ell^\infty} \leq C(\mathcal{V}_0)C_1(|\mathcal{V}_0|) \leq C(|\mathcal{V}_0|) \delta x,
\end{aligned}
\end{equation*}
where $C(|\mathcal{V}_0|)$ depends only on $|\mathcal{V}_0|,$ the number of cell functions in a period.
We can have the solutions satisfy that $\forall \lambda > 0 , \hat x_{i}(b_\text{c}) = \hat x_{i}(\lambda b_\text{c})$ simply by redefining $\hat x_{i}(b_\text{c}) = \hat x_{i}(b_\text{c}/|b_\text{c}|)$ for all $b_\text{c}$ s.t. $|b_\text{c}| \neq 0$.
Finally, by Lemma~\ref{lem:periodici_solution} we can extend our solution $\hat x(b_\text{c})$ to a periodic solution on the entire mesh, satisfying all properties claimed in Theorem~\ref{thm:auxiliary_function_existence}.
\end{proof}

We now conclude the section with the proof of Theorem~\ref{thm:linear_uniform_boundedness}:

\begin{proof}[Proof of Theorem~\ref{thm:linear_uniform_boundedness}]
Let us begin with the solvability of $Mx = \varphi$.
Take the decomposition $I = I_1\cup \dots \cup I_m$ as in Lemma~\ref{lem:block_diagonal}.
By Lemma~\ref{lem:block_diagonal},
\begin{equation*}
\begin{aligned}
\Range M &= \textstyle \{x \in \R^I : \sum_{i \in I_k} x_i = 0, \forall 1 \leq k \leq m \}.
\end{aligned}
\end{equation*}
For all $k = 1,\dots,m$, we have $M(I_k, I \setminus I_k) = M(I \setminus I_k, I_k) = 0$.
By \eqref{eqn:inhomogeneous_bound} this implies that
$\sum_{I_k} \varphi = 0$, $\forall k = 1,\dots, m$. Therefore $\varphi \in \Range M$ and $Mx = \varphi$ is solvable.

We now turn to the proof of $\ell^\infty$ bound of $x$.
We can WLOG assume $C_0 = 1$ and $\max_{i \neq j} |M_{ij}| = \eta_1 = 1$, because the general case can be reduced to it by a scaling $(\eta_1^{-1} M) (C_0^{-1} x) = (\eta_1^{-1} C_0^{-1} \varphi)$.

The result is immediate when $n = 1$ and 
we prove the cases $n \geq 2$ by induction. Assume the theorem holds for any $p \times p$ matrices with $p \leq (n-1)$, we are going to show that the theorem holds for $n \times n$ matrices.

First, since
\begin{equation} \notag
\begin{aligned}
\mathcal{M}_0(n) \defeq \mathcal{M}(n) \cap \{M \in \R^{n \times n}: \max_{i \neq j} |M_{ij}| = 1\}
\end{aligned}
\end{equation}
is compact, it suffices to show that there is a local bound.
More explicitly, we are going to show that for any $M^{(0)} \in \mathcal{M}_0(n)$, there is an open neighborhood $U \ni M^{(0)}$, s.t. for all $M \in U \cap \mathcal{M}_0(n)$ and $\varphi$ satisfying \eqref{eqn:inhomogeneous_bound} with $C_0 = 1$, there is a constant $C = C(n, U \cap \mathcal{M}_0(n))$ s.t.
one can take $x \in \R^n$ satisfying $Mx = \varphi$ and $\|x\|_{\ell^\infty} \leq C(n, U \cap \mathcal{M}_0(n))$.
Then by compactness, we conclude immediately that there is a uniform bound $C_1(n) = C(n, \mathcal{M}_0(n))$.

For arbitrary $M^{(0)} \in \mathcal{M}_0(n)$, introduce the irreducible decomposition $I = I_1 \cup \dots \cup I_m$ as in Lemma~\ref{lem:block_diagonal}.
For $1\leq k \leq m$, define $E_k = \textnormal{span}\{\mathbbm{1}_{\{i\}}\}_{i \in I_k}$, $F_k = \{x \in E_k : \sum_i x_i = 0\}$.
In addition, define $E_0 = \textnormal{span}\{\mathbbm{1}_{I_k}\}_{k = 1}^{m}$, $F_0 = \{x \in E_0 : \sum_i x_i = 0\}$. Note that we have
\begin{equation} \notag
\begin{aligned}
\R^n = (E_1 \oplus \dots \oplus E_{m}) = (F_1 \oplus \dots \oplus F_{m}) \oplus E_0 = (F_1 \oplus \dots \oplus F_{m}) \oplus F_0 \oplus \textnormal{span}\{\mathbbm{1}_{I}\}.
\end{aligned}
\end{equation}
Since there are non-zero non-diagonal entries in $M^{(0)}$, there exists $k$ such that $|I_k| > 1$. Hence $m < n$.

Let
\begin{equation} \notag
\begin{aligned}
\eta_0 &= \min \big\{ |M_{ij}^{(0)}| : i \neq j, |M_{ij}^{(0)}| \neq 0\big\},
\\
r &= \frac{1}{2nC(n,\eta_0,1)(1 + 2\max_{1 \leq p < n}C_1(p))},
\end{aligned}
\end{equation}
where the constant $C(n,\eta_0,1)$ is as in Lemma~\ref{lem:maximum_principle}.

Define the open set
\begin{equation} \notag
\begin{aligned}
U_r(M^{(0)}) = \left\{M \in \R^{n \times n} : \max_{1 \leq i,j \leq n} |M - M^{(0)}| < r \right\}.
\end{aligned}
\end{equation}
Define also the following linear mappings,
\begin{equation} \notag
\begin{aligned}
P: \R^n &\to E_0 = \textnormal{span}\{\mathbbm{1}_{I_k}\}_{k = 1}^{m} \subseteq \R^n
\\
x &\mapsto \sum_{k = 1}^{m} \left( \frac{1}{|I_k|} \sum_{i \in I_k} x_i \right) \mathbbm{1}_{I_k},
\end{aligned}
\end{equation}
i.e. we take the average on each $I_k$,
\begin{equation} \notag
\begin{aligned}
Q: \R^{m} &\to E_0 = \textnormal{span}\{\mathbbm{1}_{I_k}\}_{k = 1}^{m} \subseteq \R^n
\\
y &\mapsto \sum_{k=1}^{m} y_k \mathbbm{1}_{I_k},
\end{aligned}
\end{equation}
i.e. we project on the canonical basis of $E_0$, and finally
\begin{equation} \notag
\begin{aligned}
W: \R^{m} &\to \R^{m}
\\
(y_1,\dots, y_{m}) &\mapsto (|I_1|y_1,\dots, |I_{m}|y_{m}).
\end{aligned}
\end{equation}
For $x \in \R^n$ solving $Mx = \varphi$, introduce the decomposition
\begin{equation} \notag
\begin{aligned}
x = x_1 + x_2 + \lambda \mathbbm{1}_I, \quad x_1 \in (F_1 \oplus \dots \oplus F_{m}), \; x_2 \in F_0.
\end{aligned}
\end{equation}
Without loss of generality, we can assume $\lambda = 0$. We are going to discussion two situations.

\medskip

\noindent \emph{Case 1:} $\textstyle \|x_2\|_{\ell^\infty} \geq \big(2\max_{1 \leq p < n}C_1(p)\big) \|x_1\|_{\ell^\infty}$. Since $F_0\subset E_0$, $x_2$ belongs to the image of $Q$ and as $Q$ is trivially one-to-one from $\R^m$ to $E_0$, we can define $y_2 = Q^{-1} (x_2) \in \R^{m}$.

Since $M(x_1 + x_2) = \varphi$, we have
\begin{equation} \label{eqn:inhomogeneous_bound_induction}
\begin{aligned}
(WQ^{-1} PM Q) (y_2) = (WQ^{-1} PM) (x_2) = (WQ^{-1} P) (\varphi) - (WQ^{-1} PM) (x_1).
\end{aligned}
\end{equation}
Since
\begin{equation} \notag
\begin{aligned}
\textstyle (WQ^{-1} PM Q)_{kl} = \sum_{i \in I_k}\sum_{j \in I_l} M_{ij},
\end{aligned}
\end{equation}
it is easy to verify that $(WQ^{-1} PM Q) \in \mathcal{M}(m)$.

Our goal is to apply the induction argument to the $m \times m$ linear system \eqref{eqn:inhomogeneous_bound_induction}.
To apply the induction argument, we need to provide proper estimates on the  terms in the right-hand side.
For any $J' \subseteq J = \{1,\dots,m\}$, we have
\begin{equation} \notag
\begin{aligned}
& \textstyle \quad \sum_{k \in J'}\big[ (WQ^{-1} P) (\varphi) - (WQ^{-1} PM) (x_1) \big]_{k}
\\
&=
\textstyle \sum_{k \in J'}\big[ (WQ^{-1} P) (\varphi) \big]_{k} - \textstyle \sum_{k \in J'}\big[ (WQ^{-1} PM) (x_1) \big]_{k}
\\
&\eqdef L_1 + L_2.
\end{aligned}
\end{equation}
By our assumption on $\varphi$,
\begin{equation} \notag
\begin{aligned}
\textstyle |L_1| = \big| \sum_{k \in J'} \sum_{i \in I_k} \varphi_{i} \big| & \textstyle \leq \sum_{k \in J', l \in J \setminus J'} \sum_{i \in I_k, j \in I_l} \big( |M_{ij}| + |M_{ji}| \big)
\\
&= \textstyle \sum_{k \in J', l \in J \setminus J'} \left( \big| (WQ^{-1} PM Q)_{kl} \big| + \big| (WQ^{-1} PM Q)_{lk} \big| \right)
.
\end{aligned}
\end{equation}
On the other hand,
\begin{equation} \notag
\begin{aligned}
 |L_2| &= \textstyle \big| \sum_{k \in J'} \sum_{i \in I_k} \sum_{j \in I} M_{ij}(x_1)_j \big| = \big| \sum_{j \in I} \sum_{k \in J'} \sum_{i \in I_k} M_{ij}(x_1)_j \big|
\\
&\leq \textstyle \sum_{l \in J' }\sum_{j \in I_l} \big| \sum_{k \in J'} \sum_{i \in I_k} M_{ij}(x_1)_j \big| + \sum_{l \in J \setminus J' }\sum_{j \in I_l} \big| \sum_{k \in J'} \sum_{i \in I_k} M_{ij}(x_1)_j \big|
\\
&= \textstyle \sum_{l \in J' }\sum_{j \in I_l} \big| \sum_{k \in J \setminus J'} \sum_{i \in I_k} M_{ij}(x_1)_j \big| + \sum_{l \in J \setminus J' }\sum_{j \in I_l} \big| \sum_{k \in J'} \sum_{i \in I_k} M_{ij}(x_1)_j \big|
\\
&\leq \textstyle \sum_{k \in J', l \in J \setminus J'} \sum_{i \in I_k, j \in I_l} \big( |M_{ij}| + |M_{ji}| \big) \|x_1\|_{\ell^\infty}
\\
&= \textstyle \sum_{k \in J', l \in J \setminus J'} \left( \big| (WQ^{-1} PM Q)_{kl} \big| + \big| (WQ^{-1} PM Q)_{lk} \big| \right) \|x_1\|_{\ell^\infty}
.
\end{aligned}
\end{equation}
In conclusion we have
\begin{equation} \notag
\begin{aligned}
 |L_1 + L_2| \leq \textstyle (1 + \|x_1\|_{\ell^\infty}) \sum_{k \in J', l \in J \setminus J'} \left( \big| (WQ^{-1} PM Q)_{kl} \big| + \big| (WQ^{-1} PM Q)_{lk} \big| \right),
\end{aligned}
\end{equation}
which satisfies necessary assumption for the induction argument with $C_0 = (1 + \|x_1\|_{\ell^\infty})$. As a consequence, we have
\begin{equation} \notag
\begin{aligned}
\|x_2\|_{\ell^\infty} = \|y_2\|_{\ell^\infty} \leq (1 + \|x_1\|_{\ell^\infty}) C_1(m) \leq C_1(m) + \|x_2\|_{\ell^\infty}/2,
\end{aligned}
\end{equation}
by using the relation between $\|x_1\|_{\ell^\infty}$ and $\|x_2\|_{\ell^\infty}$ assumed at the beginning of the case.

Hence, as claimed
\begin{equation} \notag
\begin{aligned}
\|x_2\|_{\ell^\infty} &\leq 2 C_1(m) \leq 2\max_{1 \leq p < n}C_1(p),
\\
\|x\|_{\ell^\infty} &\leq \|x_1\|_{\ell^\infty} + \|x_2\|_{\ell^\infty} \leq 1 + 2\max_{1 \leq p < n}C_1(p).
\end{aligned}
\end{equation}

\medskip

\noindent \emph{Case 2:} $\textstyle \|x_2\|_{\ell^\infty} < \big(2\max_{1 \leq p < n}C_1(p)\big) \|x_1\|_{\ell^\infty}$. Since $M(x_1 + x_2) = \varphi$, we have
\begin{equation} \notag
\begin{aligned}
M^{(0)} (x_1) = M^{(0)} (x_1 + x_2) &= M (x_1 + x_2) + (M^{(0)} - M) (x_1 + x_2)
\\
&= \varphi + (M^{(0)} - M) (x_1 + x_2).
\end{aligned}
\end{equation}
On each $I_k \times I_k$ block ($k = 1,\dots m''$) we can apply Lemma~\ref{lem:maximum_principle}.
For $M \in U_r(M^{(0)}) \cap \mathcal{M}(n)$, this gives
\begin{equation} \notag
\begin{aligned}
\textstyle \|x_1\|_{\ell^\infty} &\leq C(n, \eta_0, 1) \big( \|\varphi\|_{\ell^\infty} + rd(1 + 2\max_{1 \leq p < n}C_1(p))\|x_1\|_{\ell^\infty} \big)
\\
&= C(n, \eta_0, 1) \|\varphi\|_{\ell^\infty} + \|x_1\|_{\ell^\infty}/2.
\end{aligned}
\end{equation}
By \eqref{eqn:inhomogeneous_bound}, for all $i \in I$,
\begin{equation} \notag
\begin{aligned}
|\varphi_i| \leq \sum_{j \in I \setminus \{i\}} \big( |M_{ij}| + |M_{ji}| \big) \leq 2(n-1).
\end{aligned}
\end{equation}
Hence
\begin{equation} \notag
\begin{aligned}
\|x_1\|_{\ell^\infty} &\leq 4(n-1) C(n,\eta_0,1),
\\
\|x\|_{\ell^\infty} &\leq \|x_1\|_{\ell^\infty} + \|x_2\|_{\ell^\infty} \leq 4(n-1) C(n,\eta_0,1) \big(1 + 2\max_{1 \leq p < n}C_1(p) \big).
\end{aligned}
\end{equation}
This finishes the study of Case 2.

\bigskip

Summarizing the results from Case 1 and Case 2, for arbitrary $M^{(0)} \in \mathcal{M}_0(n)$ we have $U_r(M^{(0)})$ such that for all $M \in U_r(M^{(0)})$ and $\varphi$ satisfying \eqref{eqn:inhomogeneous_bound} with $C_0 = 1$, 
one can take $x \in \R^d$ satisfying
\begin{equation} \notag
\begin{aligned}
Mx = \varphi, \quad \text{and} \quad \|x\|_{\ell^\infty} \leq 4(n-1) C(n,\eta_0,1) \big(1 + 2\max_{1 \leq p < n}C_1(p) \big).
\end{aligned}
\end{equation}
Recall that $U_r(M^{(0)})$ is give by
\begin{equation} \notag
\begin{aligned}
r &= \frac{1}{2nC(n,\eta_0,1)(1 + 2\max_{1 \leq p < n}C_1(p))},
\\
U_r(M^{(0)}) &= \left\{M \in \R^{n \times n} : \max_{1 \leq i,j \leq n} |M - M^{(0)}| < r \right\}.
\end{aligned}
\end{equation}
In conclusion, we can take
\begin{equation} \notag
\begin{aligned}
C\big(n, U_r(M^{(0)}) \cap \mathcal{M}_0(n)\big) = 4(n-1) C(n,\eta_0,1) \big(1 + 2\max_{1 \leq p < n}C_1(p) \big).
\end{aligned}
\end{equation}
We can conclude the proof by compactness.
\end{proof}

We finish this section by explaining how one may be able to find a (very rough) upper bound in the previous proof. By Lemma~\ref{lem:maximum_principle}, we have $C(n,\eta,1) = C_2(n)\eta^{-(n-1)}$ where $C_2(n)$ only depends on the dimension $n$.
For any $M \in \mathcal{M}(n)$ let us define
\begin{equation} \notag
\begin{aligned}
\eta(M) &\defeq \min \big\{ |M_{ij}| : i \neq j, |M_{ij}| \neq 0\big\},
\\
r(M) &\defeq \frac{1}{2nC_2(n)[\eta(M)]^{-(n-1)}(1 + 2\max_{1 \leq p < n}C_1(p))},
\end{aligned}
\end{equation}
We are going to argue that for sufficiently small $\eta_{\min} = \eta_{\min}(n) > 0$ and arbitrary $M^{(0)} \in \mathcal{M}_0(n)$,
there exists $M_* \in \mathcal{M}(n)$ such that $\eta(M_*) \geq \eta_{\min}$ and $M^{(0)} \in U_{r(M_*)}(M_*)$.
Once this argument is proved and $\eta_{\min}$ is given explicitly, by the proof of Theorem~\ref{thm:linear_uniform_boundedness}, we have
\begin{equation} \notag
\begin{aligned}
C\big(n, U_{r(M_*)}(M_*) \cap \mathcal{M}_0(n)\big) \leq 4(n-1) C_2(n)[\eta_{\min}(n)]^{-(n-1)} \big(1 + 2\max_{1 \leq p < n}C_1(p) \big).
\end{aligned}
\end{equation}
Therefore, 
we can take
\begin{equation} \notag
\begin{aligned}
C_1(n) = 4(n-1) C_2(n)[\eta_{\min}(n)]^{-(n-1)} \big(1 + 2\max_{1 \leq p < n}C_1(p) \big),
\end{aligned}
\end{equation}
which gives an explicit induction relation of $C_1(n)$.

We now describe how to find such $M_*$.
Let us construct a sequence $\{M^{(k)}\} \subset \mathcal{M}(n)$ (starting with $M^{(0)}$) by the following iterations,
\begin{enumerate}
\item If $M^{(k)} = 0$, we stop the sequence. Otherwise, take
  \[
  (i^{(k)},j^{(k)}) \in \argmin_{(i,j):i\neq j, |M_{ij}^{(k)}| \neq 0} |M_{ij}^{(k)}|.
  \]
  
\item For $M^{(k)} \neq 0$, take the decomposition $I = I_1^{(k)} \cup \dots \cup I_m^{(k)}$ as in Lemma~\ref{lem:block_diagonal}.
Assume WLOG that $i^{(k)},j^{(k)} \in I_1^{(k)}$. 
By the equivalent definition of irreducible matrix that the associated directed graph is strongly connected, one can take a path $j^{(k)} = j^{(k)}_0, j^{(k)}_1, \dots , j^{(k)}_p = i^{(k)}$ such that $M_{j^{(k)}_l, j^{(k)}_{l+1}}^{(k)} \neq 0$ for all $l = 1,\dots, (p-1)$.

Let $P^{(k)}$ be the permutation matrix given by
\begin{equation} \notag
P_{i,j}^{(k)} = \left\{
\begin{aligned}
&1 \quad &&\text{ if } (i,j) = (i^{(k)},j^{(k)})
\\
&&&\text{ or } (i,j) = (j^{(k)}_l, j^{(k)}_{l+1}), \; l = 1,\dots, (p-1)
\\
&&&\text{ or } i = j \in I \setminus \{ j^{(k)}_0, j^{(k)}_1, \dots , j^{(k)}_p \}
\\
\\
&0 && \text{ else}.
\end{aligned} \right.
\end{equation}

\item Take 
$M^{(k+1)} = \Bigg( M^{(k)} + \eta(M^{(k)})(P^{(k)} - I) \Bigg)$.
\end{enumerate}

For any $(i,j) \in I^2$ such that $i \neq j$ and $(P^{(k)} - I)_{ij} = 1$, we have $M^{(k)}_{i,j} < 0$.
By the definition of $\eta(M^{(k)})$ it is easy to verify that the non-diagonal entries of $M^{(k+1)}$ given in this way are again non-positive. Hence $M^{(k+1)} \in \mathcal{M}(n)$ if $M^{k} \in \mathcal{M}(n)$.
By induction, the above procedure produces a sequence $\{M^{(k)}\} \subset \mathcal{M}(n)$.
We are going to argue that for sufficiently small $\eta_{\min}$, there must be an adequate candidate for $M_*$ in $\{M^{(k)}\}$ before the sequence ends.

First, recall the definition $\eta(M) \defeq \min \big\{ |M_{ij}| : i \neq j, |M_{ij}| \neq 0\big\}$, it is straightforward to see that $M_{i^{(k)},j^{(k)}}^{(k+1)} = 0$.
Since there are only $n^2 - n$ non-zero non-diagonal entries and our process eliminates at least one entry at a time, the process must terminate somewhere before step $n^2 - n$.

Secondly, observe that $\|M^{(k+1)} - M^{(k)}\|_{\ell^\infty} \leq \eta(M^{(k)})$ and $|M_{i,j}^{(k+1)}| \leq |M_{i,j}^{(k)}|$, $\forall i\neq j$.
By induction, 
\begin{equation} \notag
\begin{aligned}
\|M^{(k+1)} - M^{(0)}\|_{\ell^\infty} \leq {\textstyle \sum_{l = 0}^{k} \eta(M^{(l)})}, \quad \max_{i \neq j} |M_{ij}^{(k+1)}| \leq 1.
\end{aligned}
\end{equation}
Thus, if
\begin{equation} \notag
\begin{aligned}
{\textstyle \sum_{l = 0}^{k} \eta(M^{(l)})} \leq r(M^{(k+1)}) = \frac{1}{2n C_2(n)\eta(M^{(k+1)})^{-(n-1)} (1 + 2\max_{1 \leq p < n}C_1(p))},
\end{aligned}
\end{equation}
or equivalently
\begin{equation} \notag
\begin{aligned}
\left( 2n C_2(n) \Big(1 + 2\max_{1 \leq p < n}C_1(p)\Big) {\textstyle \sum_{l = 0}^{k} \eta(M^{(l)})} \right)^{1/(n-1)} \leq \eta(M^{(k+1)}),
\end{aligned}
\end{equation}
then it is guaranteed that $M^{(0)} \in U_{r(M^{(k+1)})} ( M^{(k+1)} )$.

To ensure that $M^{(k+1)}$ is an adequate candidate of $M_*$, we also need that $\eta(M^{(k+1)}) \geq \eta_{\min}$.
Let us define the function
\begin{equation} \label{eqn:induction_minimum_coef}
\begin{aligned}
\rho(r) \defeq \max \left\{\eta_{\min}, \left( {2nC_2(n)} \big(1 + 2\max_{1 \leq p < n}C_1(p) \big) r \right)^{1/(n-1)} \right\},
\end{aligned}
\end{equation}
and reformulate what we just discussed as the following:
If $\eta(M^{(k+1)}) \geq \rho({\textstyle \sum_{l = 0}^{k} \eta(M^{(l)})})$,
we  have that ${\textstyle \sum_{l = 0}^{k} \eta(M^{(l)})} \leq r(M^{(k+1)})$ and $\eta(M^{(k+1)}) \geq \eta_{\min}$, so we can take $M_* = M^{(k+1)}$.

Let us assume that no candidate of $M_*$ appears until step $m$. Then we should have
$\eta(M^{(k)}) <  \rho({\textstyle \sum_{l = 0}^{k-1} \eta(M^{(l)})})$ for $k = 1,\dots m$.
Define $\sigma^{(0)} = \eta_{\min}$ and define $\sigma^{(k)}$ inductively by 
\begin{equation} \notag
\begin{aligned}
\sigma^{(k)} = \rho({\textstyle \sum_{l = 0}^{k-1} \sigma^{(l)}}).
\end{aligned}
\end{equation}
Obviously we have $\eta(M^{(0)}) \leq \sigma^{(0)}$.
Note that $\rho$ is increasing for any fixed $\eta_{\min} > 0$. Therefore we have $\eta(M^{(k)}) \leq \rho({\textstyle \sum_{l = 0}^{k-1} \eta(M^{(l)})}) \leq \rho({\textstyle \sum_{l = 0}^{k-1} \sigma^{(l)}}) \leq \sigma^{(k)}$ provided that $\eta(M^{(l)}) \leq \sigma^{(l)}$ for all $l = 0,1, \dots , (k-1)$.
Applying the induction argument, we conclude that $\eta(M^{(k)}) \leq \sigma^{(k)}$ for all $k = 0,1, \dots, m$.

Let us discuss the growth of $\sigma^{(k)}$.
Note that for $r \to 0^+$ and $\eta_{\min} \to 0^+$ we have $\rho(r) \to 0^+$. Therefore $\lim_{\eta_{\min} \to 0} \sigma^{(k)} = \lim_{\eta_{\min} \to 0} \rho({\textstyle \sum_{l = 0}^{k-1} \sigma^{(l)}}) = 0$ provided that $\lim_{\eta_{\min} \to 0} \sigma^{(l)} = 0$ for all $l = 1, \dots , (k-1)$.
Applying the induction argument, we conclude that $\lim_{\eta_{\min} \to 0} \sigma^{(k)} = 0$ for all $k \geq 0$.
In particular, by taking sufficiently small $\eta_{\min}$, one would have ${\textstyle \sum_{l = 0}^{n^2-n} \sigma^{(l)}} < 1$.

Since $\max_{i \neq j} |M_{ij}^{(0)}| = 1$, let us take $i,j$ s.t. $|M_{ij}^{(0)}| = 1$.
Observing that
\begin{equation} \notag
\begin{aligned}
|M_{ij}^{(m+1)}| \geq 1 - {\textstyle \sum_{l = 0}^{m} \eta(M^{(l)})} \geq 1 - {\textstyle \sum_{l = 0}^{m} \sigma^{(l)}}.
\end{aligned}
\end{equation}
For $m < n^2 - n$, this implies that $|M_{ij}^{(m+1)}| > 0$, hence the iteration does not terminate at step $(m+1)$ either.
By induction, unless there is an adequate candidate of $M_*$ found, the iteration does not terminate before step $n^2 - n$.
Recall that the iteration must terminate somewhere before step $n^2 - n$ as the non-zero entries are reducing, we conclude that an adequate candidate of $M_*$ must appear somewhere before step $n^2 - n$.
%
\section{Proof of remaining lemmas and propositions} \label{sec:kernel_equiv}
In this section we collect the remaining missing proofs of various technical lemmas. Let us begin with Lemma~\ref{lem:kernel_equiv_Euclidean}.

\begin{proof}[Proof of Lemma~\ref{lem:kernel_equiv_Euclidean}] 
The equation \eqref{eqn:key_estimate_perturb} is equivalent to 
\begin{equation*}
\begin{aligned}
\int_{\R^{2d}} \left(K_g^h(x,y)-K_f^h(x,y)\right) |u(x) - v(y)|^p \;\rd x\rd y
\leq C(h_1/h_0) \int K_f^h(x,y) |u(x) - v(y)|^p \;\rd x \rd y.
\end{aligned}
\end{equation*}
Notice that $\phi(x)$, yet $K^h(x)$, has compact support in the ball $B(0;2)$. Also, notice that
\begin{equation*}
\begin{aligned}
|f_1(x) - f_2(y)| \geq |x - y| - |x - f_1(x)| - |y - f_2(y)| \geq |x - y| - 2h_1,
\end{aligned}
\end{equation*}
where $h_1 \leq 1/4$ by our assumption.
Hence, $K_f^h(x,y)$ and $K_g^h(x,y)$ are non-zero only if $|x - y| \leq 5/2$.

We further take the decomposition
\begin{equation*}
\begin{aligned}
\{(x,y) \in \R^{2d} : |x - y| \leq 5/2\} = \{|x - y| \leq 1/2\} \cup \{1/2 < |x - y| \leq 5/2\},
\end{aligned}
\end{equation*}
by which we can rewrite the double integral as
\begin{equation*}
\begin{aligned}
&\quad \int_{\R^{2d}} \left(K_g^h(x,y)-K_f^h(x,y)\right) |u(x) - v(y)|^p \;\rd x\rd y
\\
&= \left(\int_{|x - y| \leq 1/2} + \int_{1/2 < |x - y| \leq 5/2}\right) 
 \left(K_g^h(x,y)-K_f^h(x,y)\right) |u(x) - v(y)|^p \;\rd x\rd y.
\end{aligned}
\end{equation*}
On the set $\{|x - y| \leq 1/2\}$, we have $|f_1(x)-f_2(y)| \leq |x - y| + 2h_1 \leq 1$.
Hence 
\begin{equation*}
\begin{aligned}
\phi(|f_1(x)-f_2(y)|) = \phi(|g_1(x)-g_2(y)|) = 1, \quad \forall |x-y| \leq 1/2.
\end{aligned}
\end{equation*}
Thus we have
\begin{equation*}
\begin{aligned}
&\quad \int_{|x - y| \leq 1/2} \left(K_g^h(x,y)-K_f^h(x,y)\right) |u(x) - v(y)|^p \;\rd x\rd y.
\\
&\leq \sup_{|x - y| \leq 1/2} \frac{\left|K_g^h(x,y)-K_f^h(x,y)\right|}{K_f^h(x,y)}
\int_{\R^{2d}} K_f^h(x,y) |u(x) - v(y)|^p \;\rd x\rd y,
\end{aligned}
\end{equation*}
where the coefficient before the integral can be bounded by
\begin{equation*}
\begin{aligned}
\sup_{|x - y| \leq 1/2} \frac{\left|K_g^h(x,y)-K_f^h(x,y)\right|}{K_f^h(x,y)} &=  \sup_{|x - y| \leq 1/2} \frac{\left|\frac{1}{\left( |g_1(x) - g_2(y)| + h \right)^d} - \frac{1}{\left( |f_1(x) - f_2(y)| + h \right)^d}\right|}{\frac{1}{\left( |f_1(x) - f_2(y)| + h \right)^d}}
\\
&\leq \frac{4h_1 \; d/h^{d+1}}{1/h^d}
\leq C h_1/h_0.
\end{aligned}
\end{equation*}
Moreover, on the set $\{1/2 < |x - y| \leq 5/2\}$,
\begin{equation*}
\begin{aligned}
&\quad \int_{1/2 < |x - y| \leq 5/2} \left(K_g^h(x,y)-K_f^h(x,y)\right) |u(x) - v(y)|^p \;\rd x\rd y
\\ \notag
&\leq \int_{1/2 < |x - y| \leq 5/2} \left(K_g^h(x,y)-K_f^h(x,y)\right) 5^{p-1}\Big(  |u(x) - v(4x/5+y/5)|^p
\\ \label{eqn:kernel_term_1}
& \quad\quad\quad + |v(4x/5+y/5) - u(3x/5+2y/5)|^p + |u(3x/5+2y/5) - v(2x/5+3y/5)|^p
\\
& \quad\quad\quad + |v(2x/5+3y/5) - u(x/5+4y/5)|^p + |u(x/5+4y/5) - v(y)|^p  \Big) \;\rd x\rd y.
\end{aligned}
\end{equation*}
As an example, let us look at the second term. With a change of variable
\begin{equation*}
\begin{aligned}
w = 3/5 + 2/5, \quad z = x/5 - y/5,
\end{aligned}
\end{equation*}
one has that
\begin{equation*}
\begin{aligned}
&\quad 5^{p-1}\int_{1/2 < |x - y| \leq 5/2} \left(K_g^h(x,y)-K_f^h(x,y)\right) |v(4x/5+y/5) - u(3x/5+2y/5)|^p \;\rd x\rd y
\\
&= 5^{d+p-1} \int_{1/10 < |z| \leq 1/2} \left(K_g^h(w+2z,w-3z)-K_f^h(w+2z,w-3z)\right)
|u(w) - v(w + z)| \;\rd w \rd z
\\
&\leq 5^{d+p-1} \sup_{w,z \in \R^d, 1/10 < |z| \leq 1/2} \frac{\left|K_g^h(w+2z,w-3z)-K_f^h(w+2z,w-3z)\right|}{K_f^h(w,w+z)}
\\
&\quad\quad\quad\quad \int_{\R^{2d}} K_f^h(w,w+z) |u(w) - v(w+z)| \;\rd w\rd z,
\end{aligned}
\end{equation*}
where the coefficient before the integral can be bounded by
\begin{equation*}
\begin{aligned}
&\; \sup_{w,z \in \R^d, 1/10 < |z| \leq 1/2} \frac{\left|K_g^h(w+2z,w-3z)-K_f^h(w+2z,w-3z)\right|}{K_f^h(w,w+z)}
\\
= &\; \sup_{w,z \in \R^d, 1/10 < |z| \leq 1/2} \frac{\left|K_g^h(w+2z,w-3z)-K_f^h(w+2z,w-3z)\right|}{\frac{1}{\left( |f_1(w) - f_2(w+z)| + h \right)^d}}
\\
\leq &\; C h_1 \leq Ch_1/h_0.
\end{aligned}
\end{equation*}
The other four terms can be bounded by the same approach.

In conclusion, we have
\begin{equation}
\begin{aligned}
&\quad \int_{\R^{2d}} K_g^h(x,y) |u(x) - v(y)| \;\rd x\rd y
\\
&= \int_{\R^{2d}} \left(K_g^h(x,y) - K_f^h(x,y) \right) |u(x) - v(y)| \;\rd x\rd y 
+ \int_{\R^{2d}} K_f^h(x,y) |u(x) - v(y)| \;\rd x\rd y 
\\
&\leq \left(1 +  Ch_1/h_0\right) \int K_f^h(x,y) |u(x) - v(y)| \;\rd x \rd y.
\end{aligned}
\end{equation}
\end{proof}

Next we prove Lemma~\ref{lem:kernel_equiv_extension_trick}.
\begin{proof}[Proof of Lemma~\ref{lem:kernel_equiv_extension_trick}]
We first choose measurable sets $(V_i)_{i \in \mathcal{V}} \subset \R^d$ by the following. Divide $\R^d$ into small hypercubes 
\begin{equation*}
\begin{aligned}
Q_{[m]} = \prod_{k=1}^d \Big[\big(n_k/\sqrt{d}\big) \delta x, \big((n_k+1)/\sqrt{d}\big) \delta x\Big), \quad \forall [m] = (n_1, \dots, n_d) \in \Z^d.
\end{aligned}
\end{equation*}
where each hypercube has diameter $\delta x$.

Then, in each hypercube $Q_{[m]}$ choose measurable sets $V_{i,[m]}$, $i \in \mathcal{V}$ satisfying
\begin{equation*}
\begin{aligned}
V_{i,[m]} \subset Q_{[m]}, \quad |V_{i,[m]}| = \int_{Q_{[m]}} \chi_i, \quad \forall i \in \mathcal{V}, \quad \text{ and } \quad V_{i,[m]} \cap V_{j,[m]} \quad \forall i,j \in \mathcal{V},
\end{aligned}
\end{equation*}
which is always possible since $\sum_{i \in \mathcal{V}} \chi_i \leq 1$ by our definition.

Choose $V_i = \bigcup_{[m]} V_{i,[m]}$ so that
\begin{equation*}
\begin{aligned}
|V_i| = \pi_i = \int_{\R^d} \chi_i, \quad \sup_{x \in V_i} |x - x_i| < 2\delta x, \quad \forall i \in \mathcal{V}, \quad \text{ and } \quad V_{i} \cap V_{j} \quad \forall i,j \in \mathcal{V}.
\end{aligned}
\end{equation*}
Moreover, recall our assumption \eqref{eqn:mesh_comparable_2} that $\sum_{i \in \mathcal{V}} \chi_i(x) = 1, \forall x \in \Omega + B(0,4)$.
For any hypercube $Q_{[m]} \subset \Omega + B(0,4)$, one has $\sum_{i \in \mathcal{V}} |V_{i,[m]}| = |Q_{[m]}|$. Since we have assumed  $\delta x \leq 1/16$, it is easy to verify that $Q_{[m]} \cap \big( \Omega + B(0,3) \big) \neq \varnothing$ implies $Q_{[m]} \subset \Omega + B(0,4)$.

Then up to modification on a negligible set, one has that
\begin{equation*}
\begin{aligned}
\textstyle \Big( \bigcup_{i \in \mathcal{V}} V_i \Big) \cap \Big( \Omega + B(0,3) \Big) &= \textstyle \Big( \bigcup_{i \in \mathcal{V}} \bigcup_{[m] \in \Z^d} V_{i, [m]} \Big) \cap \Big( \Omega + B(0,3) \Big)
\\
&= \textstyle \bigcup_{[m] \in \Z^d}  \Big( \bigcup_{i \in \mathcal{V}} V_{i, [m]} \Big) \cap \Big( \Omega + B(0,3) \Big)
\\
&= \Omega + B(0,3).
\end{aligned}
\end{equation*}
Recall our choice of piecewise constant extension $u^V \defeq \sum_{i \in \mathcal{V}} u_i \mathbbm{1}_{V_i}$ and
\begin{equation*}
f(x) = \left\{
\begin{aligned}
&\widetilde{x}_i, \quad &&\text{ for } x \in V_i,\; i \in \mathcal{V},
\\
&x, \quad &&\text{ for } x \notin \textstyle \bigcup_{i \in \mathcal{V}} V_i.
\end{aligned} \right.
\end{equation*}
Notice that $u_i \neq 0$ only if $\supp \chi_i \subset \Omega$ and we have assumed $\delta x \leq 1/16$. Hence the extended function $u^V$ satisfies $\supp u^V \subset \Omega + B(0,1)$. Also, by our assumption it is straightforward that
\begin{equation*}
\begin{aligned}
\sup_{x \in \R^d} |x - f(x)| \leq \sup_{i \in \mathcal{V}}\sup_{x \in V_i} |x - x_i| + \sup_{i \in \mathcal{V}} |x_i - \widetilde{x}_i| \leq 2\delta x + h_2 < 1/4.
\end{aligned}
\end{equation*}
Finally, let us consider the integral
\begin{equation*}
\begin{aligned}
\int_{\R^{2d}} K^h\big(f(x) - f(y)\big) |u^V(x) - u^V(y)|^p \;\rd x\rd y.
\end{aligned}
\end{equation*}
We have proved $\supp u^V \subset \Omega + B(0,1)$ and by definition $\supp K^h \in B(0,2)$.
Therefore, for $x \notin \Omega + B(0,3)$, either $y \notin \Omega + B(0,1)$, making $|u^V(x) - u^V(y)| = 0$, or $y \in \Omega + B(0,1)$, making $K^h(x - y) = 0$.

The same argument also applies to $y$ and as a consequence, the above integral  can be taken instead over any subset of $\R^{2d}$ including $\big( \Omega + B(0,3) \big)^2$. In particular, it can be reformulated as
\begin{equation*}
\begin{aligned}
\int_{\left( \bigcup_{i\in \mathcal{V}} V_i \right)^2} K^h\big(f(x) - f(y)\big) |u^V(x) - u^V(y)| \;\rd x\rd y = \sum_{i,j \in \mathcal{V}} \widetilde{K}_{i,j}^h |u_i - u_j|^p \pi_i \pi_j,
\end{aligned}
\end{equation*}
which completes the proof.
\end{proof}

We are now ready to prove Lemma~\ref{lem:compactness_regularity}, Proposition~\ref{prop:kernel_equiv} and Proposition~\ref{prop:discrete_regularity_extended}. Let us start with Proposition~\ref{prop:kernel_equiv}, which is immediate:
\begin{proof}[Proof of Proposition~\ref{prop:kernel_equiv}]
By Lemma~\ref{lem:kernel_equiv_extension_trick}, choose measurable sets $(V_i)_{i \in \mathcal{V}} \subset \R^d$, take piecewise constant extension $u^V \defeq \sum_{i \in \mathcal{V}} u_i \mathbbm{1}_{V_i}$ and take $f_1^{(1)},f_2^{(1)},f_1^{(2)},f_2^{(2)} : \R^d \to \R^d$ by
\begin{equation*}
\begin{aligned}
\text{for } k = 1,2, \quad f_1^{(k)}(x) = f_2^{(k)}(x) = \left\{
\begin{aligned}
&\widetilde{x}_i^{(k)}, \quad &&\text{ for } x \in V_i,\; i \in \mathcal{V},
\\
&x, \quad &&\text{ for } x \notin \textstyle \bigcup_{i \in \mathcal{V}} V_i.
\end{aligned} \right.
\end{aligned}
\end{equation*}
Then it is straightforward that
\begin{equation*}
\begin{aligned}
\|u\|_{h_0,p,\theta;\widetilde{x}^{(k)}}^p &= \sup_{h_0 \leq h \leq 1/2} |\log h|^{-\theta} \sum_{i,j \in \mathcal{V}} \widetilde{K}^h\big(\widetilde{x}_i^{(k)} - \widetilde{x}_j^{(k)}\big) |u_i - u_j|^p \pi_i \pi_j
\\
&= \sup_{h_0 \leq h \leq 1/2} |\log h|^{-\theta} \int_{\R^{2d}} K^h\big(f_1^{(k)}(x)-f_2^{(k)}(y)\big) |u^V(x) - u^V(y)|^p \;\rd x \rd y,
\end{aligned}
\end{equation*}
and
\begin{equation*}
\begin{aligned}
\text{for } k,l = 1,2, \quad \sup_{x \in \R^d} |x - f_l^{(k)}(x)| \leq 2\delta x + h_2 \leq 3h_2.
\end{aligned}
\end{equation*}
We now apply Lemma~\ref{lem:kernel_equiv_Euclidean} with $h_1 = 3h_2 < 1/4$, which gives that
\begin{equation*}
\begin{aligned}
\|u\|_{h_0,p,\theta;\widetilde{x}^{(2)}}^p \leq \left(1 +  Ch_2/h_0 \right) \|u\|_{h_0,p,\theta;\widetilde{x}^{(1)}}^p.
\end{aligned}
\end{equation*}
Noticing $(1 + x)^{1/p} \leq 1 + x$ for all $x \geq 0$, we conclude that
\begin{equation*}
\begin{aligned}
\|u\|_{h_0,p,\theta;\widetilde{x}^{(2)}} \leq \left(1 +  Ch_2/h_0 \right) \|u\|_{h_0,p,\theta;\widetilde{x}^{(1)}},
\end{aligned}
\end{equation*}
which finishes the proof.

\end{proof}

Next, let us prove Proposition~\ref{prop:discrete_regularity_extended}.

\begin{proof}[Proof of Proposition~\ref{prop:discrete_regularity_extended}]

Choose any labeling of the index set $\mathcal{V}$, and define
\begin{equation*}
\begin{aligned}
J : \R^d \times [0,1] &\to \mathcal{V}
\\
(y, \omega) &\mapsto \textstyle \min \{i \in \mathcal{V} : \sum_{j \leq i} \chi_j(y) \geq \omega\}.
\end{aligned}
\end{equation*}
Notice that there are only a bounded number of nonzero $\chi_i(y)$ at any point $y$ by our assumption.

Define also
\begin{equation*}
\begin{aligned}
F : \mathcal{V} &\to \Omega_{e}
\\
i &\mapsto x_{i}
\end{aligned}
\end{equation*}
Then for all $y \in \R^d$, $\omega \in [0,1]$,
\begin{equation*}
\begin{aligned}
\big| (F \circ J)(y;\omega) - y \big| \leq 2 \delta x, \quad u^\chi(y) = \sum_{i \in \mathcal{V}} u_i \chi_i(y) = \int_{0}^{1} u_{J(y,\omega)} \;\rd \omega.
\end{aligned}
\end{equation*}
By Lemma~\ref{lem:kernel_equiv_Euclidean}, for all $\omega_1,\omega_2 \in [0,1]$
\begin{equation*}
\begin{aligned}
&\quad \quad \int_{\R^{2d}} K^h(x,y) |u_{J(x,\omega_1)} - u_{J(y,\omega_2)}|^p \;\rd x\rd y
\\
&\leq (1 + C\delta x/ h) \int_{\R^{2d}} K^h\Big((F \circ J)(x,\omega_1),(F \circ J)(y,\omega_2)\Big) |u_{J(x,\omega_1)} - u_{J(y,\omega_2)}|^p \;\rd x\rd y.
\end{aligned}
\end{equation*}
Therefore, notice that $K^h\big(F(i),F(j)\big) = K^h\big(x_{i},x_{j}\big) = K_{i,j}^h$, which implies that
\begin{equation*}
\begin{aligned}
& \int_{\R^{2d}} K^h(x,y) |u^\chi(x) - u^\chi(y)|^p \rd x\rd y 
\\
&\quad= \int_{\R^{2d}} K^h(x,y) \left| \int_0^1 u_{J(x,\omega_1)} \;\rd \omega_1 - \int_0^1 u_{J(y,\omega_2)} \;\rd \omega_2 \right|^p \rd x\rd y 
\\
&\quad \leq \int_{[0,1]^2} \int_{\R^{2d}} K^h(x,y) \left| u_{J(x,\omega_1)} - u_{J(y,\omega_2)} \right|^p \rd x\rd y \rd \omega_1 \rd \omega_2
\\
&\quad \leq (1 + C\delta x/ h) \int_{[0,1]^2} \int_{\R^{2d}} K^h\Big((F \circ J)(x,\omega_1),(F \circ J)(y,\omega_2)\Big) \left| u_{J(x,\omega_1)} - u_{J(y,\omega_2)} \right|^p\rd x\rd y \rd \omega_1 \rd \omega_2
\\
&\quad= (1 + C\delta x/ h) \sum_{i,j \in \mathcal{V}} K_{i,j}^h |u_{i} - u_{j}|^p \pi_{i} \pi_{j}.
\end{aligned}
\end{equation*}

Again by Lemma~\ref{lem:kernel_equiv_Euclidean},
\begin{equation*}
\begin{aligned}
&\quad \sum_{i,j \in \mathcal{V}} K_{i,j}^h \Big| (P_{\mathcal{C}}u)_{i} - (P_{\mathcal{C}}u)_{j} \Big|^p \pi_i \pi_j
\\
&= \sum_{i,j \in \mathcal{V}} K_{i,j}^h \left|\frac{1}{\pi_{i}} \int_{\R^d} u(x) \chi_{i}(x) \;\rd x - \frac{1}{\pi_{j}} \int_{\R^d} u(y) \chi_{j}(y) \;\rd y\right|^p \pi_{i} \pi_{j}
\\
&\leq \sum_{i,j \in \mathcal{V}} \int_{\R^{2d}} K_{i,j}^h \big| u(x) - u(y)\big|^p \chi_{i}(x)\chi_{j}(y) \rd x \rd y
\\
&= \int_{\R^{2d}} K^h\Big((F \circ J)(x,\omega_1),(F \circ J)(y,\omega_2)\Big) \big| u(x) - u(y)\big|^p \rd x \rd y \rd \omega_1 \rd \omega_2
\\
&\leq (1 + C\delta x/ h) \int_{\R^{2d}} K^h(x,y) |u(x) - u(y)| \;\rd x\rd y,
\end{aligned}
\end{equation*}
concluding the proof.
\end{proof}

Lemma~\ref{lem:compactness_regularity} can then be derived from Proposition~\ref{prop:discrete_regularity_extended}.

\begin{proof}[Proof of Lemma~\ref{lem:compactness_regularity}]
Since we have assumed $h > h_0 > \delta x$ and $\|(u_i)_{i \in \mathcal{V}}\|_{h_0,p,\theta} \leq L$, by Proposition~\ref{prop:discrete_regularity_extended},
\begin{equation*}
\begin{aligned}
\int_{\R^{2d}} K^h(x,y) |u^\chi(x) - u^\chi(y)|^p \rd x\rd y
&\leq (1 + C\delta x/ h) \sum_{i,j \in \mathcal{V}} K_{i,j}^h |u_{i} - u_{j}|^p \pi_{i} \pi_{j} 
\\
&\leq C |\log h|^{\theta},
\end{aligned}
\end{equation*}
where the constant $C$ may depend on $L$.

Introduce the renormalization factor
\begin{equation*}
\begin{aligned}
C_h = |\log \theta| / \|K^h\|_{L^1}.
\end{aligned}
\end{equation*}
Then $C_h$ is bounded form above and below uniformly with respect to $h$, and the renormalized kernel $\bar K^h$ reads
\begin{equation*}
\begin{aligned}
\bar K^h(x) = K^h(x) / \|K^h\|_{L^1} = C_h |\log \theta|^{-1} K^h(x).
\end{aligned}
\end{equation*}
Hence
\begin{equation*}
\begin{aligned}
\|u^\chi - \bar K^h \star u^\chi \|_{L^p}^p &= (C_h |\log h|^{-1})^{p} \int_{\R^d} \left( \int_{\R^d} K^{h}(x-y) \big( u^\chi(x) - u^\chi(y) \big) \rd y \right)^p \rd x
\\
&\leq C |\log h|^{-p} \|K^h\|_{L^1}^{p-1} \int_{\R^{2d}} K^h(x-y) |u^\chi(x) - u^\chi(y)|^p \;\rd y\rd x
\\
&\leq C|\log h|^{-1} \int_{\R^{2d}} K^h(x-y) |u^\chi(x) - u^\chi(y)|^p \;\rd y\rd x.
\end{aligned}
\end{equation*}
This implies
\begin{equation} \label{eqn:approximate_by_convolution}
\begin{aligned}
\|u^\chi - \bar K^h \star u^\chi\|_{L^p}^p \leq C |\log h|^{\theta - 1}
\end{aligned}
\end{equation}

\end{proof}

We have finished the proofs of all lemmas and propositions in Section~\ref{subsec:compactness_and_propagation} but it remains to prove Proposition~\ref{prop:two_ways_projection} as claimed in the proof of Theorem~\ref{thm:periodic_compactness_result}.

\begin{proof}[Proof of Proposition~\ref{prop:two_ways_projection}]
Let $(\mathcal{C}, \mathcal{F})$
be a mesh as in Definition~\ref{defi:partition_of_unity_mesh} over $\Omega \subset \R^d$ such that \eqref{eqn:mesh_comparable_1} hold.
Assume that each face function $\bm{n}_{i,j} \in \mathcal{F}$ is of form $\bm{n}_{i,j}(x) = \bm{N}_{i,j} w_{i,j}(x), \forall x \in \R^d$, where $\bm{N}_{i,j}$ is a unit vector and $w_{i,j}$ is a scalar function.

Then for $1 \leq p \leq \infty$,
\begin{equation*}
\begin{aligned}
\big\| P_{\mathcal{F}}' b - P_{\mathcal{F}} b \big\|_{L^{p}([0,T] \times \mathcal{F})} \leq C \delta x \|b\|_{L^{p}_t(W^{1,p}_x)},
\end{aligned}
\end{equation*}
where the constant only depends on $p$ and the constant in the structural assumption \eqref{eqn:mesh_comparable_1}.

We are going to first prove the inequality for any fixed time $t$ and To simplify the notation we omit $t$ in all the calculations.

By definition, we have
\begin{equation*}
\begin{aligned}
\big( P_{\mathcal{F}} b - P_{\mathcal{F}}' b \big)_{i,j} = \int_{\R^d} \Big( b(x) \cdot \bm{N}_{i,j} \Big)^+ w_{i,j}(x) \;\rd x - \left( \int_{\R^d} b(x) \cdot \bm{N}_{i,j} w_{i,j}(x) \;\rd x \right)^+.
\end{aligned}
\end{equation*}
We introduce the more general function
\begin{equation*}
\begin{aligned}
I(b, \bm{N}, w) \defeq \int_{\R^d} \Big( b(x) \cdot \bm{N} \Big)^+ w(x) \;\rd x - \left( \int_{\R^d} b(x) \cdot \bm{N} w(x) \;\rd x \right)^+
\end{aligned}
\end{equation*}
where $\bm{N} \in \R^d$ is a unit vector and $w$ is a non-negative, bounded function with compact support.
It is straightforward that $I(b,\bm{N},w) \geq 0$ and for any two  functions $v, w \geq 0$,
\begin{equation*}
\begin{aligned}
&\quad I(b,\bm{N},w) + I(b,\bm{N},v) 
\\
&= \int_{\R^d} \Big( b(x) \cdot \bm{N} \Big)^+ \big(w(x) + v(x)\big) \;\rd x - \left( \int_{\R^d} b(x) \cdot \bm{N} w(x) \;\rd x \right)^+  - \left( \int_{\R^d} b(x) \cdot \bm{N} v(x) \;\rd x \right)^+
\\
&\leq \int_{\R^d} \Big( b(x) \cdot \bm{N} \Big)^+ \big(w(x) + v(x)\big) \;\rd x - \left( \int_{\R^d} b(x) \cdot \bm{N} \big(w(x) + v(x)\big) \;\rd x \right)^+
\\
&= I(b,\bm{N},w + v).
\end{aligned}
\end{equation*}
Hence if  $0 \leq w \leq u$, then $I(b,\bm{N},w) \leq I(b,\bm{N},u)$.

Moreover, $I(b,\bm{N},w)$ is directly bounded by the following inequality
\begin{equation*}
\begin{aligned}
I(b,\bm{N},w)
 \leq \sup_{\lambda}\left\{ \int_{\R^d} \Big( b(x) \cdot \bm{N} - \lambda \Big)^+ w(x) \;\rd x - \left( \int_{\R^d} \Big( b(x) \cdot \bm{N} - \lambda \Big) w(x) \;\rd x \right)^+ \right\}.
\end{aligned}
\end{equation*}
And it is easy to verify that (in distributional sense)
\begin{equation*}
\begin{aligned}
&\quad \frac{\partial}{\partial \lambda} \left\{ \int_{\R^d} \Big( b(x) \cdot \bm{N} - \lambda \Big)^+ w(x) \;\rd x - \left( \int_{\R^d} \Big( b(x) \cdot \bm{N} - \lambda \Big) w(x) \;\rd x \right)^+ \right\}
\\
= &-\int_{\R^d} \mathbbm{1}\Big\{b(x) \cdot \bm{N} - \lambda \geq 0\Big\} w(x) \;\rd x
+ \mathbbm{1} \left\{ \int_{\R^d} \Big( b(x) \cdot \bm{N} - \lambda \Big) w(x) \;\rd x \geq 0 \right\} \int_{\R^d} w(x)\;\rd x.
\end{aligned}
\end{equation*}
Thus the maximum is attained at
\begin{equation*}
\begin{aligned}
\lambda^* = \left( \int_{\R^d} b(x) \cdot \bm{N} w(x) \;\rd x \right) / \left( \int_{\R^d} w(x) \;\rd x \right),
\end{aligned}
\end{equation*}
thus
\begin{equation*}
\begin{aligned}
I(b,\bm{N},w)
&\leq \int_{\R^d} \Big( b(x) \cdot \bm{N} - \lambda^* \Big)^+ w(x) \;\rd x
\\
&\leq \frac{1}{\|w\|_{L^1}} \int_{\R^{2d}} |b(x) - b(y)|w(x)w(y) \;\rd x \rd y.
\end{aligned}
\end{equation*}
Notice that $w_{i,j} \leq C\delta x^{-1} \mathbbm{1}_{B(x_i;\delta x)}$ by our structural assumptions \eqref{eqn:mesh_comparable_1}.
Therefore,
\begin{equation*}
\begin{aligned}
\big( P_{\mathcal{F}} b - P_{\mathcal{F}}' b \big)_{i,j} &= I(b,\bm{N}_{i,j},w_{i,j}) \leq I(b,\bm{N}_{i,j}, C(\delta x)^{-1} \mathbbm{1}_{B(x_i;\delta x)})
\\
&\leq \frac{C (\delta x)^{-1}}{\|B(x_i;\delta x)\|_{L^1}} \int_{\R^{2d}} |b(x) - b(y)| \mathbbm{1}_{B(x_i;\delta x)}(x) \mathbbm{1}_{B(x_i;\delta x)}(y) \;\rd x \rd y.
\end{aligned}
\end{equation*}
When $p = \infty$, we have
\begin{equation*}
\begin{aligned}
\big\| P_{\mathcal{F}} b - P_{\mathcal{F}}' b \big\|_{L^\infty(\mathcal{F})} &= \sup_{i,j \in \mathcal{V}} \big( P_{\mathcal{F}} b - P_{\mathcal{F}}' b \big)_{i,j} \; (\delta x)^{-(d-1)}
\\
&\leq C(\delta x)^{d-1} \; \delta x \|b\|_{W^{1,\infty}} \; (\delta x)^{-(d-1)}
\\
&= C\delta x \|b\|_{W^{1,\infty}}.
\end{aligned}
\end{equation*}
When $p = 1$, we have
\begin{equation*}
\begin{aligned}
\big\| P_{\mathcal{F}} b - P_{\mathcal{F}}' b \big\|_{L^1(\mathcal{F})} &= \sum_{i,j \in \mathcal{V}} \big( P_{\mathcal{F}} b - P_{\mathcal{F}}' b \big)_{i,j} \delta x = \sum_{(i,j) \in \mathcal{E}} \big( P_{\mathcal{F}} b - P_{\mathcal{F}}' b \big)_{i,j} \delta x
\\
&= \sum_{(i,j) \in \mathcal{E}} \frac{C}{\|B(x_i;\delta x)\|_{L^1}} \int_{\R^{2d}} |b(x) - b(y)| \mathbbm{1}_{B(x_i;\delta x)}(x) \mathbbm{1}_{B(x_i;\delta x)}(y) \;\rd x \rd y
\\
&\leq \sum_{(i,j) \in \mathcal{E}} \frac{C}{\|B(x_i;\delta x)\|_{L^1}} \int_{x \in B(x_i;\delta x)} \int_{|z| \leq 2\delta x} |b(x) - b(x+z)| \;\rd x \rd y
\\
&\leq \frac{C}{\|B(x_i;\delta x)\|_{L^1}} \int_{x \in \R^d} \int_{|z| \leq 2\delta x} |b(x) - b(x+z)| \;\rd x \rd y
\\
&\leq C\delta x \|b\|_{W^{1,1}}.
\end{aligned}
\end{equation*}
An interpolation completes the case $1 \leq p \leq \infty$, i.e.
\begin{equation*}
\begin{aligned}
\big\| P_{\mathcal{F}} b - P_{\mathcal{F}}' b \big\|_{L^p(\mathcal{F})} \leq C\delta x \|b\|_{W^{1,p}}.
\end{aligned}
\end{equation*}
Integrating now over time, we conclude that
\begin{equation*}
\begin{aligned}
\big\| P_{\mathcal{F}}' b - P_{\mathcal{F}} b \big\|_{L^{p}([0,T] \times \mathcal{F})} \leq C \delta x \|b\|_{L^{p}_t(W^{1,p}_x)}.
\end{aligned}
\end{equation*}

\end{proof}

\noindent {\bf Acknowledgments.} P.~E. Jabin and D. Zhou were partially supported by NSF DMS Grant 2049020, 2219397, and 2205694.
\nocite{*}
\bibliography{refs}{} 
\bibliographystyle{siam} 

\end{document}